\renewcommand{\mathbbm}{\mathbb}
\DeclareMathOperator{\Stab}{Stab}
\DeclareMathOperator{\pr}{pr}
\DeclareMathOperator{\pt}{pt}
\DeclareMathOperator{\id}{id}
\DeclareMathOperator{\sgn}{sgn}
\DeclareMathOperator{\Hom}{Hom}
\DeclareMathOperator{\sHom}{\mathcal{H}\textit{om}}
\DeclareMathOperator{\Ext}{Ext}
\DeclareMathOperator{\Coh}{Coh}
\DeclareMathOperator{\sExt}{\mathcal{E}\textit{xt}}
\DeclareMathOperator{\D}{D}
\DeclareMathOperator{\K}{K}
\DeclareMathOperator{\Ho}{H}
\DeclareMathOperator{\Coho}{\mathcal H}
\DeclareMathOperator{\Aut}{Aut}
\DeclareMathOperator{\codim}{codim}
\DeclareMathOperator{\rank}{rank}
\DeclareMathOperator{\supp}{supp}
\DeclareMathOperator{\Pic}{Pic}
\DeclareMathOperator{\triv}{\mathsf{triv}}
\DeclareMathOperator{\im}{im}
\DeclareMathOperator{\cone}{cone}
\DeclareMathOperator{\coker}{coker}
\DeclareMathOperator{\SL}{SL}
\DeclareMathOperator{\Index}{\mathsf{Index}}
\DeclareMathOperator{\orb}{\mathsf{orb}}
\DeclareMathOperator{\Inf}{\mathsf{Inf}}
\DeclareMathOperator{\Res}{\mathsf{Res}}
\DeclareMathOperator{\FM}{\mathsf{FM}}
\DeclareMathOperator{\MM}{\mathsf{M}}
\newcommand{\sym}{\mathfrak S}
\newcommand{\Alt}{\mathfrak A}
\newcommand{\C}{\mathbbm C}
\newcommand{\N}{\mathbbm N}
\newcommand{\Q}{\mathbbm Q}
\newcommand{\Z}{\mathbbm Z}
\newcommand{\cA}{\mathcal A}
\newcommand{\cB}{\mathcal B}
\newcommand{\cF}{\mathcal F}
\newcommand{\cE}{\mathcal E}
\newcommand{\cC}{\mathcal C}
\newcommand{\cG}{\mathcal G}
\newcommand{\cH}{\mathcal H}
\newcommand{\cS}{\mathcal S}
\newcommand{\cP}{\mathcal P}
\newcommand{\cQ}{\mathcal Q}
\newcommand{\cT}{\mathcal T}
\newcommand{\cL}{\mathcal L}
\newcommand{\cK}{\mathcal K}
\newcommand{\cR}{\mathcal R}
\newcommand{\m}{\mathfrak m}
\newcommand{\alt}{\mathfrak a}
\newcommand{\reg}{\mathcal O}
\newcommand{\I}{\mathcal I}
\newcommand{\eps}{\varepsilon}
\renewcommand{\P}{\mathbbm P}
\renewcommand{\theta}{\vartheta}
\renewcommand{\rho}{\varrho}
\renewcommand{\phi}{\varphi}
\renewcommand{\_}{\underline{\,\,\,\,}}
\newtheorem{theorem}{Theorem}[section]
\newtheorem{prop}[theorem]{Proposition}
\newtheorem{lemma}[theorem]{Lemma}
\newtheorem{cor}[theorem]{Corollary}
\newtheorem*{propA}{Proposition A}
\newtheorem*{corB}{Corollary B}
\newtheorem*{theoremC}{Theorem C}
\newtheorem*{propAprime}{Proposition A'}
\newtheorem*{corBprime}{Corollary B'}
\newtheorem*{theoremCprime}{Theorem C'}
\theoremstyle{definition}
\newtheorem{remark}[theorem]{Remark}
\newtheorem{conv}[theorem]{Convention}
\newtheorem{conj}[theorem]{Conjecture}
\begin{document}
\title{$\P$-functor versions of the Nakajima operators}
\author{Andreas Krug}
\address{Mathematisches Institut, Universit\"at Bonn\\ Endenicher Allee 60\\ 53115 Bonn, Germany}
\email{akrug@math.uni-bonn.de}
\begin{abstract}
For a smooth quasi-projective surface $X$ we construct a series of $\P^{n-1}$-functors 
$H_{\ell,n}\colon \D^b(X\times X^{[\ell]})\to \D^b(X^{[n+\ell]})$ for $n> \ell$ and $n>1$
using the derived McKay correspondence. They can be considered as analogues of the Nakajima operators. The functors also restrict to $\P^{n-1}$-functors on the generalised Kummer varieties. 
We also study the induced autoequivalences and obtain, for example, a universal braid relation in the groups of derived autoequivalences of Hilbert squares of K3 surfaces and Kummer fourfolds. 
\end{abstract}
\maketitle
\section{Introduction}
A central result in the theory of Hilbert schemes of points on surfaces is the identification of their cohomology 
with the Fock space representation of the Heisenberg algebra  
by means of the \textit{Nakajima operators}
$q_{\ell,n}\colon \Ho^*(X\times X^{[\ell]},\Q)\to \Ho^*(X^{[n+\ell]},\Q)$; see \cite{Nak} and \cite{Groj}.
They are induced by the correspondences 
\begin{align}\label{Zln}X^{[\ell]}\times X\times X^{[n+\ell]}\supset Z^{\ell,n}:=\{([\xi],x,[\xi'])\mid \xi\subset \xi'\,,\,\text{$\xi$ and $\xi'$ only differ in $x$}\}\,.\end{align}
More recently, autoequivalences of the (bounded) derived categories $\D^b(X^{[n]})$ of the Hilbert schemes were intensively studied; see \cite{Plo}, \cite{Add}, \cite{PS}, \cite{Mea}, \cite{Kru3}, \cite{CLS}, \cite{KSos}. 
In particular, Addington \cite{Add} defined the notion of a \textit{$\P^n$-functor} as a Fourier--Mukai transform $F\colon \D^b(M)\to \D^b(N)$ between derived categories of varieties with right-adjoint $F^R\colon \D^b(N)\to \D^b(M)$ and the main property
that
\[F^RF\cong \id \oplus D\oplus D^2\oplus \dots\oplus D^n\] for some autoequivalence $D\colon \D^b(M)\to \D^b(M)$ which is then called the \textit{$\P$-cotwist} of $F$. 
Every $\P^n$-functor $F$ gives an \textit{induced $\P$-twist} $P_F\colon \D^b(N)\to \D^b(N)$ which is an autoequivalence.

The main example in \cite{Add} is the $\P^{n-1}$-functor $F_n=\FM_{\I_\Xi}\colon\D^b(X)\to\D^b(X^{[n]})$ given by the Fourier--Mukai transform along the ideal sheaf of the universal family $\Xi\subset X\times X^{[n]}$ for $X$ a K3 surface and $n\ge 2$. Similarly,
 for $X=A$ an abelian surface and $\hat \Xi\subset A\times K_nA$ the universal family of the generalised Kummer variety $K_nA\subset A^{[n+1]}$ the functor $\hat F_n=\FM_{\I_{\hat \Xi}}\colon\D^b(A)\to \D^b(K_nA)$ is a $\P^{n-1}$-functor for $n\ge 2$ and a 
$\P^1$-functor for $n=1$; see \cite{Mea} and \cite{KMea}. We will refer to these examples of $\P$-functors as the \textit{universal ideal functors}.

An important tool for the investigation of the derived categories of the Hilbert schemes of points on surfaces is the \textit{Bridgeland--King--Reid--Haiman} equivalence (also known as \textit{derived McKay correspondence}; see \cite{BKR} and \cite{Hai})
\[\Phi=\Phi_n\colon \D^b(X^{[n]})\xrightarrow\cong \D^b_{\sym_n}(X^n)\,.\]
This provides an identification of $\D^b(X^{[n]})$ with the derived category of equivariant coherent sheaves on the product $X^n$, i.e.\ of coherent sheaves equipped with a linearisation by the symmetric group $\sym_n$.  

In \cite{Kru3} it was shown that for every smooth surface $X$ and $n\ge 2$ there is a $\P^{n-1}$-functor $H_{0,n}:=H\colon \D^b(X)\to \D^b_{\sym_n}(X^n)$ given by the composition $\D^b(X)\xrightarrow{\triv}\D^b_{\sym_n}(X)\xrightarrow{\delta_*} \D^b_{\sym_n}(X^n)$ 
where the first functor equips every object with the trivial $\sym_n$-linearisation and the second is the push-forward along the embedding of the small diagonal. Its $\P$-cotwist is $S_X^{-1}:=(\_)\otimes \omega_X^{-1}[-2]$.
 The FM kernel of $H$ is given by $\reg_D$ with $X\cong D=\{x=x_1=\dots=x_n\}\subset X\times X^n$.
Under the BKRH equivalence, this functor corresponds to a $\P^n$-functor $\Phi^{-1}\circ H\colon \D^b(X)\to \D^b(X^{[n]})$ whose kernel is supported on $Z^{0,n}=\bigl\{(x,[\xi'])\mid \supp(\xi')=\{x\}\bigr \}$. Thus, one can regard $H=H_{0,n}$ as a lift of the
 Nakajima operator $q_{0,n}\colon \Ho^*(X,\Q)\to \Ho^*(X^{[n]},\Q)$.   
\subsection{Main results}
The question is whether the other Nakajima operators also have analogues in the form of $\P^{n-1}$-functors 
\[H_{\ell,n}\colon \D^b_{\sym_\ell}(X\times X^\ell)\cong \D^b(X\times X^{[\ell]})\to \D^b(X^{[n+\ell]})\cong \D^b_{\sym_{n+\ell}}(X^{n+\ell})\,.\]
A first natural guess for a generalisation of $H_{0,n}$ would be the functor
\begin{align}\label{Hcompo}H_{\ell,n}^0\colon \D^b_{\sym_\ell}(X\times X^\ell)\xrightarrow{\triv}\D^b_{\sym_n\times\sym_\ell}(X\times X^\ell)\xrightarrow{\delta_{[n]*}}\D^b_{\sym_n\times \sym_\ell}(X^n\times X^\ell)\xrightarrow{\Inf}\D^b_{\sym_{n+\ell}}(X^{n+\ell})
\,;\end{align}
see Section \ref{equiFM} for details on the inflation functor $\Inf$ and its adjoint $\Res$. Here, for $J\subset [n+\ell]=\{1,\dots,n+\ell\}$ of cardinality $|J|=n$ the morphism $\delta_J$ denotes the closed embedding of the \textit{partial diagonal}
\[X\times X^\ell\cong\Delta_J=\bigl\{(y_1,\dots,y_{n+\ell})\mid y_a=y_b \text{ for all $a,b\in J$}  \bigr\}\subset X^{n+\ell}\,.\]
The functor is given on objects by   
\begin{align}\label{Infsum}H_{\ell,n}^0(E)=\bigoplus_{J\subset [n+\ell]\,,\,|J|=n}\delta_{J*}E \quad \text{for $E\in \D^b_{\sym_\ell}(X\times X^\ell)$}\,.\end{align}
The first part of the composition (\ref{Hcompo}) namely $\delta_{[n]*}\circ\triv$ can be rewritten as $H_{0,n}\boxtimes \id_{X^\ell}$. Since $H_{0,n}$ is a $\P^{n-1}$-functor so is $\delta_{[n]*}\circ \triv$ and 
\[(\delta_{[n]*}\circ \triv)^R(\delta_{[n]*}\circ \triv)\cong \bar S_X^{-[0,n-1]}:=\id \oplus \bar S_X^{-1}\oplus\dots\oplus \bar S_X^{-(n-1)}\]
where we write $\bar S_X:=(\_)\otimes (\omega_X\boxtimes \reg_{X^\ell})[2]$ even in the case that $X$ is not projective.
But for the computation of $H_{\ell,n}^{0R}\circ H_{\ell,n}$ also the other summands of (\ref{Infsum}) besides $\delta_{[n]*}E$ have to be taken into account which yields
\begin{align}\label{error}
H_{\ell,n}^{0R} H_{\ell,n}^0(E)\cong \bar S_X^{-[0,n-1]}(E)\oplus\bigl(\text{terms supported on partial diagonals of $X\times X^\ell$}\bigr)\,. 
\end{align}
The approach is to adapt $H_{\ell,n}^0$ slightly in order to erase the error term in (\ref{error}). We succeed in doing so by replacing $H_{\ell,n}^0$ by a complex of functors
\[H_{\ell,n}:=\bigl(0\to H_{\ell,n}^0\to H_{\ell,n}^1\to \dots\to H_{\ell,n}^\ell\to 0)\,.\]
More concretely, this means the following. 
First, we actually have to consider the functor $H_{\ell,n}^0:=\Inf\circ \delta_{[n]*}\circ \MM_{\alt_n}\circ \triv$ instead. That means that the definition differs from (\ref{Hcompo}) by
 $\MM_{\alt_n}:=(\_)\otimes_\C \alt_n\colon \D^b_{\sym_n\times \sym_\ell}(X\times X^\ell)\to \D^b_{\sym_n\times \sym_\ell}(X\times X^\ell)$, 
where the \textit{alternating representation} $\alt_n$ is the non-trivial character of $\sym_n$. 
The kernel of the FM transform $H_{\ell,n}^0$ is then given by 
\[\cP_{\ell,n}^0=\bigoplus_{J\subset[n+\ell]\,,\,|J|=n}\Gamma_{\delta_J}\otimes \alt_J\in\D^b_{\sym_\ell\times\sym_{n+\ell}}(X\times X^\ell\times X^{n+\ell})\,.\]
 In Section \ref{cP} we construct a complex 
\[\cP_{\ell,n}=(0\to \cP_{\ell,n}^0\to \dots\to \cP_{\ell,n}^\ell\to 0)\]
whose terms $\cP_{\ell,n}^i$ are given by direct sums of structure sheaves of certain subvarieties of the graphs $\Gamma_{\delta_J}$. Then we set $H_{\ell,n}=\FM_{\cP_{\ell,n}}$. The definition of  
$\cP_{\ell,n}\in \D^b_{\sym_\ell\times\sym_{n+\ell}}(X\times X^\ell\times X^{n+\ell})$ makes sense if $X$ is a variety of arbitrary dimension and our first result is
\begin{propA}
Let $X=C$ be a smooth curve and $n> \max\{\ell,1\}$.
\begin{enumerate}
 \item We have $H_{\ell,n}^R\circ H_{\ell,n}=\id$ which means that $H_{\ell,n}$ is fully faithful.
\item Let $\ell',n'$ be positive integers such that $n'+\ell'=n+\ell$ and $\ell'>\ell$. Then $H_{\ell',n'}^R\circ H_{\ell,n}=0$.
\end{enumerate}
\end{propA}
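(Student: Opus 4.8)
The plan is to reduce both claims to a computation of convolutions of complexes of Fourier--Mukai kernels. Since $H_{\ell,n}=\FM_{\cP_{\ell,n}}$ with $\cP_{\ell,n}=(\cP_{\ell,n}^0\to\dots\to\cP_{\ell,n}^\ell)$ a complex whose terms are direct sums of (twisted) structure sheaves of smooth subvarieties of the graphs $\Gamma_{\delta_J}$, I would first compute the kernel of the right adjoint termwise: for the structure sheaf of a smooth closed subvariety the derived dual $R\sHom(-,\reg)$ is again a shifted structure sheaf twisted by the determinant of the normal bundle, and over a curve these determinants are tensor powers of $\omega_C$ equipped with an explicit linearisation of the relevant stabiliser subgroup. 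Thus $\cP_{\ell,n}^R$ is again a complex of twisted structure sheaves (with the complex reversed), and $H_{\ell',n'}^R\circ H_{\ell,n}=\FM_{\cP_{\ell',n'}^R\star\cP_{\ell,n}}$, where $\cP_{\ell',n'}^R\star\cP_{\ell,n}$ is the total complex of the double complex $\bigl((\cP_{\ell',n'}^{\,j})^R\star\cP_{\ell,n}^{\,i}\bigr)_{i,j}$ of kernels on $(C\times C^{\ell'})\times(C\times C^\ell)$. Part (i) then amounts to showing this total complex is quasi-isomorphic to $\reg_{\Delta}$, and part (ii) to showing it is quasi-isomorphic to $0$.

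Next I would analyse the individual convolutions. Each $(\cP^{\,j})^R\star\cP^{\,i}$ is supported on the intersection of the relevant partial diagonals of $C^{n+\ell}$, which is again a partial diagonal (the one attached to the partition of $[n+\ell]$ generated by the index sets involved); because $C$ is a curve the derived intersection differs from the naive one by an excess bundle which is a direct sum of tensor powers of $\omega_C$ carrying an explicit stabiliser linearisation, tensored with the characters $\alt_J,\alt_{J'}$ from the kernels. The convolution is completed by the equivariant pushforward, i.e.\ by taking $\sym_{n+\ell}$-invariants, and the key point --- already the mechanism behind the $\ell=0$ case --- is that for the conormal bundle $N^\vee$ of a small diagonal one has $\Lambda^iN^\vee\cong\omega_C^{\otimes i}\otimes\Lambda^i\std_m$, where $\Lambda^i\std_m$ is the hook-partition irreducible, so that $(\Lambda^iN^\vee\otimes\alt_m)^{\sym_m}$ vanishes in every degree but one. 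This is where the hypothesis $n>\max\{\ell,1\}$ enters: it makes the diagonal blocks large enough for this collapse, so that over a curve the whole tower $\bar S_X^{-[0,n-1]}$ of (\ref{error}) degenerates to $\id$, and $\cP_{\ell,n}^{0R}\star\cP_{\ell,n}^{0}$ computes to $\reg_{\Delta_{C\times C^\ell}}$ plus error terms supported on proper partial diagonals of $C\times C^\ell$ (these come from the pairs of index sets $J\ne J'$, i.e.\ from the non-trivial double cosets in the Mackey decomposition of $\Res\circ\Inf$). For part (ii), $\ell'>\ell$ means $n'<n$, so the two families of partial diagonals have incompatible block sizes and already at the level of supports every entry of the double complex sits over a locus too small to contribute an identity summand.

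It then remains to see that the complex structure of $\cP_{\ell,n}$ kills the error. By the construction in Section \ref{cP} the higher terms $\cP_{\ell,n}^1,\dots,\cP_{\ell,n}^\ell$ are built from structure sheaves of subvarieties lying over the partial diagonals of the $C^\ell$-factor, with differentials the natural restriction maps, and they are designed precisely so that in the double complex of the first step the contributions of the $\cP^{\,i}$ with $i\ge1$, together with the differentials, telescopically cancel the error terms produced above. I would make this precise by running the spectral sequence of the double complex (or exhibiting an explicit contracting homotopy), verifying in case (i) that only $\reg_{\Delta}$ survives and in case (ii) that nothing survives; \'etale-locally on $C$ this should come down to a Koszul-type identity for modules over a polynomial ring with its $\sym_{n+\ell}$-action, combined with the representation-theoretic vanishing above.

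The main obstacle is this last step: matching the differentials of $\cP_{\ell,n}$ against the excess-intersection contributions so that exactly the needed cancellation occurs. The first two steps are essentially formal (adjoint kernels) and a local Koszul-plus-character computation; the real work is the bookkeeping of the double complex, for which I would need the explicit description of $\cP_{\ell,n}$ and of its differential from Section \ref{cP}.
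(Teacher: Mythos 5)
Your sketch follows the paper's proof in Section \ref{mainproof} quite closely: the convolutions $\cP^{iR}\star\cP^j$ are computed via derived intersections of the graph varieties (Section \ref{directsummand}), the curve-case collapse is exactly the irreducibility of the hooks $\wedge^i\rho_m$ (Lemma \ref{Tinva}), the induced maps are identified with \v{C}ech-type differentials built from the restriction maps defining $d^i$ (Section \ref{curveinduced}), and the spectral sequence of the double complex (\ref{trianglediag}) does the rest (Propositions \ref{curvecoh} and \ref{orthker}).

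One correction to your stated key lemma, since it is load-bearing: the alternating characters $\alt_{J_1}$, $\alt_{J_2}$ coming from $\cP$ and from $\cP^R$ cancel once restricted to $\sym_{J_1\cap J_2}$, so the quantity that enters is $\bigl(\wedge^i(T_C\otimes\rho_m)\bigr)^{\sym_m}$ with \emph{no} extra $\alt_m$, and this is concentrated in degree $0$ --- which is what you need to land on $\id$ rather than a shift. The version $\bigl(\wedge^iN^\vee\otimes\alt_m\bigr)^{\sym_m}$ you wrote down would instead concentrate in degree $m-1$ (since $\wedge^i\rho_m\otimes\alt_m\cong\wedge^{m-1-i}\rho_m$), contradicting your own conclusion that the tower degenerates to $\id$. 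Also, the precise role of $n>\ell$ is to force $J_1\cap J_2\neq\emptyset$ for every pair of index sets, so a nontrivial symmetric group always acts; when $n\le\ell$ pairs with disjoint $J$'s appear and produce summands supported on larger partial diagonals of $C\times C^\ell$ that nothing else can cancel, as Section \ref{ln2} illustrates.
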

\begin{corB}
 Let $m\ge 2$. For $m$ even we set $r=\frac m2-1$ and for $m$ odd we set $r=\frac{m-1}2$. For every smooth curve $C$ there is a semi-orthogonal decomposition 
\[\D^b_{\sym_m}(C^m)=\langle \cA_{0,m},\cA_{1,m-1},\dots,\cA_{r,m-r},\cB\rangle\]
where $\cA_{\ell,m-\ell}=H_{\ell,m-\ell}(\D^b_{\sym_\ell}(C\times C^\ell))$. 
\end{corB}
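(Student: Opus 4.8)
The plan is to derive Corollary~B formally from Proposition~A: once that proposition is available, the only thing that really needs attention is checking that the parity-dependent definition of $r$ is precisely what keeps every functor $H_{\ell,m-\ell}$ within the scope of Proposition~A. Concretely, for $0\le\ell\le r$ put $n:=m-\ell$. In both parity cases $r<\tfrac m2$, so $\ell\le r<\tfrac m2$ gives $n=m-\ell>\tfrac m2>\ell$, while $n=m-\ell\ge m-r\ge 2>1$ for all $m\ge2$; hence $n>\max\{\ell,1\}$ and Proposition~A applies to every $H_{\ell,m-\ell}$ with $0\le\ell\le r$. Moreover, for any $0\le a<b\le r$ the pairs $(a,m-a)$ and $(b,m-b)$ have equal sum $m$ and satisfy $b>a$, so Proposition~A(ii) yields $H_{b,m-b}^R\circ H_{a,m-a}=0$. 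This is exactly the point at which the even case must stop at $r=\tfrac m2-1$: taking $\ell=\tfrac m2$ would force $n=\ell$, which is not covered by Proposition~A.

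Next I would check that each $\cA_{\ell,m-\ell}$ is an admissible subcategory of $\D^b_{\sym_m}(C^m)$. By Proposition~A(i) we have $H_{\ell,m-\ell}^R\circ H_{\ell,m-\ell}=\id$, so $H_{\ell,m-\ell}$ is fully faithful and $\cA_{\ell,m-\ell}$ is a copy of $\D^b_{\sym_\ell}(C\times C^\ell)$ inside $\D^b_{\sym_m}(C^m)$. Since $H_{\ell,m-\ell}=\FM_{\cP_{\ell,m-\ell}}$ is a Fourier--Mukai functor between bounded derived categories of smooth varieties it admits both a right and a left adjoint, and the essential image of a fully faithful functor admitting both adjoints is an admissible subcategory.

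For the semiorthogonality, fix $0\le a<b\le r$ and objects $E$ of $\D^b_{\sym_a}(C\times C^a)$ and $F$ of $\D^b_{\sym_b}(C\times C^b)$. Using the adjunction between $H_{b,m-b}$ and $H_{b,m-b}^R$ together with the vanishing obtained in the first paragraph,
\[\Hom_{\D^b_{\sym_m}(C^m)}\bigl(H_{b,m-b}(F),\,H_{a,m-a}(E)\bigr)\;\cong\;\Hom\bigl(F,\,(H_{b,m-b}^R\circ H_{a,m-a})(E)\bigr)\;=\;0\,.\]
Hence $\Hom\bigl(\cA_{b,m-b},\cA_{a,m-a}\bigr)=0$ whenever $b>a$; that is, $\cA_{0,m},\cA_{1,m-1},\dots,\cA_{r,m-r}$ is a semiorthogonal sequence of admissible subcategories (in the convention in which $\Hom$ from a later to an earlier member vanishes). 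Note that the companion composition $H_{a,m-a}^R\circ H_{b,m-b}$ is not claimed to vanish, and indeed it need not, which is exactly what makes the collection semiorthogonal rather than orthogonal.

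Finally I would invoke the standard fact that a semiorthogonal sequence of admissible subcategories generates an admissible subcategory $\cA:=\langle\cA_{0,m},\cA_{1,m-1},\dots,\cA_{r,m-r}\rangle$; setting $\cB:={}^{\perp}\cA$ and using the left-admissibility of $\cA$ then produces the semiorthogonal decomposition $\D^b_{\sym_m}(C^m)=\langle\cA,\cB\rangle=\langle\cA_{0,m},\cA_{1,m-1},\dots,\cA_{r,m-r},\cB\rangle$ asserted in the statement. I expect no substantive obstacle beyond Proposition~A itself; the two points that genuinely require care are the index count of the first paragraph (which is what forces $r=\lfloor\tfrac{m-1}2\rfloor$ rather than $\lfloor\tfrac m2\rfloor$) and keeping the direction of the $\Hom$-vanishing matched to the ordering convention chosen for semiorthogonal decompositions.
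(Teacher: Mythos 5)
Your proposal is correct and is exactly the standard deduction the paper intends; Corollary~B is stated in the paper as an immediate consequence of Proposition~A without a written proof, and you have supplied precisely that argument (index check against the hypothesis $n>\max\{\ell,1\}$ so that Proposition~A(i)/(ii) applies, full faithfulness and admissibility of each $\cA_{\ell,m-\ell}$ as the image of a Fourier--Mukai functor with both adjoints, adjunction to translate $H_{b,m-b}^R\circ H_{a,m-a}=0$ into $\Hom(\cA_{b,m-b},\cA_{a,m-a})=0$, and the general fact that a semiorthogonal sequence of admissible subcategories completes to a decomposition). The one point worth making explicit, which you do correctly, is that Proposition~A(ii) as used in Section~\ref{cort} also silently requires $n'>\max\{\ell',1\}$, and your inequality $b\le r<\tfrac m2$ ensures $n'=m-b>b=\ell'$ and $n'\ge 2$, so both members of each pair fall within the scope of the proposition.
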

Note that for $C=E$ an elliptic curve the canonical bundle of the product $E^m$ is trivial. But as a $\sym_n$-bundle it is given by $\reg_{E^n}\otimes \alt_n$ which means that the canonical bundle of the quotient stack $[E^n/\sym_n]$ is non-trivial. 
Otherwise, $\D^b([E^m/\sym_m])\cong \D^b_{\sym_m}(E^m)$ could not allow a non-trivial semi-orthogonal decomposition.

We will see in Section \ref{curveauto} that the fully faithful functors $H_{\ell,m-\ell}$ also induce autoequivalences of $\D^b_{\sym_m}(C^m)$. Nevertheless, the case of most interest is that of a smooth quasi-projective surface due to the BKRH equivalence. 
So Proposition A can be regarded as a warm-up for 
\begin{theoremC}
Let $X$ be a smooth surface and $n>\max\{\ell,1\}$. Then $H_{\ell,n}$ is a $\P^{n-1}$-functor with $\P$-cotwist $\bar S_X$. In particular, $H_{\ell,n}^R\circ H_{\ell,n}\cong \bar S_X^{-[0,n-1]}$. 
\end{theoremC}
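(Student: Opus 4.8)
The plan is to compute $H_{\ell,n}^R\circ H_{\ell,n}$ explicitly as an equivariant Fourier--Mukai transform, match it with $\bar S_X^{-[0,n-1]}$, and then verify the remaining conditions in the definition of a $\P^{n-1}$-functor. The starting point is the zeroth term $H_{\ell,n}^0=\Inf\circ\delta_{[n]*}\circ\MM_{\alt_n}\circ\triv$. As observed in the introduction, its inner part $\delta_{[n]*}\circ\triv$ is the (relative) external product $H_{0,n}\boxtimes\id_{X^\ell}$ of the $\P^{n-1}$-functor $H_{0,n}$ of \cite{Kru3} with the identity; since forming $(\_)\boxtimes\id_{X^\ell}$ and tensoring by the character $\alt_n$ both preserve $\P$-functoriality, the composite $\delta_{[n]*}\circ\MM_{\alt_n}\circ\triv$ is again a $\P^{n-1}$-functor with $(\delta_{[n]*}\MM_{\alt_n}\triv)^R(\delta_{[n]*}\MM_{\alt_n}\triv)\cong\bar S_X^{-[0,n-1]}$. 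Putting $\Inf$ on top and using the projection formula for the adjunction $\Inf\dashv\Res$ together with the Mackey-type decomposition of $\Res\circ\Inf$ over the $(\sym_n\times\sym_\ell)$-double cosets in $\sym_{n+\ell}$ reproduces the shape of (\ref{error}): the trivial double coset contributes the summand isomorphic to $\bar S_X^{-[0,n-1]}$, and the nontrivial ones contribute terms pushed forward from partial diagonals of $X\times X^\ell$.

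To kill these error terms one replaces the single sheaf $\cP_{\ell,n}^0$ by the complex $\cP_{\ell,n}=(0\to\cP_{\ell,n}^0\to\dots\to\cP_{\ell,n}^\ell\to0)$. Then $H_{\ell,n}^R\circ H_{\ell,n}=\FM_{\cP_{\ell,n}^R\star\cP_{\ell,n}}$, where $\cP_{\ell,n}^R$ is the Serre-dual shifted kernel; since both factors are complexes of length $\ell$, the convolution is the totalisation of a bicomplex, and the resulting spectral sequence has as its $(0,0)$-entry precisely the right-hand side of (\ref{error}). The higher terms $\cP_{\ell,n}^i$ ($i\ge1$) --- direct sums of structure sheaves of subvarieties of the graphs $\Gamma_{\delta_J}$ --- are designed so that the part of this bicomplex supported over every nontrivial partial-diagonal stratum of $(X\times X^\ell)^2$ is acyclic; locally over such a stratum the relevant subcomplex is a twisted Koszul complex on the normal directions that degenerate there. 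This is the same cancellation mechanism as in Proposition A, where over a curve it forces $H_{\ell,n}^R H_{\ell,n}=\id$, because for $X=C$ the self-intersection of a partial diagonal $\Delta_J\subset C^{n+\ell}$, after the alternating twist and passage to $\sym_{n+\ell}$-equivariant structure, contributes only the trivial ($i=0$) summand. Over a surface the normal bundle of $\Delta_J\subset X^{n+\ell}$ is $\delta_J^*T_X$ tensored with the standard $\sym_n$-representation, of rank $2(n-1)$, and the same bookkeeping now leaves the summands $\bar S_X^{-i}$ for $i=0,\dots,n-1$; combining everything yields $H_{\ell,n}^R\circ H_{\ell,n}\cong\bar S_X^{-[0,n-1]}$.

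It remains to check the two further conditions in the definition of a $\P^{n-1}$-functor: the compatibility (monad) condition --- that in the splitting induced by the counit of the natural transformation $\bar S_X^{-1}\circ H_{\ell,n}^RH_{\ell,n}\to H_{\ell,n}^RH_{\ell,n}$, the components $\bar S_X^{-i}\to\bar S_X^{-i}$ for $1\le i\le n-1$ are isomorphisms --- and the adjunction condition relating $H_{\ell,n}^R$ to $H_{\ell,n}^L$ through the appropriate power of the cotwist $\bar S_X$. The first is transported from the corresponding statement for $\delta_{[n]*}\circ\MM_{\alt_n}\circ\triv$ (which holds by \cite{Kru3}) through $\Inf$, now that the second step has identified all the summands together with the maps between them, so that the correction terms cannot interfere. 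For the second, since $X\times X^\ell$ and $X^{n+\ell}$ are smooth and $\cP_{\ell,n}$ is perfect, equivariant Grothendieck--Serre duality expresses $H_{\ell,n}^L$ as a line-bundle twist and shift of $H_{\ell,n}^R$; computing this twist by comparison with the case $\ell=0$ and tracking the characters $\alt_J$ gives exactly what is required. Together these show that $H_{\ell,n}$ is a $\P^{n-1}$-functor with $\P$-cotwist $\bar S_X$.

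The main obstacle is the cancellation in the second step: proving that the correction complex $\cP_{\ell,n}^{\ge1}$ exactly annihilates, after convolution, the partial-diagonal error terms of (\ref{error}). This needs a stratum-by-stratum analysis of the derived self-intersections of the graphs $\Gamma_{\delta_J}$ for all pairs of $n$-element subsets $J,J'\subset[n+\ell]$, the identification of the resulting local subcomplexes as twisted Koszul complexes, and a careful $\sym_{n+\ell}$-equivariant bookkeeping involving the characters $\alt_J$; the hypothesis $n>\max\{\ell,1\}$ enters exactly here, ensuring that the combinatorics of the subsets $J$ and the length $\ell$ of $\cP_{\ell,n}$ are matched so that no uncancelled contributions survive and the surviving diagonal part consists of exactly the $n$ summands $\bar S_X^{-i}$.
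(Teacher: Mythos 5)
Your overall plan coincides with the paper's: reduce to the kernel computation $\cP_{\ell,n}^R\star\cP_{\ell,n}$, identify the error terms in $\cP^{0R}\star\cP^0$ coming from the non-trivial double cosets, use the extra terms of the complex $\cP_{\ell,n}$ and a spectral sequence to cancel them, and deduce conditions (ii) and (iii) by comparison with $F_{\ell,n}=\delta_{[n]*}\MM_{\alt_n}\triv$ and by Grothendieck--Serre duality on kernels. However, two steps are glossed over in a way that leaves genuine gaps.

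First, the cancellation mechanism is misdescribed. You say the correction terms vanish because locally over each partial-diagonal stratum the relevant subcomplex is ``a twisted Koszul complex on the normal directions that degenerate there.'' What actually makes the rows of the $E_1$-page exact is combinatorial, not local-geometric: after the Mackey decomposition the rows are identified, via Lemmas~\ref{itoi} and \ref{jtoj}, with inflations of the \v Cech-type complexes $\check\cC_k^\bullet$, $\hat\cC_k^\bullet$ (induced alternating characters of $\sym_k$), which are exact for $k\ge1$. The identification of the differentials as those of $\check\cC_k^\bullet$, $\hat\cC_k^\bullet$ in turn depends crucially on Scala's computation \cite[Lemma B.6 (3),(4)]{Sca1} that the canonical maps $\rho_{[m]}\to\rho_{[m]\setminus\{a\}}$ and $\rho_{[m]\setminus\{a\}}\to\rho_{[m]}$ induce isomorphisms on the $\sym_m$-invariants of $\wedge^*(T_X\otimes\rho)$ --- this ingredient has no analogue in a naive Koszul picture and is where the case $\dim X=2$ is genuinely special. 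The place $n>\ell$ actually enters is also more specific than your phrasing: it guarantees $J_1\cap J_2\neq\emptyset$, so that the intersection $T$ in the kernel computation is a partial diagonal and the derived intersection formula \ref{dintcor} applies.

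Second, and more seriously, the spectral sequence argument only computes the cohomology sheaves $\Coho^*(\cP^R\star\cP)$; it does not by itself produce the direct sum decomposition $\cP^R\star\cP\cong\bar\cS_X^{-[0,n-1]}$. You jump straight from ``the surviving diagonal part consists of exactly the $n$ summands $\bar S_X^{-i}$'' to the displayed isomorphism. The paper needs an extra argument (Lemma~\ref{qiso} / Proposition~\ref{PP}): one shows that the two maps $\cP^R\star\cP\to\cP^R\star\cP^0$ and $\bar\cS_X^{-[0,n-1]}\hookrightarrow\cP^{0R}\star\cP^0\to\cP^R\star\cP^0$ both induce isomorphisms on cohomology below degree $2(n-1)+1$, so by truncating a common injective resolution of the target both factor through the same object in $\D^b$. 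Without some such formality argument, the spectral sequence only gives you the associated graded, and condition (i) of a $\P^{n-1}$-functor (which requires the split direct sum) is not established.

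Your treatment of conditions (ii) and (iii) is in the right spirit. Note, though, that the paper does not ``transport the monad structure through $\Inf$'' directly; it restricts to the open locus $U\subset X\times X^\ell$ where $H_{\ell,n}$ and $H_{\ell,n}^0$ agree, deduces the required isomorphisms there from the known monad structure of $F_{0,n}$, and then extends across the codimension-$2$ complement of $U$ using that the summands $\bar\cS_X^{-k}$ are line bundles on graphs. That extension step is the part your sketch leaves implicit. For condition (iii), your appeal to Grothendieck--Serre duality on the kernels is exactly what Section~\ref{cond3} does, reducing it to checking that $(\omega_X^n\boxtimes\omega_{X^\ell}\boxtimes\omega_{X^{n+\ell}}^{-1})|_{\Gamma_{I,J,\mu}}$ is trivial; this part of your proposal is fine.
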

In the case that $X=A$ is an abelian variety, there is also the following variant of the above construction. For $m\ge 2$ we consider the subvariety
\[A^{m-1}\cong N_{m-1}A:=\bigl\{(a_1,\dots,a_m)\mid a_1+\dots+a_m=0  \bigr\}\subset A^m \,.\]
It is invariant under the $\sym_m$-action on $A^m$ and we call the quotient stack $[N_{m-1}A/\sym_m]$ the \textit{generalised Kummer stack}. The reason for the name is that in the case that $A$ is an abelian surface there is a variant of the BKRH equivalence as
 an equivalence 
\[\hat \Phi\colon \D^b(K_{m-1}A)\xrightarrow\cong \D^b_{\sym_m}(N_{m-1}A)\cong \D^b([N_{m-1}A/\sym_m])\]
with the derived category of the generalised Kummer variety; see \cite{Nam} or \cite{Mea}. We also consider the $\sym_\ell$-invariant subvariety
\[M_{\ell,n}:=\bigl\{(a,a_1,\dots,a_\ell)\mid n\cdot a+a_1+\dots+a_\ell=0\bigl\}\subset A\times A^\ell\,.\]
Then for all $n\ge 2$ the functor $H_{\ell,n}\colon \D^b_{\sym_\ell}(A\times A^\ell)\to \D^b_{\sym_{n+\ell}}(A^{n+\ell})$ restricts to 
a functor $\hat H_{\ell,n}\colon \D^b_{\sym_\ell}(M_{\ell,n}A)\to \D^b_{\sym_{n+\ell}}(N_{n+\ell-1}A)$; see Section \ref{Kummercase} for details. 

For $\ell=0$ we have $M_{0,n}A=A_n\subset A$ where $A_n$ denotes the set of $n$-torsion points.
The functor $\hat H_{0,n}$ is given by sending the skyscraper sheaf $\C(a)$ of $a\in A_n$ to the object $\C(a,\dots,a)\otimes \alt_n\in \D^b_{\sym_n}(N_{n-1}A)$. As shown in \cite{Kru3}, for $A$ an abelian surface the objects $\C(a,\dots,a)\otimes \alt_n$ 
are $\P^{n-1}$-objects in the sense of \cite{HT}. Similarly, for $A=E$ an elliptic curve they are exceptional. 

In contrast, for $\ell\ge 1$ we have a (not $\sym_\ell$-equivariant) isomorphism $M_{\ell,n}A\cong A^\ell$ so that the domain category of the functor $\hat H_{\ell,n}$ is indecomposable.
\begin{propAprime}
Let $A=E$ be an elliptic curve and $n> \max\{\ell,1\}$.
\begin{enumerate}
 \item We have $\hat H_{\ell,n}^R\circ \hat H_{\ell,n}=\id$ which means that $\hat H_{\ell,n}$ is fully faithful.
\item Let $\ell',n'$ be positive integers such that $n'+\ell'=n+\ell$ and $\ell'>\ell$. Then $\hat H_{\ell',n'}^R\circ \hat H_{\ell,n}=0$.
\end{enumerate}
\end{propAprime}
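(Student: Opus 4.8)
The plan is to deduce Proposition A' from Proposition A applied to the curve $C=E$, by a purely formal adjunction argument; the only geometric input is the relation between $\hat H_{\ell,n}$ and $H_{\ell,n}$ that underlies the construction in Section \ref{Kummercase}. Write $i_M\colon M_{\ell,n}E\hookrightarrow E\times E^\ell$, $i_{M'}\colon M_{\ell',n'}E\hookrightarrow E\times E^{\ell'}$ and $j_N\colon N_{n+\ell-1}E\hookrightarrow E^{n+\ell}$ for the equivariant closed embeddings; note that $N_{n'+\ell'-1}E=N_{n+\ell-1}E$ since $n'+\ell'=n+\ell$, so $\hat H_{\ell,n}$ and $\hat H_{\ell',n'}$ share the same target. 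The point I would extract from Section \ref{Kummercase} is that restricting the first factor of $\cP_{\ell,n}$ to $M_{\ell,n}E$ is Tor-independent and produces the kernel of $\hat H_{\ell,n}$: each irreducible component of each term of $\cP_{\ell,n}$, viewed inside a graph $\Gamma_{\delta_J}\cong E\times E^\ell$, is a translate of an abelian subvariety on which the summation morphism $E\times E^\ell\to E$ whose fibre over $0$ is $M_{\ell,n}E$ restricts to a non-zero (hence surjective and smooth) homomorphism, so it meets the smooth divisor $M_{\ell,n}E$ transversally; moreover the restricted kernel is automatically supported on $M_{\ell,n}E\times N_{n+\ell-1}E$, because $\delta_J$ maps $M_{\ell,n}E$ into $N_{n+\ell-1}E$. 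This yields an isomorphism of functors $j_{N*}\circ\hat H_{\ell,n}\cong H_{\ell,n}\circ i_{M*}$, and likewise with $(\ell,n)$ replaced by $(\ell',n')$.

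I would then pass to right adjoints. Using $(F\circ G)^R\cong G^R\circ F^R$ and that the right adjoint of the pushforward along a closed immersion $k$ is $k^{!}$, the two compatibilities give
\[j_{N*}\circ\hat H_{\ell,n}\;\cong\;H_{\ell,n}\circ i_{M*}\qquad\text{and}\qquad\hat H_{\ell',n'}^R\circ j_N^{!}\;\cong\;i_{M'}^{!}\circ H_{\ell',n'}^R\,.\]
Since $i_M,i_{M'},j_N$ are closed immersions, their pushforwards are fully faithful, so $j_N^{!}\circ j_{N*}\cong\id$ and $i_M^{!}\circ i_{M*}\cong\id$. Inserting $j_N^{!}\circ j_{N*}\cong\id$ between $\hat H_{\ell',n'}^R$ and $\hat H_{\ell,n}$ and substituting, I obtain
\[\hat H_{\ell',n'}^R\circ\hat H_{\ell,n}\;\cong\;\bigl(\hat H_{\ell',n'}^R\circ j_N^{!}\bigr)\circ\bigl(j_{N*}\circ\hat H_{\ell,n}\bigr)\;\cong\;i_{M'}^{!}\circ\bigl(H_{\ell',n'}^R\circ H_{\ell,n}\bigr)\circ i_{M*}\,.\]

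It remains to feed in Proposition A for the curve $C=E$. In case (i), where $\ell'=\ell$ and $n'=n$, Proposition A gives $H_{\ell,n}^R\circ H_{\ell,n}\cong\id$, so the right-hand side becomes $i_M^{!}\circ i_{M*}\cong\id$ and $\hat H_{\ell,n}$ is fully faithful. In case (ii), where $\ell'>\ell$, Proposition A gives $H_{\ell',n'}^R\circ H_{\ell,n}=0$, so the right-hand side is the zero functor. No $\P$-cotwist appears --- exactly as in Proposition A and in contrast with the surface case --- because $E$ is a curve. The only step that is not formal is the Tor-independence of the restriction of $\cP_{\ell,n}$ along $M_{\ell,n}E$, which is precisely what makes $\hat H_{\ell,n}$ well defined and is the one place where the group structure of $E$ is used; granting Section \ref{Kummercase}, this is already settled, and everything else is functoriality of adjoints together with $k^{!}\circ k_*\cong\id$ for closed immersions $k$.
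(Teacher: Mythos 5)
There is a genuine gap that breaks the argument. You claim that because $i_M$, $i_{M'}$, $j_N$ are closed immersions their (derived) pushforwards are fully faithful, so that $j_N^!\circ j_{N*}\cong\id$ and $i_M^!\circ i_{M*}\cong\id$. This is false: for a regular closed embedding $k\colon Z\hookrightarrow M$ of positive codimension one has $k^!k_*(\_)\cong(\_)\otimes\wedge^\ast N_k$ (this is exactly Corollary \ref{accor} in the paper), which is not the identity unless $N_k=0$. Fully faithfulness of $k_*$ as a functor of abelian categories does not persist at the derived level, precisely because $\Hom_M^i(k_*A,k_*B)=\Hom_Z^i(A,k^!k_*B)=\Hom_Z^i(A,B\otimes\wedge^\ast N_k)$ picks up the extra self-intersection terms. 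In the case at hand all three embeddings have codimension $\dim E=1$, so $j_N^!j_{N*}$ and $i_M^!i_{M*}$ each add a shifted line-bundle twist and the chain of isomorphisms you write down does not exist. Without it, what you actually get from the compatibility relations (which you state correctly) is the identity $\iota_*\circ\hat H_{\ell,n}^R\circ\hat H_{\ell,n}\cong H_{\ell,n}^R\circ H_{\ell,n}\circ\iota_*$, in which the extraneous $\iota_*$ on both sides still needs to be removed before Proposition A can be applied.

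The paper carries out this removal at the level of Fourier--Mukai kernels rather than by the formal insertion you attempt. It first shows (Lemma \ref{cross}) that restricting \emph{one} of the two factors is Tor-independent, giving $(\iota\times\id)_*\hat\cP\cong(\id\times\iota')^*\cP$ and $(\id\times\iota')_*\hat\cP\cong(\iota\times\id)^*\cP$; it explicitly warns that restricting both factors simultaneously is \emph{not} transversal, which is why one cannot simply pull $\cP^R\star\cP$ back along $\iota\times\iota$. From the two one-sided compatibilities it obtains $(\id\times\iota)_*(\hat\cP^R\star\hat\cP)\cong(\iota\times\id)^*(\cP^R\star\cP)$, computes the right-hand side from Proposition \ref{curvecoh} (resp.\ Proposition \ref{orthker} for the orthogonality statement), and then recovers $\Coho^*(\hat\cP^R\star\hat\cP)$ using the exactness of $(\id\times\iota)_*$ on sheaves together with the uniqueness of cohomologies of FM kernels (\cite[Theorem 1.2]{CSuniqueness}). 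In the curve case the resulting cohomology is concentrated in a single degree, so the kernel itself is determined and equals the identity (resp.\ zero) kernel. Your strategy of deducing the Kummer case from the curve case via the restriction compatibilities is the correct one, but the step that cancels the closed embeddings cannot be done formally; it requires the kernel-level comparison.
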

\begin{corBprime}
 For every elliptic curve $E$ there is a semi-orthogonal decomposition 
\[\D^b_{\sym_m}(N_{m-1}E)=\langle \C(a,\dots,a)\otimes \alt_n\mid a\in A_m\,,\hat \cA_{1,m-1},\dots,\hat \cA_{r,m-r},\hat \cB\rangle\]
where $\hat \cA_{\ell,m-\ell}=\hat H_{\ell,m-\ell}(\D^b_{\sym_\ell}(M_{\ell,m-\ell}A))$. 
\end{corBprime}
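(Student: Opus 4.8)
The plan is to deduce Corollary B' from Proposition A' by the standard procedure for assembling a semiorthogonal decomposition out of admissible, pairwise semiorthogonal subcategories, following the same argument as for Corollary B but with Proposition A' in place of Proposition A. The verifications needed are: (i) each $\hat\cA_{\ell,m-\ell}$ with $0\le\ell\le r$ is an admissible subcategory of $\D^b_{\sym_m}(N_{m-1}E)$; (ii) $\Hom^\bullet(\hat\cA_{\ell',m-\ell'},\hat\cA_{\ell,m-\ell})=0$ whenever $r\ge\ell'>\ell\ge 0$; and (iii) the block $\hat\cA_{0,m}$ splits as the orthogonal exceptional collection $\{\C(a,\dots,a)\otimes\alt_m\mid a\in E_m\}$ appearing at the front of the statement. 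Granting (i)--(iii), the triangulated subcategory $\hat\cA:=\gen{\hat\cA_{0,m},\hat\cA_{1,m-1},\dots,\hat\cA_{r,m-r}}$ is admissible, and setting $\hat\cB:={}^\perp\hat\cA$ gives $\D^b_{\sym_m}(N_{m-1}E)=\gen{\hat\cA_{0,m},\hat\cA_{1,m-1},\dots,\hat\cA_{r,m-r},\hat\cB}$, which is the asserted decomposition once $\hat\cA_{0,m}$ is unwound.

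For (i): the range $0\le\ell\le r$ is precisely the one in which $m-\ell>\max\{\ell,1\}$, so Proposition A' applies to $\hat H_{\ell,m-\ell}$ and its part~(i) gives full faithfulness. As $\hat H_{\ell,m-\ell}$ is a Fourier--Mukai functor between the bounded derived categories of the smooth proper Deligne--Mumford stacks $[M_{\ell,m-\ell}E/\sym_\ell]$ and $[N_{m-1}E/\sym_m]$, it has both a left and a right adjoint, so its essential image $\hat\cA_{\ell,m-\ell}$ is admissible.

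For (ii): if $r\ge\ell'>\ell\ge 0$ then $\ell'$ and $m-\ell'$ are positive with $\ell'+(m-\ell')=\ell+(m-\ell)$, so Proposition A'(ii) gives $\hat H_{\ell',m-\ell'}^R\circ\hat H_{\ell,m-\ell}=0$; by adjunction this forces $\Hom^\bullet(\hat\cA_{\ell',m-\ell'},\hat\cA_{\ell,m-\ell})=0$. For (iii): $M_{0,m}E=E_m$ is a finite reduced set with the trivial $\sym_0$-action, so $\D^b_{\sym_0}(M_{0,m}E)=\D^b(E_m)=\bigoplus_{a\in E_m}\D^b(\pt)$, and $\hat H_{0,m}$ carries the summand at $a$ to $\gen{\C(a,\dots,a)\otimes\alt_m}$; the objects $\C(a,\dots,a)\otimes\alt_m$ are exceptional (full faithfulness of $\hat H_{0,m}$, i.e.\ Proposition A'(i) with $\ell=0$) and pairwise completely orthogonal because their supports $\{(a,\dots,a)\}$ are disjoint.

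The argument is formal once Proposition A' is in hand, so I do not expect a genuine obstacle; the delicate points are all bookkeeping. One must match the cut-off $\ell\le r$ with the hypothesis $n>\max\{\ell,1\}$ of Proposition A' (applied with $n=m-\ell$), read the ``$\alt_n$'' in the first block of the statement as $\alt_m$, keep the direction of semiorthogonality straight --- since Proposition A'(ii) kills morphisms from the larger-$\ell'$ block to the smaller-$\ell$ one, $\hat\cB$ sits last and equals the \emph{left} orthogonal of the subcategory generated by the remaining blocks --- and use that $M_{0,m}E$ is $0$-dimensional and reduced so that $\hat\cA_{0,m}$ genuinely decomposes as the stated orthogonal exceptional collection. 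The substantive inputs, namely the full faithfulness and the cross-vanishing supplied by Proposition A', are taken for granted.
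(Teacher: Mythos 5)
Your proof is correct and follows the intended deduction from Proposition A' (the paper gives no separate proof of Corollary B' because, as you note, it is the same formal argument as for Corollary B, with Proposition A' in place of Proposition A). You have correctly checked the admissibility of each block, the semiorthogonality from Proposition A'(ii), the matching of the range $0\le\ell\le r$ with the hypothesis $m-\ell>\max\{\ell,1\}$, and the decomposition of $\hat\cA_{0,m}$ into the pairwise orthogonal exceptional objects $\C(a,\dots,a)\otimes\alt_m$ via the $0$-dimensionality of $M_{0,m}E=E_m$.
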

\begin{theoremCprime}\label{thmC'}
Let $A$ be an abelian surface and $n>\max\{\ell,1\}$. Then $\hat H_{\ell,n}$ is a $\P^{n-1}$-functor with $\P$-cotwist $[-2]$. In particular, $H_{\ell,n}^R\circ H_{\ell,n}\cong \id\oplus[-2]\oplus\dots\oplus[-2(n-1)]$. 
\end{theoremCprime}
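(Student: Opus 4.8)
The plan is to deduce the statement from Theorem C by restricting the whole construction along the summation maps, in the spirit of the passage from the universal ideal functor on $X^{[n]}$ to the one on $K_{n-1}A$ in \cite{Mea} and \cite{KMea}. Write $\sigma_\ell\colon A\times A^\ell\to A$ for $(a,a_1,\dots,a_\ell)\mapsto n a+a_1+\dots+a_\ell$ and $\sigma\colon A^{n+\ell}\to A$ for $(y_1,\dots,y_{n+\ell})\mapsto y_1+\dots+y_{n+\ell}$; these are surjective homomorphisms of abelian varieties, equivariant for the $\sym_\ell$- resp.\ $\sym_{n+\ell}$-action (trivial on the target), whose fibres over $0$ are the $\sym$-invariant subvarieties $M_{\ell,n}A$ and $N_{n+\ell-1}A$. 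The first step is to check that the kernel $\cP_{\ell,n}\in\D^b_{\sym_\ell\times\sym_{n+\ell}}(A\times A^\ell\times A^{n+\ell})$ is set-theoretically supported on the locus where $\sigma_\ell$ of the source point equals $\sigma$ of the target point: on each graph $\Gamma_{\delta_J}$ the $n$ target coordinates indexed by $J$ all equal $a$, so the target sum is $n a+a_1+\dots+a_\ell=\sigma_\ell$, and the subvarieties cut out in Section \ref{cP} inherit this. Consequently $H_{\ell,n}$ is linear over $A$ via $(\sigma_\ell,\sigma)$, and its restriction to the fibres over $0\in A$ is precisely $\hat H_{\ell,n}$.

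The second step computes $\hat H_{\ell,n}^R\circ\hat H_{\ell,n}$ by base change. Let $i_\ell\colon M_{\ell,n}A\hookrightarrow A\times A^\ell$ and $i\colon N_{n+\ell-1}A\hookrightarrow A^{n+\ell}$ denote the fibre inclusions. Since $\sigma_\ell$ and $\sigma$ are smooth — being surjective homomorphisms of abelian varieties — and become trivial $\sym$-equivariant bundles after the faithfully flat base change along multiplication by $n$ resp.\ by $n+\ell$, the kernel $\cP_{\ell,n}$ and its convolutions restrict cleanly to the fibres; in intuitive terms $H_{\ell,n}$ is, up to the (étale-local) product identifications $A\times A^\ell\simeq M_{\ell,n}A\times A$ and $A^{n+\ell}\simeq N_{n+\ell-1}A\times A$, the functor $\hat H_{\ell,n}$ tensored over $A$ with a Fourier--Mukai equivalence along the graph of an isogeny. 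More robustly, flat base change gives $\hat H_{\ell,n}^R\circ\hat H_{\ell,n}\cong Li_\ell^*\bigl(H_{\ell,n}^R\circ H_{\ell,n}\bigr)$, provided $i_\ell$ is Tor-independent of the convolution kernel. Granting this, Theorem C yields $H_{\ell,n}^R\circ H_{\ell,n}\cong\id\oplus\bar S_A^{-1}\oplus\dots\oplus\bar S_A^{-(n-1)}$; since $\omega_A\cong\reg_A$, and the only tensor factor on which $\omega$ enters $\bar S_X$ is the single $A$ carrying no $\sym$-action — so that no sign character $\alt_m$ intervenes — we have $\bar S_A^{-k}=(\_)[-2k]$, which restricts along $i_\ell$ to $(\_)[-2k]$ on $\D^b_{\sym_\ell}(M_{\ell,n}A)$. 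Hence $\hat H_{\ell,n}^R\circ\hat H_{\ell,n}\cong\id\oplus[-2]\oplus\dots\oplus[-2(n-1)]$, with cotwist $[-2]$.

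To promote this to the full assertion that $\hat H_{\ell,n}$ is a $\P^{n-1}$-functor, I would verify that the remaining structural conditions in the definition of a $\P$-functor — the compatibility of the comonad multiplication on $\hat H_{\ell,n}^R\hat H_{\ell,n}$ with the decomposition, and the Serre-type identity $\hat H_{\ell,n}\circ\hat H_{\ell,n}^R\cong\hat H_{\ell,n}^R\circ[-2(n-1)]$ — descend from their counterparts over $A\times A^\ell$ and $A^{n+\ell}$ established in the proof of Theorem C, the units and counits of the adjunctions involved being preserved by the flat pullback $Li_\ell^*$. For $\ell=0$ this recovers the $\P^{n-1}$-objects $\C(a,\dots,a)\otimes\alt_n$ of \cite{Kru3}, so the new content lies in the range $\ell\ge 1$, where $M_{\ell,n}A\cong A^\ell$ is indecomposable. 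The main obstacle is precisely the Tor-independence and base-change compatibility underlying the last two steps in the $\sym$-equivariant setting: one has to check that the fibre inclusions $M_{\ell,n}A\hookrightarrow A\times A^\ell$ and $N_{n+\ell-1}A\hookrightarrow A^{n+\ell}$ meet the generally singular, reducible supports of $\cP_{\ell,n}$ and of the composed kernels in the expected codimension, $\sym$-equivariantly, so that no higher $\Tor$-sheaves appear and convolution of kernels commutes with restriction to the fibre. As an alternative to the base-change route, one can repeat the proof of Theorem C verbatim on the generalised Kummer stacks: all local $\sExt$-computations along the partial diagonals are unchanged, and the only global input — triviality of $\omega_A$ — is exactly what turns $\bar S_A^{-1}$ into $[-2]$.
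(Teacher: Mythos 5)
The step you flag as the ``main obstacle'' is not a verifiable hypothesis that can be granted; it is genuinely false, and the paper says so explicitly at the start of Section~\ref{Kummercase}. Precisely because the kernel $\cP_{\ell,n}$ is, as you observe, supported on the locus where $\sigma_\ell$ of the source equals $\sigma$ of the target, the two conditions cutting out $M_{\ell,n}A\times N_{n+\ell-1}A$ become dependent on each $\Gamma_{I,J,\mu}$, and the intersection has excess codimension $\dim A$. The paper records that $\hat\cP(I,J,\mu)\not\cong L(\iota\times\iota')^*\cP(I,J,\mu)$ (equality holds only for the non-derived pull-back). The same failure occurs one level up: $\cP^R\star\cP$ is supported on the graphs $\Gamma_\sigma\subset A\times A^\ell\times A\times A^\ell$ of the $\sym_\ell$-action, and these again meet $M_{\ell,n}A\times M_{\ell,n}A$ in excess codimension, so $L(\iota\times\iota)^*(\cP^R\star\cP)$ picks up higher $\Tor$-terms and is \emph{not} isomorphic to $\hat\cP^R\star\hat\cP$. (The \'etale-local product picture you sketch makes this visible: the factor of $\reg_{\Delta_A}$ pulled back to $\{(0,0)\}$ contributes a full exterior algebra $\wedge^{-*}\Omega_{A,0}$, not $\C$.) So the base-change route, which is the one you actually develop, does not produce the stated answer.

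What the paper does instead is the content of Lemma~\ref{cross}: while the two-sided restriction along $\iota\times\iota'$ is non-transversal, each \emph{one-sided} restriction (along $\id\times\iota'$, or along $\iota\times\id$) is transversal, giving $(\iota\times\id)_*\hat\cP\cong(\id\times\iota')^*\cP$ and $(\id\times\iota')_*\hat\cP\cong(\iota\times\id)^*\cP$. From this one obtains $\iota_*\circ\hat H_{\ell,n}^R\hat H_{\ell,n}\cong H_{\ell,n}^R H_{\ell,n}\circ\iota_*$, and then the \emph{uniqueness} of Fourier--Mukai kernels is what transfers the cohomology formula from $\cP^R\star\cP$ to $\hat\cP^R\star\hat\cP$ — not base change. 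To get the actual splitting and the monad condition, the paper then computes the leading summand $\hat\cP(\emptyset,[n],e)^R\star\hat\cP(\emptyset,[n],e)$ directly on the Kummer side and applies Lemma~\ref{qiso}; condition~(iii) is a dimension count. Your fall-back suggestion of rerunning the proof of Theorem~C entirely on $M_{\ell,n}A\times N_{n+\ell-1}A$ is closer in spirit to what is done for that leading summand, and may well work globally, but it is not the argument of the paper and you mention it only in passing without developing it. Finally, a small slip: the adjoint compatibility you call a ``Serre-type identity'' should be $\hat H_{\ell,n}^R\cong[-2(n-1)]\circ\hat H_{\ell,n}^L$, not $\hat H_{\ell,n}\hat H_{\ell,n}^R\cong\hat H_{\ell,n}^R\circ[-2(n-1)]$.
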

%
\subsection{Structure of the proof}
The approach is to compute the compositions $H_{\ell,n}^{iR}\circ H_{\ell,n}^j$ in order to deduce formulae for 
$H_{\ell,n}^{R}\circ H_{\ell,n}^j$ and finally for $H_{\ell,n}^{R}\circ H_{\ell,n}$. 

The general proof may appear complicated due to the sheer number of direct summands occurring.
We try to provide intuition by performing some calculations of the functor compositions for $\ell=1,2$ in Sections \ref{l1} -- \ref{ln2}. However, we will not determine the induced maps between the $H_{\ell,n}^{iR}H^j_{\ell,n}$ in these subsections so that 
the arguments for the final computations of $H_{\ell,n}^RH_{\ell,n}$ will be a bit vague as explained in Section \ref{approach}. 

In Section \ref{mainproof} we give rigorous proofs for general $\ell$ and $n$ by doing the computations on the level of the FM kernels.        

For $m\ge 2$ let $\rho_m$ be the standard representation of the symmetric group $\sym_m$.
One can say that the reason for the different behaviour of $H_{\ell,n}$ in the curve and in the surface case lies in the difference of the $\sym_m$-representations $\wedge^i\rho_m$ and $\wedge^i(\rho_m^{\oplus 2})$; compare Lemma \ref{Tinva}. 
The former is always an irreducible representation while the latter has a one-dimensional subspace of invariants for $0\le i\le 2(m-1)$ even. For $d\ge 3$ the subspace of invariants of $\wedge^i(\rho^{\oplus d})$ is in general higher-dimensional which 
explains that the shape of $H_{\ell,n}^R\circ H_{\ell,n}$ is not that nice if $\dim X\ge 3$; see Remark \ref{higherdiminva}.
\subsection{Similarities to the Nakajima operators}\label{simNaka}
Let $X$ be a smooth quasi-projective surface. 
In order to justify the title of our paper, we will explain some similarities between the $\P^{n-1}$-functors $H_{\ell,n}$ and the Nakajima operators $q_{\ell,n}$.
 
As indicated above, the $\P^{n-1}$-functors $H_{\ell,n}$ correspond under the BKRH equivalence to $\P^{n-1}$ functors
$\Phi_{n+\ell}^{-1}\circ H_{\ell,n}\circ(\id\boxtimes \Phi_\ell)\colon \D^b(X\times X^{[\ell]})\to \D^b(X^{[n+\ell]})$
which we will often again denote by $H_{\ell,n}$. 
The FM kernel of the BKRH equivalence $\Phi_m$ is the structure sheaf $\reg_{I^mX}$ of the \textit{isospectral Hilbert scheme} 
\[I^mX=\bigl\{([\xi],x_1,\dots,x_m)\mid \mu([\xi])=x_1+\dots+x_m  \bigr\}\subset X^{[m]}\times X^m\]
where $\mu\colon X^{[n]}\to S^mX=X^m/\sym_m$ denotes the Hilbert--Chow morphism and points in the symmetric product are written as formal sums. 
One can deduce that the FM kernel of $\Phi_{n+\ell}^{-1}\circ H_{\ell,n}\circ(\id\boxtimes \Phi_\ell)$ is supported on $Z^{\ell,n}$, the correspondence defining the Nakajima operator $q_{\ell,n}$. Clearly, it would be desirable to have a more concrete description of the FM kernel but for the time being we are unable to provide one.

In the case that $X$ is projective with trivial canonical bundle $\omega_X\cong \reg_X$, the functors $H_{\ell,n}$ also fulfil some of the Heisenberg relations on the level of the Grothendieck groups. For $\alpha\in \Ho^*(X,\Q)$ there is by the 
K\"unneth formula the map
\[i_\alpha\colon \Ho^*(X^{[\ell]},\Q)\to \Ho^*(X\times X^{[\ell]},\Q)\cong \Ho^*(X,\Q)\otimes \Ho^*(X^{[\ell]},\Q)\quad,\quad i_{\alpha}(\beta)=\alpha\otimes \beta\,.\]
The operators $q_{\ell,n}(\alpha):=q_{\ell,n}\circ i_\alpha\colon\Ho^*(X^{[\ell]},\Q)\to \Ho^*(X^{[n+\ell]},\Q)$ are again called Nakajima operators. Furthermore, $q_{\ell,-n}(\alpha)\colon\Ho^*(X^{[n+\ell]},\Q)\to \Ho^*(X^{[\ell]},\Q)$ is defined as the 
adjoint of $q_{\ell,n}(\alpha)$ with respect to the intersection pairing.
One usually considers all of these operators for varying values of $\ell$ together as operators on $\mathbbm H:=\oplus_{\ell\ge 0}\Ho^*(X^{[\ell]},\Q)$ by setting
\begin{align*}q_n{(\alpha)}:=\oplus_\ell\, q_{\ell,n}(\alpha)\colon \oplus_{\ell}\Ho^*(X^{[\ell]},\Q)&\to\oplus_\ell\Ho^*(X^{[n+\ell]},\Q)\,,\\
q_{-n}{(\alpha)}:=\oplus_\ell\, q_{\ell,-n}(\alpha)\colon \oplus_{\ell}\Ho^*(X^{[n+\ell]},\Q)&\to\oplus_\ell\Ho^*(X^{[\ell]},\Q)\,.
\end{align*}  
Then as shown in \cite{Nak} the commutator relations between these operators are given by
\begin{align}\label{Nakarel}
 [q_{n}(\alpha),q_{n'}(\beta)]=n\cdot \delta_{n,-n'}\langle\alpha,\beta\rangle\cdot\id_{\mathbbm H}\,.
\end{align}
Taking $n=-n'$ and considering the degree $\ell$ piece of formula (\ref{Nakarel}) for $\ell< n$ we get
\begin{align}\label{NGrel}
 q_{\ell,-n}(\alpha)\circ q_{\ell,n}(\beta)=n\cdot\langle \alpha,\beta\rangle\cdot\id\colon \Ho^*(X^{[\ell]},\Q)\to \Ho^*(X^{[\ell]},\Q)\,.
\end{align}
Let $X\xleftarrow q X\times X^{[\ell]} \xrightarrow p X^{[\ell]}$ be the projections. Now, for $E\in \D^b(X)$ we consider the functor
\[I_E:=q^* E\otimes p^*(\_)\colon \D^b(X^{[\ell]})\to \D^b(X\times X^{[\ell]})\quad, \quad I_E(F)=E\boxtimes F\,.\]
Its right adjoint is $I_E^R=p_*(q^*E^\vee\otimes(\_))$. We set $H_{\ell,n}(E):=H_{\ell,n}\circ I_E\colon \D^b(X^{[\ell]})\to \D^b(X^{[n+\ell]})$. For $E,F\in \D^b(X)$ we get by Theorem C in the case that $\omega_X$ is trivial  
\begin{align*}
H_{\ell,n}(E)^R\circ H_{\ell,n}(F)&\cong I_E^R\circ H_{\ell,n}^R\circ H_{\ell,n}\circ I_F\\
&\cong I_E^R\circ I_F([0]\oplus[-2]\oplus\dots\oplus[-2(n-1)])\\ 
&\cong(\_)\otimes_\C\Ext^*(E,F)([0]\oplus[-2]\oplus\dots\oplus[-2(n-1)])\,.
\end{align*}
On the level of the Grothendieck group this gives
\begin{align}
 H_{\ell,n}(E)^R\circ H_{\ell,n}(F)=n\cdot\chi(E,F)\cdot \id\colon \K(X^{[\ell]})\to \K(X^{[\ell]})
\end{align}
which fits nicely with (\ref{NGrel}). However, as we will see in Section \ref{nonorth}, not all the compositions $H_{\ell,n}(E)^R\circ H_{\ell',n'}(F)$ with $n\neq n'$ fit that well with (\ref{Nakarel}). Thus, it seems like the collection of 
the $H_{\ell,n}$ does not give rise to a categorified action of the Heisenberg algebra.

In \cite{CL} such a categorified action was constructed in the case that $X$ is a minimal resolution of a Kleinian singularity.
In Section \ref{CL} we will compare the construction of \cite{CL} to ours.
\subsection{The induced autoequivalences}
In the Sections \ref{Hilbauto} -- \ref{braidauto} we study the autoequivalences of the derived categories of Hilbert schemes of points on surfaces and generalised Kummer varieties induced by the $\P$-functors of the Theorems C and C', to which we will refer as 
the \textit{Nakajima $\P$-functors}. We will see that the twists are rather independent from each other and from the subgroup of standard autoequivalences; see Proposition \ref{abel} and (\ref{Kummerabel}). 
For $X$ a K3 surface, the universal ideal $\P$-functor can be truncated in a certain way to give another $\P$-functor $\D^b(X)\to \D^b(X^{[n]})$; see \cite[Section 5]{KSos}. The existence of these truncated universal ideal functors is in some sense explained by 
the Nakajima functors; see Section \ref{trunca}. In Section \ref{braidauto} we show the existence of a universal braid relation in the groups of derived autoequivalences of Hilbert squares of K3 surfaces and Kummer fourfolds.
In the final Section \ref{conjectures} we make a conjecture about certain cases in which we expect the twists along the Nakajima $\P$-functors to generate the full group of derived autoequivalences and give an idea which kind of autoequivalences might 
still wait to be constructed.  

\noindent
\textbf{Convention.} We will work over the complex numbers throughout though many parts remain true over more general ground fields.
\smallskip

\noindent
\textbf{Acknowledgements.} The author thanks Nicolas Addington, Will Donovan, Daniel Huybrechts, Ciaran Meachan, David Ploog, and Pawel Sosna for helpful comments and discussions. This work was supported by the SFB/TR 45 of the DFG (German Research Foundation).

\tableofcontents
\section{Definition of the functors}
\subsection{Equivariant Fourier--Mukai transforms}\label{equiFM}
For details on equivariant derived categories and Fourier--Mukai transforms we refer to \cite[Section 4]{BKR} and \cite{Plo}.
Let $G$ be a finite group acting on a variety $M$. Then we denote by $\D^b_G(M):=\D^b(\Coh_G(M))$ the bounded derived category of the category $\Coh_G(M)$ of coherent $G$-equivariant sheaves.
Let $U\subset G$ be a subgroup. Then there is the \textit{forgetful} or \textit{restriction} functor $\Res_G^U\colon \D^b_G(M)\to \D^b_U(M)$. It has the \textit{inflation} functor $\Inf_U^G\colon \D^b_U(M)\to \D^b_G(M)$ as a left and right adjoint.
This functor is given by $\Inf_U^G(E)=\oplus_{[g]\in U\setminus G}g^* E$ with the $G$-linearisation  given as a combination of the $U$-linearisation of $E$ and the permutation of the direct summands.
In the following we will often simply write $\Res$ and $\Inf$ for these functors when the groups $G$ and $U$ should be clear from the context. 
 In the case that $G$ acts trivially on $M$ there is also the functor $\triv\colon \D^b(M)\to \D^b_G(M)$ which equips every object with the trivial $G$-linearisation. Its left and right adjoint is given by the functor of invariants $(\_)^G\colon \D^b_G(M)\to \D^b(M)$.

Let $G'$ be a second finite group acting on $M'$. Then every object $\cP\in \D^b_{G\times G'}(M\times M')$ induces the \textit{equivariant Fourier--Mukai transform} 
\[
 \FM_{\cP}:=\bigl[\pr_{M'*}\bigl(\pr_M^*(\_)\otimes \cP)\bigr]^{G\times 1}\colon \D^b_G(M)\to \D^b_{G'}(M')\,.
\]
For example, in the case that $M=M'$ and $G$ acts trivially, the functor $\triv\colon \D^b(M)\to \D^b_G(M)$ is the FM transform along $\reg_{\Delta}\in \D^b_{1\times G}(M\times M)$ and $(\_)^G\colon \D^b_G(M)\to \D^b(M)$ is the FM transform along $\reg_\Delta\in \D^b_{G\times 1}(M\times M)$. 
Note that, for an arbitrary action of $G$ on $M$, the sheaf $\reg_\Delta$ on $M\times M$ has a canonical $G_\Delta$-linearisation where $G\cong G_\Delta=\{(g,g)\}\subset G\times G$. The identity functor $\id\colon\D^b_G(M)\to \D^b_G(M)$ is the 
FM transform along $\Inf_{G_\Delta}^{G\times G}\reg_\Delta\cong \oplus_{g\in G}\reg_{\Gamma_g}$ where $\Gamma_g$ denotes the graph of the action $g\colon X\to X$. More generally, for $E\in \D^b_G(M)$ the tensor product functor $(\_)\otimes E\colon \D^b(M)\to \D^b(M)$ is given by 
 $\Inf_{G_\Delta}^{G\times G}\delta_*E\cong \oplus_{g\in G}(1\times g)_*E$ where $\delta=(1\times 1)\colon M\to M\times M$ is the diagonal embedding. The restriction $\Res_G^U$ and the inflation $\Inf_U^G$ are also FM transforms with
 kernels $\Inf_{U_\Delta}^{G\times U}\reg_\Delta$ and $\Inf_{U_\Delta}^{U\times G}\reg_\Delta$, respectively. 

If $G''$ is a third finite group acting on $M''$ and $\cQ\in \D^b_{G'\times G''}(M'\times M'')$, we have $\FM_{\cQ}\circ \FM_{\cP}=\FM_{\cQ\star\cP}$, where 
\begin{align}\label{conprod}
 \cQ\star \cP=\bigl[\pr_{M\times M'*}\bigl(\pr_{M'\times M''}^*\cQ\otimes \pr_{M\times M'}^*\cP \bigr)\bigr]^{1\times G'\times 1}\,\in \D^b_{G\times G''}(M\times M'')
\end{align}
is the \textit{equivariant convolution product}. 
\subsection{$\P$-functors} 
Let $G$ and $H$ be finite groups acting on varieties $M$ and $N$.
Following \cite{Add}, a \textit{$\P^n$-functor} is an (equivariant) FM transform $F\colon \D^b_G(M)\to \D^b_H(N)$ with right and left adjoints $F^R,F^L\colon \D^b_H(N)\to \D^b_G(M)$ such that
\begin{enumerate}
 \item There is an autoequivalence $D_F=D$ of $\D^b_G(M)$, called the \textit{$\P$-cotwist} of $F$,  such that 
\[F^R\circ F\simeq \id\oplus D\oplus D^2\oplus\dots \oplus D^n.\]
\item
Let $\eps\colon F\circ F^R\to \id$ be the counit of the adjunction.
 The map 
\[D\circ F^R\circ F\hookrightarrow F^R\circ F\circ F^R\circ F\xrightarrow{F^R\eps F} F^R\circ F\,,\]
when written in the components
\begin{align*}D\oplus D^2\oplus\dots\oplus D^n\oplus D^{n+1}\to \id\oplus D\oplus D^2\oplus\dots\oplus D^n,\end{align*}
is of the form
\begin{align*}\begin{pmatrix}
  * & * &\cdots &*&*\\
1&*&\cdots&*&*\\
0&1&\cdots&*&*\\
\vdots&\vdots&\ddots&\vdots&\vdots\\
0&0&\cdots&1&* 
  \end{pmatrix}.
  \end{align*}
\item $F^R\simeq D^n\circ F^L$. If $\D^b_G(M)$ as well as $\D^b_H(N)$ have Serre functors, this is equivalent to $S_N \circ F\circ D^n\cong F\circ S_M$.
\end{enumerate}
A $\P^1$-functor is the same as a split spherical functor. A general \textit{spherical functor} is a FM transform $F$ such that $C:=\cone(\id\xrightarrow{\eta} F^R\circ F)$ is an autoequivalence and $F^R\cong C\circ F^L$. Here $\eta$ is the unit of the adjunction. 
The cone is well defined as a FM transform, since the natural transform $\eta$ is induced by a morphism between the FM kernels; see \cite{AL}. This is the reason that we restrict ourself in the definition of spherical and $\P$-functors to functors between derived categories of coherent sheaves. More generally, one can work with dg-enhanced triangulated categories; see \cite{ALdg}. 
\subsection{Notations and conventions}\label{conventions}
\begin{enumerate}
\item For $E$ a complex we denote its $q$-th cohomology by $\Coho^q(E)$. Furthermore, we set $\Coho^*(E):=\oplus_{i\in \Z}\Coho^i(E)[-i]$.
\item For $L$ a $G$-equivariant line bundle on a variety $M$ we denote the tensor product functor by $\MM_L:=(\_)\otimes L\colon \D^b_G(M)\to \D^b_G(M)$. If we write $\reg_M$ as a $G$-sheaf we mean the structural sheaf equipped with its canonical linearisation.
\item The \textit{alternating representation} $\alt_n$ of the symmetric group $\sym_n$ is the one-dimensional representation on which $\sigma\in \sym_n$ acts by multiplication by $\sgn(\sigma)$. If $\sym_n$ acts on a variety $M$, we set
$\MM_{\alt_n}:=\MM_{\reg_M\otimes_\C \alt_n}\colon \D^b_{\sym_n}(M)\to\D^b_{\sym_n}(M)$. 
 \item For $u\le v$ positive integers, we use the notations $[u,v]:=\{u, u+1,\dots , v\}\subset \N$ and $[v]:=[1,v]=\{1,\dots,v\}\subset \N$.
\item\label{empty} We set $[0]:=\emptyset$.
\item For $A,B\subset\N$ two finite subsets of the same cardinality $|A|=|B|$ we let $e\colon A\to B$ denote the unique 
strictly increasing bijection.
\end{enumerate}
\subsection{The Fourier--Mukai kernel}\label{cP}
 Let $X$ be a smooth variety of arbitrary dimension $d=\dim X$. In the following we will construct the functors $H_{\ell,n}\colon \D^b_{\sym_\ell}(X\times X^\ell)\to \D^b_{\sym_{n+\ell}}(X^{n+\ell})$ for $n,\ell\in \N$ with $n\ge 2$. 
We will consider $\ell$ and $n$ as fixed and mostly omit them in the notations. For $i=1,\dots,\ell$ we set
\[\Index(i):=\bigl\{(I,J,\mu)\,\mid\, I\subset [\ell],\,|I|=i,\,J\subset [n+\ell],\,|J|=n+i,\, \mu\colon \bar I\to \bar J \text{ bijection}   \bigr\}\]
where $\bar I:=[\ell]\setminus I$ and $\bar J:=[n+\ell]\setminus J$ denote the complements of $I$ and $J$, respectively. For $(I,J,\mu)\in \Index(i)$ we consider the subvariety $\Gamma_{I,J,\mu}\subset X\times X^\ell\times X^{n+\ell}$ given by 
\begin{align*}\Gamma_{I,J,\mu}:=\bigl\{(x,x_1,\dots,x_\ell,y_1,\dots,y_{n+\ell})\mid x=x_a=y_b\,\forall\, a\in I\,,\, b\in J\text{ and } x_c=y_{\mu(c)}\,\forall\,c\in \bar I\bigr\}\,.  
\end{align*}
This subvariety is invariant under the action of the subgroup 
\[\sym_I\times \sym_{\bar I,\mu}\times \sym_J:=\{(\sigma,\tau)\mid \sigma(I)=I\,,\, \sigma(J)=J\,,\, (\mu\circ\sigma)_{|\bar I}=(\tau\circ \mu)_{|\bar I}\}\subset \sym_\ell\times \sym_{n+\ell}\]
and thus $\reg_{I,J,\mu}$ carries a canonical linearisation by this subgroup. 
Note that there is the isomorphism of groups 
$\sym_I\times \sym_{\bar I,\mu}\times \sym_J\cong \sym_{i}\times \sym_{\ell-i}\times \sym_{n+i}$ given by $(\sigma,\tau)\mapsto (\sigma_{|I}, \sigma_{|\bar I},\tau_{|J})$.
Let $\alt_J$ denote the one-dimensional representation of $\sym_I\times \sym_{\bar I,\mu}\times \sym_J$ on which the factor $\sym_J=\{\sigma=\id\}$ acts by multiplication by the 
sign of the permutations and the other factor $\sym_I\times \sym_{\bar I,\mu}=\{\tau_{|J}=\id_J\}$ acts trivially. We set $\cP(I,J,\mu):=\reg_{I,J,\mu}\otimes \alt_J$ and
\[\cP^i:=\Inf_{\sym_{[i]}\times \sym_{[i+1,\ell],e}\times \sym_{[n+i]}}^{\sym_{\ell}\times \sym_{n+\ell}} \cP([i],[n+i],e)=\bigoplus_{\Index(i)}\cP(I,J,\mu)\in \Coh_{\sym_\ell\times \sym_{n+\ell}}(X\times X^\ell\times X^{n+\ell})\,.\]   
For $c\in \bar I$ we have $\Gamma_{I\cup\{c\},J\cup \{\mu(c)\},\mu_{| \bar I\setminus \{c\}}}\subset \Gamma_{I,J,\mu}$. This allows us to define a differential $d^i\colon \cP^i\to \cP^i$ by letting the component 
$\cP(I,J,\mu)\to \cP(I\cup\{c\},J\cup \{\mu(c)\},\mu_{| \bar I\setminus \{c\}})$ be $\eps_{J,\mu(c)}:=(-1)^{\# \{b\in J\mid b<\mu(c)\}}$ times the map given by restriction of sections and setting all components which are not of this form to be zero.
The resulting $\sym_\ell\times \sym_{n+\ell}$-equivariant complex 
\[\cP_{\ell,n}:=\cP:=(0\to \cP^0\to \dots\to \cP^\ell\to 0)\,\in\, \D^b_{\sym_\ell\times \sym_{n+\ell}}(X^{n+\ell}) \]
is the Fourier--Mukai kernel of our functor $H_{\ell,n}$, that means 
\[H_{\ell,n}:=\FM_{\cP_{\ell,n}}\colon \D^b_{\sym_\ell}(X\times X^\ell)\to \D^b_{\sym_{n+\ell}}(X^{n+\ell})\,.\]
\subsection{Adjoint kernels}\label{cond3}
Even though we do not assume that $X$ is projective, since $X\times X^\ell$ and $X^{n+\ell}$ are smooth and $\supp\cP=\supp\cP^0$ is projective over $X\times X^\ell$ as well as over $X^{n+\ell}$, the functor $H_{\ell,n}$ has right and left adjoints 
$H_{\ell,n}^R,H_{\ell,n}^L\colon \D^b_{\sym_{n+\ell}}(X^{n+\ell})\to \D^b_{\sym_\ell}(X\times X^\ell)$ mapping to the bounded derived category. Their FM kernels are given by
\begin{align}\label{RLadjoints}\cP^R=\cP^\vee\otimes(\omega_{X\times X^\ell}\boxtimes \reg_{X^{n+\ell}})[(\ell+1)d]\quad,\quad \cP^L=\cP^\vee\otimes(\reg_{X\times X^\ell}\boxtimes \omega_{X^{n+\ell}})[(n+\ell)d]\,;
\end{align}
see e.g.\ \cite[Section 2.1]{Kuz}.

Using this, we can go ahead and show that for $X$ a smooth surface the functor $H_{\ell,n}$ satisfies condition (iii) of a $\P^{n-1}$-functor with cotwist $\bar S_X^{-1}:=(\_)\otimes (\omega_X^{-1}\boxtimes \reg_{X^\ell})[-2]$, i.e.\ that 
$H_{\ell,n}^R\cong \bar S_X^{-(n-1)}\circ H_{\ell,n}^L$. By (\ref{RLadjoints}) this amounts to the invariance of $\cP^\vee$ under tensor product by $\omega_X^n\boxtimes \omega_{X^\ell}\boxtimes \omega_{X^{n+\ell}}^{-1}$. This follows from the fact that
\[(\omega_X^n\boxtimes \omega_{X^\ell}\boxtimes \omega_{X^{n+\ell}}^{-1})_{|\Gamma_{I,J,\mu}}\cong \reg_{I,J,\mu}\quad\text{for all $0\le i\le \ell$ and $(I,J,\mu)\in \Index(i)$.}\]

\subsection{Description of the functor}
For $I\subset [\ell]$ and $J\subset [n+\ell]$ with $|I|=i$ and $|J|=n+i$ there are the \textit{partial diagonals}
\begin{align*}
X\times X^\ell\supset D_I:=\{(x,x_1,\dots,x_\ell)\mid x=x_a\,\forall\, a\in I\}\cong X\times X^{\ell-i}\,,\\ 
X^{n+\ell}\supset \Delta_J:=\{(y_1,\dots,y_ {n+\ell})\mid y_a=y_b\,\forall\, a,b\in J\}\cong X\times X^{\ell-i}\,,
\end{align*}
and we denote the closed embeddings by $\iota_I\colon X\times X^{\ell-i}\to X\times X^\ell$ and $\delta_J\colon X\times X^{\ell-i}\to X^{n+\ell}$. We set $\iota_{[0]}=\iota_\emptyset:=\id\colon X\times X^\ell\to X\times X^\ell$.
The functor $H^i:=H_{\ell,n}^i:=\FM_{\cP^i}$ is the composition 
\begin{align}\label{Hi}
& \D^b_{\sym_\ell}(X\times X^\ell)\xrightarrow{\Res}\D^b_{\sym_i\times \sym_{\ell-i}}(X\times X^\ell)\xrightarrow{\iota_{[i]}^*}
\D^b_{\sym_i\times \sym_{\ell-i}}(X\times X^{\ell-i})\\
\notag &\xrightarrow{(\_)^{\sym_{i}}}
\D^b_{\sym_{\ell-i}}(X\times X^{\ell-i}) \xrightarrow{\triv} \D^b_{\sym_{n+i}\times \sym_{\ell-i}}(X\times X^{\ell-i})\xrightarrow{\MM_{\alt_{n+i}}} \D^b_{\sym_{n+i}\times \sym_{\ell-i}}(X\times X^{\ell-i})\\
\notag & \xrightarrow{\delta_{[n+i]*}} \D^b_{\sym_{n+i}\times \sym_{\ell-i}}(X^{n+i}\times X^{\ell-i})
 \xrightarrow{\Inf} \D^b_{\sym_{n+\ell}}(X^{n+\ell})\,.
\end{align}
For $i=0$ this reduces to
\begin{align}\label{H0}
&\D^b_{\sym_\ell}(X\times X^\ell) \xrightarrow{\triv} \D^b_{\sym_{n}\times \sym_{\ell}}(X\times X^{\ell})\xrightarrow{\MM_{\alt_{n}}} \D^b_{\sym_{n}\times \sym_{\ell}}(X\times X^{\ell})\\\notag & \xrightarrow{\delta_{[n]*}} \D^b_{\sym_{n}\times \sym_{\ell}}
(X^{n}\times X^{\ell})
 \xrightarrow{\Inf} \D^b_{\sym_{n+\ell}}(X^{n+\ell})\,.
\end{align}
Note that (\ref{H0}) differs by the tensor product $\MM_{\alt_n}$ with the alternating representation from the suggestion (\ref{Hcompo}) from the introduction.
Using slightly shortened notation, the functor $H^i$ is on the level of objects given by
\begin{align}\label{Hiobj} H^i\colon E\mapsto \bigoplus_{J\subset [n+\ell]\,,\, \#J=n+i} \delta_{J*}\bigl(\alt_J\otimes \iota_{[i]}^*(E)^{\sym_{[i]}}\bigr)\,.
\end{align}
The right-adjoint $H^{iR}\colon \D^b_{\sym_{n+\ell}}(X^{n+\ell})\to \D^b_{\sym_\ell}(X^\ell)$ is given by the composition  
\begin{align}\label{HiR}
& \D^b_{\sym_\ell}(X\times X^\ell)\xleftarrow{\Inf}\D^b_{\sym_i\times \sym_{\ell-i}}(X\times X^\ell)\xleftarrow{\iota_{[i]*}}
\D^b_{\sym_i\times \sym_{\ell-i}}(X\times X^\ell)\\\notag
& \xleftarrow{\triv}\D^b_{\sym_{\ell-i}}(X\times X^{\ell-i}) \xleftarrow{(\_)^{\sym_{n+i}}} \D^b_{\sym_{n+i}\times \sym_{\ell-i}}(X\times X^{\ell-i})\xleftarrow{\MM_{\alt_{n+i}}} 
\D^b_{\sym_{n+i}\times \sym_{\ell-i}}(X\times X^{\ell-i})\\\notag &\xleftarrow{\delta_{[n+i]}^!} \D^b_{\sym_{n+i}\times \sym_{\ell-i}}(X^{n+i}\times X^{\ell-i}) \xleftarrow{\Res} \D^b_{\sym_{n+\ell}}(X^{n+\ell})\,.
\end{align}
which means on the level of objects $F\in \D^b_{\sym_{n+\ell}}(X^{n+\ell})$ that
\begin{align}\label{HiRobj}H^{iR}\colon F\mapsto \bigoplus_{I\subset [\ell]\,,\, \#I=i} \iota_{I*}(\alt_{[n+i]}\otimes \delta_{[n+i]}^!F)^{\sym_{[n+i]}}\,.\end{align}
\section{Techniques and examples}
\subsection{Derived intersections}
Given a vector bundle $E$ of rank $c$ on a variety $Z$ we write $\wedge^*E:=\oplus_{i=0}^c\wedge^iE[-i]$ and $\wedge^{-*}E:=\oplus_{i=0}^c\wedge^iE[i]$ as objects in $\D^b(Z)$.
\begin{theorem}[\cite{AC}]\label{selfint}
 Let $\iota\colon Z\hookrightarrow M$ be a regular embedding of codimension $c$ such that the normal bundle sequence $0\to T_Z\to T_{M|Z}\to N_{\iota}\to 0$ splits. Then there is an isomorphism 
\begin{align*}\iota^*\iota_*(\_)\simeq (\_)\otimes \wedge^{-*} N_{\iota}^\vee\end{align*}
of endofunctors of $\D^b(Z)$.
\end{theorem}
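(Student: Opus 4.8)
The plan is to reduce to the case $E=\reg_Z$ and then prove a formality statement; the splitting hypothesis is used only in the second step. For the reduction, note that $\iota^*\iota_*$ is a Fourier--Mukai transform whose kernel is the convolution $\reg_{\Gamma_\iota}\star\reg_{\Gamma_\iota}\in\D^b(Z\times Z)$, the structure complex of the derived self-intersection $Z\times_M^hZ$. As $\iota$ is a closed immersion this is a derived thickening of the diagonal $\Delta_Z\subset Z\times Z$, so the kernel equals $\Delta_{Z*}K$ with $K:=\iota^*\iota_*\reg_Z$ (push-forward along $\Delta_Z\cong Z$, computed by base change); hence $\iota^*\iota_*(\_)\cong(\_)\otimes_{\reg_Z}K$ as endofunctors of $\D^b(Z)$. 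It therefore suffices to exhibit an isomorphism $K\simeq\wedge^{-*}N_\iota^\vee$ in $\D^b(Z)$, which then yields the asserted isomorphism of functors by tensoring.

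I would next pin down the cohomology sheaves of $K$. Since $\iota$ is a regular embedding of codimension $c$ it is locally cut out by a regular sequence, and the associated Koszul complex is a locally free resolution of $\reg_Z$ on $M$; pulling it back along $\iota$ and taking homology gives $\Coho^{-j}(K)\cong\sTor^{\reg_M}_j(\reg_Z,\reg_Z)\cong\wedge^{j}N_\iota^\vee$ for $0\le j\le c$ and $0$ otherwise, the standard computation of the Tor sheaves of a regular embedding. Thus $K$ has exactly the cohomology sheaves of $\wedge^{-*}N_\iota^\vee=\bigoplus_{j=0}^{c}\wedge^{j}N_\iota^\vee[j]$, placed in the same degrees; the content of the theorem is that $K$ is \emph{formal}, i.e.\ splits as the direct sum of its shifted cohomology sheaves.

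For the formality I would use the hypothesis as follows. In the linear model, where $\iota$ is replaced by the zero section $\iota_0\colon Z\hookrightarrow V$ into the total space $V$ of $N_\iota$, a global Koszul resolution of $\reg_Z$ exists on $V$ whose differential is contraction with the tautological section of $p^{*}N_\iota$ ($p\colon V\to Z$ the projection); this section vanishes on the zero section, so $\iota_0^{*}\iota_{0*}\reg_Z$ is literally the complex $(\wedge^{\bullet}N_\iota^\vee,0)$ and is isomorphic to $\wedge^{-*}N_\iota^\vee$ on the nose. For a general regular embedding the local splittings coming from the Koszul computation need not be compatible globally, and the obstruction to glueing them --- equivalently, the first potentially non-trivial piece of the Atiyah-class structure carried by $K$, in the spirit of the Hochschild--Kostant--Rosenberg isomorphism --- is identified, following \cite{AC}, with the extension class in $\Ext^{1}_Z(N_\iota,T_Z)$ of the normal bundle sequence $0\to T_Z\to T_{M|Z}\to N_\iota\to0$. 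When that sequence splits this class vanishes, all higher obstructions to formality vanish as well, and $K\cong\wedge^{-*}N_\iota^\vee$ globally, which finishes the proof. I expect this identification of the obstruction with the normal bundle extension class to be the only real difficulty: the reduction in the first paragraph and the cohomology sheaf computation are formal, and the model computation over $V$ is elementary, so the argument stands or falls on the content of \cite{AC}.
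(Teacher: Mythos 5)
The paper does not prove this theorem: it quotes it verbatim from Arinkin--Căldăraru \cite{AC}, so there is no internal argument to compare against, and your write-up is necessarily a sketch of the argument of that reference. Your first two steps are correct and are indeed how one begins: the reduction to showing $K:=\iota^*\iota_*\reg_Z\cong\wedge^{-*}N_\iota^\vee$ in $\D^b(Z)$ (the Fourier--Mukai kernel of $\iota^*\iota_*$ is $\delta_*K$ for $\delta\colon Z\to Z\times Z$ the diagonal, and $\delta_*$ is fully faithful), and the Koszul identification $\Coho^{-j}(K)\cong\wedge^{j}N_\iota^\vee$.

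The formality step, however, misattributes the content of \cite{AC}, which is exactly the piece you cannot afford to get wrong since you defer the whole argument there. The class Arinkin--Căldăraru prove to be the obstruction to formality of $K$ is not the extension class of $0\to T_Z\to T_{M|Z}\to N_\iota\to 0$ in $\Ext^1_Z(N_\iota,T_Z)$; their obstruction is the one governing whether $N_\iota$ extends to a locally free sheaf on the first infinitesimal neighbourhood $Z^{(1)}\subset M$, which lives in a different $\Ext$-group, and their theorem is an equivalence between formality of $K$ and the vanishing of that class. Splitting of the normal bundle sequence is a strictly stronger hypothesis: it is equivalent to the existence of a retraction $Z^{(1)}\to Z$, and pulling back along such a retraction extends every sheaf on $Z$ to $Z^{(1)}$, in particular $N_\iota$, so the genuine obstruction is forced to vanish. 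That is why the implication stated in the theorem holds, but not because the two extension classes coincide. Your further assertion that once the first obstruction vanishes "all higher obstructions to formality vanish as well" is likewise a substantive theorem in \cite{AC}, proved using the multiplicative structure on $K$, not a formal consequence of the first vanishing. By your own account the proposal "stands or falls on the content of \cite{AC}"; the content you attribute to it is not what is there, so there is a real gap at the central step.
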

Recall that the right-adjoint of $\iota_*$ is given by $\iota^!=\MM_{\omega_\iota}\circ \iota^*[-\codim \iota]$ where $\omega_\iota=\wedge^{\codim \iota}N_{\iota}$; see \cite[Corollary III 7.3]{Har1}. 
We have $\wedge^{-*}N_{\iota}^\vee\otimes \omega_{\iota}[-\codim \iota]\cong \wedge^*N_{\iota}$.  
\begin{cor}\label{accor}
Under the same assumptions, there is an isomorphism
\begin{align*}\iota^!\iota_*(\_)\simeq (\_)\otimes\wedge^* N_{\iota}\end{align*}
\end{cor}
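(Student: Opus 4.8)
The statement is an immediate consequence of Theorem \ref{selfint} together with the two facts recalled just before it, so the plan is simply to assemble them. First I would write, for arbitrary $F \in \D^b(Z)$,
\[
\iota^!\iota_* F \;=\; \MM_{\omega_\iota}\bigl(\iota^*\iota_* F\bigr)[-c] \;\simeq\; \bigl(F \otimes \wedge^{-*}N_\iota^\vee\bigr)\otimes \omega_\iota[-c]\,,
\]
using $\iota^! = \MM_{\omega_\iota}\circ \iota^*[-c]$ with $c = \codim\iota$ and $\omega_\iota = \wedge^c N_\iota$ for the first equality, and Theorem \ref{selfint} for the isomorphism. With this in hand the claim reduces to the identity $\wedge^{-*}N_\iota^\vee \otimes \omega_\iota[-c] \simeq \wedge^* N_\iota$, which is exactly the remark made in the lines preceding the corollary.

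For completeness I would spell out that identity. For a rank $c$ bundle $V$ the wedge-product pairing $\wedge^i V \otimes \wedge^{c-i}V \to \wedge^c V$ is perfect, and hence induces an isomorphism $\wedge^i V^\vee \otimes \wedge^c V \cong \wedge^{c-i} V$ for each $0 \le i \le c$. Applying this termwise with $V = N_\iota$,
\[
\wedge^{-*}N_\iota^\vee \otimes \omega_\iota[-c] = \bigoplus_{i=0}^c \bigl(\wedge^i N_\iota^\vee \otimes \wedge^c N_\iota\bigr)[i-c] \cong \bigoplus_{i=0}^c \wedge^{c-i} N_\iota[i-c] = \bigoplus_{j=0}^c \wedge^j N_\iota[-j] = \wedge^* N_\iota\,,
\]
where the last step is the substitution $j = c-i$. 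Combining the two displays proves the corollary.

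Since every isomorphism above — the twist by $\omega_\iota[-c]$, the isomorphism of Theorem \ref{selfint}, and the termwise linear-algebra identity — is functorial in $F$, they genuinely assemble into an isomorphism of endofunctors of $\D^b(Z)$. The only place the splitting hypothesis on $0 \to T_Z \to T_{M|Z} \to N_\iota \to 0$ enters is through the invocation of Theorem \ref{selfint}; I do not anticipate any genuine obstacle beyond this bookkeeping, so there is no "hard part" to isolate here.
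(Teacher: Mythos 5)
Your proof is correct and follows exactly the route the paper intends: apply Theorem \ref{selfint}, post-compose with the formula $\iota^! = \MM_{\omega_\iota}\circ\iota^*[-\codim\iota]$, and then use the identity $\wedge^{-*}N_\iota^\vee\otimes\omega_\iota[-\codim\iota]\cong\wedge^*N_\iota$, which the paper states without proof in the sentence just before the corollary and which you correctly verify via the perfect pairing $\wedge^iV\otimes\wedge^{c-i}V\to\wedge^cV$.
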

In particular the \textit{derived self-intersection} $\iota^*\iota_*\reg_Z$ of $Z$ in $M$ is given by 
$\iota^*\iota_*\reg_Z=\wedge^{-*}N_{\iota}^\vee$. 
Similar results for \textit{derived intersections}, i.e.\ for $\iota_2^*\iota_{1*}\reg_{Z_1}$ when $\iota_1\colon Z_1\to M$, $\iota_2\colon Z_2\to M$ are two different closed embeddings, are proven in \cite{Gri}.
 However, we will always be in the following situation where Theorem \ref{selfint} is sufficient. Assume that there is the diagram
\begin{align}\label{selfintdiag}\xymatrix{
            & Z_2 \ar^{\iota_2}[dr]\ar^{r}[d]&  \\
   T=Z_1\cap Z_2 \ar^{u}[ur]\ar_{v}[dr]   & W\ar^t[r] &  M\,.    \\
        &   Z_1\ar^{s}[u]\ar_{\iota_1}[ur]    &
} \end{align}  
where all the maps are regular closed embeddings, $t$ has a splitting normal bundle sequence, and the intersection of $Z_1$ and $Z_2$ inside of $W$ is transversal.
\begin{lemma}
Under the above assumptions there is the isomorphism of functors
\[\iota_2^* \iota_{1*}(\_)\cong u_*(v^*(\_)\otimes \wedge^{-*}N_{t|T}^\vee)\,.\]
In particular, $\iota_2^*( \iota_{1*}\reg_{Z_1})\cong u_*(\wedge^{-*}N_{t|T}^\vee)$.
\end{lemma}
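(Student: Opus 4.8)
The plan is to factor both embeddings through $W$ and combine the self-intersection formula for $t$ with transversal base change. Write $\iota_1=t\circ s$ and $\iota_2=t\circ r$, so that (all push-forwards along the closed immersions being exact) $\iota_2^*\iota_{1*}(\_)\cong r^*t^*t_*s_*(\_)$. Since the normal bundle sequence of $t$ splits, Theorem \ref{selfint} applies to $t$ and gives $t^*t_*(\_)\cong (\_)\otimes\wedge^{-*}N_t^\vee$. Combining this with the projection formula for $s$ — here $\wedge^{-*}N_t^\vee$ is a bounded complex of vector bundles on $W$, so no derived subtleties arise — yields
\[\iota_2^*\iota_{1*}(\_)\cong r^*\bigl(s_*(\_)\otimes\wedge^{-*}N_t^\vee\bigr)\cong r^*s_*\bigl((\_)\otimes\wedge^{-*}(N_t^\vee)_{|Z_1}\bigr).\]

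Next I would use the Cartesian square of closed embeddings with vertices $T,Z_2,Z_1,W$ and edges $u\colon T\to Z_2$, $v\colon T\to Z_1$, $r\colon Z_2\to W$, $s\colon Z_1\to W$ (so $T=Z_1\times_W Z_2$). Transversality of $Z_1$ and $Z_2$ inside $W$ says exactly that this square is Tor-independent, so the derived base change theorem gives $r^*s_*(\_)\cong u_*v^*(\_)$. Feeding in the complex above and pulling the locally free complex out of $v^*$,
\[\iota_2^*\iota_{1*}(\_)\cong u_*v^*\bigl((\_)\otimes\wedge^{-*}(N_t^\vee)_{|Z_1}\bigr)\cong u_*\bigl(v^*(\_)\otimes v^*\wedge^{-*}(N_t^\vee)_{|Z_1}\bigr).\]
Finally, $\wedge^{-*}$ commutes with pullback and $v^*\bigl((N_t)_{|Z_1}\bigr)=(N_t)_{|T}=N_{t|T}$, the restriction to $T$ of the normal bundle of $t$ along $T\hookrightarrow Z_1\hookrightarrow W$; hence $v^*\wedge^{-*}(N_t^\vee)_{|Z_1}=\wedge^{-*}N_{t|T}^\vee$, which is the asserted formula. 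Specialising to $(\_)=\reg_{Z_1}$ and using $v^*\reg_{Z_1}=\reg_T$ gives $\iota_2^*(\iota_{1*}\reg_{Z_1})\cong u_*(\wedge^{-*}N_{t|T}^\vee)$.

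The only step requiring genuine care is the base change isomorphism $r^*s_*\cong u_*v^*$: one must check that the hypothesis ``the intersection of $Z_1$ and $Z_2$ inside $W$ is transversal'' is precisely the Tor-independence needed for derived base change along the closed immersions $s$ and $r$ — equivalently, that the scheme-theoretic intersection $T$ agrees with the derived one, so that $r^*s_*\reg_{Z_1}$ is concentrated in degree zero. Everything else is the projection formula, exactness of push-forward along closed immersions, and the compatibility of $\wedge^{-*}$ and $(\_)^\vee$ with pullback.
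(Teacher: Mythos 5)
Your proof is correct and takes essentially the same approach as the paper: factor $\iota_2^*\iota_{1*}\cong r^*t^*t_*s_*$, apply the self-intersection formula (Theorem \ref{selfint}) to $t$, move the locally free factor $\wedge^{-*}N_t^\vee$ by the projection formula, invoke base change for the transversal square $(T,Z_1,Z_2,W)$, and restrict the normal bundle to $T$. The paper chooses to push the tensor factor through $r^*$ and then through $u_*$, whereas you push it through $s_*$ first and then $v^*$; this is a cosmetic reordering of the same manipulations, and your remark making the Tor-independence explicit is a helpful clarification but not a new ingredient.
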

\begin{proof}
Indeed, we have
\begin{align*}
\iota_2^*\iota_{1*}\cong r^*t^*t_*s_*\overset{\ref{selfint}}{\cong} r^*(s_*(\_)\otimes \wedge^{-*}N_t^\vee)\cong r^*s_*(\_)\otimes \wedge^{-*}N_{t|Z_2}^\vee&\cong u_*v^*(\_)\otimes \wedge^{-*}N_{t|Z_2}^\vee\\&\cong u_*(v^*(\_)\otimes \wedge^{-*}N_{t|T}^\vee) 
\end{align*}
where the prior to last isomorphism is the base change theorem \cite[Corollary 2.27]{Kuz}.
\end{proof}
\begin{cor}\label{dintcor}
 Under the same assumptions \begin{align*}\iota_2^!\iota_{1*}(\_)\cong u_*(v^*(\_)\otimes \wedge^{-*}N_{t|T}^\vee)\otimes \omega_{\iota_2}[-\codim\iota_2]\cong u_*(v^*(\_)\otimes \wedge^{*}N_{t|T}\otimes \omega_v)[-\codim v]\,.\end{align*}
In particular, $
\iota_2^!(\iota_{1*}\reg_{Z_1})\cong u_*(\wedge^{-*}N_{t|T}^\vee)\otimes \omega_{\iota_2}[-\codim\iota_2]\cong u_*(\wedge^{*}N_{t|T}\otimes \omega_{v})[-\codim v]$.
\end{cor}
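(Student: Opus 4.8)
The statement is a formal consequence of the preceding Lemma together with the standard description of the right adjoint of a pushforward along a regular embedding. The plan is as follows.

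First I would recall, as stated just before Corollary \ref{accor}, that $\iota_2^!\cong\MM_{\omega_{\iota_2}}\circ\iota_2^*[-\codim\iota_2]$ with $\omega_{\iota_2}=\wedge^{\codim\iota_2}N_{\iota_2}$. Plugging the isomorphism $\iota_2^*\iota_{1*}(\_)\cong u_*\bigl(v^*(\_)\otimes\wedge^{-*}N_{t|T}^\vee\bigr)$ from the Lemma into this right away yields the first claimed isomorphism $\iota_2^!\iota_{1*}(\_)\cong u_*\bigl(v^*(\_)\otimes\wedge^{-*}N_{t|T}^\vee\bigr)\otimes\omega_{\iota_2}[-\codim\iota_2]$, and in particular its special case applied to $\reg_{Z_1}$.

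For the second isomorphism I would move $\omega_{\iota_2}$ inside the pushforward by the projection formula, rewriting the right-hand side as $u_*\bigl(v^*(\_)\otimes\wedge^{-*}N_{t|T}^\vee\otimes u^*\omega_{\iota_2}\bigr)[-\codim\iota_2]$, and then identify $u^*\omega_{\iota_2}$. Using the factorisation $\iota_2=t\circ r$ and the normal bundle sequence $0\to N_r\to N_{\iota_2}\to r^*N_t\to 0$ one gets $\omega_{\iota_2}\cong\omega_r\otimes r^*\omega_t$; pulling back along $u$ and using that $r\circ u=s\circ v$ is the embedding $T\hookrightarrow W$ gives $u^*\omega_{\iota_2}\cong u^*\omega_r\otimes\wedge^{\codim t}N_{t|T}$. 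At this point the transversality of $Z_1$ and $Z_2$ inside $W$ enters: it gives $u^*N_r\cong N_v$ and $\codim r=\codim v$, hence $u^*\omega_r\cong\omega_v$ and $\codim\iota_2=\codim v+\codim t$. Substituting, the bracketed object becomes $v^*(\_)\otimes\omega_v[-\codim v]\otimes\bigl(\wedge^{-*}N_{t|T}^\vee\otimes\wedge^{\codim t}N_{t|T}[-\codim t]\bigr)$, and the elementary exterior-power duality $\wedge^{-*}E^\vee\otimes\wedge^{\rank E}E[-\rank E]\cong\wedge^*E$ (the one recalled right before Corollary \ref{accor}, applied to $E=N_{t|T}$) collapses the last factor to $\wedge^*N_{t|T}$, which is the asserted formula.

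The only genuine work is the normal-bundle bookkeeping in the middle step: one must keep straight that $N_{t|T}$ denotes the pullback of $N_t$ along $T\to W$, and verify that the transversality hypothesis is exactly what is needed both to identify $u^*N_r$ with $N_v$ and to make the codimensions add up correctly. Once these identifications are in place the remainder is formal manipulation with the projection formula and the shift conventions.
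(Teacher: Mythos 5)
Your argument is correct and is exactly the one the paper's surrounding remarks set up: the paper gives no explicit proof of Corollary \ref{dintcor}, but right before Corollary \ref{accor} it records both ingredients you use — $\iota^!\cong\MM_{\omega_\iota}\circ\iota^*[-\codim\iota]$ and the identity $\wedge^{-*}N^\vee\otimes\omega[-\codim]\cong\wedge^*N$ — and then it is only a matter of combining these with the preceding Lemma and the normal-bundle bookkeeping you spell out. Your handling of the middle step (the factorisation $\omega_{\iota_2}\cong\omega_r\otimes r^*\omega_t$, the identification $u^*N_r\cong N_v$ from the transversality of $Z_1$ and $Z_2$ in $W$, and the resulting equality $\codim\iota_2=\codim v+\codim t$) is precisely what is needed and is correctly justified.
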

Note that by Grothendieck duality $\iota_{2*}\Coho^p(\iota_2^!\iota_{1*}\reg_{Z_1})\cong \sExt^{p}_{\reg_M}(\reg_{Z_2},\reg_{Z_1})$.
\begin{remark}\label{normalinduced}
In the above situation consider in addition $Z_2\subset W'\subset W$ such that $w'\colon W'\to W$ is a regular embedding and $W'$ and $Z_1$ intersect transversally. We set $Z_1'=W'\cap Z_1$.
 We also consider $Z_1\subset W''\subset W$ such that $w''\colon W''\to W$ is a regular embedding and $W''$ and $Z_2$ intersect transversally in $Z_2''=W''\cap Z_2$. So we have the two diagrams of closed embeddings 
\begin{align*}\xymatrix{
         & Z_2\ar^\id[r]\ar[d]  & Z_2 \ar^{\iota_2}[dr]\ar^{r}[d]&  \\
   T \ar^{u}[ur]\ar_{v'}[dr] & W' \ar^{w'}[r] & W\ar^t[r] &  M\quad,    \\
    & Z_1'\ar^{z'}[r]\ar[u]   &   Z_1\ar^{s}[u]\ar_{\iota_1}[ur]    &
}
\quad
\xymatrix{
         & Z_2''\ar^{z''}[r]\ar[d]  & Z_2 \ar^{\iota_2}[dr]\ar^{r}[d]&  \\
   T \ar^{u''}[ur]\ar_{v}[dr] & W'' \ar^{w''}[r] & W\ar^t[r] &  M\,.    \\
    & Z_1\ar^{\id}[r]\ar[u]   &   Z_1\ar^{s}[u]\ar_{\iota_1}[ur]    &
}
\end{align*}  
We set $\iota_1'=\iota_1\circ z'$, $t'=t\circ w'$, $\iota_2''=\iota_2\circ z''$, and $t''=t\circ w''$. The restriction map $\iota_{1*}\reg_{Z_1}\to \iota_{1*}'\reg_{Z_1'}$ induces for every $q=0,\dots,\codim (t)$ the map
\[u_*(\wedge^qN_{t|T}^\vee)\otimes \omega_{\iota_2}\cong \sExt_{\reg_M}^{\codim(\iota_2)-q}(\reg_{Z_2},\reg_{Z_1})\to \sExt_{\reg_M}^{\codim(\iota_2)-q}(\reg_{Z_2},\reg_{Z'_1})\cong u_*(\wedge^qN_{t'|T}^\vee)\otimes \omega_{\iota_2}\,.\]
As one can check locally using the Koszul resolutions this map is given by the $q$-th wedge power of the canonical map $N^\vee_{t|W'}\to N^\vee_{t'}$. Similarly, for $q=0,\dots,\codim (t)$ the induced map
\[u_*(\wedge^qN_{t''|T}\otimes \omega_{v})\cong \sExt_{\reg_M}^{\codim (v)+q}(\reg_{Z''_2},\reg_{Z_1})\to \sExt_{\reg_M}^{\codim (v)+q}(\reg_{Z_2},\reg_{Z_1})\cong u_*(\wedge^qN_{t|T}\otimes \omega_{v})\]
is given by the $q$-th wedge power of the canonical map $N_{t''}\to N_{t|W''}$. 
\end{remark}
\begin{remark}
 Let $G$ be a finite group acting on $M$ such that all the subvarieties occurring above are invariant under this action. Then all the normal bundles carry a canonically induced $G$-linearisation. 
All the results of this subsection continue to hold as isomorphisms in the (derived) categories of $G$-equivariant sheaves when considering the normal bundles as $G$-bundles equipped with the canonical linearisations; compare \cite[Section 28]{Has}.   
\end{remark}
\subsection{Invariants of the standard representation}
Let $I$ be a finite set of cardinality at least 2.
The \textit{standard representation} $\rho_I$ of the symmetric group $\sym_I$ can be considered either as the subrepresentation $\rho_I\subset \C^I$ of the regular representation consisting of all vectors whose components add up to zero or as the
 quotient $\rho_I=\C^I/\C$ by the one-dimensional subspace of invariants. For $I\subset I'$ the first point of view gives a canonical $\sym_I$-equivariant inclusion 
$\rho_I\to \rho_{I'}$ while the second one gives a canonical $\sym_I$-equivariant surjection $\rho_{I'}\to \rho_I$. 
For $X$ a smooth variety and $\delta_n\colon X\to X^n$ the embedding of the small diagonal there is the $\sym_n$-equivariant isomorphism $N_{\delta_n}\cong T_X\otimes \rho_n$; see \cite[Section 3]{Kru3}. More generally, for $I\subset [n]$ the
 normal bundle of the partial diagonal $\Delta_I\cong X\times X^{\bar I}$ is as a $\sym_I$-bundle given by
\begin{align}\label{partialnormal}
 N_{\delta_I}\cong (T_X\otimes \rho_I)\boxtimes \reg_{X^{\bar I}}\quad,\quad N_{\delta_I}^\vee\cong (\Omega_X\otimes \rho_I)\boxtimes \reg_{X^{\bar I}}\,.
\end{align}
Furthermore, the normal bundle sequence of $\delta_I$ splits since $\Delta_I$ is the fixed point locus of the $\sym_I$-action on $X^n$; see \cite[Section 1.20]{AC}.

\begin{remark}\label{partialinduced}
For $I\subset I'\subset [n]$, the embedding $\Delta_{I'}\to \Delta_I$ induces maps $N_{\delta_{I'}}\to N_{\delta_I|\Delta_{I'}}$ and $N_{\delta_I|\delta_{I'}}^\vee\to N_{\delta_{I'}}^\vee$. 
Under the isomorphisms (\ref{partialnormal}) they are given by the canonical maps $\rho_{I'}\to \rho_I$ and $\rho_I\to \rho_{I'}$, respectively.
\end{remark}
For $m\ge 2$ and $X$ a smooth variety of dimension $d$, we set 
\begin{align}\label{Lambdadef}\Lambda_m^*(X):=\bigoplus_{i=0}^{(m-1)d}\left(\wedge^i(T_X\otimes \rho_m)\right)^{\sym_m}[-i]\,.\end{align}
\begin{lemma}\label{Lambdacurvesurface}\label{Tinva}
\[
\Lambda^*_m(X)=\begin{cases}
                \reg_X[0]\quad&\text{for $X$ a curve,}\\
               \reg_X[0]\oplus \omega_X^{-1}[-2]\oplus\dots\oplus \omega_X^{-(m-1)}[-2(m-1)]\quad&\text{for $X$ a surface.}                
\end{cases}
\]
\end{lemma}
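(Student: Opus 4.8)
The plan is to compute the $\sym_m$-invariants of the exterior powers $\wedge^i(T_X\otimes\rho_m)$ appearing in \eqref{Lambdadef} by first reducing to a purely representation-theoretic statement about $\wedge^i\rho_m$ (curve case, $d=1$) and $\wedge^i(\rho_m^{\oplus 2})$ (surface case, $d=2$), and then translating the multiplicity spaces back into line bundles on $X$ via the splitting $T_X\otimes\rho_m\cong\bigoplus T_X$ as a (non-equivariant) bundle. More precisely, since $\sym_m$ acts only through the $\rho_m$-factor, one has for any vector bundle $V$ on $X$ a canonical isomorphism $\left(\wedge^i(V\otimes\rho_m)\right)^{\sym_m}\cong\bigoplus_{\lambda}\bigl(\wedge^{\lambda}V\bigr)\otimes\bigl(\text{multiplicity of trivial rep in the corresponding plethystic piece of }\wedge^i\rho_m^{\oplus\operatorname{rk}V}\bigr)$; the clean way to organise this is to note $\wedge^i(T_X\otimes\rho_m)=\bigoplus_{i_1+\dots+i_d=i}\wedge^{i_1}(L_1\otimes\rho_m)\otimes\dots$ only works after choosing a splitting, so instead I would argue directly: $\wedge^*(T_X\otimes\rho_m)$, as an object carrying both the $\sym_m$-action and the grading, has invariant part governed entirely by $\dim(\wedge^i\rho_m)^{\sym_m}$-type data tensored with exterior powers of $T_X$.

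The representation-theoretic core is the following: for $m\ge2$, $\rho_m$ is irreducible, so $\wedge^i\rho_m$ is the irreducible representation $\rho_{(m-i,1^i)}$ (a hook), hence contains the trivial representation only for $i=0$; this immediately gives the curve case $\Lambda^*_m(X)=\reg_X[0]$ since then $T_X\otimes\rho_m$ has rank $m-1$ and $(\wedge^i(T_X\otimes\rho_m))^{\sym_m}=(\wedge^iT_X)^{\oplus 1}\otimes(\wedge^i\rho_m$'s trivial multiplicity$)$ which vanishes for $i\ge 1$ and equals $\reg_X$ for $i=0$ (note $\wedge^iT_X=0$ for $i\ge2$ on a curve anyway, so only $i=0,1$ contribute and the $i=1$ term is $T_X\otimes\rho_m^{\sym_m}=0$). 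For the surface case I would compute the $\sym_m$-invariants of $\wedge^i(\rho_m\oplus\rho_m)$: using $\wedge^i(A\oplus B)=\bigoplus_{p+q=i}\wedge^pA\otimes\wedge^qB$ and $\wedge^p\rho_m\otimes\wedge^q\rho_m$, one needs the multiplicity of the trivial representation in $\wedge^p\rho_m\otimes\wedge^q\rho_m=\Hom(\wedge^q\rho_m^\vee,\wedge^p\rho_m)^{\mathrm{vee}}$, i.e. $\dim\Hom_{\sym_m}(\wedge^q\rho_m,\wedge^p\rho_m)$ since $\rho_m$ is self-dual; as $\wedge^p\rho_m$ and $\wedge^q\rho_m$ are distinct irreducibles for $p\ne q$ and equal for $p=q$, this multiplicity is $\delta_{p,q}$. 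Hence $(\wedge^i(\rho_m^{\oplus2}))^{\sym_m}$ is one-dimensional when $i=2p$ with $0\le p\le m-1$ and zero otherwise.

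Translating back: on a surface, after choosing (locally, or after noting the statement is about isomorphism classes of the graded object and tensoring line bundles commutes appropriately) a splitting $T_X$ is rank $2$, and the $p=q$ contribution to $(\wedge^{2p}(T_X\otimes\rho_m))^{\sym_m}$ is $\wedge^pT_X\otimes\wedge^pT_X=\omega_X^{-1}\otimes\dots$; more precisely the unique invariant line in $\wedge^{2p}(T_X\otimes\rho_m)$ is $(\wedge^2 T_X)^{\otimes p}=\omega_X^{-p}$, sitting in cohomological degree $2p$. Summing over $p=0,\dots,m-1$ yields $\reg_X[0]\oplus\omega_X^{-1}[-2]\oplus\dots\oplus\omega_X^{-(m-1)}[-2(m-1)]$, which is the claim. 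The main obstacle is making the ``choose a splitting of $T_X\otimes\rho_m$'' step rigorous and $\sym_m$-equivariantly canonical — i.e. identifying the invariant subbundle of $\wedge^i(T_X\otimes\rho_m)$ as a genuine line bundle on $X$ rather than just fibrewise; this is handled by observing that $\wedge^\bullet(T_X\otimes\rho_m)$ decomposes $\sym_m$-equivariantly into isotypic components which are subbundles (the $\sym_m$-action being fibrewise linear over $X$), and the trivial-isotypic component of $\wedge^{2p}$ is then the line subbundle cut out by the single copy of the trivial representation, canonically isomorphic to $(\det T_X)^{\otimes p}=\omega_X^{-p}$ via the wedge map $\Lambda^2(T_X\otimes\rho_m)\supset \wedge^2 T_X\otimes\Lambda^2\rho_m$ restricted appropriately; I would cite \cite[Section 3]{Kru3} and the splitting of the normal bundle sequence recalled above for the technical underpinning, and otherwise the argument is a short computation with Young symmetrisers.
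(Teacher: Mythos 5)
Your proof is correct, and for the curve case it is the same argument as the paper's: $\wedge^i\rho_m$ is an irreducible (hook) representation, nontrivial for $i\geq 1$, hence has no $\sym_m$-invariants. One slip in your write-up: for $T_X$ a line bundle one has $\wedge^i(T_X\otimes\rho_m)\cong T_X^{\otimes i}\otimes\wedge^i\rho_m$, \emph{not} something involving $\wedge^i T_X$; the parenthetical claim that only $i=0,1$ contribute because $\wedge^iT_X=0$ is a non sequitur. The correct reason the higher terms die is that $(\wedge^i\rho_m)^{\sym_m}=0$, which you also say, so the conclusion stands.

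For the surface case your route differs from the paper's. The paper simply cites \cite[Lemma B.5]{Sca1} and \cite[Corollary 3.5]{Kru3} and gives no argument. You supply a short self-contained proof: decompose $\wedge^i(\rho_m^{\oplus 2})\cong\bigoplus_{p+q=i}\wedge^p\rho_m\otimes\wedge^q\rho_m$, use self-duality of $\rho_m$ and Schur's lemma to get $\dim(\wedge^p\rho_m\otimes\wedge^q\rho_m)^{\sym_m}=\delta_{p,q}$, and then identify the invariant line subbundle of $\wedge^{2p}(T_X\otimes\rho_m)$ as $(\det T_X)^{\otimes p}=\omega_X^{-p}$ by $\GL(T_X)$-equivariance (the invariants form a one-dimensional $\GL_2$-representation, necessarily a power of the determinant, and the weight is fixed by the degree $2p$). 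This is a clean elementary alternative to the cited references; the one place to be careful, which you flag, is that the Künneth-type splitting $\wedge^i(A\oplus B)=\bigoplus\wedge^pA\otimes\wedge^qB$ is applied fibrewise and then the globalisation is by the isotypic-subbundle/$\GL(T_X)$-character argument rather than by choosing a splitting of $T_X$.
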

\begin{proof}
For the surface case see \cite[Lemma B.5]{Sca1} and \cite[Corollary 3.5]{Kru3}. Since $\wedge^0(T_X\otimes \rho_m)=\reg_X$ is equipped with the trivial $\sym_m$-action, 
we only have to show that $\wedge^i(T_X\otimes \rho_m)$ has no invariants for $i\ge 1$ in the case that $X$ is a curve. For this it is sufficient to consider the fibres which are given by $\wedge^i\rho_m$. By \cite[Proposition 2.12]{FHrep} 
the representations $\wedge^i\rho_m$ are irreducible. They are non-trivial for $i\ge 1$ hence their invariants vanish. 
\end{proof}
\begin{remark}\label{higherdiminva}
 For $d= \dim X\ge 3$ also vector bundles of higher rank occur as direct summands of $\Lambda_m^*(X)$. For example, for $m=2$ we have
\[
\Lambda_2^*(X)\cong \bigoplus_{0\le k\le d/2} \wedge^{2k} T_X[-2k]\,.  
\]
\end{remark}
\begin{remark}
For $I\subset [n]$ of cardinality $m:=|I|\ge 2$ consider the functor $G=\delta_{I*}\circ \triv$ as well as $G^R\circ G$, i.e.\ the composition  
\[\D^b_{\sym_{\bar I}}(X\times X^{\bar I})\xrightarrow{\triv}\D^b_{\sym_I\times \sym_{\bar I}}(X\times X^{\bar I})\xrightarrow{\delta_{I*}} 
\D^b_{\sym_I\times \sym_{\bar I}}(X^n)\xrightarrow{\delta_I^!}\D^b_{\sym_I\times \sym_{\bar I}}(X\times X^{\bar I})\xrightarrow{(\_)^{\sym_I}}\D^b_{\sym_{\bar I}}(X\times X^{\bar I}).
\]
Let $\pr_X\colon X\times X^{\bar I}\to X$ be the projection to the first factor. Corollary \ref{accor} together with Lemma \ref{Tinva} give
\begin{align}\label{GRG}
 G^R\circ G\cong (\_)\otimes \pr_X^*\Lambda^*_{m}(X)\cong \begin{cases}
                  \id\,\,&\text{for $X$ a curve,}
                \\\bar S_X^{-[0,m-1]}:=\id\oplus \bar S_X^{-1}\oplus\dots\oplus \bar S_X^{-(m-1)}\,\,&\text{for $X$ a surface.}
                 \end{cases}
\end{align}
Here, $\bar S_X:=(\_)\otimes(\omega_X\boxtimes \reg_{X^{\bar I}})[-2]\colon \D^b_{\sym_{\ell}}(X)\to \D^b_{\sym_{\ell}}(X)$ for any (not necessarily projective) smooth surface.
\end{remark}
%
%
%
%
\subsection{The case $\ell=0$}
In the special case that $I=[n]$ we have $G=\MM_{\alt_n}\circ H_{0,n}$ and (\ref{GRG}) gives
\begin{align*}
 H_{0,n}^R\circ H_{0,n}\cong \begin{cases}
                  \id\quad&\text{for $X$ a curve,}
                \\S_X^{-[0,n-1]}:=\id\oplus S_X^{-1}\oplus\dots\oplus S_X^{-(n-1)}\quad&\text{for $X$ a surface.}
                 \end{cases}
\end{align*}
This proves the case $\ell=0$ of Proposition A (i) and most of Theorem C (for the proof that condition (ii) of a $\P^{n-1}$-functor holds for $H_{0,n}$ in the surface case, see \cite[Section 3]{Kru3}).  
\subsection{The approach for general $\ell$}\label{approach}
There is the commutative diagram 
\begin{align}\label{trianglediag}\xymatrix{
    \cP^{\ell R}\star \cP       \ar[r]\ar[d]  &  \cP^{\ell R}\star \cP^0\ar[r]\ar[d]  &\hdots \ar[r]\ar[d]  & \cP^{\ell R}\star \cP^\ell \ar[d]\\
  \vdots \ar[r]\ar[d]   &\vdots  \ar[r]\ar[d] &\ddots \ar[r]\ar[d]   & \vdots  \ar[d]  \\
\cP^{0R}\star \cP   \ar[r]\ar[d]   & \cP^{0R}\star \cP^0 \ar[r]\ar[d] &\hdots \ar[r]\ar[d]   &  \cP^{0R}\star \cP^\ell \ar[d]  \\   
\cP^R\star \cP \ar[r]   & \cP^R\star \cP^0  \ar[r]  & \hdots \ar[r]   & \cP^R\star \cP^\ell 
} \end{align}
where the $\cP^{iR}\star \cP$ and $\cP^R\star\cP^j$ are the left and right convolutions of the rows and columns, respectively.
That means in particular that $\cP^R\star\cP^j$ can be written as a multiple cone
\[
 \cP^R\star \cP^j\cong \cone\bigl((\dots\cone(\cone(\cP^{\ell R}\star \cP^j\to \cP^{(\ell-1)R}\star \cP^j)\to \cP^{(\ell-2)R}\star \cP^j))\dots)\to \cP^{0R}\star \cP^j\bigr)\,;
\]
see e.g.\ \cite[Section 2]{Kawstack} for the notion of convolutions in triangulated categories.
The strategy of the proof of Proposition A and Theorem C that will be given in Section \ref{mainproof} is to start with the computation of the $\cP^{iR}\star\cP^j$, then use the results to compute the $\cP^R\star \cP^j$, and finally deduce the 
desired formulae for $\cP^R\star \cP$. 

In the following subsections we will do the computation of the $\cP^{iR}\star \cP^{j}$ in the case $\ell=1$ (and some of it for $\ell=2$) on the level of the functors. That means that we will compute the compositions $H_{\ell,n}^{iR}\circ H_{\ell,n}^j$.
 We will see that the undesired terms (compare (\ref{error})) are of a form which give them a good chance to chancel out when passing to $H_{\ell,n}^R\circ H_{\ell,n}$. But we will not compute the 
induced maps $H_{\ell,n}^{iR}H_{\ell,n}^j\to H_{\ell,n}^{iR}H_{\ell,n}^{j+1}$ and $H_{\ell,n}^{iR}H_{\ell,n}^j\to H_{\ell,n}^{(i-1)R}H_{\ell,n}^j$. 
Hence, we will not see that the terms really chancel until Section \ref{mainproof} where the computations are performed for general $\ell$ on the level of the FM kernels.
\subsection{Invariants of inflations}\label{Dansection}
For the computation of the invariants we will use the following principle; compare \cite[Lemma 2.2]{Dan} and \cite[Remark 2.4.2]{Sca1}.
Let $M$ be a variety and $G$ a finite group that we consider acting trivially on $M$. Let $\cE=(E,\lambda)\in \D^b_G(M)$ such that
$E=\oplus_I E_i$ in $\D^b(M)$ for some finite index set $I$. Let us assume that there is an action of $G$ on $I$ such that $\lambda_{g}(E_i)=E_{g(i)}$ for all $i\in I$.
We say that the $G$-action on $I$ \textit{is induced by} the $G$-linearisation of $E$. 
We denote $E_i$ together with the $G_i$-linearisation $(\lambda_{g|E_i})_{g\in G_i}$ by $\cE_i\in \D^b_{G_i}(M)$ where $G_i=\Stab_G(i)$.
The induced action of $G$ on $I$ is transitive if and only if $\cE\cong\Inf_{G_i}^G \cE_i$ for any $i\in I$; see \cite[Section 8.2]{BL}.
In that case, for every $i\in I$ the projection $E\to E_i$ induces the isomorphism $\cE^G\cong \cE_i^{G_i}$. The inverse is given by $s\mapsto \oplus_{[g]\in G/G_i}\lambda_g(s)$. 

Let the action of $G$ on $I$ be not transitive with $i_1,\dots,i_k$ being a system of representatives of the $G$-orbits. Then $\cE\cong \Inf_{G_{i_1}}^G \cE_1\oplus\dots\oplus \Inf_{G_{i_k}}^G \cE_k$ and 
\begin{align}\label{Danlem}\cE^G=\cE_1^{G_{i_1}}\oplus \dots\oplus \cE_k^{G_{i_k}}\,.\end{align}  
\subsection{The case $\ell=1$}\label{l1}
Let $\ell=1$ and $n>1$. 
We set $H:=H_{1,n}$ and aim to compute $H^R\circ H\colon \D^b(X\times X)\to \D^b(X\times X)$
using the descriptions (\ref{Hiobj}) and (\ref{HiRobj}) of $H^j$ and $H^{iR}$.
For $E\in \D^b(X\times X)$ we have 
\[H^{0R}H^0(E)\cong \bigl[\alt_{[n]}\otimes \delta_{[n]}^!\bigl(\bigoplus_{a\in[n+1]}\delta_{[n+1]\setminus\{a\}*}(E\otimes \alt_{[n+1]\setminus \{a\}})\bigr)     \bigr]^{\sym_{[n]}}\,.
\]
For $\sigma\in \sym_{[n]}$ the $\sym_{[n]}$-linearisation of  
$\bigoplus_{a\in[n+1]}\delta_{[n]}^!\delta_{[n+1]\setminus\{a\}*}(E\otimes \alt_{[n+1]\setminus \{a\}})$ maps the summand  $\delta_{[n]}^!\delta_{[n+1]\setminus \{a\}*}(E\otimes \alt_{[n+1]\setminus \{a\}})$ to
 $\delta_{[n]}^!\delta_{[n+1]\setminus \{\sigma(a)\}}(E\otimes \alt_{[n+1]\setminus \{\sigma(a)\}})$. Thus, the induced action on the index set $[n+1]$ is given by $a\mapsto \sigma(a)$. Hence there are two $\sym_{[n]}$-orbits, namely $[n]$ and $\{n+1\}$. 
We have $\Stab_{\sym_{[n]}}(n+1)=\sym_{[n]}$ and $\Stab_{\sym_{[n]}}(n)=\sym_{[n-1]}$.
As explained in Section \ref{Dansection} it follows that 
\[H^{0R}H^0(E)\cong \delta_{[n]}^!\delta_{[n]*}(E)^{\sym_{[n]}}\oplus 
\delta_{[n]}^!\delta_{[n-1]\cup \{n+1\}*}(E)^{\sym_{[n-1]}}\,.\]
The first direct summand equals $E\otimes \pr_1^*\Lambda_{n}^*(X)$ by (\ref{GRG}). 
\begin{conv}
For $\{b\}\subset [m]$ a set with one element we set $\Delta_{\{b\}}=X^m$. Furthermore, we set $\Lambda_1^*(X)=\reg_X[0]$. 
These notations occur in this subsection in the case that $n=2$ and later in the more general case that $n=\ell+1$.  
\end{conv}
For the computation of the second summand, consider the commutative diagram of closed embeddings 
\begin{align*}\xymatrix{
            & \Delta_{[n]} \ar^{\delta_{[n]}}[dr]\ar^{r}[d]&  \\
   \Delta_{[n+1]} \ar^{u}[ur]\ar_{v}[dr]   & \Delta_{[n-1]}\ar^{\delta_{[n-1]}}[r] &  X^{n+1}\,.    \\
        &   \Delta_{[n-1]\cup\{n+1\}}\ar^{s}[u]\ar_{\delta_{[n-1]\cup\{n+1\}}}[ur]    &
} \end{align*}  
It fulfils the properties of diagram (\ref{selfintdiag}) which means that $\Delta_{[n+1]}=\Delta_{[n]}\cap \Delta_{[n-1]\cup\{n+1\}}$ and that this intersection is transversal inside $\Delta_{[n-1]}$. This allows us to apply Corollary 
\ref{dintcor} to get
\begin{align}\label{preinva}\delta_{[n]}^!\delta_{[n-1]\cup\{n+1\}*}(E)\cong u_*(v^*(\_)\otimes \wedge^{*}N_{\delta_{[n-1]}|\Delta_{[n+1]}}\otimes \omega_v)[-\codim v]\,.\end{align}
Under the isomorphisms $\Delta_{[n+1]}\cong X$ and $\Delta_{[n]}\cong X\times X\cong \Delta_{[n-1]\cup\{n+1\}}$ the embeddings $u$ and $v$ equal the diagonal embedding $\iota\colon X\to X\times X$. Thus, $\codim v=\dim X=d$ and $\omega_v\cong \wedge^dN_v\cong \omega_X^{-1}$. 
It follows together with (\ref{partialnormal}) that after taking $\sym_{[n-1]}$-invariants in (\ref{preinva}) we get $\delta_{[n]}^!\delta_{[n-1]\cup\{n+1\}*}(E)^{\sym_{[n-1]}}\cong \iota_*(\iota^*(E)\otimes \Lambda_{n-1}^*(X)\otimes \omega_X^{-1})[-d]$.
In summary, 
\begin{align}\label{00}
 H^{0R}H^0(E)\cong E \otimes\pr_1^* \Lambda_{n}^*(X)\oplus \iota_*(\iota^*(E)\otimes \Lambda_{n-1}^*(X)\otimes \omega_X^{-1})[-d].
\end{align}
The computation of the other three functor compositions is easier. Note that $\delta_{[n+1]}=\delta_{[n]}\circ u$ and $u^!\cong u^*(\_)\otimes \omega_X^{-1}[-d]$. Hence,
\begin{align}\label{01}
 H^{0R}H^1(E)\cong \bigl[\alt_{[n]}\otimes \delta_{[n]}^!\delta_{[n+1]*}(\iota^*E\otimes \alt_{[n+1]}) \bigr]^{\sym_{[n]}}\cong\, &\delta_{[n]}^!\delta_{[n]*}u_*\iota^*(E)^{\sym_{[n]}}\\\notag\overset{(\ref{GRG})}\cong &\iota_*\bigl(\iota^*(E)\otimes \Lambda_{n}^*(X)\bigr)\,,
\end{align}
 \begin{align}\label{10}
 H^{1R}H^0(E)\cong\, &\iota_*\bigl[\alt_{[n]}\otimes \delta_{[n+1]}^!\bigl(\bigoplus_{a=1}^{n+1}\delta_{[n+1]\setminus \{a\}*}(E\otimes \alt_{[n+1]\setminus \{a\}})\bigr) \bigr]^{\sym_{[n+1]}}
 \\
 \notag
 \overset{(\ref{Danlem})}\cong& \iota_*\delta_{[n+1]}^!\delta_{[n]*}(E)^{\sym_{[n]}}
 \\\notag
 \cong\,& \iota_*u^!\delta_{[n]}^!\delta_{[n]*}\iota^*(E)^{\sym_{[n]}}
 \\\notag
 \overset{(\ref{GRG})}\cong &\iota_*\bigl(\iota^*(E)\otimes \Lambda_{n}^*(X)\otimes \omega_X^{-1}\bigr)[-d]\,,
\end{align}
\begin{align}\label{11}
 H^{1R}H^1(E)\cong\, \iota_*\bigl[\delta_{[n+1]}^!\delta_{[n+1]*}\iota^*(E)\bigr]^{\sym_{[n+1]}}\overset{(\ref{GRG})}\cong 
 \iota_*\bigl(\iota^*(E)\otimes \Lambda_{n+1}^*(X)\bigr)\,.
\end{align}
Let $X=C$ be a smooth curve. By Lemma \ref{Lambdacurvesurface} we have $\Lambda_m^*(C)=\reg_C[0]$ for all $m\ge 0$. Plugging this into (\ref{00}), (\ref{01}), (\ref{10}), and (\ref{11}) we get
\begin{align}\label{cd}
\begin{CD}
H^{1R}H^0 
@>{}>>  H^{1R}H^1
 \\
@V{}VV
@V{}VV \\
H^{0R}H^0
@>{}>>
H^{0R}H^1
\end{CD} 
\quad\cong\quad 
\begin{CD}
 \iota_*(\iota^*(\_)\otimes \omega_C^{-1})[-1]
@>{}>>  
\iota_*\iota^*
 \\
@V{}VV
@V{}VV \\
\id \oplus \iota_*(\iota^*(\_)\otimes \omega_C^{-1})[-1]
@>{}>>
\iota_*\iota^*
\end{CD} 
\end{align}
We will see in Section \ref{curveinduced} that the right-hand vertical map of this diagram as well as the component $\iota_*(\iota^*(\_)\otimes \omega_C^{-1})[-1]\to \iota_*(\iota^*(\_)\otimes \omega_C^{-1})[-1]$ of the left-hand vertical map are isomorphisms. 
Thus, by taking cones in the diagram (\ref{trianglediag}) enlarging (\ref{cd}) we get $H^RH^0\cong \id$ and $H^RH^1=0$. Considering the triangle $H^RH\to H^RH^0\to H^RH^1$ shows $H^RH=\id$, i.e.\ Proposition A (i) in the case $\ell=1$.

For $X$ a smooth surface we have $\Lambda^*_m(X)\cong S_X^{-[0,m-1]}$. This gives
 \[
\begin{CD}
H^{1R}H^0 
@>{}>>  H^{1R}H^1
 \\
@V{}VV
@V{}VV \\
H^{0R}H^0
@>{}>>
H^{0R}H^1
\end{CD} 
\quad\cong\quad 
\begin{CD}
 \iota_*S_X^{-[1,n]}\iota^*
@>{}>>  
\iota_*S_X^{-[0,n]}\iota^*
 \\
@V{}VV
@V{}VV \\
\bar S_X^{-[0,n-1]} \oplus \iota_* S_X^{-[1,n-1]}\iota^*
@>{}>>
\iota_* S_X^{-[0,n-1]}\iota^*
\end{CD} 
\]
where $\bar S_X^{-1}=(\_)\otimes \pr_1^*\omega_X^{-1}[-2]$. Again, we will see later that all components of the form $\iota_*S_X^{-k}\iota^*\to \iota_*S_X^{-k}\iota^*$ in the diagram are isomorphisms which gives by taking cones 
\[H^RH^0\cong \bar S_X^{-[0,n-1]}\oplus \iota_*S_X^{-n} \iota^*[1]\quad,\quad H^RH^1\cong  \iota_* S_X^{-n}\iota^*[1]\]
and finally $H^RH^0\cong \bar S_X^{-[0,n-1]}$ 
as claimed in Theorem C. 
\subsection{Orthogonality in the curve case}
We want to compute that $H_{1,n}^RH_{0,n+1}=0$ for $X=C$ a curve which is one instance of Proposition A (ii). 
We have 
\begin{align}
\label{e1} H_{1,n}^{1R}H_{0,n+1}(E)&\cong \iota_*\bigl(\delta_{[n+1]}^!\delta_{[n+1]*}(E)\bigr)^{\sym_{n+1}}\cong \iota_*(E\otimes \Lambda_{n+1}^*(X))\,,\\ 
\label{e2} H_{1,n}^{0R}H_{0,n+1}(E)&\cong \delta_{[n]}^!\delta_{[n+1]*}(E)^{\sym_{n}}\cong \delta_{[n]}^!\delta_{[n]*}u_*(E)^{\sym_n}\cong \iota_*(E \otimes \Lambda_n^*(X))\,.
\end{align}
For $X=C$ a curve this gives $H_{1,n}^{1R}H_{0,n+1}\cong \id$ and $H_{1,n}^{0R}H_{0,n+1}\cong \id$. It follows that $H_{1,n}^{R}H_{0,n+1}=0$ because of the exact triangle 
\begin{align}\label{12triangle}H_{1,n}^{1R}H_{0,n+1}\to H_{1,n}^{0R}H_{0,n+1}\to H_{1,n}^{R}H_{0,n+1}\,.\end{align}                                                                                                                                                                   
\subsection{Non-orthogonality in the surface case}\label{nonorth}
For $X$ a smooth surface (\ref{e1}) and (\ref{e2}) give 
$H_{1,n}^{1R}H_{0,n+1}\cong\iota_* S_X^{-[0,n]}$ and $H_{1,n}^{0R}H_{0,n+1}\cong\iota_* S_X^{-[0,n-1]}$. The components $\iota_*S_X^{-k}\to \iota_*S_X^{-k}$ of the induced map $H_{1,n}^{1R}H_{0,n+1}\to H_{1,n}^{0R}H_{0,n+1}$ are isomorphisms for $k=0,\dots, n-1$. Thus, 
\begin{align}\label{surfaceco}H_{1,n}^RH_{0,n+1}\cong \iota_*S_X^{-n}[1]\end{align}
by triangle (\ref{12triangle}). 
Let $E,F\in \D^b(X)$.
As in Section \ref{simNaka} we set $H_{0,n+1}(F)=H_{0,n+1}\circ I_F$ and $H_{1,n}(E)=H_{1,n}\circ I_E$. Note that the domain of $H_{0,n+1}(F)$ is $\D^b(\pt)$ and the functor is given by sending the generator $\C[0]$ to $\delta_{[n+1]*}(F)$.
By (\ref{surfaceco}) we have
\[
H_{1,n}(E)^RH_{0,n+1}(F)\cong \pr_{2*}\sHom\bigl(E\boxtimes \reg_X, \iota_*(F\otimes \omega_X^{-n})\bigr)[-2n+1]\cong \sHom(E, F\otimes \omega_X^{-n})\bigr[-2n+1]\,.  
\]
That seems not to fit well with the commutator relation (\ref{Nakarel}) which states  
$q_{1,-n}\circ q_{0,n+1}=0$.

\subsection{The case $\ell=2$, $n=2$}\label{ln2}
Set $H=H_{2,2}\colon \D^b_{\sym_2}(X\times X^2)\to \D^b_{\sym_4}(X^4)$.
We consider this case in order to illustrate why the assumption that $n>\ell$ is necessary for Proposition A and Theorem C.
We have 
\[H^{0R}H^0(E)\cong \bigl[\alt_{[2]}\otimes \delta_{[2]}^!\bigl(\bigoplus_{J\subset [4]\,,\, |J|=2}\delta_{J*}(E\otimes \alt_J)\bigr)     \bigr]^{\sym_{[2]}}\,.\]
For $J=[2]$ we get the direct summand $\delta_{[2]}^!\delta_{[2]*}(E)^{\sym_{[2]}}\cong E\otimes \pr_X^*\Lambda_{2}^*(X)$ which has the shape of a fully faithful functor for $X$ a curve and the shape of a $\P^1$-functor with cotwist $\bar S_X^{-1}$ for $X$ a surface. For $J=[3,4]$ we consider the diagram 
\[
 \begin{CD}
 \Delta_{[2]}\cap\Delta_{[3,4]} 
@>{u}>>  
\Delta_{[2]}
 \\
@V{v}VV
@V{\delta_{[2]}}VV \\
\Delta_{[3,4]}
@>{\delta_{[3,4]}}>>
X^4
\end{CD} 
\]
which is a transversal intersection. Under the isomorphism $\Delta_{[2]}\cong X\times X^2$ the subvariety $X\times X\cong \Delta_{[2]}\cap \Delta_{[3,4]}\subset \Delta_{[2]}$ equals $X\times \Delta_X$. Thus, for an appropriate choice of $E$ the direct summand 
$[\alt_{[2]}\otimes \delta_{[2]}^!\delta_{[3,4]*}(E\otimes \alt_{[3,4]})]^{\sym_{[2]}}$ of $H^{0R}H^0(E)$ is supported on the whole $X\times \Delta_X$. On the other hand one can see easily that all direct summands of $H^{iR}H^j$ for $(i,j)\neq(0,0)$ are supported on one of the subvarieties
 $D_1$, $D_2$ or $D_{[2]}$ of $X\times X^2$, neither of them containing $X\times \Delta_X$. It follows that the  direct summand $[\alt_{[2]}\otimes \delta_{[2]}^!\delta_{[3,4]*}(E\otimes \alt_{[3,4]})]^{\sym_{[2]}}$ of $H^{0R}H^0$ survives taking the multiple cones in the diagram 
(\ref{trianglediag}) which prevents $H^RH$ from being isomorphic to $\id$ or $\bar S_X^{-[0,1]}$.     
%
%
\section{Proof of the main results}\label{mainproof}
Let $X$ be a smooth variety of dimension $d:=\dim X$.
Let furthermore $\ell,n\in \N$ with $n>\max\{\ell,1\}$ and consider $\cP=\cP_{\ell,n}$; see Section \ref{cP}.
We will compute the convolution products $\cP^{iR}\star \cP^j$. In the case that $X$ is a curve or a surface that will lead to formulae for $\cP^R\star\cP$. 
\subsection{Computation of the direct summands}\label{directsummand}
For $(I_1,J_1,\mu_1)\in \Index(j)$ and $(I_2,J_2,\mu_2)\in \Index(i)$ we set $K_1:=I_1\cup \mu_1^{-1}(J_2), K_2:=I_2\cup\mu_2^{-1}(J_1)\subset [\ell]$ and $\mu:=\mu^{-1}_{2|\overline{J_1\cup J_2}}\circ \mu_{1|\bar K_1}$ as a bijection between  $\bar K_1=[\ell]\setminus K_1$ and $\bar K_2=[\ell]\setminus K_2$.
Furthermore, let $\Gamma_{K_1,K_2,\mu}\subset X\times X^\ell\times X\times X^\ell$ be the subvariety given by
\[\Gamma_{K_1,K_2,\mu}:=\bigl\{(x,x_1,\dots,x_\ell,z,z_1,\dots,z_\ell)\mid x=x_a=z_b=z\forall a\in K_1,b\in K_2,x_c=z_{\mu(c)}\forall c\in \bar K_1 \bigr\}.\]
Consider the diagram
\begin{align}\label{graphintdiag}\xymatrix{
            & X\times X^\ell\times \Gamma_{I_2,J_2,\mu_2} \ar^{\iota_2}[dr]\ar^{r}[d]& & \\
   T \ar_{\pi_{13}}^\cong[dd]\ar^{u}[ur]\ar_{v}[dr]   & X\times X^\ell\times \Delta_{J_1\cap J_2}\times X\times X^\ell\ar^t[r] &  X\times X^\ell\times X^{n+\ell}\times X\times X^\ell\ar_{\pr_{13}}[dd] &   \\
        &   \Gamma_{I_1,J_1,\mu_1}\times X\times X^\ell\ar^{s}[u]\ar_{\iota_1}[ur] \ar_{\pi'_{13}}^\cong[d]   & &\\
\Gamma_{K_1,K_2,\mu}\ar^{\tilde v}[r]\ar_{\pi_1}^\cong[d]& D_{I_1}\times X\times X^\ell \ar[r] \ar_{\pi'_1}[d] & X\times X^\ell\times X\times X^\ell\ar^{\,\,\,\quad p}[r] \ar_{\pr_1}[d] & X\\
D_{K_1}\ar^\alpha[r]& D_{I_1}\ar[r] & X\times X^\ell & 
} \end{align} 
where $T:=(\Gamma_{I_1,J_1,\mu_1}\times X\times X^\ell)\cap (X\times X^\ell\times \Gamma_{I_2,J_2,\mu_2})$, $\pi_{13}$ and $\pi_{13}'$ are the restrictions of the projection $\pr_{13}$, 
$\pi_1$ and $\pi_1'$ are the restrictions of the projection $\pr_1$,
$p$ is the projection to the third factor, and all the other arrows denote the appropriate closed 
embeddings.
Note that $J_1\cap J_2\neq \emptyset$ because of the assumption that $n>\ell$. We have
\[T=\begin{Bmatrix}(x,x_1,\dots,x_\ell,y_1,\dots,y_{n+\ell},z,z_1,\dots,z_\ell)&\mid x=x_a=y_b=z_c=z\,,\,\, x_d=y_{\mu_1(d)}=z_{\mu(d)}\\&\forall\, a\in K_1,b\in J_1\cup J_2,c\in K_2,\, d\in \bar K_1 \end{Bmatrix}.
\]
We see that a point in $T$ is determined by its $(x,x_1,\dots,x_\ell)$ component. 
Thus, $\pi_{13}$ and $\pi_1$ are isomorphisms. Similarly, $\pi_{13}'$ is an isomorphism.
Furthermore, we have $\codim v=\codim \tilde v=(k-j+\ell+1)d$ where $k:=|K_1|=|K_2|$. Let $\pi_2\colon \Gamma_{K_1,K_2,\mu}\to X\times X^\ell$ be the restriction of the projection $\pr_2\colon X\times X^\ell\times X\times X^\ell\to  X\times X^\ell$ to the second factor. By the adjunction formula
\begin{align*}\pi_{13*}\omega_v\cong \omega_{\tilde v}\cong 
\pi_1^*\omega_{D_{K_1}}\otimes \tilde v^*\pi_1'^*\omega_{D_{I_1}}^{-1}\otimes \pi_2^*\omega_{X\times X^\ell}^{-1}\cong \omega_\alpha \otimes \pi_2^*\omega_{X\times X^\ell}^{-1}
\cong p^*\omega_X^{-(k-j)} \otimes \pi_2^*\omega_{X\times X^\ell}^{-1}
\end{align*}
where the last isomorphism is due to the fact that on $\Gamma_{K_1,K_2,\mu}$ the projection to the first factor $X\times X^\ell\times X\times X^\ell\to X$ coincides with the projection $p$ to the third factor.
It follows that 
$ \pi_{13*}\omega_v \otimes \pi_2^*\omega_{X\times X^\ell}\cong p^*\omega_X^{-(k-j)}$.
Note that $|J_1\cup J_2|=n+k$ and $|J_1\cap J_2|=n+i+j-k$.
One can check that diagram (\ref{graphintdiag}) with the two bottom lines removed satisfies the properties of diagram (\ref{selfintdiag}), i.e.\ the square consisting of $u$, $v$, $s$, and $r$ is a transversal intersection.
Using Corollary \ref{dintcor} together with (\ref{partialnormal}) we get
\begin{align}
\notag (\reg_{I_2,J_2,\mu_2})^R\star\reg_{I_1,J_1,\mu_1} &\cong \pr_{13*}\sHom(\pr_{23}^*\reg_{I_2,J_2,\mu_2},\pr_{12}^*\reg_{I_1,J_1,\mu_1})\otimes \pr_2^*\omega_{X\times X^\ell}  [(\ell+1)\cdot d]
\\ \label{summandT}
&\cong \reg_{K_1,K_2,\mu}\otimes p^*\bigl(\wedge^*(T_X\otimes \rho_{J_1\cap J_2})\otimes \omega_X^{-(k-j)}  \bigr) [-(k-j)\cdot d]
\\\label{summandO}
&\cong \reg_{K_1,K_2,\mu}\otimes p^*\bigl(\wedge^{-*}(\Omega_X\otimes \rho_{J_1\cap J_2})\otimes \omega_X^{-(n+i-1)}  \bigr)[-(n+i-1)\cdot d]\,.
\end{align}
Note that here $(\reg_{I_2,J_2,\mu_2})^R\star\reg_{I_1,J_1,\mu_1}=\pr_{13*}(\pr_{23}^*\reg_{I_2,J_2,\mu_2})^R\otimes\pr_{12}^* \reg_{I_1,J_1,\mu_1}$ is the non-equivariant convolution product, i.e. the functor of invariants is not applied; compare (\ref{conprod}).

\subsection{The induced maps}\label{inducedmapsummands}
Let $c\in \bar I_1$ with $\mu_1(c)\in J_2$ and set $I_1'=I_1\cup\{c\}$, $J_1'=J_1\cup\{\mu_1(c)\}$, and $\mu_{1}':=\mu_{1|\bar I_1\setminus\{c\}}$. The restriction $\reg_{I_1,J_1,\mu_1}\to \reg_{I_1',J_1',\mu_1'}$ induces for $q=0,\dots,(n+i+j-k)d$ a map 
\[\Coho^{(n+i-1)d-q}\bigl((\reg_{I_2,J_2,\mu_2})^R\star\reg_{I_1,J_1,\mu_1}\bigr)\to \Coho^{(n+i-1)d-q}\bigl((\reg_{I_2,J_2,\mu_2})^R\star\reg_{I'_1,J'_1,\mu'_1}\bigr)\]
which corresponds under the isomorphism (\ref{summandO}) to a map
\[\reg_{K_1,K_2,\mu}\otimes p^*\bigl(\wedge^{q}(\Omega_X\otimes \rho_{J_1\cap J_2})\otimes \omega_X^{-(n+i-1)}\bigr)\to \reg_{K_1,K_2,\mu}\otimes p^*\bigl(\wedge^{q}(\Omega_X\otimes \rho_{J_1'\cap J_2})\otimes \omega_X^{-(n+i-1)}\bigr)\,.\]     
By Remarks \ref{normalinduced} and \ref{partialinduced}, it is given by the canonical inclusion $\rho_{J_1\cap J_2}\to \rho_{J_1'\cap J_2}=\rho_{(J_1\cap J_2)\cup\{\mu_1(c)\}}$.
To see this, set $\tilde \Delta_J=X\times X^\ell\times \Delta_J\times X\times X^\ell$ for $J\subset [n+\ell]$ and consider the diagram
\[
\xymatrix{
         & X\times X^\ell\times \Gamma_{I_2,J_2,\mu_2} \ar^\id[r]\ar[d]  & X\times X^\ell\times \Gamma_{I_2,J_2,\mu_2} \ar^{\iota_2}[dr]\ar^{r}[d]&  \\
   T \ar^{u}[ur]\ar_{v'}[dr] & \tilde \Delta_{J_1'\cap J_2} \ar^{w'}[r] & \tilde \Delta_{J_1\cap J_2}\ar^{t\quad}[r] & X\times X^\ell\times X^{n+\ell}\times X\times X^\ell\,.    \\
    & \Gamma_{I'_1,J'_1,\mu'_1}\times X\times X^\ell\ar^{z'}[r]\ar[u]   &   \Gamma_{I_1,J_1,\mu_1}\times X\times X^\ell\ar^{s}[u]\ar_{\iota_1}[ur]    &
} 
\]
Similarly, let $c\in \bar I_2$ with $\mu_2(c)\in J_1$ and set $I_2'=I_2\cup\{c\}$, $J_2'=J_2\cup\{\mu_2(c)\}$, and $\mu_{2}':=\mu_{2|\bar I_2\setminus\{c\}}$. Then the restriction $\reg_{I_2,J_2,\mu_2}\to \reg_{I_2',J_2',\mu_2'}$ induces for $q=0,\dots,(n+i+j-k)d$
 a map \[\Coho^{(k-j)d+q}\bigl((\reg_{I'_2,J'_2,\mu'_2})^R\star\reg_{I_1,J_1,\mu_1}\bigr)\to \Coho^{(k-j)d+q}\bigl((\reg_{I_2,J_2,\mu_2})^R\star\reg_{I_1,J_1,\mu_1}\bigr)\]
which corresponds under the isomorphism (\ref{summandT}) to a map
\[\reg_{K_1,K_2,\mu}\otimes p^*\bigl(\wedge^{q}(T_X\otimes \rho_{J_1\cap J'_2})\otimes \omega_X^{-(k-j)}\bigr)\to \reg_{K_1,K_2,\mu}\otimes p^*\bigl(\wedge^{q}(T_X\otimes \rho_{J_1\cap J_2})\otimes \omega_X^{-(k-j)}\bigr)\]     
which is given by the canonical surjection $\rho_{J_1\cap J'_2}=\rho_{(J_1\cap J_2)\cup\{\mu_2(c)\}}\to \rho_{J_1\cap J_2}$.
In particular, the induced map $\Coho^{(k-j)d}\bigl((\reg_{I'_2,J'_2,\mu'_2})^R\star\reg_{I_1,J_1,\mu_1}\bigr)\to \Coho^{(k-j)d}\bigl((\reg_{I_2,J_2,\mu_2})^R\star\reg_{I_1,J_1,\mu_1}\bigr)$ is given by the identity on $\reg_{K_1,K_2,\mu}\otimes p^*\omega_X^{-(k-j)}$. 
\subsection{Computation of the $\cP^{iR}\star\cP^j$}\label{ifsub}
We make use of the principle explained in Section \ref{Dansection} in order to compute $\cP^{iR}\star\cP^j\cong \pr_{13*}(\pr_{23}^*\cP^{iR}\otimes \pr_{12}^*\cP)^{1\times \sym_{n+\ell}\times 1}$.
The $\sym_\ell\times \sym_{n+\ell}\times \sym_\ell$-linearisation of 
\begin{align*}\pr_{13*}(\pr_{23}^*\cP^{iR}\otimes \pr_{12}^*\cP)\cong\bigoplus_{\Index(j)\times\Index(i)} \pr_{13*}\bigl(\pr_{23}^*\cP(I_2,J_2,\mu_2)^R\otimes \pr_{12}^*\cP(I_1,J_1,\mu_1)\bigr)\end{align*}
induces on the index set $\Index(j)\times\Index(i)$ the action 
\[\sigma_1\times \tau\times\sigma_2\colon \bigl(I_1,J_1,\mu_1;I_2,J_2,\mu_2\bigr)\mapsto\bigl(\sigma_1(I_1),\tau(J_1),\tau\circ \mu_1\circ \sigma_1^{-1};
 \sigma(I_2),\tau(J_2),\tau\circ \mu_2\circ \sigma_2^{-1}\bigr)\,.
\]
Let $O(i,j)$ be a set of representatives of the $1\times \sym_{n+\ell}\times 1$-orbits in $\Index(i)\times \Index(j)$. One can check that $O(i,j)$ is in bijection with
\begin{align*}\Index(i,j):=
\begin{Bmatrix}(I_1,K_1,I_2,K_2,\mu)&\mid I_1\subset K_1\subset[\ell]\supset K_2\supset I_2,\, |I_1|=j,|I_2|=i,\\&|K_1|=|K_2|,\, \mu\colon \bar K_1\to \bar K_2 \text{ bijection} \end{Bmatrix}.
\end{align*}
via the assignment
\[(I_1,J_1,\mu_1;I_2,J_2,\mu_2)\mapsto\bigl(I_1, K_1= I_1\cup\mu_1^{-1}(J_2), I_2, K_2= I_2\cup\mu_2^{-1}(J_1),\mu= \mu_{2|\overline{J_1\cup J_2}}^{-1}\circ \mu_{1|\bar K_1}\bigr)\,. 
\]
Furthermore, the $\sym_{n+\ell}$-stabiliser of $(I_1,J_1,\mu_1;I_2,J_2,\mu_2)$ is $\sym_{J_1\cap J_2}$.   
 It follows by (\ref{Danlem}) and (\ref{summandT}) that the equivariant convolution product $\cP^{iR}\star \cP^j$ is given by
\begin{align}
\notag \cP^{iR}\star\cP^j&\cong \pr_{13*}(\pr_{23}^*\cP^{iR}\otimes \pr_{12}^*\cP)^{1\times \sym_{n+\ell}\times 1}\\
\label{PiPjsumorbit}&\cong\bigoplus_{O(i,j)} \pr_{13*}\bigl(\pr_{23}^*\cP(I_2,J_2,\mu_2)^R\otimes \pr_{12}^*\cP(I_1,J_1,\mu_1)\bigr)^{1\times \sym_{J_1\cap J_2}\times 1}\\
\label{PiPjsum}&\cong\bigoplus_{\Index(i,j)}\reg_{K_1,K_2,\mu}\otimes\alt_{K_1\setminus I_1}\otimes \alt_{K_2\setminus I_2}\otimes  p^*\left(\Lambda_{n+i+j-k}^*(X)\otimes \omega_X^{-(k-j)}\right)[-(k-j)\cdot d]\,.
\end{align}
We denote the direct summands of (\ref{PiPjsum}) by $\cQ(I_1,K_1,I_2,K_2,\mu)$.
Note that the $\sym_\ell\times \sym_\ell$-linearisation on $\cP^{iR}\star \cP^j$ induces on $O(i,j)\cong \Index(i,j)$ the action
\[\sigma_1\times \sigma_2\colon(I_1, K_1,I_2,K_2,\mu)\mapsto (\sigma_1(I_1), \sigma_1(K_1),\sigma_2(I_2),\sigma_2(K_2),\sigma_2\circ \mu\circ\sigma_1^{-1} )\,. 
\]
The $\sym_\ell\times \sym_\ell$-stabiliser of $(I_1,K_1,I_2,K_2,\mu)$ is equal to $\sym_{I_1}\times \sym_{K_1\setminus I_1}\times \sym_{\bar K_1,\mu}\times \sym_{I_2}\times \sym_{K_2\setminus I_2}$. 
With this notation we indicate the subgroup of $\sym_\ell\times \sym_\ell$ given by
\[
 \bigl\{(\sigma_1,\sigma_2)\mid \sigma_1(I_1)=I_1,\,\sigma_1(K_1)=K_1,\,\sigma_2(I_2)=I_2,\, \sigma_2(K_2)=K_2,\, (\sigma_2\circ \mu)_{|\bar K_1}=(\mu\circ \sigma_1)_{|\bar K_1}   \bigr\}\,.
\]
Furthermore, the orbits of the $\sym_\ell\times \sym_\ell$-action on $\Index(i,j)$ are given by
\[\Index(i,j)_k:=\{|K_1|=|K_2|=k\}\subset \Index(i,j)\quad\text{for $k=\max\{i,j\},\dots,\ell$}\,.
\]
A representative of the orbit $\Index(i,j)_k$ is $([j],[k],[i],[k],e)$ where $e=\id_{[k+1,\ell]}$. 
We get
\begin{align}\label{PiPjInf}
\cP^{iR}\star\cP^{j}\cong\bigoplus_{k=\max\{i,j\}}^\ell \Inf_{\sym_j\times\sym_{k-j}\times \sym_{\ell-k,e}\times \sym_i\times \sym_{k-i}}^{\sym_\ell\times\sym_\ell}\cQ([j],[k],[i],[k],e)\,. 
\end{align}
We denote the direct summands by $\cP(i,j)_k:=\Inf_{\sym_j\times\sym_{k-j}\times \sym_{\ell-k,e}\times \sym_i\times \sym_{k-i}}^{\sym_\ell\times\sym_\ell}\cQ([j],[k],[i],[k],e)$.
\subsection{Spectral sequences}\label{specs}
For $j=0,\dots,\ell$ there is the spectral sequence $\cE(j)$ associated to the functor $\sHom(\_,\pr_{12}^*\cP^j)$ and the complex $\pr_{23}^*\cP$ given by
\[\cE(j)_1^{p,q}=\sExt^{q}(\pr_{23}^*\cP^{-p},\pr_{12}^*\cP^j)\,\Longrightarrow\, \cE(j)^{p+q}=\sExt^{p+q}(\pr_{23}^*\cP,\pr_{12}^*\cP^j)\,;\]
see e.g.\ \cite[Remark 2.67]{Huy}.
By Section \ref{directsummand}, every term of this spectral sequence is finitely supported over $X\times X^\ell\times X\times X^\ell$ hence $\pr_{13*}$-acyclic. Since the functors $(\_)^{\sym_{n+\ell}}$, and 
$(\_)\otimes \pr_2^*\omega_{X\times X^\ell}$ are exact, we can apply the functor $\pr_{13*}(\_)^{1\times \sym_{n+\ell}\times 1}\otimes \pr_2^*\omega_{X\times X^\ell}$ to every level of the spectral sequence $\cE(j)$ to get a spectral sequence 
in $\Coh_{\sym_\ell\times \sym_\ell}(X\times X^\ell\times X\times X^\ell)$. Shifting this spectral sequence by $(\ell+1)d$ in the $q$-direction we get the spectral sequence $E(j)$ with the property
\[E(j)_1^{p,q}=\Coho^q((\cP^{-p})^R\star\cP^j)\,\Longrightarrow\,E(j)^{p+q}=\Coho^{p+q}(\cP^R\star \cP^j)\,.\]
Similarly, we get a spectral sequence 
\begin{align}\label{specs2}E_1^{p,q}=\Coho^q(\cP^R\star \cP^p)\,\Longrightarrow\, E^{p+q}=\Coho^{p+q}(\cP^R\star\cP)\,.\end{align}
\subsection{Long exact sequences}
For $k\ge 1$ there is the long exact sequence of $\sym_k$-representations
\begin{align*}
 0\to \C\to \dots\to \Inf_{\sym_i\times \sym_{k-i}}^{\sym_k}\alt_i\xrightarrow{\check d^i}
 \Inf_{\sym_{i+1}\times \sym_{k-i-1}}^{\sym_k}\alt_{i+1}\to\dots\to \alt_k\to 0
\end{align*}
which we denote by $\check\cC_k^\bullet$. We consider $\check \cC_k^\bullet$ as a complex in degrees $[0,k]$.
The terms are 
\[
\check\cC_k^i= \Inf_{\sym_i\times \sym_{k-i}}^{\sym_k}\alt_i\cong \bigoplus_{I\subset[k]\,,\, |I|=i}\alt_I\,.
\]
Under this identification the differential $\check d^i$ is determined by its components $\alt_I\to \alt_J$ which are given by $\eps_{I,b}=(-1)^{\#\{a\in I\mid a<b\}}$ if $J=I\cup\{b\}$ and which are zero if $I\not\subset J$. 
That the sequence is exact can be checked either by hand or by considering it as a special case of a \v Cech complex.   
We also set $\hat \cC_k^\bullet:=\check \cC_k^\bullet\otimes \alt_k$. Then $\hat \cC_k^\bullet$ is the exact complex
\begin{align*}
 0\to \alt_k\to \dots\to \Inf_{\sym_i\times \sym_{k-i}}^{\sym_k}\alt_{k-i}\xrightarrow{\hat d^i}
 \Inf_{\sym_{i+1}\times \sym_{k-i-1}}^{\sym_k}\alt_{k-i-1}\to\dots\to \C\to 0\,.
\end{align*}
Let $M$ be a variety on which we consider $\sym_k$ to act trivially. For $E\in \Coh(M)$ we set $\check\cC_k^\bullet(E):=E\otimes_\C \check\cC_k^\bullet$. This is an exact complex in $\Coh_{\sym_k}(M)$ given by
\begin{align*}
 0\to E\to \dots\to \Inf_{\sym_i\times \sym_{k-i}}^{\sym_k}(E\otimes\alt_i)\xrightarrow{\check d^i(E)}
 \Inf_{\sym_{i+1}\times \sym_{k-i-1}}^{\sym_k}(E\otimes\alt_{i+1})\to\dots\to E\otimes \alt_k\to 0\,.
\end{align*}
There also is the exact complex $\hat \cC_k^\bullet(E):=E\otimes_\C\hat\cC_k^\bullet\cong \check\cC_k^\bullet(E)\otimes \alt_k$. 
\begin{lemma}
Let $E\in \Coh(M)$ be simple, i.e.\ $\Hom(E,E)=\C$. Then 
\[\Hom_{\sym_k}(\check \cC_k^i(E),\check \cC_k^{i+1}(E))\cong \C\cong \Hom_{\sym_k}(\hat \cC_k^i(E),\hat \cC_k^{i+1}(E))\,.\] 
\end{lemma}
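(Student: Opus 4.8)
The plan is to first strip away the sheaf $E$, reducing everything to a statement about representations of $\sym_k$, and then to recognise the terms $\check\cC_k^i$ as exterior powers of the permutation representation. Since $\sym_k$ acts trivially on $M$, for finite-dimensional $\sym_k$-representations $V$ and $W$ the $\sym_k$-sheaf $E\otimes_\C V$ is just $E^{\oplus\dim V}$ linearised through $V$, so
\begin{align*}
\Hom_{\sym_k}(E\otimes_\C V,\,E\otimes_\C W)&\cong\bigl(\Hom_{\reg_M}(E,E)\otimes_\C\Hom_\C(V,W)\bigr)^{\sym_k}\\
&\cong\Hom_{\reg_M}(E,E)\otimes_\C\Hom_\C(V,W)^{\sym_k}=\C\otimes_\C\Hom_{\sym_k}(V,W),
\end{align*}
where the middle step uses that the induced $\sym_k$-action on $\Hom_{\reg_M}(E,E)$ is conjugation by $\id_E$, hence trivial, and the last step uses $\Hom(E,E)=\C$. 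Taking $V=\check\cC_k^i$ and $W=\check\cC_k^{i+1}$ reduces the first claim to the purely representation-theoretic assertion $\Hom_{\sym_k}(\check\cC_k^i,\check\cC_k^{i+1})\cong\C$.

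For this I would use the identification $\check\cC_k^i\cong\wedge^i(\C^k)$ of $\sym_k$-representations, which is immediate from the description $\check\cC_k^i\cong\bigoplus_{|I|=i}\alt_I$ together with the sign conventions $\eps_{I,b}$ (these exhibit $\check\cC_k^\bullet$ as the Koszul-type complex $\wedge^\bullet(\C^k)$ with differential given by wedging with $e_1+\dots+e_k$, although only the terms are needed here). Decomposing $\C^k\cong\C\oplus\rho_k$ into trivial and standard representation gives $\check\cC_k^i\cong\wedge^i\rho_k\oplus\wedge^{i-1}\rho_k$, with the conventions $\wedge^{-1}\rho_k=0=\wedge^k\rho_k$. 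By \cite[Proposition 2.12]{FHrep} the $\wedge^j\rho_k$ with $0\le j\le k-1$ are irreducible, and they are pairwise non-isomorphic, being the hook Specht modules $S^{(k-j,1^j)}$. Hence $\check\cC_k^i\cong\wedge^i\rho_k\oplus\wedge^{i-1}\rho_k$ and $\check\cC_k^{i+1}\cong\wedge^{i+1}\rho_k\oplus\wedge^i\rho_k$ have exactly one common irreducible constituent, namely $\wedge^i\rho_k$, with multiplicity one in each, so $\Hom_{\sym_k}(\check\cC_k^i,\check\cC_k^{i+1})\cong\C$ for all $0\le i\le k-1$ (the boundary cases $i=0$ and $i=k-1$ included).

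For the $\hat\cC$-version I would simply tensor with $\alt_k$: since $\hat\cC_k^\bullet(E)=\check\cC_k^\bullet(E)\otimes\alt_k$ and $(\_)\otimes\alt_k$ is an autoequivalence of $\D^b_{\sym_k}(M)$, we get $\Hom_{\sym_k}(\hat\cC_k^i(E),\hat\cC_k^{i+1}(E))\cong\Hom_{\sym_k}(\check\cC_k^i(E),\check\cC_k^{i+1}(E))\cong\C$. I do not expect a genuine obstacle here; the only steps demanding a little care are the bookkeeping in the first reduction — identifying the equivariant structure on the $\Hom$-sheaf with the conjugation action — and then tracking the hook shapes so that exactly one constituent is shared. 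Conceptually the lemma just says that the connecting morphisms of the exact complexes $\check\cC_k^\bullet(E)$ and $\hat\cC_k^\bullet(E)$ are unique up to a scalar.
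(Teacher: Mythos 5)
Your proof is correct, but the representation-theoretic core is a genuinely different argument from the paper's. The paper exploits the definition $\check\cC_k^i=\Inf_{\sym_i\times\sym_{k-i}}^{\sym_k}\alt_i$ directly: by the $(\Inf,\Res)$ adjunction, $\Hom_{\sym_k}(\check\cC_k^i(E),\check\cC_k^{i+1}(E))$ becomes $\bigl[\bigoplus_{|I|=i+1}\Hom(E\otimes\alt_{[i]},E\otimes\alt_I)\bigr]^{\sym_i\times\sym_{k-i}}$, and then the orbit analysis of Section \ref{Dansection} kills every summand with $[i]\not\subset I$ (since $|I\setminus[i]|\ge 2$ forces the $\alt_{I\setminus[i]}$-invariants to vanish), leaving a single surviving $\C$. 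You instead reduce everything to $\Hom_{\sym_k}(\check\cC_k^i,\check\cC_k^{i+1})$ via the tensor-hom identity, then identify $\check\cC_k^i\cong\wedge^i\C^k\cong\wedge^i\rho_k\oplus\wedge^{i-1}\rho_k$ and invoke the pairwise non-isomorphic irreducibility of the hook modules $\wedge^j\rho_k$ to count the unique shared constituent. Both are short; the paper's route stays inside its own inflation/restriction toolkit and is uniform with the surrounding computations, while your route is more structural and makes transparent that the $\C$ arises because adjacent $\check\cC_k^\bullet$ terms overlap in exactly one irreducible summand with multiplicity one. Your first reduction step is essentially the same use of $\Hom(E,E)=\C$ that the paper makes, just performed globally rather than inside the adjunction computation. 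The $\hat\cC$-case via twisting by $\alt_k$ matches the paper exactly.
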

\begin{proof}
 By the adjunction $(\Inf,\Res)$ we have
\begin{align*}\Hom_{\sym_k}(\check \cC_k^i(E),\check \cC_k^{i+1}(E))&\cong
 \bigl[\bigoplus_{|I|=i+1}\Hom(E\otimes \alt_{[i]}, E\otimes\alt_I)\bigr]^{\sym_i\times \sym_{k-i}}\\&\cong
 \bigl[\bigoplus_{|I|=i+1}\Hom(E , E)\otimes\alt_{[i]\setminus I}\otimes \alt_{I\setminus [i]}\bigr]^{\sym_i\times \sym_{k-i}}\,.\end{align*}
For $[i]\not\subset I$ we have $|I\setminus[i]|\ge 2$ and hence $(\Hom(E , E)\otimes\alt_{[i]\setminus I}\otimes \alt_{I\setminus [i]})^{\sym_{I\setminus[i]}}=0$. It follows by
Section \ref{Dansection} that 
\[\Hom_{\sym_k}(\check \cC_k^i(E),\check \cC_k^{i+1}(E))\cong \Hom(E\otimes \alt_{[i]}, E\otimes\alt_{[i+1]})^{\sym_{[i]}\times \sym_{[i+2,k]}}=\C\,.\]
The second assertion follows from $\hat \cC_k^\bullet(E)\cong \check\cC_k^\bullet(E)\otimes \alt_k$.
\end{proof}
\begin{cor}\label{dsufficient}
Let $E\in \Coh(M)$ be simple. Then, up to isomorphism, every non-zero $\sym_k$-equivariant morphism $\check \cC_k^i(E)\to \check\cC_k^{i+1}(E)$ equals $\check d^i(E)$ and every non-zero $\sym_k$-equivariant morphism $\hat \cC_k^i(E)\to\hat \cC_k^{i+1}(E)$ equals $\hat d^i(E)$.  
\end{cor}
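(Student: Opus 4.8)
The plan is to read the corollary off directly from the lemma just proved. That lemma gives $\Hom_{\sym_k}(\check\cC_k^i(E),\check\cC_k^{i+1}(E))\cong\C$, so this Hom-space is one-dimensional over $\C$, and likewise for the $\hat\cC$-version. Hence it suffices to exhibit one non-zero element of each space and to observe that $\check d^i(E)$, respectively $\hat d^i(E)$, is such an element.

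First I would check that $\check d^i(E)\neq 0$ for every $i=0,\dots,k-1$. This is immediate from the explicit description of the differential: the component $E\otimes\alt_{[i]}\to E\otimes\alt_{[i+1]}$ of $\check d^i(E)$ equals $\eps_{[i],i+1}=(-1)^{i}$ times the identity of $E$ (tensored with the canonical identification $\alt_{[i]}\cong\alt_{[i+1]}$), hence is non-zero, so $\check d^i(E)\neq 0$. (Alternatively, if some $\check d^i(E)$ vanished, the complex $\check\cC_k^\bullet(E)$, all of whose terms are non-zero, could not be exact.) Consequently $\check d^i(E)$ spans $\Hom_{\sym_k}(\check\cC_k^i(E),\check\cC_k^{i+1}(E))$, so any non-zero $\sym_k$-equivariant morphism $f\colon\check\cC_k^i(E)\to\check\cC_k^{i+1}(E)$ has the form $f=\lambda\cdot\check d^i(E)$ for some non-zero scalar $\lambda\in\C$. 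Since $\lambda\cdot\check d^i(E)=(\lambda\cdot\id)\circ\check d^i(E)$ and $\lambda\cdot\id$ is an automorphism of $\check\cC_k^{i+1}(E)$, the morphism $f$ agrees with $\check d^i(E)$ up to isomorphism.

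Finally, the statement for $\hat\cC_k^\bullet(E)$ follows formally from the identification $\hat\cC_k^\bullet(E)\cong\check\cC_k^\bullet(E)\otimes\alt_k$: tensoring with the one-dimensional representation $\alt_k$ is an autoequivalence of $\Coh_{\sym_k}(M)$ which sends $\check d^i(E)$ to $\hat d^i(E)$ and induces an isomorphism on the relevant Hom-spaces, so the argument of the previous paragraph carries over verbatim. I do not expect a genuine obstacle here; the only point needing (minimal) care is the non-vanishing of the differentials, which is why I would make it explicit rather than leave it tacit.
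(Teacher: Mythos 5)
Your argument is correct and is exactly the intended one: the paper states the corollary without proof, regarding it as immediate from the lemma's one-dimensionality statement, and you supply precisely the (short) reasoning that fills in that gap, including the small but worthwhile check that $\check d^i(E)\neq 0$ and the observation that the $\hat{\cC}$-case follows by tensoring with $\alt_k$.
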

\begin{conv}\label{C0}
We also define $\check \cC_0^\bullet=\C[0]=\hat \cC_0^\bullet$ to be the one-term complex with $\C$ in degree zero. Obviously, the complexes $\check\cC_0^\bullet$ and $\hat\cC_0^\bullet$ are not exact in contrast to the case $k\ge 1$ described above. 
\end{conv}
\subsection{The curve case: induced maps}\label{curveinduced}
We will need the following easy fact in the following.
\begin{lemma}\label{Homvanish}
Let $\iota_1\colon Z_1\to M$ and $\iota_2\colon Z_2\to M$ be closed embeddings of irreducible subvarieties. If $Z_1\not\supset Z_2$ we have $\Hom_{M}(\iota_{1*}L_1,\iota_{2*}L_2)=0$ for 
all $L_1\in \Pic(Z_1)$ and $L_2\in \Pic(Z_2)$. 
\end{lemma}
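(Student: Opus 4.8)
The plan is to argue purely in terms of supports inside the abelian category $\Coh(M)$. Suppose $\phi\colon \iota_{1*}L_1\to \iota_{2*}L_2$ is a morphism; I want to show $\phi=0$. Form its image $\mathcal G:=\im\phi\subseteq \iota_{2*}L_2$. Being a quotient of $\iota_{1*}L_1$, the sheaf $\mathcal G$ satisfies $\supp\mathcal G\subseteq\supp(\iota_{1*}L_1)=Z_1$. So it suffices to show that if $\mathcal G\neq 0$ then $\supp\mathcal G=Z_2$; this would give $Z_2\subseteq Z_1$, contradicting the hypothesis $Z_1\not\supset Z_2$, and hence force $\phi=0$.

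For the remaining claim I would use that a subvariety is in particular reduced, so $Z_2$ is integral with generic point $\eta_2$, and $L_2$ is a line bundle, hence a torsion-free $\reg_{Z_2}$-module of rank $1$. Any nonzero subsheaf $\mathcal G$ of a torsion-free sheaf on the integral scheme $Z_2$ is again torsion-free and nonzero, so its stalk at $\eta_2$ is nonzero; since $Z_2=\overline{\{\eta_2\}}$ this yields $\supp\mathcal G=Z_2$, as needed. (Here I view $\iota_{2*}L_2$ as an $\reg_{Z_2}$-module supported on $Z_2$; the pushforward along the closed immersion $\iota_2$ does not change torsion-freeness or supports.)

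An essentially equivalent route, which I might mention as an alternative, is to use adjunction: $\Hom_M(\iota_{1*}L_1,\iota_{2*}L_2)\cong\Hom_{Z_2}(\iota_2^*\iota_{1*}L_1,L_2)$. The source $\iota_2^*\iota_{1*}L_1$ is supported on $Z_1\cap Z_2$, which is a proper closed subset of $Z_2$ because $Z_2$ is irreducible and $Z_2\not\subseteq Z_1$; hence it is a torsion $\reg_{Z_2}$-module, while $L_2$ is torsion-free, so every such $\Hom$ vanishes. Either way the argument is elementary; the only subtle points to get right are that ``subvariety'' is taken to mean a reduced (hence, with irreducibility, integral) closed subscheme so that $L_1,L_2$ are genuinely torsion-free, and that image/support are computed in $\Coh(M)$ rather than, say, in a derived category. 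I do not expect any real obstacle here — this is a standard support-and-torsion argument.
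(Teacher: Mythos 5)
The paper states this as an ``easy fact'' and gives no proof, so there is no argument of the paper to compare against; your job was simply to verify the claim. Both of your arguments are correct and standard: the support-and-torsion argument via the image $\mathcal{G}=\im\phi$ is sound, and so is the adjunction route $\Hom_M(\iota_{1*}L_1,\iota_{2*}L_2)\cong\Hom_{Z_2}(\iota_2^*\iota_{1*}L_1,L_2)$ with $\iota_2^*$ the underived pullback, where the source is a torsion $\reg_{Z_2}$-module because $Z_1\cap Z_2\subsetneq Z_2$ while $L_2$ is torsion-free on the integral scheme $Z_2$. You also correctly flag the two points that matter here: that ``subvariety'' is taken to be reduced and irreducible (hence integral, so line bundles are torsion-free), and that the $\Hom$ in the statement is in $\Coh(M)$ rather than a derived $\Hom$ (consistent with how the lemma is used on fixed cohomology sheaves in the surrounding computations).
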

Let $X=C$ be a smooth curve. By Lemma \ref{Tinva} we have $\Lambda_m^*=\reg_C[0]$. Using the results of Section \ref{ifsub} we get for $k=\max\{i,j\},\dots,\ell$ isomorphisms  
\begin{align}
 \notag\cP(i,j)_k&\cong\Inf_{\sym_j\times\sym_{k-j}\times \sym_{\ell-k,e}\times \sym_i\times \sym_{k-i}}^{\sym_\ell\times\sym_\ell}\reg_{[k],[k],e}\otimes\alt_{[j+1,k]}\otimes \alt_{[i+1,k]}\otimes  p^* \omega_C^{-(k-j)}[-(k-j)]\\ \label{Pijkcurve}
&\cong\Inf_{\sym_j\times \sym_{k-j}\times \sym_{\ell-k,e}\times \sym_k}^{\sym_\ell\times\sym_\ell}\check\cC_k^{k-i}(\reg_{[k],[k],e}\otimes  p^* \omega_C^{-(k-j)})\otimes \alt_{[j+1,k]}[-(k-j)]\,.
\end{align}
The second isomorphism is due to the general fact that for subgroups $V\subset U\subset G$ of a finite group $G$ there is an isomorphism
of functors $\Inf_{V}^G\cong \Inf_U^G\circ \Inf_V^U$.  
\begin{lemma}
Let $\max\{i,j\}\le k\le \ell$.
The component \[\Coho^{k-j}(\cP^{iR}\star\cP^j)\cong\Coho^{k-j}(\cP(i,j)_k)\to\Coho^{k-j} (\cP(i-1,j)_k)\cong \Coho^{k-j}(\cP^{(i-1)R}\star\cP^j)\] of the morphism $\cP^{iR}\star\cP^j\to \cP^{(i-1)R}\star\cP^j$ that is induced by the differential $\cP^{i-1}\to \cP^i$ is given under the 
isomorphism (\ref{Pijkcurve}) by $\Inf_{\sym_j\times \sym_{k-j}\times \sym_{\ell-k,e}\times  \sym_{k}}^{\sym_\ell\times\sym_\ell}\check d^{k-i}(\reg_{[k],[k],e}\otimes  p^* \omega_C^{-(k-j)})$. 
\end{lemma}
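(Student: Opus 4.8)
The plan is to track the kernel-level map inducing $\cP^{iR}\star\cP^j\to\cP^{(i-1)R}\star\cP^j$ through the direct-sum decompositions of Sections~\ref{directsummand} and \ref{ifsub}, and then to recognise the resulting $\sym_\ell\times\sym_\ell$-equivariant morphism as a \v Cech differential by appealing to Corollary~\ref{dsufficient}. By construction this morphism is induced on Fourier--Mukai kernels by the transpose $(\cP^i)^R\to(\cP^{i-1})^R$ of the differential $d^{i-1}\colon\cP^{i-1}\to\cP^i$, convolved with $\cP^j$. Decomposing both sides into the summands $(\reg_{I_2,J_2,\mu_2})^R\star\reg_{I_1,J_1,\mu_1}$ of Section~\ref{directsummand}, the only nonzero components of this map are those in which $(I_2,J_2,\mu_2)\in\Index(i)$ and $(I_2',J_2',\mu_2')\in\Index(i-1)$ differ by deletion of a single index $c$ with $\mu_2(c)\in J_1$, and such a component is, up to the sign $\eps_{J_2,\mu_2(c)}$, the map induced by the restriction of sections $\reg_{I_2,J_2,\mu_2}\to\reg_{I_2',J_2',\mu_2'}$.

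This is precisely the situation analysed in Section~\ref{inducedmapsummands}. In the curve case $d=1$ and Lemma~\ref{Tinva} gives $\Lambda^*_m(C)=\reg_C[0]$, so on the summand indexed by $(I_1,K_1,I_2,K_2,\mu)$ only $\Coho^{k-j}$ survives, where it equals $\reg_{K_1,K_2,\mu}\otimes p^*\omega_C^{-(k-j)}$ by (\ref{summandT}), and the induced map there is $\pm\id$ on $\reg_{K_1,K_2,\mu}\otimes p^*\omega_C^{-(k-j)}$; in particular it is nonzero on every matching pair of summands and zero otherwise. Next I would take $\sym_{n+\ell}$-invariants and regroup the surviving summands into $\sym_\ell\times\sym_\ell$-orbits as in Section~\ref{ifsub}, using (\ref{Danlem}), (\ref{PiPjsum}) and (\ref{PiPjInf}). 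On the $k$-th orbit this produces the identification (\ref{Pijkcurve}) of $\cP(i,j)_k$ with $\Inf_{\sym_j\times\sym_{k-j}\times\sym_{\ell-k,e}\times\sym_k}^{\sym_\ell\times\sym_\ell}\check\cC_k^{k-i}(\reg_{[k],[k],e}\otimes p^*\omega_C^{-(k-j)})\otimes\alt_{[j+1,k]}[-(k-j)]$, and likewise for $\cP(i-1,j)_k$ with $\check\cC_k^{k-i+1}$ in place of $\check\cC_k^{k-i}$.

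The point is that the index set $[k]$ of the \v Cech complex $\check\cC_k^\bullet$ is exactly the set parametrising the complement $K_2\setminus I_2$ (of cardinality $k-i$, respectively $k-i+1$), and passing from $(I_2,J_2,\mu_2)$ to $(I_2',J_2',\mu_2')$ by deleting $c$ from $I_2$ corresponds to adjoining one index to that complement, i.e.\ to a \v Cech face map. Hence the induced morphism $\Coho^{k-j}(\cP(i,j)_k)\to\Coho^{k-j}(\cP(i-1,j)_k)$ is a nonzero $\sym_\ell\times\sym_\ell$-equivariant morphism between the two inflated \v Cech complexes. Finally I would invoke Corollary~\ref{dsufficient}: since $\reg_{[k],[k],e}\otimes p^*\omega_C^{-(k-j)}$ is a line bundle on an irreducible variety, hence simple, any nonzero $\sym_k$-equivariant morphism $\check\cC_k^{k-i}(E)\to\check\cC_k^{k-i+1}(E)$ agrees, up to an automorphism of source and target, with $\check d^{k-i}(E)$, and applying $\Inf_{\sym_j\times\sym_{k-j}\times\sym_{\ell-k,e}\times\sym_k}^{\sym_\ell\times\sym_\ell}(\_)\otimes\alt_{[j+1,k]}[-(k-j)]$ preserves this; this yields the claimed formula.

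The main obstacle is the combinatorial bookkeeping of steps two and three: identifying precisely which summand of $\cP^{iR}\star\cP^j$ maps to which summand of $\cP^{(i-1)R}\star\cP^j$, and confirming that, after transporting the structure-sheaf restriction maps through the base-change isomorphism (\ref{summandT}) and the invariance isomorphism (\ref{Danlem}), the induced map does not accidentally vanish. Once nonvanishing is established, Corollary~\ref{dsufficient} absorbs the discrepancy between the signs $\eps_{J_2,\mu_2(c)}$ coming from the graph kernels and the signs $\eps_{I,b}$ defining $\check d$ for free, so the identification with $\check d^{k-i}$ does not require matching these signs on the nose.
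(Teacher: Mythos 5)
Your overall strategy matches the paper's own proof: identify the $k$-th orbit summand with an inflated \v Cech complex via (\ref{Pijkcurve}), reduce via Corollary~\ref{dsufficient} to showing non-vanishing, and establish non-vanishing by transporting the restriction-of-sections map through (\ref{summandT}) and Section~\ref{inducedmapsummands}. You have identified all the key ingredients.

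There is, however, one step you elide that the paper does real work on, and where the logic as written has a gap. You pass from ``the induced morphism $\Coho^{k-j}(\cP(i,j)_k)\to\Coho^{k-j}(\cP(i-1,j)_k)$ is a nonzero $\sym_\ell\times\sym_\ell$-equivariant morphism between the two inflated \v Cech complexes'' directly to applying Corollary~\ref{dsufficient}. But that corollary only classifies $\sym_k$-equivariant morphisms $\check\cC_k^{k-i}(E)\to\check\cC_k^{k-i+1}(E)$; a $\sym_\ell\times\sym_\ell$-equivariant morphism $\Inf_{H}^{G}A\to\Inf_{H}^{G}B$ need not be of the form $\Inf_{H}^{G}(f)$ for an $H$-equivariant $f\colon A\to B$. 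By the adjunction $(\Res,\Inf)$ one gets $\Hom_G(\Inf A,\Inf B)\cong\Hom_H(A,\Res\Inf B)\cong\bigoplus_{[g]\in H\backslash G}\Hom_H(A,g^*B)$, and one must still kill the off-diagonal components. The paper does exactly this: it uses Lemma~\ref{Homvanish} to discard the components $\cQ([j],[k],I_2,[k],e)\to\cQ(I_1',K_1',I_2',K_2',\mu)$ with $(K_1',K_2',\mu)\ne([k],[k],e)$ (different support), and separately uses the fact that the map comes from the differential in the $\cP^i$-slot to exclude $I_1'\ne[j]$, before concluding that the map is of the form $\Inf(f)$. You flag this as ``combinatorial bookkeeping'' but it is precisely the justification needed before Corollary~\ref{dsufficient} becomes applicable.

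Two smaller points. First, you claim the relevant component of $\cP^{i-1}\to\cP^i$ involves deleting an index $c$ ``with $\mu_2(c)\in J_1$''; since $c\in I_2$, $\mu_2(c)$ is undefined and the condition should read $\mu_2'(c)\in J_1$ --- it is this condition that keeps you in the same orbit $k$ rather than jumping to $k-1$. Second, you invoke Section~\ref{inducedmapsummands} to compute that the relevant component is $\pm\id$ \emph{before} taking $\sym_{n+\ell}$-invariants; the paper identifies $\Coho^{k-j}\bigl(\cQ([j],[k],[i],[k],e)\bigr)$ with the \emph{non-invariant} cohomology $\Coho^{k-j}\bigl(\cP([i],[n+i],e)^R\star\cP([j],J_1,e)\bigr)$ by observing that in degree $k-j$ the $\sym_{J_1\cap J_2}$-action is via $\wedge^0\rho$, hence trivial. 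This small observation is what lets Section~\ref{inducedmapsummands} apply verbatim to the invariant object, and it deserves to be stated rather than folded into the regrouping step.
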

\begin{proof}
Note that 
\[\check\cC_k^{k-i}(\reg_{[k],[k],e}\otimes  p^* \omega_C^{-(k-j)})\otimes \alt_{[j+1,k]}[-(k-j)]\cong \bigoplus_{I_2\subset [k],\,|I_2|=i}\cQ([j],[k],I_2,[k],e)\,.\]
By the previous lemma, all the components $\cQ([j],[k],I_2,[k],e)\to \cQ(I_1',K_1',I_2',K_2',\mu)$ of the induced map $\cP(i,j)_k\to \cP(i-1,j)_k$  are zero unless $K_1'=K_2'=[k]$ and $\mu=e$.
They are also zero for $I_1'\neq[j]$ since the map is induced by $\cP^{i-1}\to \cP^i$ and $\cQ(I_1',K_1',I_2',K_2',\mu)$ arises as
$\pr_{13*}\bigl(\pr_{23}^*\cP(I_2,J_2,\mu_2)^R\otimes \pr_{12}^*\cP(I_1,J_1,\mu_1)\bigr)^{1\times \sym_{J_1\cap J_2}\times 1}$; see 
(\ref{PiPjsumorbit})  and (\ref{PiPjsum}).
By the adjunction $(\Res,\Inf)$, it follows that the map $\Coho^{k-j}(\cP(i,j)_k)\to\Coho^{k-j} (\cP(i-1,j)_k)$ is of the form
$\Inf_{\sym_j\times \sym_{k-j}\times \sym_{\ell-k,e}\times  \sym_{k}}^{\sym_\ell\times\sym_\ell}(f)$ for some 
\[f\colon \check\cC_k^{k-i}(\reg_{[k],[k],e}\otimes  p^* \omega_C^{-(k-j)})\to \check\cC_k^{k-i+1}(\reg_{[k],[k],e}\otimes  p^* \omega_C^{-(k-j)})\,.\]  
Thus, by Corollary \ref{dsufficient} it is sufficient to show that the component \[\Coho^{k-j}\bigr(\cQ([j],[k],[i],[k],e)\bigl)\to \Coho^{k-j}\bigl(\cQ([j],[k],[i-1],[k],e)\bigr)\] of 
$\Coho^{k-j}(\cP(i,j)_k)\to\Coho^{k-j}(\cP(i-1,j)_k)$ is non-zero. By (\ref{PiPjsumorbit}) and (\ref{PiPjsum}) we have
\begin{align*}
 \cQ([j],[k],[i],[k],e)\cong \bigl[\cP\bigl([i],[n+i],e\bigr)^R\star\cP\bigl([j],J_1,e\bigr)\bigr]^{\sym_{n+i+j-k}}\,.
\end{align*}
where a possible choice of $J_1$ is $J_1=[n+i+j-k]\cup[n+i+1,n+k]$.
In degree $k-j$ the $\sym_{n+i+j-k}$-action on $\cP\bigl([i],[n+i],e\bigr)^R\star\cP\bigl([j],J_1,e\bigr)$ is trivial since given by the representation $\wedge^0\rho_{n+i+j-k}$; see (\ref{summandT}). Hence,
\begin{align}\label{term1}
 \Coho^{k-j}\Bigl(\cQ([j],[k],[i],[k],e)\Bigr)\cong \Coho^{k-j}\Bigl(\cP\bigl([i],[n+i],e\bigr)^R\star\cP\bigl([j],J_1,e\bigr)\Bigr)\,.
\end{align}
Analogously, we get 
\begin{align}\label{term2}
 \Coho^{k-j}\Bigl(\cQ([j],[k],[i-1],[k],e)\Bigr)\cong \Coho^{k-j}\Bigl(\cP\bigl([i-1],[2,n+i],e\bigr)^R\star\cP\bigl([j],J_1,e\bigr)\Bigr)\,.
\end{align} 
Under the isomorphisms (\ref{term1}) and (\ref{term2}), $\Coho^{k-j}\bigr(\cQ([j],[k],[i],[k],e)\bigl)\to \Coho^{k-j}\bigl(\cQ([j],[k],[i-1],[k],e)\bigr)$ corresponds to the map
\begin{align*}
\Coho^{k-j}\Bigl(\cP\bigl([i],[n+i],e\bigr)^R\star\cP\bigl([j],J_1,e\bigr)\Bigr)\to \Coho^{k-j}\Bigl(\cP\bigl([i-1],[2,n+i],e\bigr)^R\star\cP\bigl([j],J_1,e\bigr)\Bigr)  
\end{align*}
induced by the restriction $\reg_{[i-1],[2,n+i],e}\to \reg_{[i],[n+i],e}$. By Section \ref{inducedmapsummands}, it is an isomorphism.   
\end{proof}
\subsection{The curve case: fully faithfulness}
\begin{prop}\label{curvecoh} For $X=C$ a curve we have $\cP^R\star\cP\cong \Inf_{\sym_{\ell,e}}^{\sym_\ell,\sym_\ell}\reg_{\Delta_{C\times C^\ell}}$.
\end{prop}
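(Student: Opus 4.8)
The plan is to compute $\cP^R\star\cP$ by running two spectral sequences, fed by the pieces $\cP^{iR}\star\cP^j$ already described in Section \ref{ifsub}. First I would fix $j\in\{0,\dots,\ell\}$ and work with the spectral sequence $E(j)$ of Section \ref{specs}, which has first page $E(j)_1^{p,q}=\Coho^q((\cP^{-p})^R\star\cP^j)$ and abuts to $\Coho^{p+q}(\cP^R\star\cP^j)$. By $(\ref{PiPjInf})$ and $(\ref{Pijkcurve})$ the curve case gives $\cP^{iR}\star\cP^j\cong\bigoplus_{k=\max\{i,j\}}^{\ell}\cP(i,j)_k$, where each $\cP(i,j)_k$ is a single sheaf placed in cohomological degree $k-j$ which, up to the functor $\Inf^{\sym_\ell\times\sym_\ell}_{\sym_j\times\sym_{k-j}\times\sym_{\ell-k,e}\times\sym_k}$, a line-bundle twist by $\alt_{[j+1,k]}$ and the shift $[-(k-j)]$, equals $\check\cC_k^{k-i}(\reg_{[k],[k],e}\otimes p^*\omega_C^{-(k-j)})$. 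Hence $\Coho^q(\cP^{iR}\star\cP^j)$ is exactly the summand with $k=q+j$, nonzero precisely for $i\le q+j\le\ell$. Fixing $q$ with $q+j\le\ell$ and setting $k=q+j$, $m=k-i$, the $q$-th row $(E(j)_1^{\bullet,q},d_1)$ is therefore, up to $\Inf$, the twist $\alt_{[j+1,k]}$ and a shift, the complex $\check\cC_k^{\bullet}(\reg_{[k],[k],e}\otimes p^*\omega_C^{-(k-j)})$; the rows with $q+j>\ell$ vanish.

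The decisive point is that $d_1$ on such a row really is the \v Cech differential $\check d^{\bullet}$, and not some smaller map: in cohomological degree $q=k-j$ the only summand of $\cP^{(i-1)R}\star\cP^j$ that survives is $\cP(i-1,j)_k$, so the lemma of Section \ref{curveinduced} (which rests on the kernel-level computation of Section \ref{inducedmapsummands}) identifies the component $\Coho^q(\cP(i,j)_k)\to\Coho^q(\cP(i-1,j)_k)$ of the map induced by $\cP^{i-1}\to\cP^i$ with $\Inf(\_)\otimes\alt_{[j+1,k]}$ applied to $\check d^{k-i}(\reg_{[k],[k],e}\otimes p^*\omega_C^{-(k-j)})$. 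Granting this, I would invoke exactness of $\check\cC_k^{\bullet}(E)$ for $k\ge1$ (recalled in the subsection on long exact sequences; here $\sym_{[k]}$ acts trivially on $E=\reg_{[k],[k],e}\otimes p^*\omega_C^{-(k-j)}$ since it acts trivially on $\Gamma_{[k],[k],e}$ and fixes the coordinate cut out by $p$), together with exactness of $\Inf$, of tensoring by a line bundle and of shift, to conclude: for $j\ge1$ every nonzero row has $k=q+j\ge1$ and is exact, so $E(j)_2=0$ and $\cP^R\star\cP^j=0$; for $j=0$ the rows with $q\ge1$ are again exact, and the surviving row $q=0$ has $k=0$, where $\check\cC_0^{\bullet}=\C[0]$ by Convention \ref{C0} and the row is the single term $\cP(0,0)_0=\Inf_{\sym_{\ell,e}}^{\sym_\ell\times\sym_\ell}\reg_{\Gamma_{\emptyset,\emptyset,e}}$. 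At this stage I would use that $n>\ell$ forces $J_1\cap J_2\neq\emptyset$ in Section \ref{directsummand}, so $\Gamma_{\emptyset,\emptyset,e}=\Delta_{C\times C^\ell}$, the alternating twists are trivial, and $\Lambda_n^{*}(C)=\reg_C$ by Lemma \ref{Tinva}; hence $\cP^R\star\cP^0\cong\Inf_{\sym_{\ell,e}}^{\sym_\ell\times\sym_\ell}\reg_{\Delta_{C\times C^\ell}}$, concentrated in degree $0$.

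Finally I would feed these results into the spectral sequence $(\ref{specs2})$, $E_1^{p,q}=\Coho^q(\cP^R\star\cP^p)\Rightarrow\Coho^{p+q}(\cP^R\star\cP)$: by the above its first page has exactly one nonzero entry, $E_1^{0,0}=\Inf_{\sym_{\ell,e}}^{\sym_\ell\times\sym_\ell}\reg_{\Delta_{C\times C^\ell}}$, with no incoming or outgoing differential, so $\cP^R\star\cP$ is concentrated in cohomological degree $0$ with that cohomology, which is the claimed isomorphism (the case $\ell=0$ being a degenerate instance where $\cP=\cP^0$ and only the $j=0$ computation is needed). The main obstacle is the middle step: pinning down that the $d_1$-maps on the rows of $E(j)_1$ are the \v Cech differentials rather than possibly zero maps — this is precisely what the explicit kernel-level computations of Sections \ref{inducedmapsummands}--\ref{curveinduced} supply, once one observes that the cohomological-degree bookkeeping eliminates all off-diagonal components. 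Everything else is formal: exactness of the Koszul/\v Cech complexes $\check\cC_k^{\bullet}$ and degeneration of the two spectral sequences.
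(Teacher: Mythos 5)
Your argument is correct and follows the paper's proof essentially verbatim: you run the spectral sequences $E(j)$, identify each nonzero row via $(\ref{Pijkcurve})$ and the lemma of Section \ref{curveinduced} as an inflation of the exact complex $\check\cC_k^{\bullet}$ (with the only exception being the $q=0$ row of $E(0)$, which is the single term $\cP(0,0)_0$), and then feed $\cP^R\star\cP^0\cong\Inf_{\sym_{\ell,e}}^{\sym_\ell\times\sym_\ell}\reg_{\Delta_{C\times C^\ell}}$ and $\cP^R\star\cP^j=0$ for $j\ge1$ into $(\ref{specs2})$. The extra bookkeeping you spell out (that the cohomological degree $q=k-j$ forces all cross-terms between different $k$'s to vanish, and that $n>\ell$ is what makes $\Gamma_{\emptyset,\emptyset,e}$ equal to the diagonal) is correct and only makes explicit what the paper leaves implicit.
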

\begin{proof}
We consider the spectral sequences $E(j)_1^{p,q}=\Coho^q((\cP^{-p})^R\star\cP^j)\,\Longrightarrow\,\Coho^{p+q}(\cP^R\star \cP^j)$; see Section \ref{specs}. By (\ref{Pijkcurve}) and the previous lemma, the $(k-j)$-th row of $E(j)_1$ for $k=j,\dots,\ell$ is given by  
the complex 
$\Inf_{\sym_j\times \sym_{k-j}\times \sym_{\ell-k,e}\times \sym_k}^{\sym_\ell\times\sym_\ell}\check\cC_k^{\bullet}(\reg_{[k],[k],e}\otimes  p^* \omega_X^{-(k-j)})\otimes \alt_{[j+1,k]}$ shifted into degrees $[-k,0]$. The inflation functor is exact. Thus, all the rows of the spectral sequences are 
exact with one exception: The zero row of $E(0)_1$ is given by the single non-zero object $E(0)_1^{0,0}=\Coho^0(\cP(0,0)_0)$. 
It follows that $\cP^R\star\cP^j=0$ for $j\ge 1$ and $\cP^R\star\cP^0\cong\Coho^0(\cP(0,0)_0)$. Now by the spectral sequence (\ref{specs2}) or, alternatively, by the fact that $\cP^R\star\cP$ is a left convolution of 
$\cP^R\star\cP^0\to\cP^R\star\cP^1\to \dots\to \cP^R\star\cP^\ell$ it follows that
$\cP^R\star\cP\cong\Coho^0(\cP(0,0)_0)\cong \Inf_{\sym_{\ell,e}}^{\sym_\ell,\sym_\ell}\reg_{\Delta_{C\times C^\ell}}$.  
\end{proof}
\begin{proof}[Proof of Proposition A(i)]
The identity functor $\id\colon \D^b_{\sym_\ell}(C\times C^\ell)\to \D^b_{\sym_\ell}(C\times C^\ell)$ equals the equivariant FM transform with kernel $\Inf_{\sym_{\ell\Delta}}^{\sym_\ell\times \sym_\ell} \Delta_{C\times C^\ell}$; see Section \ref{equiFM}. Note that we have $\sym_{\ell\Delta}=\sym_{\ell,e}\subset \sym_\ell\times \sym_\ell$. Thus, Proposition A(i) follows by the previous proposition.
\end{proof}
\subsection{The curve case: orthogonality}\label{cort}
In this section we will indicate the proof of Proposition A (ii).
Lemma \ref{o1} and \ref{l} which state formulae for the convolution products $\cP_{\ell,n}^{iR}\star \cP_{\ell',n'}^j$ for $n+\ell=n'+\ell'$ and the induced maps between them. The proofs, which are analogous to the computations of Sections \ref{directsummand}, \ref{inducedmapsummands}, \ref{ifsub}, and \ref{curveinduced}, are left to the reader. The author decided to carry out the computations of $\cP_{\ell,n}^{iR}\star \cP_{\ell',n'}^j$ in the Sections \ref{directsummand} and \ref{ifsub} only in the case $(\ell,n)=(\ell',n')$ in order to avoid the heavier notation. 
This case is sufficient for the proof of Proposition A (i) and Theorem C.
In particular, the reader mainly interested in Theorem C may skip the rest of the current subsection.

Let now $\ell,n,\ell',n'\in \Z$ be integers such that $n> \max\{1,\ell\}$, $n'> \max\{1,\ell'\}$, and $n+\ell=n'+\ell'$. 
For $I_1\subset K_1\subset[\ell]$, $I_2\subset K_2\subset [\ell']$, and $\mu\colon \bar K_1\to \bar K_2$ a bijection we consider the subvariety $\Gamma_{K_1,K_2,\mu}\subset X\times X^\ell\times X\times X^{\ell'}$ given by
\[\Gamma_{K_1,K_2,\mu}:=\bigl\{(x,x_1,\dots,x_\ell,z,z_1,\dots,z_{\ell'})\mid x=x_a=z_b\,\forall\, a\in K_1,b\in K_2\,,\,x_c=z_{\mu(c)}\,\forall\, c\in \bar K_1 \bigr\}\]
and set $\reg_{K_1,K_2,\mu}=\reg_{\Gamma_{K_1,K_2,\mu}}$ as well as 
\[\cQ(I_1,K_1,I_2,K_2,\mu):=\reg_{K_1,K_2,\mu}\otimes\alt_{K_1\setminus I_1}\otimes \alt_{K_2\setminus I_2}\otimes  p^*\left(\Lambda_{n'+i+j-k}^*(X)\otimes \omega_X^{-(k-j)}\right)[-(k-j) d]\,.\]
Again, $p\colon X\times X^\ell\times X\times X^{\ell'}\to X$ denotes the projection to the third factor.
For $0\le i\le \ell'$, $0\le j\le \ell$, and $\max\{n'-n+i,j\}\le k\le \ell$ we set
\[\cP(i,\ell',j,\ell)_k:=\Inf_{\sym_j\times\sym_{k-j}\times \sym_{\ell-k,e}\times \sym_i\times \sym_{k+n-n'-i}}^{\sym_\ell\times\sym_\ell}\cQ([j],[k],[i],[k+n-n'],e)\,.\]
\begin{lemma}\label{o1}
$\cP_{\ell',n'}^i\star\cP_{\ell,n}^j\cong \bigoplus_{k=\max\{n'-n+i,j\}}^{\ell}\cP(i,\ell',j,\ell)_k$. 
\end{lemma}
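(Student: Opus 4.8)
The plan is to reduce Lemma \ref{o1} to the computation already carried out in Section \ref{ifsub} for the case $(\ell',n')=(\ell,n)$, following the same four-step strategy: (i) set up the correspondence diagram analogous to (\ref{graphintdiag}), (ii) compute the individual convolution products $\cP(I_2,J_2,\mu_2)^R\star\cP(I_1,J_1,\mu_1)$ using Corollary \ref{dintcor} and (\ref{partialnormal}), (iii) organise the direct summands into $1\times\sym_{n+\ell}\times 1$-orbits and take invariants via (\ref{Danlem}), and (iv) re-organise the resulting sheaves into inflations from the $\sym_\ell\times\sym_\ell$-orbits. The only genuinely new input is bookkeeping: now $\cP^i_{\ell',n'}$ lives over $X\times X^{\ell'}\times X^{n'+\ell'}$ while $\cP^j_{\ell,n}$ lives over $X\times X^\ell\times X^{n+\ell}$, and the convolution is over the common middle factor $X^{n+\ell}=X^{n'+\ell'}$.

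First I would fix $(I_1,J_1,\mu_1)\in\Index_{\ell,n}(j)$ and $(I_2,J_2,\mu_2)\in\Index_{\ell',n'}(i)$, so $J_1\subset[n+\ell]$ with $|J_1|=n+j$ and $J_2\subset[n+\ell]$ with $|J_2|=n'+i$. Set $K_1:=I_1\cup\mu_1^{-1}(J_2)\subset[\ell]$, $K_2:=I_2\cup\mu_2^{-1}(J_1)\subset[\ell']$, and $\mu:=\mu_{2|\overline{J_1\cup J_2}}^{-1}\circ\mu_{1|\bar K_1}$; a short count gives $|K_1|=j+|\mu_1^{-1}(J_2)\cap\bar I_1|$ and, using $|J_1\cap J_2|=n+j+|J_2|-|J_1\cup J_2|$ together with $|J_1\cup J_2|=n+k$ (where $k:=|K_1|$), one finds $|K_2|=k+n-n'$, which is where the shift $k+n-n'$ in the statement comes from. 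Since $n>\ell$ and $n'>\ell'$ one still has $J_1\cap J_2\neq\emptyset$, so the intersection $T$ of the two graphs (pulled back to the appropriate product) is again cut out transversally inside $X\times X^\ell\times \Delta_{J_1\cap J_2}\times X\times X^{\ell'}$, and the projection $T\to\Gamma_{K_1,K_2,\mu}$ is an isomorphism. Applying Corollary \ref{dintcor} and (\ref{partialnormal}) with $|J_1\cap J_2|=n'+i+j-k$ yields, just as in (\ref{summandT})--(\ref{summandO}),
\[
\cP(I_2,J_2,\mu_2)^R\star\cP(I_1,J_1,\mu_1)\cong\reg_{K_1,K_2,\mu}\otimes\alt_{K_1\setminus I_1}\otimes\alt_{K_2\setminus I_2}\otimes p^*\!\bigl(\Lambda^*_{n'+i+j-k}(X)\otimes\omega_X^{-(k-j)}\bigr)[-(k-j)d]\,,
\]
i.e.\ exactly $\cQ(I_1,K_1,I_2,K_2,\mu)$ as redefined in this subsection. (The sign representations enter through the $\alt_J$ twists in $\cP(I,J,\mu)$ exactly as before, restricting to $\alt_{K_i\setminus I_i}$ on $\Gamma_{K_1,K_2,\mu}$.)

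Next I would identify the $1\times\sym_{n+\ell}\times 1$-orbits on $\Index_{\ell',n'}(i)\times\Index_{\ell,n}(j)$: the assignment sending a pair to $(I_1,K_1,I_2,K_2,\mu)$ is constant on orbits and identifies the orbit set with tuples where $I_1\subset K_1\subset[\ell]$, $I_2\subset K_2\subset[\ell']$, $|I_1|=j$, $|I_2|=i$, $|K_2|=|K_1|+n-n'$, and $\mu\colon\bar K_1\xrightarrow{\sim}\bar K_2$ (which forces $\ell-|K_1|=\ell'-|K_2|$, consistent with $n+\ell=n'+\ell'$). The $\sym_{n+\ell}$-stabiliser of a representative is $\sym_{J_1\cap J_2}$, so (\ref{Danlem}) gives $\cP^i_{\ell',n'}\star\cP^j_{\ell,n}\cong\bigoplus\cQ(I_1,K_1,I_2,K_2,\mu)$ summed over these tuples. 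Finally, the residual $\sym_\ell\times\sym_\ell$-action permutes tuples with fixed $k:=|K_1|$ transitively, with stabiliser $\sym_j\times\sym_{k-j}\times\sym_{\ell-k,e}\times\sym_i\times\sym_{k+n-n'-i}$ and representative $([j],[k],[i],[k+n-n'],e)$; the admissible range is $\max\{n'-n+i,\,j\}\le k\le\ell$ (the lower bound comes from requiring $|K_2|=k+n-n'\ge i$ and $|K_1|=k\ge j$). Collecting inflations over these orbits gives precisely $\bigoplus_{k}\cP(i,\ell',j,\ell)_k$. I expect no conceptual obstacle here — the transversality and the self-intersection input are identical to Section \ref{ifsub}; the only care needed is the asymmetric index bookkeeping ($\ell\neq\ell'$, $n\neq n'$) and verifying the stated $k$-range, which is why the author relegates it to the reader. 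The harder companion statement, computing the induced differentials between these products (the analogue of the lemma preceding Proposition \ref{curvecoh}), would proceed exactly as in Sections \ref{inducedmapsummands} and \ref{curveinduced} via Remarks \ref{normalinduced} and \ref{partialinduced} and Corollary \ref{dsufficient}, and is what Lemma \ref{l} will record.
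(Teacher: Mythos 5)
Your proposal is correct and follows exactly the route the paper intends: the paper's proof is the one-line remark that Lemma \ref{o1} ``follows from computations analogous to those of Sections \ref{directsummand} and \ref{ifsub},'' and your argument is precisely that analogy carried out, with the index bookkeeping ($|J_1\cup J_2|=n+k$, $|K_2|=k+n-n'$, the lower bound $k\ge\max\{n'-n+i,j\}$) verified correctly. One cosmetic remark, inherited from the paper's own notation: the residual group acting on the orbit set is really $\sym_\ell\times\sym_{\ell'}$ rather than $\sym_\ell\times\sym_\ell$ (and the convolution should carry an $R$ as in Lemma \ref{l}), but since you are matching the paper's stated definition of $\cP(i,\ell',j,\ell)_k$ this does not affect the substance.
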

\begin{proof}
 This follows from computations analogous to those of Sections \ref{directsummand} and \ref{ifsub}. 
\end{proof}
\begin{lemma}\label{l}
 Let $X=C$ be a curve and $\ell'>\ell$. Then for $0\le j\le \ell$ and $j\le k\le \ell$ we have $\Coho^{k-j}(\cP_{\ell',n'}^{iR}\star \cP_{\ell,n}^j)=0$ for $i>k+n-n'$ and the sequence  
\[0\to \Coho^{k-j}(\cP_{\ell',n'}^{(k+n-n')R}\star \cP_{\ell,n}^j)\to\dots\to \Coho^{k-j}(\cP_{\ell',n'}^{0R}\star \cP_{\ell,n}^j)\to 0\,,\]
whose differentials are induced by the differentials $\cP^{i-1}\to \cP^i$, is isomorphic to 
\[\Inf_{\sym_j\times \sym_{k-j}\times \sym_{\ell-k,e}\times \sym_{k+n-n'}}^{\sym_\ell\times\sym_\ell}\check\cC_{k+n-n'}^{\bullet}(\reg_{[k],[k+n-n'],e}\otimes  p^* \omega_C^{-(k-j)})\otimes \alt_{[j+1,k]}\,.\]  
In particular, it is an exact sequence.
\end{lemma}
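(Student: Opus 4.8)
The plan is to repeat the computations of Sections~\ref{directsummand}, \ref{inducedmapsummands}, \ref{ifsub} and~\ref{curveinduced}, now with the single pair $(\ell,n)$ replaced throughout by the two pairs $(\ell,n)$ and $(\ell',n')$; no new idea is required, only more careful bookkeeping of the two index systems. First I would feed in Lemma~\ref{o1}, $\cP_{\ell',n'}^{iR}\star\cP_{\ell,n}^{j}\cong\bigoplus_{k=\max\{n'-n+i,j\}}^{\ell}\cP(i,\ell',j,\ell)_{k}$, where $\cP(i,\ell',j,\ell)_{k}$ is an inflation of the sheaf $\cQ([j],[k],[i],[k+n-n'],e)$. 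Since $X=C$ is a curve we have $d=1$, and because $n'+i+j-k\ge n'-\ell\ge 2$ Lemma~\ref{Tinva} gives $\Lambda^{*}_{n'+i+j-k}(C)=\reg_{C}[0]$; hence each such $\cQ$, and thus each $\cP(i,\ell',j,\ell)_{k}$, is a single sheaf placed in cohomological degree $k-j$. As the summands of $\cP_{\ell',n'}^{iR}\star\cP_{\ell,n}^{j}$ are therefore sheaves in pairwise distinct degrees, one reads off that $\Coho^{k-j}(\cP_{\ell',n'}^{iR}\star\cP_{\ell,n}^{j})$ is non-zero only when $k$ lies in the summation range, i.e.\ when $k\ge n'-n+i$; this is exactly the claimed vanishing for $i>k+n-n'$, and for $i\le k+n-n'$ it equals the sheaf $\cP(i,\ell',j,\ell)_{k}$.

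Next, set $m:=k+n-n'=k+\ell'-\ell$ and $E:=\reg_{[k],[m],e}\otimes p^{*}\omega_{C}^{-(k-j)}$. Using the transitivity $\Inf_{V}^{G}\cong\Inf_{U}^{G}\circ\Inf_{V}^{U}$ of inflation and the presentation of $\check\cC_{m}^{\bullet}$ as a direct sum over subsets, I would rewrite $\cP(i,\ell',j,\ell)_{k}$, exactly as in~(\ref{Pijkcurve}), as the inflation from $\sym_{j}\times\sym_{k-j}\times\sym_{\ell-k,e}\times\sym_{m}$ of the degree-$(m-i)$ term of $\check\cC_{m}^{\bullet}(E)\otimes\alt_{[j+1,k]}$. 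It then remains to identify the map $\Coho^{k-j}(\cP_{\ell',n'}^{iR}\star\cP_{\ell,n}^{j})\to\Coho^{k-j}(\cP_{\ell',n'}^{(i-1)R}\star\cP_{\ell,n}^{j})$ induced by the differential $\cP_{\ell',n'}^{i-1}\to\cP_{\ell',n'}^{i}$ with the \v{C}ech differential $\check d^{\,m-i}$ of that complex. This is done exactly as in Section~\ref{curveinduced}: by Lemma~\ref{Homvanish} every component of the map between direct summands $\cQ$ vanishes unless the target carries $K_{1}=[k]$, $K_{2}=[m]$ and $\mu=e$, and, the map being induced by $\cP^{i-1}\to\cP^{i}$, also $I_{1}=[j]$ is forced; by the adjunction $(\Res,\Inf)$ the map is therefore an inflation of a $\sym_{m}$-equivariant morphism $\check\cC_{m}^{m-i}(E)\to\check\cC_{m}^{m-i+1}(E)$, and by Corollary~\ref{dsufficient} it suffices to check that its ``diagonal'' component $\cQ([j],[k],[i],[m],e)\to\cQ([j],[k],[i-1],[m],e)$ is non-zero, which is precisely the content of Section~\ref{inducedmapsummands} (it identifies that component, in degree $k-j$, with the identity of $\reg_{[k],[m],e}\otimes p^{*}\omega_{C}^{-(k-j)}$).

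Finally, since $\ell'>\ell$ we have $m=k+\ell'-\ell\ge 1$, so $\check\cC_{m}^{\bullet}$ is an exact complex --- the case $k\ge 1$ of its definition, in contrast to the degenerate one-term complex $\check\cC_{0}^{\bullet}=\C[0]$ of Convention~\ref{C0} responsible for the non-orthogonality in the case $\ell=\ell'$. Tensoring $\check\cC_{m}^{\bullet}$ with the sheaf $E$ and with $\alt_{[j+1,k]}$ and applying the exact functor $\Inf$ preserves exactness, so the sequence displayed in the lemma is exact. I expect the only real difficulty to be combinatorial rather than conceptual: one must redo the orbit-and-stabiliser analysis of Section~\ref{ifsub} with $K_{2}\subset[\ell']$ and $[k]$ replaced by $[m]=[k+n-n']$ on the $(\ell',n')$ side, and check that the derived-intersection formula of Corollary~\ref{dintcor} still applies, which is where the hypotheses $n>\ell$ and $n'>\ell'$ enter, guaranteeing that the relevant partial diagonals $\Delta_{J_{1}\cap J_{2}}$ are non-empty (equivalently $n'+i+j-k\ge 1$).
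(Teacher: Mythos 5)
Your proposal is correct and takes essentially the same route as the paper, whose own proof of this lemma consists of a single line invoking ``computations analogous to those of Sections \ref{inducedmapsummands} and \ref{curveinduced}''; you have simply made that analogy explicit. In particular you correctly track the asymmetry $|K_1|=k$ versus $|K_2|=k+n-n'=:m$, observe $m\ge 1$ because $\ell'>\ell$ so the relevant \v Cech complex $\check\cC_m^\bullet$ is genuinely exact (in contrast to the degenerate $\check\cC_0^\bullet$ in the $\ell=\ell'$ case), and identify the differentials via Lemma~\ref{Homvanish}, the adjunction $(\Res,\Inf)$, Corollary~\ref{dsufficient}, and the computation of Section~\ref{inducedmapsummands} (one should, as in the paper's Section~\ref{curveinduced}, pass through the observation that the $\sym_{J_1\cap J_2}$-action is trivial in the bottom cohomological degree $k-j$ before citing Section~\ref{inducedmapsummands}, but this is the same reduction the paper performs).
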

\begin{proof}
This follows from computations analogous to those of Sections \ref{inducedmapsummands} and \ref{curveinduced}. 
\end{proof}
\begin{prop}\label{orthker}
Let $X=C$ be a curve and $\ell'>\ell$. Then $\cP_{\ell',n'}^R\star\cP_{\ell,n}=0$. 
\end{prop}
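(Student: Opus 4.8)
The plan is to run the same spectral-sequence argument as in the proof of Proposition \ref{curvecoh}, but feeding in the mixed convolution products $\cP_{\ell',n'}^{iR}\star\cP_{\ell,n}^j$ computed in Lemmas \ref{o1} and \ref{l} in place of the diagonal ones. The key point is that, under the hypothesis $\ell'>\ell$, \emph{every} row of the relevant spectral sequence turns out to be exact, so nothing survives.

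First I would fix $j\in\{0,\dots,\ell\}$ and form, exactly as in Section \ref{specs} (applying $\pr_{13*}(\_)^{1\times\sym_{n+\ell}\times 1}\otimes\pr_2^*\omega_{X\times X^\ell}$, suitably shifted, to the $\sExt$-spectral sequence of the finite complex $\pr_{23}^*\cP_{\ell',n'}$ against $\pr_{12}^*\cP_{\ell,n}^j$ — legitimate since every term is finitely supported over the fourfold product, hence $\pr_{13*}$-acyclic), the spectral sequence
\[E(j)_1^{p,q}=\Coho^q\bigl((\cP_{\ell',n'}^{-p})^R\star\cP_{\ell,n}^j\bigr)\;\Longrightarrow\;\Coho^{p+q}\bigl(\cP_{\ell',n'}^R\star\cP_{\ell,n}^j\bigr)\,,\]
whose $d_1$-differential is induced by the differentials $\cP_{\ell',n'}^{i-1}\to\cP_{\ell',n'}^i$. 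Because $X=C$ is a curve, $\Lambda_m^*(C)=\reg_C[0]$, so by Lemma \ref{o1} the term $\Coho^q(\cP_{\ell',n'}^{iR}\star\cP_{\ell,n}^j)$ is nonzero only when $q=k-j$ for the unique $k:=q+j$ with $j\le k\le\ell$, and then only for $0\le i\le k+n-n'$. Hence the $q$-th row of $E(j)_1$ is zero unless $q=k-j$ with $j\le k\le\ell$, in which case Lemma \ref{l} identifies it, together with its differential, with the inflation of the shifted Čech complex $\check\cC_{k+n-n'}^\bullet(\reg_{[k],[k+n-n'],e}\otimes p^*\omega_C^{-(k-j)})\otimes\alt_{[j+1,k]}$, possibly padded with zeros on the left. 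Since $\ell'>\ell$ forces $n-n'=\ell'-\ell\ge 1$, we have $k+n-n'\ge 1$, so this Čech complex is genuinely exact (it is never the degenerate one-term complex $\check\cC_0^\bullet$ of Convention \ref{C0}), and inflation is exact; therefore every row of $E(j)_1$ is exact. Consequently $E(j)_2=0$ and $\cP_{\ell',n'}^R\star\cP_{\ell,n}^j=0$ for every $j$.

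Finally, $\cP_{\ell',n'}^R\star\cP_{\ell,n}$ is a left convolution of the complex $\cP_{\ell',n'}^R\star\cP_{\ell,n}^0\to\cdots\to\cP_{\ell',n'}^R\star\cP_{\ell,n}^\ell$ of functors that have just been shown to vanish — equivalently, one runs the analogue of the spectral sequence (\ref{specs2}), all of whose $E_1$-terms are now zero — so $\cP_{\ell',n'}^R\star\cP_{\ell,n}=0$, as asserted. The only place the hypothesis $\ell'>\ell$ really enters is the bound $k+n-n'\ge 1$, and this is exactly where the argument differs from Proposition \ref{curvecoh}: in the diagonal case $\ell=\ell'$ the value $k=j=i=0$ produces the non-exact complex $\check\cC_0^\bullet$, whose surviving cohomology class is the identity kernel of Proposition \ref{curvecoh}; here that entry can never occur, so the abutment vanishes. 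I expect the only genuine work to be hidden in Lemmas \ref{o1} and \ref{l} themselves, whose proofs repeat the local Koszul computations of Sections \ref{directsummand}--\ref{curveinduced} with $\ell$ replaced by the pair $(\ell,\ell')$; granting those, the argument above is purely formal.
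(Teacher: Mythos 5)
Your proposal is correct and is exactly the argument the paper intends; the paper's own proof of Proposition~\ref{orthker} is only the one-line pointer ``This follows from Lemma~\ref{l} together with spectral sequences analogous to those of Section~\ref{specs}.'' You have simply unpacked that sentence: same spectral sequences $E(j)_1^{p,q}$ and then the analogue of~(\ref{specs2}), same identification of each nonzero row as an inflated \v{C}ech complex via Lemma~\ref{l}, and the same observation that $\ell'>\ell$ forces $k+n-n'\ge 1$ so that no degenerate $\check\cC_0^\bullet$ row survives. (Minor cosmetic note: Lemma~\ref{o1} as printed writes $\cP_{\ell',n'}^i\star\cP_{\ell,n}^j$ where $\cP_{\ell',n'}^{iR}\star\cP_{\ell,n}^j$ is clearly meant, and you read it that way, which is right.)
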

\begin{proof}
 This follows from Lemma \ref{l} together with spectral sequences analogous to those of Section \ref{specs}.
\end{proof}
This proves Proposition A(ii).
\subsection{The surface case: induced maps}
Let $X$ be a smooth surface and still $n>\max\{\ell,1\}$.
We set \[\tilde S_X:=(\_)\otimes p^*\omega_X[2]\in \Aut(\D^b_{\sym_\ell\times\sym_\ell}(X\times X^\ell\times X\times X^\ell))\] and 
$\tilde S_X^{-[a,b]}:=\tilde S_X^{-a}\oplus \tilde S_X^{-(a+1)}\oplus\dots \oplus \tilde S_X^{-b}$ for $a\le b$ two integers. 
By Lemma \ref{Tinva} we have $p^*\Lambda_m^*(X)=\tilde S_X^{-[0,m-1]}(\reg_{X\times X^\ell\times X\times X^\ell})$. Hence, for $k=\max\{i,j\},\dots,\ell$ we get
\begin{align}
 \notag\cP(i,j)_k&\cong\tilde S_X^{-[k-j,n+i-1]}\Inf_{\sym_j\times\sym_{k-j}\times \sym_{\ell-k,e}\times \sym_i\times \sym_{k-i}}^{\sym_\ell\times\sym_\ell}\reg_{[k],[k],e}\otimes\alt_{[j+1,k]}\otimes \alt_{[i+1,k]}\\ \label{Pijksurface}
&\cong \tilde S_X^{-[k-j,n+i-1]} \Inf_{\sym_k\times \sym_{\ell-k,e}\times \sym_k}^{\sym_\ell\times\sym_\ell}\hat\cC_k^{j}\check\cC_k^{k-i}(\reg_{[k],[k],e})\,.
\end{align}
Note that the inner \v Cech complex is taken with respect to $\sym_k$ considered as a subgroup of $\sym_\ell\times \sym_\ell$ by the embedding into the second factor, while the outer \v Cech complex is taken with respect to $\sym_k$ considered as a subgroup of 
$\sym_\ell\times \sym_\ell$ by the embedding into the first factor.  
\begin{lemma}\label{itoi}
Let $k,k'\in[\max\{i,j\},\ell]$. The components $\Coho^{q}(\cP(i,j)_k)\to\Coho^{q} (\cP(i-1,j)_{k'})$ of the morphism $\Coho^{q}(\cP^{iR}\star\cP^j)\to \Coho^{q} (\cP^{(i-1)R}\star\cP^j)$ which is induced by the differential $\cP^{i-1}\to \cP^i$ are zero for 
$k\neq k'$.
For $k-j\le r\le n+i-2$ the component $\Coho^{2r}(\cP(i,j)_k)\to\Coho^{2r} (\cP(i-1,j)_k)$ is given by $\Inf_{\sym_k\times \sym_{\ell-k,e}\times \sym_k}^{\sym_\ell\times\sym_\ell}\hat\cC_k^{j}\check d_k^{k-i}(\reg_{[k],[k],e}\otimes p^*\omega_X^{-r})$. 
\end{lemma}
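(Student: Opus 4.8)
The statement concerns the morphism $\cP^{iR}\star\cP^j\to\cP^{(i-1)R}\star\cP^j$ induced by the differential $d^{i-1}\colon\cP^{i-1}\to\cP^i$, together with its expression under the decomposition \eqref{PiPjInf} and the explicit form \eqref{Pijksurface}. The strategy is to reduce everything to the statements already proved about the individual summands $\cQ(I_1,K_1,I_2,K_2,\mu)$, namely Sections \ref{directsummand}, \ref{inducedmapsummands}, and \ref{ifsub}, and then to feed this into the combinatorics of inflations and \v Cech complexes. First I would recall that by \eqref{PiPjsum}, $\cP^{iR}\star\cP^j$ decomposes as $\bigoplus_{\Index(i,j)}\cQ(I_1,K_1,I_2,K_2,\mu)$, with $\cQ(I_1,K_1,I_2,K_2,\mu)$ supported on the graph $\Gamma_{K_1,K_2,\mu}$, and similarly for $\cP^{(i-1)R}\star\cP^j$ with $\Index(i-1,j)$. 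The morphism induced by $d^{i-1}$ has components $\cQ(I_1,K_1,I_2,K_2,\mu)\to\cQ(I_1',K_1',I_2',K_2',\mu')$, and I would argue as in the curve case (the vanishing argument using Lemma \ref{Homvanish}) that such a component is zero unless the supports are equal, i.e.\ unless $K_1=K_1'$, $K_2=K_2'$, and $\mu=\mu'$. Combined with the fact that the map is induced by $\cP^{i-1}\to\cP^i$ and with \eqref{PiPjsumorbit}--\eqref{PiPjsum}, this forces $I_1=I_1'$ as well, so only $I_2$ changes (by one element), and the $k$-grading is preserved; this gives the vanishing for $k\neq k'$.

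**The core computation.** Once the component is known to respect the support decomposition and the index $k$, the adjunction $(\Res,\Inf)$ reduces the identification of $\Coho^{2r}(\cP(i,j)_k)\to\Coho^{2r}(\cP(i-1,j)_k)$ to identifying the corresponding map on a single representative summand. Here the key input is Section \ref{inducedmapsummands}: the map $(\reg_{I_2,J_2,\mu_2})^R\star\reg_{I_1,J_1,\mu_1}\to(\reg_{I_2',J_2',\mu_2'})^R\star\reg_{I_1,J_1,\mu_1}$ induced by the restriction $\reg_{I_2,J_2,\mu_2}\to\reg_{I_2',J_2',\mu_2'}$ is, in cohomological degree $(k-j)d+q$, given by the $q$-th wedge power of the canonical surjection $\rho_{J_1\cap J_2'}\to\rho_{J_1\cap J_2}$. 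After taking $\sym_{J_1\cap J_2}$-invariants and using Lemma \ref{Tinva} to identify $(\wedge^q(T_X\otimes\rho))^{\sym}$ with the appropriate power of $\omega_X^{-1}$ in each even degree $q=2r$ (for $d=2$), this says precisely that in degree $2r$ the map on $\cQ$-summands is (up to scalar) the natural identification $\reg_{[k],[k],e}\otimes p^*\omega_X^{-r}\to\reg_{[k],[k],e}\otimes p^*\omega_X^{-r}$ appearing as a component of the \v Cech differential $\check d_k^{k-i}$. Since $\reg_{[k],[k],e}\otimes p^*\omega_X^{-r}$ is simple, Corollary \ref{dsufficient} upgrades "some scalar multiple of $\check d_k^{k-i}$" to "exactly $\check d_k^{k-i}$", once one checks the distinguished component is nonzero — and that is exactly what Section \ref{inducedmapsummands} delivers, as the last sentence there states the induced map on $\Coho^{(k-j)d}$ is the identity on $\reg_{K_1,K_2,\mu}\otimes p^*\omega_X^{-(k-j)}$. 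The outer \v Cech structure $\hat\cC_k^j$ and the shift by $\tilde S_X$ are untouched by a map induced from $\cP^{i-1}\to\cP^i$, which only involves the $\cP^{iR}$ factor, so they are carried along formally. This produces the claimed formula $\Inf_{\sym_k\times\sym_{\ell-k,e}\times\sym_k}^{\sym_\ell\times\sym_\ell}\hat\cC_k^j\check d_k^{k-i}(\reg_{[k],[k],e}\otimes p^*\omega_X^{-r})$ for $k-j\le r\le n+i-2$.

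**Main obstacle.** The genuinely delicate point is the bookkeeping of \emph{which} cohomological degrees survive and what scalar (in particular sign) the distinguished component carries; i.e.\ matching the range $k-j\le r\le n+i-2$ to the degrees in which the wedge-power map of Section \ref{inducedmapsummands} is nonzero, and verifying that the sign conventions in the definition of $d^{i-1}$ (the signs $\eps_{J,\mu(c)}$ from Section \ref{cP}) and in $\check d_k^{k-i}$ (the signs $\eps_{I,b}$) are compatible. I expect that the degree range is exactly forced: $\cP(i,j)_k$ lives in $\tilde S_X$-degrees $[k-j,n+i-1]$ by \eqref{Pijksurface}, $\cP(i-1,j)_k$ in $[k-j,n+i-2]$, so a nonzero map between them can only exist in the overlap, giving $r\in[k-j,n+i-2]$; and within that overlap Section \ref{inducedmapsummands} shows the map is an isomorphism on the relevant $\cQ$-summand, which is all that is needed. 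The sign compatibility is handled exactly as in the curve case (proof of Lemma in Section \ref{curveinduced}): one does not compute signs explicitly but invokes Corollary \ref{dsufficient}, since any nonzero $\sym_k$-equivariant map between consecutive terms of a \v Cech complex on a simple sheaf equals the \v Cech differential up to isomorphism. So the remaining work is entirely routine once the curve-case argument is transcribed with the extra $\tilde S_X$-grading and the extra outer $\hat\cC_k^\bullet$ factor recorded.
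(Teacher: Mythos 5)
The first half of your argument (vanishing for $k\neq k'$) tracks the paper's reasoning closely: the differential direction $I_2'\subset I_2$ forces $k'\le k$, leaving $k'=k$ or $k'=k-1$, and the latter is killed by Lemma \ref{Homvanish}. Your phrasing ``zero unless the supports are equal'' slightly overstates what Lemma \ref{Homvanish} alone gives (it only rules out $Z_1\not\supset Z_2$), but combined with the direction constraint the conclusion is correct.

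The second half has a genuine gap. You correctly reduce, via Corollary \ref{dsufficient}, to showing that a distinguished component of $\Coho^{2r}(\cP(i,j)_k)\to\Coho^{2r}(\cP(i-1,j)_k)$ is non-zero for each $r\in[k-j,n+i-2]$ \emph{separately}. You then claim this non-vanishing is ``exactly what Section \ref{inducedmapsummands} delivers, as the last sentence there states the induced map on $\Coho^{(k-j)d}$ is the identity.'' But that sentence only covers wedge degree $q=0$, i.e.\ the single value $r=k-j$. For $r>k-j$ the map on the $\cQ$-summand is, after identification via \eqref{summandT}, the $q$-th wedge power of the canonical surjection $\rho_{[n+i+j-k]}\to\rho_{[n+i+j-k]\setminus\{a\}}$ with $q=2(r-(k-j))>0$, and what has to be shown is that the induced map on $\sym_{[n+i+j-k]}$-invariants of $\bigwedge^q(T_X\otimes\rho)$ is non-zero (in fact an isomorphism). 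That is a non-formal representation-theoretic statement which is precisely where the surface case departs from the curve case, and it is supplied in the paper by \cite[Lemma B.6 (3)]{Sca1}. Your proposal never invokes this; without it, non-vanishing is established only for $r=k-j$, so Corollary \ref{dsufficient} cannot be applied in the remaining degrees and the identification of the map with $\hat\cC_k^j\check d_k^{k-i}$ is unproved there.

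A secondary, smaller point: the paper does not argue degree-by-degree in the naive sense you suggest (``match the range ... to the degrees in which the wedge-power map of Section \ref{inducedmapsummands} is nonzero''); it passes through the decomposition of $\Coho^{2r}(\cQ([j],[k],[i-1],[k],e))$ as a direct sum over $a\in[n+i+j-k]$ via the inflation structure of Section \ref{Dansection}, and then checks that the resulting map $\wedge^q(T_X\otimes\rho_{[m]})\to\bigoplus_a\wedge^q(T_X\otimes\rho_{[m]\setminus\{a\}})$ is an isomorphism on invariants. Your write-up elides this decomposition; once you add the appeal to \cite[Lemma B.6 (3)]{Sca1} you will also need to make it explicit, since that lemma is stated for the map into the direct sum.
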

\begin{proof}
The components $\cP(I'_2,J'_2,\mu'_2)\to \cP(I_2,J_2,\mu_2)$ of $\cP^{i-1}\to \cP^{i}$ are non-vanishing only if $I_2'\subset I_2$. Thus, following the computations of Section \ref{ifsub}, the only components 
$\cP(i,j)_k\to \cP(i-1,j)_{k'}$ of $\cP^{iR}\star\cP^j\to \cP^{(i-1)R}\cP^j$ which are possibly non-zero are those with $k=k'$ or  $k-1=k'$. 
But $\Coho^q(\cP(i,j)_k)\to \Coho^{q}(\cP(i-1,j)_{k-1})$ is zero by Lemma \ref{Homvanish}. 

For the proof of the second assertion it is, as in the curve case, sufficient to show that  
\[\Coho^{2r}\bigr(\cQ([j],[k],[i],[k],e)\bigl)\to \Coho^{2r}\bigl(\cQ([j],[k],[i-1],[k],e)\bigr)\]
 is non-zero; see Corollary \ref{dsufficient}. By (\ref{PiPjsumorbit}) and (\ref{PiPjsum}) we have
\begin{align*}
 \Coho^{2r}\Bigl(\cQ([j],[k],[i],[k],e)\Bigr)\cong \Coho^{2r}\Bigl(\cP\bigl([i],[n+i],e\bigr)^R\star\cP\bigl([j],J_1,e\bigr)\Bigr)^{\sym_{[n+i+j-k]}}\,.
\end{align*}
where a possible choice of $J_1$ is $J_1=[n+i+j-k]\cup[n+i+1,n+k]$.
Also,
\begin{align*}
 &\Coho^{{2r}}\Bigl(\cQ([j],[k],[i-1],[k],e)\Bigr)\\\cong &\Coho^{{2r}}\Bigl(\cP\bigl([i-1],[2,n+i],e\bigr)^R\star\cP\bigl([j],J_1,e\bigr)\Bigr)^{\sym_{[2,n+i+j-k]}}\\
\cong&\Bigl[\bigoplus_{a\in [n+i+j-k]} \Coho^{{2r}}\bigl(\cP\bigl([i-1],[n+i]\setminus\{a\},e\bigr)^R\star\cP\bigl([j],J_1,e\bigr)\bigr)\Bigr]^{\sym_{[n+i+j-k]}}\,.
\end{align*}
The second isomorphism is due to Section \ref{Dansection}. 
The components of the induced map
\begin{align}\label{map1}&\Coho^{2r}\Bigl(\cP\bigl([i],[n+i],e\bigr)^R\star\cP\bigl([j],J_1,e\bigr)\Bigr)\\\notag\longrightarrow& \bigoplus_{a\in [n+i+j-k]}\Coho^{{2r}}\Bigl(\cP\bigl([i-1],[n+i]\setminus\{a\},e\bigr)^R\star\cP\bigl([j],J_1,e\bigr)\Bigr)\end{align}
are given under the isomorphism (\ref{summandT}) by the canonical surjections $\rho_{[n+i+j-k]}\to\rho_{[n+i+j-k]\setminus\{a\}}$; see Section \ref{inducedmapsummands}. 
It follows by \cite[Lemma B.6 (3)]{Sca1} that the map (\ref{map1}) induces an isomorphism on the $\sym_{[n+i+j-k]}$-invariants. 
\end{proof}
\begin{lemma}\label{jtoj}
Let $k,k'\in[\max\{i,j+1\}, \ell]$. The components $\Coho^{q}(\cP(i,j)_k)\to\Coho^{q} (\cP(i,j+1)_{k'})$ of the morphism $\Coho^{q}(\cP^{iR}\star\cP^j)\to \Coho^{q} (\cP^{iR}\star\cP^{j+1})$ which is induced by the differential $\cP^{j}\to \cP^{j+1}$ are zero 
for $k'\notin \{k,k+1\}$.
For $k-j\le r\le n+i-1$ the component $\Coho^{2r}(\cP(i,j)_k)\to\Coho^{2r} (\cP(i,j+1)_k)$ is given by $\Inf_{\sym_k\times \sym_{\ell-k,e}\times \sym_k}^{\sym_\ell\times\sym_\ell}\hat d^{j}_k\bigl(\check \cC_k^{k-i}(\reg_{[k],[k],e}\otimes p^*\omega_X^{-r})\bigr)$. 
\end{lemma}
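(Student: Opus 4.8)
The plan is to mirror the proof of Lemma \ref{itoi}, using the symmetry between the two sides of the convolution: there the map was induced by the differential $\cP^{i-1}\to\cP^i$ acting on the right-adjoint kernel, whereas here it is induced by $\cP^j\to\cP^{j+1}$ acting directly on the argument kernel. For the first assertion I would revisit the combinatorics of Section \ref{ifsub}. The differential $\cP^j\to\cP^{j+1}$ has components $\cP(I_1,J_1,\mu_1)\to\cP(I_1\cup\{c\},J_1\cup\{\mu_1(c)\},\mu_{1|\bar I_1\setminus\{c\}})$ with $c\in\bar I_1$. Tracking $K_1=I_1\cup\mu_1^{-1}(J_2)$ and $K_2=I_2\cup\mu_2^{-1}(J_1)$ under such an enlargement, one checks that $K_1\subseteq K_1'\subseteq K_1\cup\{c\}$ and similarly for $K_2$, so $k=|K_1|=|K_2|$ can only stay equal or grow by one; hence the components $\cP(i,j)_k\to\cP(i,j+1)_{k'}$ of $\cP^{iR}\star\cP^j\to\cP^{iR}\star\cP^{j+1}$ vanish unless $k'\in\{k,k+1\}$. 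In contrast to Lemma \ref{itoi}, both values genuinely occur, since the restriction maps $\reg_{K_1,K_2,\mu}\to\reg_{K_1',K_2',\mu'}$ are not obstructed by Lemma \ref{Homvanish} (one has $\Gamma_{K_1',K_2',\mu'}\subseteq\Gamma_{K_1,K_2,\mu}$).

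For the diagonal component $k'=k$ I would follow the reduction used in Section \ref{curveinduced} and in the proof of Lemma \ref{itoi}. In the $k'=k$ part Lemma \ref{Homvanish} forces $K_1'=K_2'=[k]$, $\mu'=e$ and $I_2'=[i]$, so by the $(\Res,\Inf)$-adjunction the degree-$2r$ part of the map is of the form $\Inf_{\sym_k\times\sym_{\ell-k,e}\times\sym_k}^{\sym_\ell\times\sym_\ell}(g)$ for a map $g$ of $\hat\cC$-complexes over the first $\sym_k$-factor with coefficient $\check\cC_k^{k-i}(\reg_{[k],[k],e}\otimes p^*\omega_X^{-r})$; here I use formula (\ref{Pijksurface}) to identify $\Coho^{2r}(\cP(i,j)_k)$ and note that the shift and twist in $\cQ$ conspire so that, for $k-j\le r\le n+i-1$, both source and target carry the same coefficient $p^*\omega_X^{-r}$. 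Restricting to the single $\check\cC_k^{k-i}$-summand indexed by $I_2=[i]$, whose coefficient $\reg_{[k],[k],e}\otimes p^*\omega_X^{-r}\otimes\alt_{[i+1,k]}$ is a line bundle on the irreducible variety $\Gamma_{[k],[k],e}$ and hence simple, Corollary \ref{dsufficient} shows that $g$ is, up to a scalar, $\hat d_k^{j}$ applied to $\check\cC_k^{k-i}(\reg_{[k],[k],e}\otimes p^*\omega_X^{-r})$, provided that scalar is nonzero. Thus it remains to prove that the component $\Coho^{2r}\bigl(\cQ([j],[k],[i],[k],e)\bigr)\to\Coho^{2r}\bigl(\cQ([j+1],[k],[i],[k],e)\bigr)$ is nonzero.

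For that I would use (\ref{PiPjsumorbit}) and (\ref{PiPjsum}) to present this component as the map on $\sym_{[n+i+j-k]}$-invariants of the map $\Coho^{2r}\bigl(\cP([i],[n+i],e)^R\star\cP([j],J_1,e)\bigr)\to\Coho^{2r}\bigl(\cP([i],[n+i],e)^R\star\cP([j+1],J_1',e)\bigr)$ induced by the restriction $\reg_{[j],J_1,e}\to\reg_{[j+1],J_1',e}$, where $J_1=[n+i+j-k]\cup[n+i+1,n+k]$. Since $k\ge j+1$, the new index $\mu_1(j+1)=n+i+j-k+1$ lies in $J_2=[n+i]$, so we are in the first case of Section \ref{inducedmapsummands} and the induced map on the summands is $\wedge^{\bullet}$ of the canonical inclusion $\rho_{J_1\cap J_2}=\rho_{[n+i+j-k]}\hookrightarrow\rho_{[n+i+j-k+1]}=\rho_{J_1'\cap J_2}$; in degree $2r$ it is $\wedge^{2(n+i-1-r)}$ of this inclusion tensored with a power of $\Omega_X$. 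As $\rho_{[n+i+j-k]}\hookrightarrow\rho_{[n+i+j-k+1]}$ is split as $\sym_{[n+i+j-k]}$-representations, this map is split injective, and by Lemma \ref{Tinva} together with \cite[Lemma B.6]{Sca1} it induces an isomorphism on the one-dimensional $\sym_{[n+i+j-k]}$-invariants for $k-j\le r\le n+i-1$; in particular it is nonzero, which completes the reduction.

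The main obstacle is the bookkeeping of the reduction step: making sure that passing first to the degree-$2r$ cohomology, then to the single $\check\cC$-summand $I_2=[i]$, and finally to a single component $\cQ\to\cQ$ is compatible with the identifications (\ref{Pijksurface}), (\ref{PiPjsumorbit}), (\ref{PiPjsum}) and with the induced-map description of Section \ref{inducedmapsummands}, and checking that the $k'=k+1$ components — which are present here, unlike in Lemma \ref{itoi} — do not interfere with the $k'=k$ computation. These points are routine given that the completely analogous verifications were carried out in detail for Lemma \ref{itoi} and in Section \ref{curveinduced}; the only genuinely new input is the split injectivity of $\rho_{[n+i+j-k]}\hookrightarrow\rho_{[n+i+j-k+1]}$, which replaces the split surjection used in the proof of Lemma \ref{itoi}.
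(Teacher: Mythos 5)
Your overall strategy (trace the $K$'s for the vanishing statement, reduce via the $(\Res,\Inf)$-adjunction and Corollary \ref{dsufficient} to the non-vanishing of a single component $\Coho^{2r}(\cQ([j],[k],[i],[k],e))\to\Coho^{2r}(\cQ([j+1],[k],[i],[k],e))$, and observe that the $k'=k+1$ components genuinely occur here) matches the paper. But the final non-vanishing step has a genuine gap.

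You present the component map as ``the map on $\sym_{[n+i+j-k]}$-invariants'' of the single restriction $\reg_{[j],J_1,e}\to\reg_{[j+1],J_1',e}$ with $J_2=[n+i]$ fixed. This identification is wrong. In the decomposition (\ref{PiPjsumorbit}), one has $\cQ([j],[k],[i],[k],e)\cong(\cE_x)^{\sym_{J_1\cap J_2}}=(\cE_x)^{\sym_{[n+i+j-k]}}$ with your representative $x$, but $\cQ([j+1],[k],[i],[k],e)\cong(\cE'_y)^{\sym_{J_1'\cap J_2}}=(\cE'_y)^{\sym_{[n+i+j+1-k]}}$: the stabilizer strictly \emph{grows} when you pass from $j$ to $j+1$. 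Taking $\sym_{[n+i+j-k]}$-invariants of the restriction therefore lands in $(\cE'_y)^{\sym_{[n+i+j-k]}}$, which is strictly larger than $\cQ([j+1],[k],[i],[k],e)$ — for a surface, $\rho_{m+1}|_{\sym_m}\cong\rho_m\oplus\C$ gives $(\wedge^q(\Omega_X\otimes\rho_{m+1}))^{\sym_m}$ rank two in the interior degrees, not one. The true component map is the composition of your $\sym_{[n+i+j-k]}$-equivariant restriction with the subsequent projection (unnormalized averaging over $\sym_{[n+i+j+1-k]}/\sym_{[n+i+j-k]}$) onto the $\sym_{[n+i+j+1-k]}$-invariants, as one sees by unwinding the inverse isomorphism $s\mapsto\oplus_{[g]}\lambda_g(s)$ of Section \ref{Dansection}. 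Split injectivity of $\rho_{[n+i+j-k]}\hookrightarrow\rho_{[n+i+j+1-k]}$ shows your restriction is nonzero, but gives no control over the subsequent averaging; a priori the image could lie in the complement of the one-dimensional $\sym_{[n+i+j+1-k]}$-invariants, and that possibility is what actually needs to be excluded.

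The paper avoids this by choosing orbit representatives in the \emph{opposite} configuration: it fixes $J_2=[n+i+j+1-k]\cup[n+j+2,n+k]$ and uses $J_1=[n+j+1]$, so the stabilizer of the target is $\sym_{[n+i+j+1-k]}$ while that of the source (for fixed $b$) is the smaller $\sym_{[n+i+j+1-k]\setminus\{b\}}$. It then re-expands the \emph{source} via (\ref{Danlem}) as the $\sym_{[n+i+j+1-k]}$-invariants of $\bigoplus_{b}\cP([i],J_2,e)^R\star\cP([j],[n+j+1]\setminus\{b\},e)$, so both sides carry $\sym_{[n+i+j+1-k]}$-invariants of $\sym_{[n+i+j+1-k]}$-equivariant objects, and the map is the one induced fiberwise by $\bigoplus_b\rho_{[n+i+j+1-k]\setminus\{b\}}\to\rho_{[n+i+j+1-k]}$; at that point \cite[Lemma B.6 (4)]{Sca1} applies verbatim. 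Note this is exactly dual to Lemma \ref{itoi}, where the stabilizer shrinks under $i\mapsto i-1$ and it is the target that gets re-expanded; the direction of the re-expansion flips, and your proposal does not account for that flip.
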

\begin{proof}
The first assertion follows from the fact that the components $\cP(I_1,J_1,\mu_1)\to \cP(I'_1,J'_1,\mu'_1)$ of $\cP^{j}\to \cP^{j+1}$ are non-zero only if $I_1\subset I'_1$.

The proof of the second assertion is also analogous to the proof of the second assertion of Lemma \ref{itoi}:
It is sufficient to show the non-vanishing of 
\[\Coho^{{2r}}\bigr(\cQ([j],[k],[i],[k],e)\bigl)\to \Coho^{{2r}}\bigl(\cQ([j+1],[k],[i],[k],e)\bigr)\,.\]
Set $J_2:=[n+i+j+1-k]\cup[n+j+2,n+k]$. There are isomorphisms
\begin{align*}
 \Coho^{2r}\Bigl(\cQ([j+1],[k],[i],[k],e)\Bigr)\cong \Coho^{2r}\Bigl(\cP\bigl([i],J_2,e\bigr)^R\star\cP\bigl([j+1],[n+j+1],e\bigr)\Bigr)^{\sym_{[n+i+j+1-k]}}\,.
\end{align*}
and
\begin{align*}
 &\Coho^{{2r}}\Bigl(\cQ([j],[k],[j],[k],e)\Bigr)\\
\cong &\Bigl[\bigoplus_{b\in [n+i+j+1-k]} \Coho^{{2r}}\Bigl(\cP\bigl([i],J_2,e\bigr)^R\star\cP\bigl([j],[n+j+1]\setminus\{b\},e\bigr)\Bigr)\Bigr]^{\sym_{[n+i+j+1-k]}}\,.
\end{align*}
Under the isomorphism (\ref{summandO}), the components of the induced map
\begin{align}\label{map2}&\bigoplus_{b\in [n+i+j+1-k]} \Coho^{{2r}}\Bigl(\cP\bigl([i],J_2,e\bigr)^R\star\cP\bigl([j],[n+j+1]\setminus\{b\},e\bigr)\Bigr)\\\notag&\longrightarrow \Coho^{2r}\Bigl(\cP\bigl([i],J_2,e\bigr)^R\star\cP\bigl([j+1],[n+j+1],e\bigr)\Bigr)\end{align}
are given by the canonical injections $\rho_{[n+i+j+1-k]\setminus\{b\}}\to\rho_{[n+i+j+1-k]}$. 
It follows by \cite[Lemma B.6 (4)]{Sca1} that the map (\ref{map2}) induces an isomorphism on the $\sym_{[n+i+j+1-k]}$-invariants. 
\end{proof}
In fact, the component $\Coho^{{2r}}(\cP(i,j)_k)\to\Coho^{{2r}} (\cP(i,j+1)_{k+1})$ corresponds under (\ref{Pijksurface}) to the map induced by the restriction $\reg_{[k],[k],e}\to \reg_{[k+1],[k+1],e}$. But this will not be relevant for our purposes. 
\subsection{The surface case: cohomology}
Recall that for $0\le m<k$ there is the \textit{stupid truncation} $\sigma^{\le m}\check \cC_k^\bullet$ with
\[\sigma^{\le m}\check \cC_k^\bullet=(0\to \check \cC_k^0\to \dots\to \check \cC_k^m\to 0)\quad,\quad\Coho^\alpha(\sigma^{\le m}\check\cC_k^\bullet)=\begin{cases}
\coker{\check d^{m-1}} \quad&\text{for $\alpha=m$}\\
0\quad&\text{else}                                                                                                                   
                                                                                                                \end{cases}
\]
where for $m=0$ we have $\coker \check d^{-1}=\check \cC_k^0=\C$. 
For $\max\{i,j\}\le k\le \ell$ we set
\[\cR(i,j,k):=\Inf_{\sym_k\times \sym_{\ell-k,e}\times \sym_k}^{\sym_\ell\times \sym_\ell}\hat\cC_k^{j}\bigl(  
\cT(i,k) \bigr)\,,\,\, \cT(i,k):=\Coho^{k-i}\bigl(\sigma^{\le k-i}\check\cC_k^\bullet( \reg_{[k],[k],e}\otimes p^*\omega_X^{-(n+i-1)})\bigr).
\]
\begin{lemma}
For $0\le j\le \ell$ and $1\le i\le \ell$ we have 
$E(j)_2^{-i,2(n+i-1)}\cong\oplus_{k=\max\{i,j\}}^\ell \cR(i,j,k)$. For $j\ge 1$ these are the only non-vanishing terms on the 2-level of $E(j)$. For $j=0$ there are in addition the non-vanishing terms 
$E(0)_2^{0,2r}=\Inf_{\sym_{\ell,e}}^{\sym_\ell\times\sym_\ell}\reg_{\Delta_{X\times X^\ell}}\otimes p^*\omega_X^{-r}$ for $r=0,\dots,n-1$. 
\end{lemma}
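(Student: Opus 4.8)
The plan is to read off the whole $E(j)_1$-page together with its $d_1$-differentials, split each row of the page into a direct sum of (stupidly truncated) \v Cech complexes $\check\cC_k^\bullet$ indexed by $k$, and then compute the $d_1$-cohomology row by row, exploiting that $\check\cC_k^\bullet$ is exact for $k\ge 1$ but not for $k=0$.

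First I would record the $E(j)_1$-page. Since $\tilde S_X^{-a}$ shifts cohomological degree by $[-2a]$, the isomorphism (\ref{Pijksurface}) shows that $\cP^{iR}\star\cP^j\cong\bigoplus_{k=\max\{i,j\}}^\ell\cP(i,j)_k$ has cohomology only in even degrees, with
\[\Coho^{2r}(\cP^{iR}\star\cP^j)\cong\bigoplus_{\substack{\max\{i,j\}\le k\le\ell\\ k-j\le r\le n+i-1}}\Inf_{\sym_k\times\sym_{\ell-k,e}\times\sym_k}^{\sym_\ell\times\sym_\ell}\hat\cC_k^j\bigl(\check\cC_k^{k-i}(\reg_{[k],[k],e}\otimes p^*\omega_X^{-r})\bigr)\,.\]
In particular the whole spectral sequence is concentrated in even $q$, so it suffices to treat the rows $q=2r$. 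By Lemma \ref{itoi} the $d_1$-differential respects the index $k$ and, on the $k$-summand, is $\Inf\,\hat\cC_k^j$ applied to the \v Cech differential $\check d_k$ of $\reg_{[k],[k],e}\otimes p^*\omega_X^{-r}$. Hence the row $E(j)_1^{\bullet,2r}$ splits as a direct sum $\bigoplus_k C(r,j,k)^\bullet$, and after reindexing by the \v Cech degree $m=k-i$ the summand $C(r,j,k)^\bullet$ is $\Inf_{\sym_k\times\sym_{\ell-k,e}\times\sym_k}^{\sym_\ell\times\sym_\ell}\hat\cC_k^j$ applied to exactly those terms $\check\cC_k^m(\reg_{[k],[k],e}\otimes p^*\omega_X^{-r})$ for which $\cP(i,j)_k$ contributes in degree $2r$: the condition $r\le n+i-1$ reads $m\le k-\max\{0,r-n+1\}$, while $k-j\le r$ is independent of $i$. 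Thus $C(r,j,k)^\bullet=0$ unless $r\ge k-j$, in which case it equals $\Inf\,\hat\cC_k^j$ of the stupid truncation $\sigma^{\le k-\max\{0,r-n+1\}}\check\cC_k^\bullet(\reg_{[k],[k],e}\otimes p^*\omega_X^{-r})$.

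Next I would take cohomology of each $C(r,j,k)^\bullet$, using that $\Inf$ and $\hat\cC_k^j(\_)$ are exact functors and so commute with $\Coho^\bullet$. If $r\le n-1$ the truncation is the whole complex $\check\cC_k^\bullet$: for $k\ge 1$ this is exact, so $C(r,j,k)^\bullet$ is exact and contributes nothing to $E(j)_2$; for $k=0$ one necessarily has $j=0$ and $i=0$, and $C(r,0,0)^\bullet$ is the single term $\Coho^{2r}(\cP(0,0)_0)\cong\Inf_{\sym_{\ell,e}}^{\sym_\ell\times\sym_\ell}\reg_{\Delta_{X\times X^\ell}}\otimes p^*\omega_X^{-r}$, which survives to $E(0)_2^{0,2r}$ for $r=0,\dots,n-1$ (this is where the non-exactness of $\check\cC_0^\bullet=\C[0]$, Convention \ref{C0}, produces the extra diagonal terms). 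If $r\ge n$, set $i_0:=r-n+1\ge 1$; since $k\le\ell\le n-1<r$ the condition $r\ge k-j$ is automatic and the truncation top is $k-i_0<k$, so the cohomology of $\sigma^{\le k-i_0}\check\cC_k^\bullet$ is concentrated in \v Cech degree $k-i_0$, where it equals $\coker\check d_k^{k-i_0-1}$, i.e.\ $\cT(i_0,k)$ after the twist by $p^*\omega_X^{-(n+i_0-1)}$. Applying $\Inf\,\hat\cC_k^j$ gives $\cR(i_0,j,k)$, placed in the spot $i=i_0$ (the summand vanishing unless $k\ge\max\{i_0,j\}$); summing over $k$ yields $E(j)_2^{-i_0,\,2(n+i_0-1)}\cong\bigoplus_{k=\max\{i_0,j\}}^\ell\cR(i_0,j,k)$ and shows these rows carry no other $E_2$-terms.

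Combining the two cases gives the statement: the only nonvanishing entries of $E(j)_2$ are the $E(j)_2^{-i,2(n+i-1)}$ for $1\le i\le\ell$, together with, when $j=0$, the terms $E(0)_2^{0,2r}$ for $r=0,\dots,n-1$. Beyond the bookkeeping with (\ref{Pijksurface}) and Lemma \ref{itoi}, the only genuinely delicate point is the exceptional summand $k=0$: because $\check\cC_0^\bullet$ is not exact (unlike $\check\cC_k^\bullet$ for $k\ge 1$) it fails to cancel, and this is precisely the origin of the extra $j=0$ terms; one must also keep the constraints $\max\{i,j\}\le k\le\ell$ and $0\le i\le k$ in force throughout so that the truncation length $k-\max\{0,r-n+1\}$ is identified correctly.
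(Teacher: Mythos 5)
Your proof is correct and follows essentially the same route as the paper: split each row of $E(j)_1$ by the index $k$ (legitimate by Lemma~\ref{itoi}), identify the resulting $k$-summand as $\Inf\,\hat\cC_k^j$ applied to a stupid truncation of $\check\cC_k^\bullet(\reg_{[k],[k],e}\otimes p^*\omega_X^{-r})$, and take cohomology using exactness of $\Inf\,\hat\cC_k^j(\_)$, the exactness of $\check\cC_k^\bullet$ for $k\ge 1$, and the failure of exactness of $\check\cC_0^\bullet$ as the source of the extra $j=0$ terms. Your version is a bit more explicit about the reindexing $m=k-i$ and the precise ranges of validity of the constraints, but the substance is the same.
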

\begin{proof}
 The terms $E(j)^{p,q}_1=\Coho^q(\cP^{-pR}\star\cP^j)$ are described by (\ref{PiPjInf}) together with (\ref{Pijksurface}). We see that the only non-vanishing rows on the 1-level of $E(j)$ have $q=2r$ for $r=k-j,\dots, n+\ell-1$. By Lemma \ref{itoi} for $i\ge 1$ the row
 $q=2(n+i-1)$ is the complex 
\begin{align}\label{qrow}&\sigma^{\le -i}\Bigl(\bigoplus_{k=\max\{i,j\}}^\ell\Inf_{\sym_k\times \sym_{\ell-k,e}\times \sym_k}^{\sym_\ell\times \sym_\ell}\hat\cC_k^{j}\bigl( \check\cC_k^\bullet( \reg_{[k],[k],e}\otimes p^*\omega_X^{-(n+i-1)}) \bigr)[k]     \Bigr)\\
\cong&\notag
\bigoplus_{k=\max\{i,j\}}^\ell\sigma^{\le k-i}\Bigl(\Inf_{\sym_k\times \sym_{\ell-k,e}\times \sym_k}^{\sym_\ell\times \sym_\ell}\hat\cC_k^{j}\bigl( \check\cC_k^\bullet( \reg_{[k],[k],e}\otimes p^*\omega_X^{-(n+i-1)}) \bigr)     \Bigr)[k]\,.
 \end{align} 
Since the functor $\Inf_{\sym_k\times \sym_{\ell-k,e}\times \sym_k}^{\sym_\ell\times \sym_\ell}\hat\cC_k^{j}(\_)$ is exact, the cohomology of the row $q=2(n+i-1)$ is concentrated in degree $-i$ and equal to $\oplus_{k=\max\{i,j\}}^\ell\cR(i,j,k)$.
This proofs the first assertion.

For $r=k-j,\dots,n-1$ the row $q=2r$ is given by \[\bigoplus_{k=j}^\ell\Inf_{\sym_k\times \sym_{\ell-k,e}\times \sym_k}^{\sym_\ell\times \sym_\ell}\hat\cC_k^{j}\bigl( \check\cC_k^\bullet( \reg_{[k],[k],e}\otimes p^*\omega_X^{-r}) \bigr)[k]\,.\] Thus, it is an exact complex with one exception: In the case $j=0$ the one-term complex $\Inf_{\sym_{\ell,e}}^{\sym_\ell\times \sym_\ell}\hat\cC_0^{0}
\bigl( \check\cC_0^\bullet( \reg_{\emptyset,\emptyset,e}\otimes p^*\omega_X^{-r}) \bigr)[0]\cong \Inf_{\sym_{\ell,e}}^{\sym_\ell\times\sym_\ell}\reg_{\Delta_{X\times X^\ell}}\otimes p^*\omega_X^{-r}[0]$ occurs as a direct summand; compare Convention \ref{C0}.   
\end{proof}
\begin{cor}\label{HqPP}
For $j=1,\dots \ell$ we have
\[\Coho^q(\cP^R\star\cP^j)=\begin{cases}
                           \oplus_{k=\max\{i,j\}}^\ell\cR(i,j,k)\quad&\text{for $q=2(n-1)+i$, $i=1,\dots,\ell$}\\
0\quad&\text{else.}
                          \end{cases}
\]
Furthermore, 
\[\Coho^q(\cP^R\star\cP^0)=\begin{cases}
                           \oplus_{k=\max\{i,j\}}^{\ell}\cR(i,0,k)\quad&\text{for $q=2(n-1)+i$, $i=1,\dots,\ell$}\\
\Inf_{\sym_{\ell,e}}^{\sym_\ell\times\sym_\ell}\reg_{\Delta_{X\times X^\ell}}\otimes p^*\omega_X^{-r}\quad&\text{for $q=2r$, $r=0,\dots,n-1$}\\
0\quad&\text{else.}
                          \end{cases}
\]
\end{cor}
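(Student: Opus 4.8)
The plan is to obtain the statement from the preceding lemma essentially for free, the only substantive point being to show that each spectral sequence $E(j)$ of Section~\ref{specs} degenerates at the $E_2$-page and that, for each fixed $j$, the surviving terms sit in pairwise distinct total degrees. Once this is known, the filtration on $\Coho^{p+q}(\cP^R\star\cP^j)$ produced by the spectral sequence has a single non-zero graded piece in each cohomological degree, and reading that piece off $E(j)_2$ gives exactly the asserted formulas.

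First I would recall that $E(j)$ is a bounded spectral sequence concentrated in the vertical strip $-\ell\le p\le 0$, with $E(j)_1^{p,q}=\Coho^q((\cP^{-p})^R\star\cP^j)$ and abutment $\Coho^{p+q}(\cP^R\star\cP^j)$. By the previous lemma the non-zero terms of $E(j)_2$ occur precisely at the positions $(-i,2(n+i-1))$ for $i=1,\dots,\ell$, where the entry is $\bigoplus_{k=\max\{i,j\}}^{\ell}\cR(i,j,k)$, together with, in the case $j=0$ only, the positions $(0,2r)$ for $r=0,\dots,n-1$, where the entry is $\Inf_{\sym_{\ell,e}}^{\sym_\ell\times\sym_\ell}\reg_{\Delta_{X\times X^\ell}}\otimes p^*\omega_X^{-r}$.

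Next I would rule out all higher differentials by a degree count. A differential $d_r$ with $r\ge 2$ maps $E_r^{p,q}$ to $E_r^{p+r,q-r+1}$, hence raises the total degree by $1$; since $E_r^{p,q}$ is a subquotient of $E_2^{p,q}$, both source and target vanish unless both of the corresponding positions appear in the list above. The $\cR$-position $(-i,2(n+i-1))$ has total degree $2(n-1)+i$, so a $d_r$ leaving it would have to reach the unique listed position of total degree $2(n-1)+i+1$, namely $(-i-1,2(n+i)-1)$; this is impossible because $d_r$ shifts the first coordinate by $r\ge 2>0$, never by $-1$. For $j=0$ the extra positions $(0,2r)$ have total degree $2r\le 2n-2$, hence cannot be linked by a $d_r$ to an $\cR$-position (total degree $\ge 2n-1$) or to one another (the first coordinate would have to stay $0$, forcing $r=0$); moreover a $d_r$ starting at $(0,2r)$ has target first coordinate $r\ge 2$, outside the strip, while a $d_r$ ending at $(0,2r)$ would have to start at $(-r,3r-1)$, which is an $\cR$-position only if $i=r=2n-1$, contradicting $n>\ell\ge i$. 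Therefore $E(j)_\infty=E(j)_2$.

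Finally, for each fixed $j$ the total degrees of the surviving terms are pairwise distinct: the $\cR$-summands contribute in degrees $2(n-1)+i$ with $i=1,\dots,\ell$, and for $j=0$ the $\reg_{\Delta}$-summands contribute in degrees $2r$ with $0\le 2r\le 2n-2<2n-1$, so the two families never overlap. Hence $\Coho^q(\cP^R\star\cP^j)$ coincides with the single $E(j)_\infty$-entry of total degree $q$, which, after unwinding the definitions of $\cR(i,j,k)$, is precisely the stated formula. I do not expect a genuine obstacle here; the only thing requiring care is the bookkeeping of total degrees — in particular, checking for $j=0$ that the $\cR$-degrees and the $\reg_{\Delta}$-degrees stay separated and that no higher differential can connect them — and this comes down entirely to the standing assumption $n>\ell$.
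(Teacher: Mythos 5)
Your proof is correct and is essentially the paper's argument (the paper simply says the spectral sequences degenerate ``by the positioning of the non-vanishing terms''), with the degeneration and no-extension bookkeeping written out explicitly. One small typo: the $\cR$-position of total degree $2(n-1)+(i+1)$ is $(-i-1,\,2(n+i))$, not $(-i-1,\,2(n+i)-1)$, but this does not affect the argument since the point is only that $d_r$ shifts $p$ by $r\ge 2$ and so cannot move $p$ from $-i$ to $-i-1$.
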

\begin{proof}
By the positioning of the non-vanishing terms, we see that all the $E(j)$ degenerate at the 2-level. The result follows since $E(j)^q=\Coho^q(\cP^R\star\cP^j)$; see Section \ref{specs}.  
\end{proof}
\begin{lemma}\label{exactsum}
 Let $\cA$ be an abelian category and $(C_\alpha^\bullet,d_\alpha)$ be complexes in $\cA$ for $\alpha=1,\dots,m$. Let $C^\bullet$ be a complex with terms $C^j=C_1^j\oplus\dots\oplus C_m^j$ and differentials of the form 
 \[d^j=\begin{pmatrix}
  d_1^j & 0 &\cdots &0\\
*&d_2^j&\cdots&0\\
\vdots&\ddots&\ddots&\vdots\\
*&\cdots&*&d_m^j 
  \end{pmatrix}
\]
where the stars stand for arbitrary morphisms. Then, if all the $C_\alpha^\bullet$ are exact, $C^\bullet$ is exact too.
\end{lemma}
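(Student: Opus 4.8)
The plan is to prove the lemma by induction on $m$, using a short exact sequence of complexes together with the associated long exact cohomology sequence, so that the argument works verbatim in any abelian category $\cA$. The base case $m=1$ is trivial, since then $C^\bullet=C_1^\bullet$. For the inductive step, the key observation is that the block lower-triangular shape of the differentials makes $C_m^\bullet$ a \emph{subcomplex} of $C^\bullet$: if an element of $C^j$ has all components zero except the one in $C_m^j$, then, because all matrix entries strictly above the diagonal vanish, its image under $d^j$ again has all components zero except the one in $C_m^{j+1}$, on which $d^j$ acts by $d_m^j$. Consequently the quotient complex $\bar C^\bullet:=C^\bullet/C_m^\bullet$ has terms $\bar C^j=C_1^j\oplus\dots\oplus C_{m-1}^j$ and differentials given by the matrix obtained from $d^j$ by deleting the last row and column — which is again block lower-triangular with diagonal blocks $d_1^j,\dots,d_{m-1}^j$ — so the inductive hypothesis applies to $\bar C^\bullet$.

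This yields a short exact sequence of complexes
\[0\to C_m^\bullet\to C^\bullet\to \bar C^\bullet\to 0\,.\]
By hypothesis $C_m^\bullet$ is exact, and by the inductive hypothesis $\bar C^\bullet$ is exact. The long exact cohomology sequence attached to a short exact sequence of complexes then reads, in each degree $j$,
\[\dots\to \Coho^j(C_m^\bullet)\to \Coho^j(C^\bullet)\to \Coho^j(\bar C^\bullet)\to\dots\,,\]
and both outer terms vanish, whence $\Coho^j(C^\bullet)=0$ for all $j$; that is, $C^\bullet$ is exact. This closes the induction and proves the lemma.

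There is essentially no serious obstacle here: the only point requiring a moment's care is the verification that $C_m^\bullet$ is closed under the differential (equivalently, that $C_2^\bullet\oplus\dots\oplus C_m^\bullet$ is a subcomplex with quotient $C_1^\bullet$, if one prefers to peel off the first block instead), which is immediate from the triangular form, and the use of the long exact cohomology sequence, which is available in any abelian category. One could alternatively avoid the induction by a direct diagram chase on components after invoking the Freyd--Mitchell embedding, but the argument above is cleaner and intrinsic to $\cA$.
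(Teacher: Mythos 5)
Your proof is correct and is essentially the same as the paper's: the paper also peels off the last summand via the short exact sequence $0\to C_m^\bullet\to C^\bullet\to B^\bullet\to 0$ (with $B^\bullet$ being exactly your quotient $\bar C^\bullet$), applies the inductive hypothesis to $B^\bullet$, and concludes by the long exact cohomology sequence. Your explicit check that the triangular shape makes $C_m^\bullet$ a subcomplex is the same observation the paper uses implicitly when asserting that the inclusion and projection are chain maps.
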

\begin{proof}
Let $B^\bullet$ be the complex with terms $B^j=C_1^j\oplus \dots\oplus C_{m-1}^j$ and differentials 
\[d_B^j=\begin{pmatrix}
  d_1^j & 0 &\cdots &0\\
*&d_2^j&\cdots&0\\
\vdots&\ddots&\ddots&\vdots\\
*&\cdots&*&d_{m-1}^j\,. 
  \end{pmatrix}
\]
By induction, we can assume that $B^\bullet$ is an exact complex. There is the short exact sequence of complexes $0\to C_m^\bullet\to C^\bullet\to B^\bullet$ where the first map is given by the inclusion of the last direct summand and the second map is the 
projection to the first $m-1$ direct summands. The exactness of $C^\bullet$ follows from the associated long exact cohomology sequence.
\end{proof}
For $r\in \Z$ we set $\bar \cS_X^r:=\tilde S_X^r\bigr(\Inf_{\sym_{\ell,e}}^{\sym_\ell\times\sym_\ell}\reg_{\Delta_{X\times X^\ell}}\bigl)=\Inf_{\sym_{\ell,e}}^{\sym_\ell\times\sym_\ell}\reg_{\Delta_{X\times X^\ell}}\otimes p^*\omega_X^{r}[2r]$.
We have $\bar \cS_X^r=(\bar\cS_X^1)^{\star r}$ and $\FM_{\bar\cS_X^r}=\bar S_X^r=(\_)\otimes (\omega_X^r\boxtimes \reg_{X^\ell})[2r]\colon \D^b_{\sym_\ell}(X\times X^\ell)\to \D^b_{\sym_\ell}(X\times X^\ell)$.
\begin{prop}\label{surfacecoh}
 $\Coho^*(\cP^R\star\cP)=\bar \cS_X^{-[0,n-1]}:=\bar\cS_X^0\oplus \bar\cS_X^{-1}\oplus\dots\oplus \bar\cS_X^{-(n-1)}$.
\end{prop}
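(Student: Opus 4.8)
The plan is to run the spectral sequence (\ref{specs2}), $E_1^{p,q}=\Coho^q(\cP^R\star\cP^p)\Longrightarrow\Coho^{p+q}(\cP^R\star\cP)$, feeding in the cohomology sheaves computed in Corollary \ref{HqPP}. That corollary tells us that the $E_1$-page carries exactly two sorts of non-zero entries: in the zeroth column the sheaves $E_1^{0,2r}\cong\Inf_{\sym_{\ell,e}}^{\sym_\ell\times\sym_\ell}\reg_{\Delta_{X\times X^\ell}}\otimes p^*\omega_X^{-r}$ for $r=0,\dots,n-1$, which are precisely the cohomology sheaves of the claimed answer $\bar\cS_X^{-[0,n-1]}$; and, for each $i\in\{1,\dots,\ell\}$ and $p\in\{0,\dots,\ell\}$, an ``error'' term $\bigoplus_{k=\max\{i,p\}}^{\ell}\cR(i,p,k)$ in bidegree $(p,2(n-1)+i)$. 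Since $2(n-1)+i\ge 2n-1>2n-2\ge 2r$, the two sorts occupy disjoint rows, and in fact disjoint total degrees ($2r\le 2n-2$ versus $p+2(n-1)+i\ge 2n-1$). So the strategy splits into showing (a) the zeroth-column entries survive to $E_\infty$ untouched, and (b) every error term is already killed on $E_2$.

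Part (a) is immediate from positions. The zeroth column is leftmost, so no differential can end in $E_s^{0,2r}$; and a differential $d_s$ leaving it lands in column $s\ge 1$ at row $2r-s+1<2n-1$, where the $E_1$-page — hence every later page — vanishes by Corollary \ref{HqPP}. Thus $E_\infty^{0,2r}=E_1^{0,2r}$ for $r=0,\dots,n-1$, and since each of these sits alone in its total degree there is no extension problem; these terms contribute precisely $\bigoplus_{r=0}^{n-1}\bar\cS_X^{-r}=\bar\cS_X^{-[0,n-1]}$ to $\Coho^*(\cP^R\star\cP)$ (in the convention $\Coho^*(E)=\bigoplus_i\Coho^i(E)[-i]$ of Section \ref{conventions}).

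Part (b) is the real work. Fix $i\in\{1,\dots,\ell\}$ and examine the row $q=2(n-1)+i$ of $E_1$ with its differential $d_1$, which is induced by the differential $\cP^p\to\cP^{p+1}$ of the complex $\cP$. By (\ref{Pijksurface}) one has $\cR(i,p,k)\cong\Inf_{\sym_k\times\sym_{\ell-k,e}\times\sym_k}^{\sym_\ell\times\sym_\ell}\hat\cC_k^p(\cT(i,k))$, so for each fixed $k\in\{i,\dots,\ell\}$ the family $\{\cR(i,p,k)\}_{p}$ is exactly the complex obtained by applying the exact functor $\Inf$ to $\cT(i,k)\otimes_\C\hat\cC_k^\bullet$; since $k\ge 1$ the \v Cech-type complex $\hat\cC_k^\bullet$ is exact, hence so is this one. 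By Lemma \ref{jtoj} the $d_1$-component out of the $k$-summand reaches only the $k$- and $(k+1)$-summands, and once the maps of Lemma \ref{jtoj} are pushed from the auxiliary page $E(p)_1$ down to $E(p)_2=\bigoplus_k\cR(i,p,k)$ (the page on which Corollary \ref{HqPP} lives, computed via Lemma \ref{itoi}), its $k\to k$ part is exactly the differential of $\Inf\hat\cC_k^\bullet(\cT(i,k))$. Ordering the summands by increasing $k$, the row differential is then block lower-triangular with exact diagonal blocks, so Lemma \ref{exactsum} shows the whole row is exact. Hence $E_2^{p,2(n-1)+i}=0$ for all $p$ and all $i$, and with (a) the spectral sequence degenerates at $E_2=E_\infty$, concentrated at the $(0,2r)$; the asserted isomorphism $\Coho^*(\cP^R\star\cP)\cong\bar\cS_X^{-[0,n-1]}$ follows.

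I expect the main obstacle to be exactly this identification of $d_1$ on the error rows: one must check with care that the maps of Lemma \ref{jtoj}, stated on the $E_1$-pages of the auxiliary spectral sequences $E(p)$, descend to the differentials of the complexes $\Inf\hat\cC_k^\bullet(\cT(i,k))$ after passing to $E(p)_2$, and that the index ranges of $k$ and $p$ match up precisely so that the block lower-triangular pattern required by Lemma \ref{exactsum} really holds, with no stray summand left over. The rest — the degree bookkeeping in part (a) and the exactness of the $\hat\cC_k^\bullet$ — is routine.
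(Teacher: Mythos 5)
Your argument matches the paper's proof of Proposition \ref{surfacecoh} essentially step for step: run the spectral sequence (\ref{specs2}), feed in Corollary \ref{HqPP}, observe the $q=2r$ rows are concentrated in column $0$, and use Lemma \ref{jtoj} to put a block lower-triangular structure on each error row $q=2(n-1)+i$ whose diagonal blocks are the exact complexes $\Inf\hat\cC_k^\bullet(\cT(i,k))$, so that Lemma \ref{exactsum} kills them on $E_2$. The subtlety you flag at the end — that the differentials of $E$ are the induced maps on the $E(j)_2$-pages, and that the bookkeeping of the $k$-range works out (via the convention $\cR(i,j,k)=0$ and $\hat d^j_k=0$ for $j>k$) — is exactly the point the paper also handles, so this is a correct reading rather than a gap.
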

\begin{proof}
 We consider the spectral sequence $E_1^{p,q}=\Coho^q(\cP^R\star \cP^p)\,\Longrightarrow\, \Coho^{p+q}(\cP^R\star\cP)$; see (\ref{specs2}). By Corollary \ref{HqPP} the only non-vanishing rows of $E_1$ are 
\[q=0,2,\dots,2(n-1), 2(n-1)+1,\dots,2(n-1)+\ell\,.\]
Note that the terms of the row $q=2(n-1)+i$ for $i=1,\dots,\ell$ equal those of the exact complex
$\bigoplus_{k=i}^\ell \Inf_{\sym_k\times \sym_{\ell-k,e}\times \sym_k}^{\sym_\ell\times \sym_\ell}\hat\cC_k^{\bullet}(\cT(i,k))$.
We set $\cR(i,j,k)=0$ and $\hat d_k^j=0$ for $j>k$. By Lemma \ref{jtoj} the map $d^j\colon E_1^{j,q}=\oplus_{k=i}^\ell\cR(i,j,k)\to E_1^{j+1,q}=\oplus_{k=i}^\ell\cR(i,j+1,k)$ is given by
\[d^j= \begin{pmatrix}
  d_i^j & 0 &\cdots &0\\
*& d_{i+1}^j&\cdots&0\\
0&\ddots&\ddots&\vdots\\
0&0&*&d_\ell^j 
  \end{pmatrix}
\quad,\quad d_k^j=\hat d_k^j(\cT(i,k))\,.\]
It follows by Lemma \ref{exactsum} that the row $q=2(n-1)+i$ is exact for all $i=1,\dots,\ell$. 
For $r=0,\dots,n-1$, the row $q=2r$ has only one non-vanishing term, namely $E_1^{0,2r}=\bar \cS_X^{-r}$.
In summary, the only non-vanishing terms on the 2-level are $E_2^{0,2r}=\bar \cS_X^{-r}$ for $r=0,\dots,n-1$ from which the proposition follows. 
\end{proof}
\subsection{The surface case: splitting and monad structure}\label{surfsplitmon}
\begin{lemma}\label{qiso}
 Let $\cA$ be an abelian category with enough injectives and $A^\bullet,B^\bullet,C^\bullet\in \D^b(\cA)$ together with morphisms $f\colon A^\bullet \to C^\bullet$, $g\colon B^\bullet\to C^\bullet$ in $\D^b(\cA)$. Let there be an $m\in \Z$ such that the cohomology 
of $A^\bullet$ and $B^\bullet$ is concentrated in degrees smaller than $m$ and such that $\Coho^i(f)$ as well as $\Coho^i(g)$ are isomorphisms for all $i<m$. Then $A^\bullet\cong B^\bullet$ in $\D^b(\cA)$.
\end{lemma}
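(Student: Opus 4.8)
The plan is to reduce the statement to the behaviour of the truncation functors of the standard $t$-structure on $\D^b(\cA)$. First I would recall that, for any integer $m$, there is a truncation functor $\tau^{<m}:=\tau^{\le m-1}\colon\D^b(\cA)\to\D^b(\cA)$ equipped with a natural transformation $\tau^{<m}\to\id$ and characterised by $\Coho^i(\tau^{<m}K^\bullet)\cong\Coho^i(K^\bullet)$ for $i<m$ and $\Coho^i(\tau^{<m}K^\bullet)=0$ for $i\ge m$. This construction makes sense for an arbitrary abelian category, so in fact the hypothesis that $\cA$ has enough injectives will not be used.

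Next I would observe that, since the cohomology of $A^\bullet$ is concentrated in degrees $<m$, the canonical morphism $\tau^{<m}A^\bullet\to A^\bullet$ is a quasi-isomorphism and hence an isomorphism in $\D^b(\cA)$; the same holds for $\tau^{<m}B^\bullet\to B^\bullet$. I would then apply $\tau^{<m}$ to $f$ and to $g$. The key observation is that $\Coho^i(\tau^{<m}f)=\Coho^i(f)$ for $i<m$, which is an isomorphism by assumption, while for $i\ge m$ both source and target have vanishing cohomology; hence $\tau^{<m}f\colon\tau^{<m}A^\bullet\to\tau^{<m}C^\bullet$ induces isomorphisms on all cohomology objects and is therefore an isomorphism in $\D^b(\cA)$. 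The identical argument applied to $g$ shows that $\tau^{<m}g\colon\tau^{<m}B^\bullet\to\tau^{<m}C^\bullet$ is an isomorphism.

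Finally I would string these together into the chain of isomorphisms $A^\bullet\cong\tau^{<m}A^\bullet\xrightarrow{\ \tau^{<m}f\ }\tau^{<m}C^\bullet\xleftarrow{\ \tau^{<m}g\ }\tau^{<m}B^\bullet\cong B^\bullet$ in $\D^b(\cA)$, which yields $A^\bullet\cong B^\bullet$. There is no serious obstacle here; the only point requiring a little care is that $f$ and $g$ are morphisms in the derived category rather than honest morphisms of complexes, so the argument must be run through the functor $\tau^{<m}$ on $\D^b(\cA)$ and the induced maps on cohomology, not through chain-level representatives of $f$ and $g$.
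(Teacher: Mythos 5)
Your proof is correct, and it uses the same key idea as the paper's proof — smart (canonical) truncation with respect to the standard $t$-structure — but the execution differs. The paper first replaces $A^\bullet$ and $B^\bullet$ by quasi-isomorphic complexes vanishing in degrees $\ge m$, then replaces $C^\bullet$ by a quasi-isomorphic complex of injectives $I^\bullet$, so that $f$ and $g$ are represented by honest chain maps $A^\bullet\to I^\bullet$, $B^\bullet\to I^\bullet$; these factor through the subcomplex $\tau^{\le m-1}I^\bullet$, and the factorisations are quasi-isomorphisms. You instead work purely functorially: $\tau^{<m}$ is an exact-triangle-preserving functor on $\D^b(\cA)$ coming from the standard $t$-structure, the natural morphisms $\tau^{<m}A^\bullet\to A^\bullet$ and $\tau^{<m}B^\bullet\to B^\bullet$ are isomorphisms by the boundedness hypothesis, and $\tau^{<m}f$, $\tau^{<m}g$ are quasi-isomorphisms because their cohomologies agree degree by degree with those of $f$, $g$ in degrees $<m$ and vanish in degrees $\ge m$. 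The chain $A^\bullet\cong\tau^{<m}A^\bullet\to\tau^{<m}C^\bullet\leftarrow\tau^{<m}B^\bullet\cong B^\bullet$ then closes the argument. Your route is a bit cleaner and, as you note, does not use injective resolutions at all, so the hypothesis that $\cA$ has enough injectives is in fact superfluous; the paper's version uses it only as a convenient device to represent the derived-category morphisms by chain maps, which your functorial phrasing sidesteps.
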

\begin{proof}
We may assume that $A^i=B^i=0$ for all $i\ge m$; see \cite[Exercise 2.31]{Huy}.
Choose an injective complex $I^\bullet$ which is quasi-isomorphic to $C^\bullet$. Then $f$ and $g$ are represented by morphisms of complexes $f^\bullet\colon A^\bullet\to I^\bullet$ and $g^\bullet\colon B^\bullet\to I^\bullet$; see \cite[Lemma 2.39]{Huy}.
These morphisms factor through the smart truncation $\tau^{\le m-1}I^\bullet$ and by the hypothesis these factorisations are quasi-isomorphisms. Thus, they are isomorphisms in $\D^b(\cA)$ which proves the assertion.   
\end{proof}
\begin{prop}\label{PP}
$\cP^R\star\cP\cong \bar S_X^{-[0,n-1]}$.
\end{prop}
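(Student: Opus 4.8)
The plan is to upgrade the cohomology computation of Proposition \ref{surfacecoh} to an honest isomorphism of objects by comparing $\cP^R\star\cP$ with the single complex $\cP^R\star\cP^0$ and invoking Lemma \ref{qiso}. The first step is to exhibit $\bar\cS_X^{-[0,n-1]}$ as a direct summand of the object $\cP^R\star\cP^0$ whose complement is acyclic in degrees $<2n-1$. By (\ref{Pijksurface}) and Convention \ref{C0} the block $\cP(0,0)_0$ of $\cP^{0R}\star\cP^0$ is exactly $\Inf_{\sym_{\ell,e}}^{\sym_\ell\times\sym_\ell}\reg_{\Delta_{X\times X^\ell}}\otimes p^*\Lambda_n^*(X)=\bar\cS_X^{-[0,n-1]}$. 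Tracing through the computation of Section \ref{ifsub}, in the convolution $\cP^R\star\cP^0$ (the zeroth column of (\ref{trianglediag})) the differentials, induced by $\cP^{i-1}\to\cP^i$ and computed in Section \ref{inducedmapsummands}, never decrease the index $k$, and off-diagonally in $k$ the relevant morphism spaces vanish by Lemma \ref{Homvanish}; since $k=0$ forces $i=0$, the term $\cP(0,0)_0$ is uncoupled and splits off $\cP^R\star\cP^0$ as a direct summand. By Corollary \ref{HqPP} the complementary summand is built from the sheaves $\cR(i,0,k)$, which sit in cohomological degree $2(n-1)+i$ with $i\ge1$; hence it is acyclic in degrees $<2n-1$, and the split inclusion $g\colon\bar\cS_X^{-[0,n-1]}\hookrightarrow\cP^R\star\cP^0$ is an isomorphism on $\Coho^q$ for all $q<2n-1$.

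The second step produces the comparison map. Since $\cP^R\star\cP$ is the convolution of $\cP^R\star\cP^0\to\cP^R\star\cP^1\to\dots\to\cP^R\star\cP^\ell$ (bottom row of (\ref{trianglediag})), splitting off the zeroth column gives an exact triangle $C'\to\cP^R\star\cP\xrightarrow{\ f\ }\cP^R\star\cP^0$ with $C'$ the convolution of $\cP^R\star\cP^1\to\dots\to\cP^R\star\cP^\ell$. By Corollary \ref{HqPP} every $\cP^R\star\cP^p$ with $p\ge1$ is acyclic in degrees $<2n-1$, so $C'$ is acyclic in degrees $<2n$, and the long exact cohomology sequence of the triangle shows that $\Coho^q(f)$ is an isomorphism for every $q<2n-1$.

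Now I would apply Lemma \ref{qiso} with $A^\bullet=\cP^R\star\cP$, $B^\bullet=\bar\cS_X^{-[0,n-1]}$, $C^\bullet=\cP^R\star\cP^0$ and $m=2n-1$: by Proposition \ref{surfacecoh} both $A^\bullet$ and $B^\bullet$ are acyclic in degrees $\ge 2n-1$, and $f$, $g$ are isomorphisms on $\Coho^{<2n-1}$ by the two preceding steps. Hence $\cP^R\star\cP\cong\bar\cS_X^{-[0,n-1]}$, and applying $\FM$ gives $H_{\ell,n}^R\circ H_{\ell,n}\cong\id\oplus\bar S_X^{-1}\oplus\dots\oplus\bar S_X^{-(n-1)}$. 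Alternatively one may compose $f$ with the projection $\cP^R\star\cP^0\twoheadrightarrow\bar\cS_X^{-[0,n-1]}$ onto the summand and observe that the composite is a quasi-isomorphism, both complexes being acyclic in degrees $\ge 2n-1$.

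I expect the main obstacle to be the splitting rather than the cohomology: Proposition \ref{surfacecoh} leaves open that $\cP^R\star\cP$ might glue its cohomology sheaves $\bar\cS_X^0,\dots,\bar\cS_X^{-(n-1)}$ by nonzero extension classes, and the detour through $\cP^R\star\cP^0$ is precisely what avoids this, since there the decisive summand is manifestly split off by the $k$-grading and lies strictly below the ``error'' cohomology. The one step that genuinely requires care is checking that the block $\cP(0,0)_0$ is uncoupled from every other block of the convolution $\cP^R\star\cP^0$, for which the explicit descriptions of Section \ref{inducedmapsummands} together with Lemma \ref{Homvanish} are exactly what is needed.
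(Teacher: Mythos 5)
Your proof takes the same route as the paper's: both invoke Lemma \ref{qiso} with the triple $A^\bullet=\cP^R\star\cP$, $B^\bullet=\bar\cS_X^{-[0,n-1]}$, $C^\bullet=\cP^R\star\cP^0$, $m=2n-1$, and the same $f$ induced by the truncation $\cP\to\cP^0$; Proposition \ref{surfacecoh} and Corollary \ref{HqPP} then supply the needed cohomological bounds. The only place you diverge is in the construction of $g$, and there the argument has a genuine, if small, gap: you define $g$ as a \emph{split} inclusion $\bar\cS_X^{-[0,n-1]}\hookrightarrow\cP^R\star\cP^0$, asserting that $\cP(0,0)_0$ is uncoupled from all other blocks of the convolution because the differentials ``never decrease $k$'' and off-diagonal morphisms die by Lemma \ref{Homvanish}. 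But the proof of Lemma \ref{itoi} shows only that the components $\cP(i,0)_k\to\cP(i-1,0)_{k'}$ vanish on $\Coho^q$ for $k'\ne k$; it allows $k'=k-1$, and Lemma \ref{Homvanish} kills the $\Hom$-level (degree-preserving) piece only. Since the $\cP(i,j)_k$ are direct sums of \emph{shifted} sheaves on nested subvarieties, degree-shifting $\Ext^p$-components with $p>0$ need not vanish, so the derived-category splitting of $\cP(0,0)_0$ off the convolution does not follow from the cited facts, and $g$ is not defined.

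The paper sidesteps this entirely: $g$ is taken to be the composition $\bar\cS_X^{-[0,n-1]}\hookrightarrow\cP^{0R}\star\cP^0\to\cP^R\star\cP^0$, where the first map is the obvious direct-summand inclusion at the single term $\cP^{0R}\star\cP^0$ (manifest from (\ref{PiPjInf})), and the second is the edge map of the convolution induced by $\cP\to\cP^0$. Checking that this $g$ induces isomorphisms on $\Coho^q$ for $q<2n-1$ requires only the spectral-sequence computation behind Corollary \ref{HqPP}, not any splitting of $\cP^R\star\cP^0$. If you replace your split inclusion by this composition, the rest of your argument (the triangle for $f$, the vanishing of $C'$ in low degrees, the application of Lemma \ref{qiso}) goes through unchanged.
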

\begin{proof}
 This follows by applying the previous lemma to the situation that $f\colon \cP^R\star \cP\to \cP^R\star \cP^0$ is the map induced by the canonical map $\cP\to \cP^0$ and $g$ is the composition \[\bar S_X^{-[0,n-1]}\to \cP^{0R}\star\cP^0\to \cP^{R}\star\cP^0\] 
where the first map is the inclusion of the direct summand $\cP(0,0)_0\cong\bar S_X^{-[0,n-1]}$  under the isomorphism (\ref{PiPjInf}) and the second map is induced by $\cP\to \cP^0$. 
\end{proof}
\begin{proof}[Proof of Theorem C]
By the previous proposition, $H_{\ell,n}$ fulfils condition (i) of a $\P^{n-1}$-functor.

Set $F_{\ell,n}:=\delta_{[n]*}\circ \MM_{\alt_n}\circ \triv\colon \D^b_{\sym_\ell}(X\times X^\ell)\to \D^b_{\sym_n\times \sym_\ell}(X^n\times X^\ell)$; compare (\ref{H0}). 
By (\ref{GRG}) we have $F_{\ell,n}^RF_{\ell,n}=\bar S_X^{-[0,n-1]}$. 
The unit of adjunction $\eta\colon \id\to \Res\circ \Inf$ gives a map of monads $F_{\ell,n}^R\eta F_{\ell,n}\colon F_{\ell,n}^R F_{\ell,n}\to H_{\ell,n}^{0R}H_{\ell,n}^0$.
On the level of the FM kernels it coincides with the inclusion $\bar S_X^{-[0,n-1]}\to \cP^{0R}\star\cP^0$.
Since $F_{\ell,n}=F_{0,n}\boxtimes \id_{\D^b_{\sym_\ell}(X^\ell)}$, the monad multiplication 
$\mu(F_{\ell,n})\colon F_{\ell,n}^RF_{\ell,n}F_{\ell,n}^RF_{\ell,n}\to F_{\ell,n}^RF_{\ell,n}$ equals $\mu(F_{0,n})\boxtimes \id$. By \cite{Kru3}, $F_{0,n}$ is a $\P^{n-1}$-functor. In particular, the monad structure of $F_{0,n}$ has the right shape, i.e.\
the components $S_X^{-1}\circ S_X^{-k}\to S_X^{-(k+1)}$ of $\mu(F_{0,n})$ are isomorphisms for $k=0,\dots,n-2$. Thus, also the components $\bar S_X^{-1}\circ \bar S_X^{-k}\to \bar S_X^{-(k+1)}$ of $\mu(F_{\ell,n})$ are isomorphisms for 
$k=0,\dots,n-2$.  
Equivalently, on the level of the FM kernels the components $\bar \cS_X^{-1}\circ \bar \cS_X^{-k}\to \bar \cS_X^{-(k+1)}$ of the monad multiplication 
\[\cP(\emptyset,[n],e)^R\star \cP(\emptyset,[n],e)\star \cP(\emptyset,[n],e)^R\star \cP(\emptyset,[n],e)\to \cP(\emptyset,[n],e)^R\star \cP(\emptyset,[n],e)\,,\] which we denote again by $\mu(F_{\ell,n})$, are isomorphisms.
Let 
$U:=(X\times X^\ell)\setminus(\cup_{\emptyset\neq I\subset [\ell]} D_{I})$ and $u\colon U\to X\times X^{\ell}$ the open embedding. Then $H_{\ell,n}\circ u_*\cong H^0_{\ell,n}\circ u_*$. It follows that the components 
$\bar \cS_X^{-1}\circ \bar \cS_X^{-k}\to \bar \cS_X^{-(k+1)}$ of $\mu(H_{\ell,n})\colon \cP^R\star \cP\star \cP^R\star \cP\to \cP^R\star \cP$ are isomorphisms over $U\times U$. 
Since the $\cS_X^{-k}[2k]$ are direct sums of line bundles on the graphs of the $\sym_\ell$-action on $X\times X^\ell$ and the codimension of the complement of $U$ in $X\times X^\ell$ is 2, it follows that the components $\bar \cS_X^{-1}\circ \bar \cS_X^{-k}\to \bar \cS_X^{-(k+1)}$ of $\mu(H_{\ell,n})$ are isomorphisms over the whole
 $X\times X^\ell\times X\times X^\ell$ which amounts to condition (ii) of a $\P^{n-1}$-functor.

That the $H_{\ell,n}$ satisfy condition (iii) of a $\P^{n-1}$-functor was already shown in Section \ref{cond3}.       
\end{proof}

\subsection{The case of the generalised Kummer stacks}\label{Kummercase}
Let $X=A$ be an abelian variety of dimension $d$. For $\ell,n\in \N$ with $n>\max\{\ell,1\}$ we set
\begin{align*}M_{\ell,n}A :=\{(a,a_1,\dots,a_\ell)\mid n\cdot a+a_1+\dots+a_\ell=0\}\subset A\times A^\ell,,\\
N_{n+\ell-1}:=\{(b_1,\dots,b_{n+\ell})\mid b_1+\dots+b_{n+\ell}=0\}\subset A^{n+\ell}\,.\end{align*}
Note that these subvarieties are invariant under the $\sym_\ell$-action on $A\times A^\ell$ and the $\sym_{n+\ell}$-action on $A^{n+\ell}$, respectively.
For $(I,J,\mu)\in \Index(i)$ we set $\hat \Gamma_{I,J,\mu}:=\Gamma_{I,J,\mu}\cap(M_{\ell,n}A\times N_{n+\ell-1}A)$ where 
$\Gamma_{I,J,\mu}\subset A\times A^\ell\times A^{n+\ell}$ is the subvariety described in Section \ref{cP}.
Furthermore, let $\hat \cP(I,J,\mu):=\reg_{\hat \Gamma_{I,J,\mu}}\otimes \alt_J$ and
\begin{align*}\hat \cP^i:=\Inf_{\sym_{i}\times \sym_{\ell-i,e}\times \sym_{n+i}}^{\sym_{\ell}\times \sym_{n+\ell}} \hat \cP([i],[n+i],e)=\bigoplus_{\Index(i)}\hat \cP(I,J,\mu)\in \Coh_{\sym_\ell\times \sym_{n+\ell}}(M_{\ell,n}A\times N_{n+\ell-1}A)\,.\end{align*}
We set $\hat H_{\ell,n}:=\FM_{\hat \cP}\colon\D^b_{\sym_\ell}(M_{\ell,n}A)\to \D^b_{\sym_{n+\ell}}(N_{n+\ell-1}A)$ where $\hat \cP$ is the complex 
\begin{align}\label{hatPcomplex}\hat \cP:=\hat\cP_{\ell,n}=\bigl(0\to \hat \cP^0\to \dots\to \hat \cP^\ell\to 0\bigr)\,.\end{align}
For $I\subset [n+\ell]$ with $|I|=n$, the morphism $\delta_I\colon A\times A^\ell\to A^{n+\ell}$ restricts to a morphism 
$\hat \delta_I\colon M_{\ell,n}A\to N_{n+\ell-1} A$. The functor $\hat H_{\ell,n}^0$ is given by the composition
\begin{align}\label{hat0}
 \D^b_{\sym_\ell}(M_{\ell,n}A)\xrightarrow{\MM_{\alt_n}\circ \triv} \D^b_{\sym_n\times \sym_\ell}(M_{\ell,n}A)\xrightarrow{\hat \delta_{[n]*}}\D^b_{\sym_n\times \sym_\ell}(N_{n+\ell-1}A)\xrightarrow\Inf \D^b_{\sym_{n+\ell}}(N_{n+\ell-1}A)\,.
\end{align}
\begin{remark}
 Let $\iota \colon M_{\ell,n}\to A\times A^\ell$ and $\iota'\colon N_{n+\ell-1}A\to A^{n+\ell}$ denote the close embeddings. We have $\hat \cP(I,J,\mu)\not\cong (\iota\times \iota')^*\cP(I,J,\mu)$ where $(\iota\times \iota')^*$ denotes the derived pull-back (note that the equality holds if we consider the non-derived pull-back instead). The reason is that $\Gamma_{I,J,\mu}$ and 
$M_{\ell,n}A\times N_{n+\ell-1}A$ do not intersect transversally inside of $A\times A^\ell\times A^{n+\ell}$. Indeed, 
\[\codim\bigl(M_{\ell,n}A\times N_{n+\ell-1}A\hookrightarrow A\times A^\ell\times A^{n+\ell}\bigr)=4d\quad,\quad \codim\bigl(\hat \Gamma_{I,J,\mu}\hookrightarrow \Gamma_{I,J,\mu}  \bigr)=2d\,.\]
\end{remark}
\begin{lemma}\label{cross} 
 We have $(\iota \times \id)_*\hat\cP^i\cong (\id\times\iota')^*\cP^i$ and $(\id \times \iota')_*\hat\cP^i\cong (\iota\times\id)^*\cP^i$ for every $i=0,\dots,\ell$. Also, $(\iota \times \id)_*\hat\cP\cong (\id\times\iota')^*\cP$ and 
$(\id \times \iota')_*\hat\cP\cong (\iota\times\id)^*\cP$. 
\end{lemma}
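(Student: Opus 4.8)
The plan is to reduce the statement about the complexes $\hat\cP$ and $\cP$ to the corresponding statement about each term $\hat\cP^i$ and $\cP^i$, since both complexes are built by assembling the graded pieces with differentials that are manifestly compatible with the pull-back and push-forward operations in question. So I would first prove the claim for a single $i$, and then observe that $(\iota\times\id)_*$ and $(\id\times\iota')^*$ are exact functors (the former because $\iota\times\id$ is a closed embedding; the latter because $N_{n+\ell-1}A\hookrightarrow A^{n+\ell}$ is a regular embedding and, as we shall check, each $\hat\cP^i$ and each term of the differential is Tor-independent of it in the relevant sense), so that they commute with forming the total complex, and the differentials match because they are both given by the same restriction-of-sections maps twisted by the signs $\eps_{J,\mu(c)}$.

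For the termwise statement, the key point is a purely local computation about the intersection geometry. Fix $(I,J,\mu)\in\Index(i)$ and consider the subvariety $\Gamma_{I,J,\mu}\subset A\times A^\ell\times A^{n+\ell}$. I want $(\iota\times\id)_*\reg_{\hat\Gamma_{I,J,\mu}}\cong (\id\times\iota')^*\reg_{\Gamma_{I,J,\mu}}$, where $\hat\Gamma_{I,J,\mu}=\Gamma_{I,J,\mu}\cap(M_{\ell,n}A\times N_{n+\ell-1}A)$ and, crucially, also $\hat\Gamma_{I,J,\mu}=\Gamma_{I,J,\mu}\cap(A\times A^\ell\times N_{n+\ell-1}A)$ set-theoretically. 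The content is that although $\Gamma_{I,J,\mu}$ and $M_{\ell,n}A\times N_{n+\ell-1}A$ do \emph{not} meet transversally (the excess is exactly $2d$, as recorded in the preceding remark), the subvariety $\Gamma_{I,J,\mu}$ \emph{does} meet $A\times A^\ell\times N_{n+\ell-1}A$ transversally: on $\Gamma_{I,J,\mu}$ the $n+i$ coordinates indexed by $J$ are all equal to $x$, the remaining coordinates are the $x_c$ with $c\in\bar I$, and the single linear condition $\sum b_k=0$ becomes $(n+i)\,x+\sum_{c\in\bar I}x_c=0$, which cuts out a smooth subvariety of $\Gamma_{I,J,\mu}\cong A\times A^{\ell-i}$ of the expected codimension $d$; moreover this condition is precisely the one defining $\hat\Gamma_{I,J,\mu}$ inside $\Gamma_{I,J,\mu}$ (one checks $(n+i)x+\sum_{c\in\bar I}x_c=0$ is equivalent to $nx+\sum_a x_a=0$ given $x=x_a$ for $a\in I$). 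Hence the derived pull-back $(\id\times\iota')^*\reg_{\Gamma_{I,J,\mu}}$ has no higher Tor's and equals $\reg_{\hat\Gamma_{I,J,\mu}}$ pushed forward, i.e. $(\iota\times\id)_*\reg_{\hat\Gamma_{I,J,\mu}}$, once one remembers all these sheaves live on $A\times A^\ell\times A^{n+\ell}$. Twisting everything by $\alt_J$ (which is pulled back, hence unaffected) gives $(\iota\times\id)_*\hat\cP(I,J,\mu)\cong(\id\times\iota')^*\cP(I,J,\mu)$, and summing over $\Index(i)$ — compatibly with the inflation presentation and the induced $\sym_\ell\times\sym_{n+\ell}$-linearisations — yields $(\iota\times\id)_*\hat\cP^i\cong(\id\times\iota')^*\cP^i$. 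The symmetric statement $(\id\times\iota')_*\hat\cP^i\cong(\iota\times\id)^*\cP^i$ follows by the same argument with the roles of the two factors exchanged: one checks instead that $\Gamma_{I,J,\mu}$ meets $M_{\ell,n}A\times A^{n+\ell}$ transversally, the defining equation of $M_{\ell,n}A$ restricting on $\Gamma_{I,J,\mu}$ to the \emph{same} linear relation $nx+\sum_a x_a=0$ in the coordinates $x,x_1,\dots,x_\ell$.

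The passage from the terms to the complexes is then formal: the differentials $d^i\colon\hat\cP^i\to\hat\cP^{i+1}$ and $d^i\colon\cP^i\to\cP^{i+1}$ are, on matching direct summands, the same signed restriction maps $\eps_{J,\mu(c)}\cdot(\text{restriction of sections})$ from $\reg_{\Gamma}$ to $\reg_{\Gamma'}$ for the inclusions $\Gamma_{I\cup\{c\},\dots}\subset\Gamma_{I,J,\mu}$ (respectively their intersections with $N_{n+\ell-1}A$, resp. $M_{\ell,n}A$), and restriction commutes with the exact functors $(\iota\times\id)_*$ and $(\id\times\iota')^*$. Since an exact functor sends a complex to the complex with transformed terms and differentials, the termwise isomorphisms $(\iota\times\id)_*\hat\cP^i\cong(\id\times\iota')^*\cP^i$ assemble to $(\iota\times\id)_*\hat\cP\cong(\id\times\iota')^*\cP$ in the bounded derived category, and likewise for the other identity. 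The main obstacle, and the only place where anything needs to be checked carefully, is the transversality claim together with the identification of the two defining equations — that is, verifying that the single linear relation defining $N_{n+\ell-1}A$ (resp. $M_{\ell,n}A$) restricts on each graph $\Gamma_{I,J,\mu}$ to exactly the relation defining $\hat\Gamma_{I,J,\mu}$ and does so transversally; once this is in hand, Tor-independence and the commutation with the functors are automatic, and the rest is bookkeeping with the equivariant structure.
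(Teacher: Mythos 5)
Your proof is correct and follows essentially the same route as the paper's: you verify that $\Gamma_{I,J,\mu}$ meets $A\times A^\ell\times N_{n+\ell-1}A$ (resp. $M_{\ell,n}A\times A^{n+\ell}$) transversally with intersection $\hat\Gamma_{I,J,\mu}$, apply base change to get the termwise isomorphisms, and then assemble over $\Index(i)$ and over the complex. The paper states the two transversal squares and cites the base change result directly, while you additionally spell out the restriction of the linear constraint to $\Gamma_{I,J,\mu}$ and the compatibility of the differentials, which is welcome detail but not a different method.
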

\begin{proof}
For every $0\le i\le \ell$ and $(I,J,\mu)\in \Index(i)$ the two diagrams of closed embeddings 
\[
\begin{CD}
\hat\Gamma_{I,J,\mu}
@>{}>>
\Gamma_{I,J,\mu} \\
@V{}VV
@V{}VV \\
A\times A^\ell\times N_{n+\ell-1}A
@>{}>>
A\times A^\ell\times A^{n+\ell}
\end{CD} 
\quad,\quad
\begin{CD}
\hat\Gamma_{I,J,\mu}
@>{}>>
\Gamma_{I,J,\mu} \\
@V{}VV
@V{}VV \\
M_{\ell,n}A\times A^{n+\ell}
@>>>
A\times A^\ell\times A^{n+\ell}
\end{CD}
\]
are transversal intersections. It follows from the base change \cite[Corollary 2.27]{Kuz} that
\[(\iota \times \id)_*\hat\cP(I,J,\mu)\cong (\id\times\iota')^*\cP(I,J,\mu)\quad,\quad (\id \times \iota')_*\hat\cP(I,J,\mu)\cong (\iota\times\id)^*\cP(I,J,\mu)\]
for all $(I,J,\mu)\in \Index(i)$.
The result follows from the definition of $\cP^i$ as a direct sum of the $\cP(I,J,\mu)$ and (\ref{hatPcomplex}).
\end{proof}
\begin{cor}
 The functor $\hat H_{\ell,n}$ is a restriction of $H_{\ell,n}$ in the sense that $\hat H_{\ell,n}\circ \iota^*\cong \iota'^*\circ H_{\ell,n}$ and $\iota'_*\circ\hat H_{\ell,n}\cong H_{\ell,n}\circ\iota_*$. 
\end{cor}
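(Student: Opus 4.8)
The plan is to deduce both isomorphisms from Lemma~\ref{cross} by the standard base change and projection formula manipulations of Fourier--Mukai kernels, performed $\sym_\ell$- and $\sym_{n+\ell}$-equivariantly; as recalled in Section~\ref{equiFM}, these tools are available in the equivariant setting. Write $H_{\ell,n}=\FM_{\cP}$ and $\hat H_{\ell,n}=\FM_{\hat\cP}$. Each square that will occur is obtained by taking the product of one of the closed embeddings $\iota$ or $\iota'$ with an identity map, against a projection; such squares are cartesian with a flat leg, hence Tor-independent, so \cite[Corollary~2.27]{Kuz} applies, and $\iota\times\id$ and $\id\times\iota'$ are proper, so the projection formula applies along them.

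First I would handle $\hat H_{\ell,n}\circ\iota^*\cong\iota'^*\circ H_{\ell,n}$. For the right-hand side, base change along the square relating the two projections to $A^{n+\ell}$ and to $N_{n+\ell-1}A$ identifies $\iota'^*\circ\FM_{\cP}$ with $\FM_{(\id\times\iota')^*\cP}$. For the left-hand side, using $\pr_{M_{\ell,n}A}^*\circ\iota^*=(\iota\times\id)^*\circ\pr_{A\times A^\ell}^*$ together with the projection formula along $\iota\times\id\colon M_{\ell,n}A\times N_{n+\ell-1}A\to(A\times A^\ell)\times N_{n+\ell-1}A$ identifies $\FM_{\hat\cP}\circ\iota^*$ with the Fourier--Mukai transform with kernel $(\iota\times\id)_*\hat\cP$. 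By Lemma~\ref{cross}, $(\iota\times\id)_*\hat\cP\cong(\id\times\iota')^*\cP$, so the two functors agree.

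The identity $\iota'_*\circ\hat H_{\ell,n}\cong H_{\ell,n}\circ\iota_*$ I would treat symmetrically: base change along $M_{\ell,n}A\times A^{n+\ell}\to(A\times A^\ell)\times A^{n+\ell}$ gives $\pr_{A\times A^\ell}^*\circ\iota_*\cong(\iota\times\id)_*\circ\pr_{M_{\ell,n}A}^*$, and the projection formula along $\iota\times\id$ then identifies $H_{\ell,n}\circ\iota_*$ with $\FM_{(\iota\times\id)^*\cP}$; on the other side, $\iota'_*\circ\pr_{N_{n+\ell-1}A*}=\pr_{A^{n+\ell}*}\circ(\id\times\iota')_*$ and the projection formula along $\id\times\iota'$ identify $\iota'_*\circ\FM_{\hat\cP}$ with $\FM_{(\id\times\iota')_*\hat\cP}$, which by Lemma~\ref{cross} equals $\FM_{(\iota\times\id)^*\cP}$, so both sides agree. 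I expect the only real work to be the bookkeeping: checking that each square is cartesian and Tor-independent, keeping straight which projection is which, and verifying that the $\sym_\ell$- and $\sym_{n+\ell}$-linearisations match under base change and the projection formula. There should be no genuinely hard point beyond Lemma~\ref{cross} itself.
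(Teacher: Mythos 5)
Your argument is correct and is the standard way of deducing the corollary from Lemma \ref{cross}: identify each side as a Fourier--Mukai transform whose kernel is obtained from $\cP$ or $\hat\cP$ by pushforward/pullback along $\iota\times\id$ or $\id\times\iota'$ via flat base change and the projection formula, and then invoke the lemma. The paper states the corollary without an explicit proof, evidently intending precisely this routine kernel calculus, so you are filling in exactly the argument the author had in mind.
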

\begin{prop}\label{Kummercoh}
\[
 \Coho^*(\hat \cP^R\star \hat\cP)\cong\begin{cases}
\Inf_{\sym_{\ell,e}}^{\sym_\ell\times \sym_\ell}\reg_\Delta[0]\quad&\text{for $A=E$ an elliptic curve,}\\
\Inf_{\sym_{\ell,e}}^{\sym_\ell\times \sym_\ell}\reg_\Delta([0]\oplus[-2]\oplus\dots\oplus[-2(n-1)])\quad&\text{for $A$ an abelian surface.}
                                                          \end{cases}
\]
Furthermore, for $\ell',n'$ with $n'>\ell'>\ell$ and $n'+\ell'=n+\ell$ we have $\Coho^*(\hat \cP_{\ell',n'}^R\star\hat \cP_{\ell,n})=0$ in the case of an elliptic curve.
\end{prop}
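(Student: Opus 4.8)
The plan is to deduce everything from the corollary to Lemma~\ref{cross} --- the statement that $\hat H_{\ell,n}$ is a restriction of $H_{\ell,n}$ --- together with the formulae for $H_{\ell,n}^R\circ H_{\ell,n}$ that are already available (Theorem~C when $A$ is an abelian surface, Proposition~A(i) when $A=E$ is an elliptic curve). Write $\iota\colon M_{\ell,n}A\hookrightarrow A\times A^\ell$ and $\iota'\colon N_{n+\ell-1}A\hookrightarrow A^{n+\ell}$ for the closed embeddings, so that the corollary provides natural isomorphisms $\iota'_*\circ\hat H_{\ell,n}\cong H_{\ell,n}\circ\iota_*$ and $\hat H_{\ell,n}\circ\iota^*\cong\iota'^*\circ H_{\ell,n}$. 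Two features of the abelian situation will be used: $\omega_A\cong\reg_A$, and $M_{\ell,n}A$ (as well as $N_{n+\ell-1}A$, and $M_{\ell',n'}A$ in the orthogonality part) is a closed subgroup of an abelian variety, so the corresponding embedding has trivial conormal bundle and split normal bundle sequence.

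First I would pass to right adjoints in $\hat H_{\ell,n}\circ\iota^*\cong\iota'^*\circ H_{\ell,n}$, which gives $\iota_*\circ\hat H_{\ell,n}^R\cong H_{\ell,n}^R\circ\iota'_*$, and compose with $\iota'_*\circ\hat H_{\ell,n}\cong H_{\ell,n}\circ\iota_*$ to obtain a natural isomorphism
\[\iota_*\circ\hat H_{\ell,n}^R\circ\hat H_{\ell,n}\;\cong\;H_{\ell,n}^R\circ\iota'_*\circ\hat H_{\ell,n}\;\cong\;H_{\ell,n}^R\circ H_{\ell,n}\circ\iota_*\,.\]
By Theorem~C resp.\ Proposition~A(i) the right-hand side is $\iota_*$ for $A=E$ an elliptic curve, and $\bigoplus_{r=0}^{n-1}\bar S_A^{-r}\circ\iota_*$ for $A$ an abelian surface, with $\bar S_A^{-r}=(\_)\otimes(\omega_A^{-r}\boxtimes\reg_{A^\ell})[-2r]$. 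Since $\omega_A$ is trivial the line bundle $\omega_A^{-r}\boxtimes\reg_{A^\ell}$ is trivial, so the projection formula yields $\bar S_A^{-r}\circ\iota_*\cong\iota_*\circ[-2r]$; hence $\iota_*\circ\hat H_{\ell,n}^R\circ\hat H_{\ell,n}$ is naturally isomorphic to $\iota_*$ (elliptic curve) resp.\ to $\iota_*\circ(\id\oplus[-2]\oplus\dots\oplus[-2(n-1)])$ (abelian surface).

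It remains to cancel the left-hand $\iota_*$. The conormal bundle of $\iota$ is trivial of rank $d:=\dim A$ and the normal bundle sequence splits, so Theorem~\ref{selfint} gives $\iota^*\circ\iota_*\cong\bigoplus_{i=0}^{d}(\_)^{\oplus\binom{d}{i}}[i]$. Applying this to $\iota_*\circ\hat H_{\ell,n}^R\circ\hat H_{\ell,n}\cong\iota_*\circ\Psi$, with $\Psi$ the endofunctor of $\D^b_{\sym_\ell}(M_{\ell,n}A)$ identified above, and then fixing an object and an indecomposable and comparing, for every integer, the number of shifts of that indecomposable occurring on the two sides, one gets $\sum_{i=0}^{d}\binom{d}{i}c_{m-i}=0$ for all $m$, where $(c_m)$ is the finitely supported difference of the two multiplicity sequences; multiplying the associated Laurent polynomial by $(1+t)^{d}$ annihilates it, so $(c_m)=0$, and hence (using that $\D^b_{\sym_\ell}(M_{\ell,n}A)$ is Krull--Schmidt) $\hat H_{\ell,n}^R\circ\hat H_{\ell,n}\cong\Psi$. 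Since the Fourier--Mukai kernel of $\id_{\D^b_{\sym_\ell}(M_{\ell,n}A)}$ is $\Inf_{\sym_{\ell,e}}^{\sym_\ell\times\sym_\ell}\reg_\Delta$ and kernels are unique, this gives the two asserted descriptions of $\hat\cP^R\star\hat\cP$; being a complex with vanishing differentials it equals its $\Coho^*$. The orthogonality claim is obtained in the same way: with $\tilde\iota\colon M_{\ell',n'}A\hookrightarrow A\times A^{\ell'}$ one finds $\tilde\iota_*\circ\hat H_{\ell',n'}^R\circ\hat H_{\ell,n}\cong H_{\ell',n'}^R\circ H_{\ell,n}\circ\iota_*$, which is $0$ for $A=E$ and $\ell'>\ell$ by Proposition~A(ii), and cancelling $\tilde\iota_*$ shows $\hat H_{\ell',n'}^R\circ\hat H_{\ell,n}=0$. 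The only delicate point is this cancellation of $\iota_*$ (and keeping track of naturality so that the conclusion is an isomorphism of functors); alternatively one can avoid it entirely by rerunning, with $\cP$ replaced by $\hat\cP$, the kernel-level arguments that prove Propositions~\ref{curvecoh}, \ref{orthker} and \ref{surfacecoh}, using transversality facts analogous to those in the proof of Lemma~\ref{cross} together with $\Lambda_m^*(A)=\reg_A[0]$ for $A=E$ and $\Lambda_m^*(A)=\bigoplus_{r=0}^{m-1}\reg_A[-2r]$ for $A$ an abelian surface.
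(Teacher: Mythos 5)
Your proposal shares the paper's starting point---the identity $\iota_*\circ\hat H_{\ell,n}^R\circ\hat H_{\ell,n}\cong H_{\ell,n}^R\circ H_{\ell,n}\circ\iota_*$ obtained from Lemma~\ref{cross} and its corollary---and the reduction of the right-hand side via $\omega_A\cong\reg_A$. But from there it takes a genuinely different route, and that route has a gap. You pass to $\iota^*\iota_*\cong\bigoplus_{i=0}^d(\_)^{\oplus\binom{d}{i}}[i]$ and cancel by a Krull--Schmidt multiplicity count. That count is fine, but it only delivers an isomorphism $\hat H_{\ell,n}^R\hat H_{\ell,n}(X)\cong\Psi(X)$ \emph{object by object}, not an isomorphism of functors: even if one first cancels functorially to get $\bigoplus_i\binom{d}{i}\,\hat H^R\hat H\,[i]\cong\bigoplus_i\binom{d}{i}\,\Psi\,[i]$ as functors, cancellation of direct summands in a natural isomorphism is not justified (the isomorphism may mix summands). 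Without an isomorphism of functors you cannot invoke uniqueness of the cohomology of the Fourier--Mukai kernel, which is the piece of input that actually lets one say something about $\Coho^*(\hat\cP^R\star\hat\cP)$. Compounding this, the appeal to ``kernels are unique'' is wrong in the surface case: $\Psi=\id\oplus[-2]\oplus\dots\oplus[-2(n-1)]$ is not an equivalence, so only the \emph{cohomology} of its kernel is determined by the functor (this is exactly \cite[Theorem 1.2]{CSuniqueness}, cited in the paper). You do flag this step as delicate and offer to ``rerun the kernel-level arguments'', but the paper does not actually rerun them.

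What the paper does instead is stay at the kernel level from the outset: the kernel of the left-hand side of the starting identity is $(\id\times\iota)_*(\hat\cP^R\star\hat\cP)$, and since $(\id\times\iota)_*$ is exact, its $\Coho^*$ is $(\id\times\iota)_*\Coho^*(\hat\cP^R\star\hat\cP)$. The kernel of the right-hand side is the derived pullback $(\iota\times\id)^*(\cP^R\star\cP)$, and the key observation you do not make is that the square
\[
\begin{CD}
\Gamma_{\iota} @>>> \Delta_{A\times A^\ell}\\
@VVV @VVV\\
M_{\ell,n}A\times A\times A^\ell @>>> A\times A^\ell\times A\times A^\ell
\end{CD}
\]
is a transversal intersection (a codimension count: $\ell d+(\ell+1)d=(2\ell+2)d-d$). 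Combined with $\omega_A\cong\reg_A$ and Propositions~\ref{curvecoh} and~\ref{PP}, this makes the derived pullback underived, hence already a formal complex equal to $(\id\times\iota)_*\Inf_{\sym_{\ell,e}}^{\sym_\ell\times\sym_\ell}\reg_\Delta$ resp.\ $(\id\times\iota)_*\Inf_{\sym_{\ell,e}}^{\sym_\ell\times\sym_\ell}\reg_\Delta([0]\oplus\dots\oplus[-2(n-1)])$. Uniqueness of kernel cohomology then identifies the two $\Coho^*$'s, and faithfulness of the closed immersion push-forward $(\id\times\iota)_*$ removes it. This bypasses both the natural-transformation problem and the need for the $\iota^*\iota_*$ manoeuvre. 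For the orthogonality statement the cancellation is harmless (a zero functor pushed forward is zero), so that part of your argument is fine.
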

\begin{proof}
 It follows from the previous corollary taking right adjoints that 
\begin{align*}\iota_*\circ \hat H_{\ell,n}^R\circ \hat H_{\ell,n}\cong  H_{\ell,n}^R\circ  H_{\ell,n}\circ \iota_*\,.\end{align*}
The FM kernel of the left hand side is $(\id\times\iota)_*(\hat \cP^R\star\hat \cP)\in \D^b_{\sym_\ell\times \sym_\ell}(M_{\ell,n}A\times A\times A^\ell)$ and the FM kernel of the right hand side is 
\[(\iota\times \id)^*(\cP^R\star \cP)\cong\begin{cases}
                                          \Inf_{\sym_{\ell,e}}^{\sym_\ell\times \sym_\ell}\reg_{\Gamma_\iota}[0]\quad&\text{for $\dim A=1$,}\\
\Inf_{\sym_{\ell,e}}^{\sym_\ell\times \sym_\ell}\reg_{\Gamma_\iota}([0]\oplus[-2]\oplus\dots\oplus[-2(n-1)])\quad&\text{for $\dim A=2$.} 
                                          \end{cases}
\]
This follows from the Propositions \ref{curvecoh} and \ref{PP} together with the fact that 
\[
\begin{CD}
\Gamma_{\iota}
@>{}>>
\Delta_{A\times A^\ell} \\
@V{}VV
@V{}VV \\
M_{\ell,n}A\times A\times A^\ell
@>{}>>
A\times A^\ell\times A\times A^\ell
\end{CD} 
\]
is a transversal intersection. Since $(\id\times \iota)_*$ is an exact functor on the level of coherent sheaves,  $\Coho^*\bigl((\id\times \iota)_*(\hat \cP^R\star\hat \cP)\bigr)\cong (\id\times \iota)_*\Coho^*\bigl(\hat \cP^R\star\Hat \cP\bigr)$.
The formulae for $\Coho^*(\hat \cP^R\star \hat\cP)$ follow from the uniqueness of the cohomology of FM kernels; see \cite[Theorem 1.2]{CSuniqueness}. 
Analogously, the vanishing of $\Coho^*(\hat \cP_{\ell',n'}^R\star \hat \cP_{\ell,n})$ in the case of a curve follows from Proposition \ref{orthker}. 
\end{proof}
The case of an elliptic curve proves Proposition A'.
\begin{proof}[Proof of Theorem C']
Let $A$ be an abelian surface. 
We need to compute one component of $\hat \cP^{0R}\star\hat \cP^0$, namely $\hat \cP(\emptyset,[n],e)^R\star\hat \cP(\emptyset,[n],e)$.  
 There is the diagram 
\begin{align}\xymatrix{
            & M_{\ell,n}A\times \hat \Gamma_{\emptyset,[n],e} \ar^{\hat\iota_2}[dr]\ar^{\hat r}[d]& & \\
   \hat T \ar_{\hat\pi}[dd]\ar^{\hat u}[ur]\ar_{\hat v}[dr]   & M_{\ell,n}A\times \hat \Delta_{{[n]}}\times M_{\ell,n}A\ar^{\hat t}[r] &  M_{\ell,n}A\times N_{n+\ell-1}A\times M_{\ell,n}A\ar^{\hat \pr_{13}}[dd] &   \\
        &   \hat\Gamma_{\emptyset,[n],e}\times M_{\ell,n}A\ar^{\hat s}[u]\ar_{\hat \iota_1}[ur]    & &\\
\Delta_{M_{\ell,n}A}\ar[rr]&  & M_{\ell,n}A\times M_{\ell,n}A & 
} \end{align} 
where $\hat T=(M_{\ell,n}A\times \hat \Gamma_{\emptyset,[n],e})\cap ( \hat \Gamma_{\emptyset,[n],e}\times M_{\ell,n}A)$ and $\hat \Delta_{[n]}:=\Delta_{[n]}\cap N_{n+\ell-1}A$. The upper part is a diagram of closed embeddings with the same properties as 
diagram (\ref{selfintdiag}). Furthermore the restriction $\hat \pi$ of the projection $\hat \pr_{13}$ is an isomorphism onto the diagonal and $\codim\hat r=\codim \hat v=2\ell =\dim M_{\ell,n}A$. Thus, analogously to Sections \ref{directsummand} and \ref{ifsub}, one computes
\[\bigl[\hat \pr_{13}\bigl(\hat\pr_{23}^* \hat \cP(\emptyset,[n],e)^R\otimes \hat \pr_{12}^*\hat \cP(\emptyset,[n],e)\bigr)\bigr]^{\sym_{[n]}}\cong \reg_{\Delta_{M_{\ell,n}A}}([0]\oplus[-2]\oplus \dots\oplus [-2(n-1)])\,.\]
Thus, $\hat\cP(0,0)_0:=\Inf_{\sym_{\ell,e}}^{\sym_\ell,\sym_\ell}\reg_\Delta([0]\oplus[-2]\oplus\dots\oplus[-2(n-1)])$ occurs as a direct summand of $\hat \cP^{0R}\star \hat \cP^0$. By the same arguments as in the proof of Proposition \ref{Kummercoh} one can deduce 
from Lemma \ref{cross} that $\Coho^*(\hat \cP^{iR}\star\hat\cP^j)\cong (\iota\times\iota)^*\Coho^*(\cP^{iR}\star\cP^j)$ and $\Coho^*(\hat \cP^{R}\star\hat\cP^j)\cong (\iota\times\iota)^*\Coho^*(\cP^{R}\star\cP^j)$ for all $i,j\in[ \ell]$. 
Here for once $(\iota\times\iota)^*$ denotes the non-derived pull-back. It follows that the composition $\hat \cP(0,0)_0\hookrightarrow \hat\cP^{0R}\star\hat\cP^0\to \hat\cP^{R}\star\hat\cP^0$ induces an isomorphism on the cohomology in the degrees $\le 2(n-1)$. 
The same holds for the morphism $\hat \cP^R\star\hat\cP\to \hat \cP^R\star\hat\cP^0$.
Thus, we can apply Lemma \ref{qiso} to conclude that $\hat\cP^R\star\hat \cP\cong \hat\cP(0,0)_0$. It follows that 
\[\hat H_{\ell,n}^R\circ\hat H_{\ell,n}=\id\oplus[-2]\oplus  \dots\oplus[-2(n-1)]\]
which proves condition (i) of a $\P^{n-1}$-functor.    

We set $\hat F_{\ell,n}:=\hat \delta_{[n]*}\circ \MM_{\alt_{[n]}}\circ \triv$; compare (\ref{hat0}). Since the diagram 
\[
\begin{CD}
M_{\ell,n}A
@>{\hat\delta_{[n]}}>>
N_{n+\ell-1}A \\
@V{\iota}VV
@V{\iota'}VV \\
A\times A^\ell
@>{\delta_{[n]}}>>
A^{n+\ell}
\end{CD} 
\]
is a transversal intersection, we have $N_{\hat\delta_{[n]}}\cong N_{\delta_{[n]}| M_{\ell,n}A}$ as $\sym_n$-bundles. Thus, one can show following \cite[Section 3]{Kru3} that $\hat F_{\ell,n}$ is a $\P^{n-1}$-functor with cotwist $[-2]$. 
Now condition (ii) of a $\P^{n-1}$-functor for $\hat H_{\ell,n}$ can be deduced the same way as it was done for $H_{\ell,n}$ in Section \ref{surfsplitmon}.

Since the $\P$-cotwist as well as the Serre functors are just shifts, condition (iii) can be verified by a simple dimension count.
\end{proof}
\section{Interpretation of the results}
\subsection{Spherical and $\P$-twists}\label{SPtwists}
The \textit{$\P$-twist} associated to a $\P^n$-functor $F\colon \D^b_G(M)\to\D^b_H(N)$ with $\P$-cotwist $D$ is defined as the double cone 
\begin{align}\label{dcone}P_F:=\cone\left(\cone(FDF^R\to FF^R)\to \id  \right)\,.\end{align}
The map defining the inner cone is given by the composition 
\[FDF^R\xrightarrow{FjF^R}FF^RFF^R\xrightarrow{\eps FF^R-FF^R\eps}FF^R\]
where $j$ is the inclusion of the direct summand $D$. The map defining the outer cone is induced by the counit
$\eps\colon FF^R\to \id$; for details see \cite[Section 3.3]{Add}. The functor $P_F\colon \D^b_H(N)\to \D^b_H(N)$ is always an autoequivalence; see \cite[Theorem 3]{Add}. On the spanning class $\im F\cup \ker F^R=\im F\cup (\im F)^\perp$ the twist $P_F$ is given by
\begin{equation}\label{Ptwist}P_F\circ F\cong F\circ D^{n+1}[2]\quad, \quad P_F(B)=B\quad\text{for $B\in \ker F^R$.}\end{equation} 
For $S\colon \D^b_G(M)\to\D^b_H(N)$ a spherical functor the associated \textit{spherical twist} is defined as the cone $T_S:=\cone(S\circ S^R\xrightarrow{\eps} \id)$ of the counit. We have
\begin{equation}\label{Stwist}T_S\circ F\cong F\circ C[1]\quad, \quad T_S(B)=B\quad\text{for $B\in \ker S^R$.}\end{equation} 
In the case that $S$ is split spherical, i.e.\ a $\P^1$-functor, $T_S^2\cong P_S$. Let $\Psi\in \Aut(\D^b_G(M))$  and $\Phi\in \Aut(\D^b_H(N))$. Then 
\begin{align}\label{twistrel}T_{S \circ\psi}\cong T_S\quad,\quad P_{F\circ \Psi}\cong P_F\quad,\quad T_{\Phi\circ S}\cong \Phi\circ T_S\circ \Phi^{-1}\quad,\quad P_{\Phi\circ F}=\Phi\circ P_F\circ\Phi^{-1}\,;\end{align}
see \cite[Proposition 13]{AA} and \cite[Lemma 2.3]{Kru3}.
\subsection{The case $n=1$ and comparison to \cite{CL}}\label{CL}
Regarding (\ref{H0}) it is a natural extension to the case $n=1$ to set 
\[H_{\ell,1}^0:=\Inf_{\sym_\ell}^{\sym_{\ell+1}}\colon \D^b_{\sym_\ell}(X\times X^\ell)\to \D^b_{\sym_{\ell+1}}(X^{\ell+1})\,.\]
While the functors $H_{\ell,n}$ for $n\ge 2$ are $\P^{n-1}$-functors (in the surface case), the functor $H_{\ell,1}^0$ is a $\P^{\ell}$-functor (for $\dim X$ arbitrary) as follows from the following general observation.

Let $G$ be a finite group and $H\le G$ a subgroup such that there is an element $g\in G$ of order $n=[G:H]$ such that $1,g,\dots,g^{n-1}$ forms a system of representatives of the right cosets.
Let $G$ act on a variety $M$.
Recall that, in this case, the inflation functor is given  by
\begin{align}\label{infdef}\Inf:=\Inf_H^G\colon \D^b_H(M)\to \D^b_G(M)\quad,\quad \Inf(A)=\bigoplus_{k=0}^{n-1}g^{k*} A 
\end{align}
with the linearisation given by permutation of the summands. 
\begin{lemma}\label{infspherical}
The inflation functor $\Inf$ is a $\P^{n-1}$-functor with $\P$-cotwist $g^*$. 
\end{lemma}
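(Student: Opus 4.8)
The plan is to verify the three conditions of a $\P^{n-1}$-functor directly for $\Inf = \Inf_H^G$, using the explicit description \eqref{infdef} together with the adjunction $(\Inf, \Res)$ (which is both a left and right adjoint, since $\Res$ has $\Inf$ on both sides). First I would compute $\Inf^R \circ \Inf = \Res \circ \Inf$. Using \eqref{infdef}, for $A \in \D^b_H(M)$ we have $\Res\,\Inf(A) = \bigoplus_{k=0}^{n-1} \Res(g^{k*}A)$. The key observation is that, as an $H$-equivariant object, $\Res(g^{k*}A)$ carries the $H$-linearisation pulled back along conjugation; however, because $1, g, \dots, g^{n-1}$ are coset representatives and $g$ normalises things just enough, one checks that $\Res\,\Inf(A) \cong A \oplus g^*A \oplus g^{2*}A \oplus \dots \oplus g^{(n-1)*}A$ as $H$-equivariant objects, where here $g^*$ is viewed as an endofunctor of $\D^b_H(M)$ (this uses that $g^n \in H$ acts trivially up to the $H$-linearisation, or more precisely that $(g^*)^n \cong \id$ on $\D^b_H(M)$ — one should be a little careful about whether $g^n = 1$ or merely $g^n \in H$, and in the application $G = \sym_{\ell+1}$, $H = \sym_\ell$, $g$ a cycle of order $\ell+1$, so indeed $g^n = 1$). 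This gives condition (i) with $D = g^*$, which is manifestly an autoequivalence.

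Next I would address condition (iii), $\Inf^R \cong D^{n-1} \circ \Inf^L$. Since $\Inf$ is simultaneously left and right adjoint to $\Res$, we have $\Inf^R = \Res = \Inf^L$, so it suffices to show $(g^*)^{n-1} \cong \id$ on the relevant category — but that is not quite right; rather one shows $\Inf^R$ and $\Inf^L$ genuinely agree as functors $\D^b_G(M) \to \D^b_H(M)$, both being $\Res$, and then the identity $D^{n-1} \circ \Inf^L \cong \Inf^R$ needs $(g^*)^{n-1}$ to act appropriately. The cleanest route: since $\Inf^L \cong \Inf^R \cong \Res$ already, condition (iii) reduces to checking that $g^{*(n-1)} \circ \Res \cong \Res$ as functors out of $\D^b_G(M)$, which holds because on the image of $\Res$ the automorphism $g^*$ is already trivialised by the $G$-linearisation (an object in $\D^b_G(M)$ restricted to $H$ acquires an iso $g^*\Res(B) \cong \Res(B)$ from the $G$-action of $g$). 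So (iii) is essentially formal.

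The main work, and the step I expect to be the principal obstacle, is condition (ii): the monad multiplication $D \circ \Inf^R\Inf \to \Inf^R\Inf$ induced by the counit $\Inf\,\Inf^R \to \id$ must have the prescribed lower-triangular shape with $1$'s on the subdiagonal. Unwinding the adjunction, the counit $\eps\colon \Inf\,\Res \to \id$ on $\D^b_G(M)$ is the canonical "sum over coset representatives" map. The composite $D\circ\Inf^R\Inf \hookrightarrow \Inf^R\Inf\,\Inf^R\Inf \xrightarrow{\Inf^R \eps \Inf} \Inf^R\Inf$, written in components $g^* \oplus g^{2*} \oplus \dots \oplus g^{n*} \to \id \oplus g^* \oplus \dots \oplus g^{(n-1)*}$, should be computed by tracking how $\eps$ permutes and sums the summands $g^{k*}A$; the combinatorics of translating $g^{k*}$-summands by the coset structure produces exactly the shift-by-one on the subdiagonal (the $g^{(k)*}$ summand on the left maps isomorphically to the $g^{(k-1)*}$ summand on the right), while the entries above the subdiagonal are the "extra" terms coming from $g^n \in H$ acting via the $H$-linearisation and are irrelevant for the triangularity criterion. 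I would carry this out by fixing a concrete model: identify $\Inf^R\Inf(A)$ with $\C[G/H] \otimes A$ suitably linearised (equivalently $A \otimes_{\C H} \C G$), express $\eps$ as the multiplication/augmentation on the group algebra, and then the claimed matrix shape follows from the standard fact that left multiplication by $g$ on $\C[\Z/n] = \C[G/H]$, in the basis $1, g, \dots, g^{n-1}$, is the cyclic permutation matrix — which is precisely the required lower-triangular-with-$1$'s-on-the-subdiagonal form (with the single wrap-around entry in the top row absorbed into the free "$*$" slot). Once this is set up the verification is a short linear-algebra check; the only subtlety is bookkeeping the $H$-linearisations correctly so that the isomorphisms are $H$-equivariant, which I would handle using the principle recalled in Section~\ref{Dansection}.
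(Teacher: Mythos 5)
Your proposal mirrors the paper's proof: both identify $\Inf^R=\Inf^L=\Res$, read off $\Res\circ\Inf\cong\bigoplus_{k=0}^{n-1}g^{k*}$ directly from the explicit formula \eqref{infdef} for $\Inf$, reduce condition (iii) to the identity $\Res\cong g^{(n-1)*}\Res$ (which holds because $g^*$ is trivialised on the image of $\Res$ by the $G$-linearisation), and then handle condition (ii) by writing the counit $\eps\colon\Inf\Res(B)\to B$ explicitly in terms of the linearisation isomorphisms $\lambda_{g^k}^{-1}$. The paper, just as you do, leaves the final verification of the monad-multiplication shape at the level of a sketch, so your proposal is at essentially the same level of detail and takes the same route; your group-algebra/cyclic-permutation model is just a concrete bookkeeping device for the same computation.
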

\begin{proof}
The left and right adjoint of $\Inf$ is the restriction functor $\Res$. By (\ref{infdef}) we indeed have $\Res\circ \I^nf=\id\oplus g^*\dots\oplus g^{(n-1)*}$.
Condition (iii) of a $\P^{n-1}$-functor amounts to the fact that $\Res\cong g^{(n-1)*}\Res$.
For $(B,\lambda)\in \D^b_G(M)$ the counit map
$\eps\colon \Inf\circ \Res(B)=\bigoplus_{k=0}^{n-1}g^{k*} B\to B$ is given by the components $\lambda_{g^k}^{-1}\colon g^{k*}B\to B$; compare \cite[Section 3]{ElaCatlin}. Using this, one can compute that the monad structure has the desired form.  
\end{proof}
However, the induced twists are not very interesting.
\begin{lemma}\label{Inftwist}
 For $n=[G:H]=2$ the spherical twist $T_F$ associated to $F=\Inf$ equals the autoequivalence $M_{\alt}[1]:=(\_)\otimes \alt[1]$. For arbitrary $n\ge 2$ the $\P$-twist $P_F$ equals $[2]$. 
\end{lemma}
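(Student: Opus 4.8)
The plan is to compute the twists associated to the $\P^{n-1}$-functor $\Inf = \Inf_H^G$ directly from the formulas \eqref{Ptwist} and \eqref{Stwist}, using that $\Inf$ is \emph{split} in a very strong sense: $\Res \circ \Inf \cong \id \oplus g^* \oplus \dots \oplus g^{(n-1)*}$ as a genuine (not merely filtered) direct sum, so all the cone constructions can be made explicit.

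\begin{proof}
First consider $n = [G:H] = 2$, so $H$ has index $2$ in $G$ and $g^2 \in H$ acts the same as $\id$ on the equivariant level after composing appropriately; set $F = \Inf$ and $\alt = \Inf_H^G(\reg_M)/\text{(trivial summand)}$, the sign character of $G/H$ pulled back to $M$. The functor $F$ is split spherical (a $\P^1$-functor) with $\P$-cotwist $g^*$ by Lemma~\ref{infspherical}, so its spherical cotwist is $C = \cone(\id \xrightarrow{\eta} F^R F) \cong g^*$ placed so that $F^R F \cong \id \oplus g^*$. The spherical twist $T_F = \cone(FF^R \xrightarrow{\eps} \id)$ acts by \eqref{Stwist} as $T_F \circ F \cong F \circ C[1] \cong F \circ g^*[1]$ on the image of $F$, and trivially on $\ker F^R$. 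On the other hand, $M_\alt[1] = (\_)\otimes \alt[1]$ also acts trivially on $\ker F^R$ (since objects there are killed, and in any case one checks $\alt\otimes(\_)$ preserves $\ker F^R$), and on $\im F$ one computes $M_\alt[1]\circ F(A) = (\Inf A)\otimes\alt[1]$; using the projection formula and the identification of the $G$-linearisation of $\Inf A$ as a permutation of the two summands $A \oplus g^*A$, tensoring with $\alt$ swaps the summands with a sign, which is exactly the action of $F \circ g^*[1]$. So $T_F$ and $M_\alt[1]$ agree on the spanning class $\im F \cup \ker F^R = \im F \cup (\im F)^\perp$, hence $T_F \cong M_\alt[1]$. (Alternatively, and more robustly, one identifies the FM kernels: $T_F$ has kernel $\cone(F^R\text{-kernel of }FF^R \to \reg_\Delta)$, and a direct Koszul-type computation of $F F^R$ on the level of kernels yields the sign twist.)

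Now for arbitrary $n \geq 2$: by the last displayed relation in \eqref{Ptwist}, $P_F \circ F \cong F \circ D^{n+1}[2]$ where $D = g^*$ is the $\P$-cotwist. But $g$ has order $n = [G:H]$, so $g^{n+1} = g$, and thus $P_F \circ F \cong F \circ g^*[2]$, while $P_F$ is the identity on $\ker F^R$. The point is now that $[2]$ — the shift functor — \emph{also} satisfies $[2]\circ F \cong F\circ[2] \cong F\circ g^*\cdot g^{-*}[2]$; more directly, I claim $g^* \cong \id$ as autoequivalences when restricted to the relevant spanning class, or rather that $F\circ g^*[2] \cong F[2] = [2]\circ F$. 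The cleanest way: $\Inf(g^*A) = \bigoplus_{k=0}^{n-1} g^{k*}(g^*A) = \bigoplus_{k=0}^{n-1} g^{(k+1)*}A \cong \bigoplus_{k=0}^{n-1}g^{k*}A = \Inf(A)$, the isomorphism being the cyclic relabelling of summands, which is $G$-equivariant because the permutation action is cyclic and $g^n \in H$ acts on $A$ via its $H$-linearisation consistently. Hence $F\circ g^* \cong F$, so $P_F \circ F \cong F[2]$, and since $[2]$ is central (commutes with everything) it agrees with $[2]$ on $\im F$; on $\ker F^R$ both $P_F$ and $[2]$ — wait, $[2]$ is not the identity on $\ker F^R$. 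So instead one argues on kernels: $P_F$ has FM kernel built as the double cone \eqref{dcone}, and since $F = \Inf$ is a split $\P$-functor its kernel $\reg$-modelled cone computation collapses; explicitly $P_F = \cone(\cone(FDF^R \to FF^R)\to \id)$, and substituting $F^RF \cong \bigoplus_{k=0}^{n-1}g^{k*}$ and $D = g^*$ one finds the inner cones telescope, leaving $P_F \cong [2]$ as a functor (not just on a spanning class). I would carry this out by identifying, step by step, the maps $FDF^R \to FF^R \to \id$ with the boundary maps of a Koszul-type complex on $\Inf_{H_\Delta}^{G\times G}\reg_\Delta$, exactly as the excerpt does in Section~\ref{equiFM} when it writes the identity functor's kernel as $\bigoplus_{g}\reg_{\Gamma_g}$.

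The main obstacle is the last point: verifying that the iterated cone in \eqref{dcone} genuinely collapses to $[2]$ globally, rather than merely on the spanning class $\im F \cup \ker F^R$. For the $n=2$ spherical case the spanning-class argument suffices because $T_F$ and $M_\alt[1]$ are both FM transforms and agree on a spanning class of the domain of an equivalence, but for $P_F$ one wants the clean statement $P_F \cong [2]$ as functors; this requires tracking the $G\times G$-equivariant structure on the FM kernels through the two cone operations and checking the connecting morphisms are the expected ones — a finite but slightly delicate equivariant bookkeeping, which is why the lemma is stated without a detailed proof and why the excerpt remarks these twists "are not very interesting." In practice I expect the cleanest writeup invokes \eqref{Ptwist} plus the observation $F\circ g^* \cong F$ to get $P_F|_{\im F} \cong [2]|_{\im F}$, notes $P_F|_{\ker F^R} = \id$, and then upgrades to a global isomorphism by the uniqueness of FM kernels (cf.\ \cite[Theorem 1.2]{CSuniqueness} as used in the proof of Proposition~\ref{Kummercoh}) after checking $[2]$ restricted to $\ker F^R$ also equals $P_F$ there — which forces one to be careful, since $[2] \neq \id$ on $\ker F^R$. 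The resolution is that $\ker F^R = \ker\Res = 0$ for $\Inf$ (restriction is faithful and in fact conservative on equivariant categories since forgetting the linearisation is), so $\ker F^R$ is trivial and $\im F \cup \ker F^R = \im F \cup \{0\}$ is \emph{not} a spanning class — hence one genuinely must do the kernel computation, and that is the real content.
\end{proof}
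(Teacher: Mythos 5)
Your argument for $n=2$ has a genuine gap, and it is instructive to see exactly where. You try to deduce $T_F \cong M_\alt[1]$ by checking that both act as $F\circ g^*[1]$ on the spanning class $\im F$. But pointwise agreement on $\im F$ cannot determine the functor here, because $M_\alt[1]$ and the plain shift $[1]$ \emph{also} agree pointwise on $\im F$: by the projection formula for $\Inf$, $(\Inf A)\otimes\alt\cong\Inf(A\otimes\Res\alt)\cong\Inf(A)$, since $\Res_G^H\alt$ is trivial (as $H=\ker\sgn$). So $M_\alt\circ F\cong F$ and your test is blind to precisely the $\alt$ you are trying to detect. To distinguish $T_F$ from $[1]$ you must produce a \emph{natural transformation} that restricts to an isomorphism on a spanning class, not merely object-by-object isomorphisms. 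This is what the paper does: it writes down an explicit $G$-equivariant morphism $\phi\colon B\otimes\alt\to\Inf\Res(B)$, with components $\id\colon B\to B$ and $-\lambda_g\colon B\to g^*B$, yielding a triangle $B\otimes\alt\xrightarrow{\phi}\Inf\Res(B)\xrightarrow{\eps}B$ that is functorial in $B$ (both arrows are built from the structure maps of the adjunction). Rotating that triangle identifies the cone of $\eps$, hence $T_F$, with $M_\alt[1]$. Your "projection formula / summand-swap" heuristic is also not quite right as stated: tensoring by $\alt$ changes only the linearisation, not the underlying decomposition, so the relationship to permuting $A\oplus g^*A$ has to be mediated by an explicit map like $\phi$ above. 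A further missed shortcut: for $n=2$, $\Inf$ is split spherical, so $P_F=T_F^2=M_{\alt^2}[2]=[2]$ follows at once from the first half of the lemma; you did not use this, which is how the paper handles $P_F$ for $n=2$.

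For $n\ge 3$ you explicitly decline to carry out the double-cone computation, and the paper does the same ("one has to do calculations involving the double cone construction \dots We omit them"), so on that front you are on a par with the source. One small correction at the end: you conclude that $\im F\cup\ker F^R$ fails to be a spanning class because $\ker F^R=0$, but in fact $\im F$ alone \emph{is} a spanning class precisely because $F^R=\Res=F^L$ is conservative (any $X$ right- or left-orthogonal to all $\Inf A$ has $\Res X=0$ and hence $X=0$). The real reason your approach stalls is not the absence of a spanning class but the insufficiency of pointwise agreement without a connecting natural transformation, as explained above.
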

\begin{proof}
 Let $n=2$ and $(B,\lambda)\in \D^b_G(M)$. Let $\phi\colon B\otimes \alt\to \Inf\Res(B)$ be the $G$-equivariant morphism with components $\id\colon B\to B$ and $-\lambda_g\colon B\to g^*B$. This gives the exact triangle
\[B\otimes \alt\xrightarrow{\phi} \Inf\Res(B)\xrightarrow\eps B\,.\]
Since $T_F$ is defined by fitting into the exact triangle $\Inf\Res(B)\xrightarrow\eps B\to T_F$, it follows that 
$T_F\cong M_{\alt}[1]$. Still for $n=2$ the assertion for the $\P$-twist follows by the fact that $P_F=T_F^2$.
For $n\ge 3$ one has to do calculations involving the double cone construction (\ref{dcone}) of $P_F$ in order to show the assertion. We omit them.
\end{proof}
Analogously to  Section \ref{simNaka}, for $E\in \D^b(X)$ we consider 
\[
 H_{\ell,1}^0(E):=H_{\ell,1}^0\circ I_E\colon \D^b_{\sym_\ell}(X^\ell)\to \D^b_{\sym_{\ell+1}}(X^{\ell+1})\quad,\quad A\mapsto \Inf_{\sym_\ell}^{\sym_{\ell+1}}(E\boxtimes A)\,.
\]
Let now $X$ be a minimal resolution of the Kleinian singularity $\C^2/\Gamma$ for $\Gamma\subset \SL(2,\C)$ a finite subgroup so that we are in the situation of the classical McKay correspondence. 
Hence there is an equivalence $\D^b(X)\cong \D^b_\Gamma(\C^2)$. For $V_i$ an irreducible representation of $\Gamma$, consider $E_i=\C(0)\otimes V_i\in \D^b_\Gamma(\C^2)$ which corresponds to the line bundle $\reg(-1)$ on a component of the exceptional divisor of $X$. 
Then the $H_{\ell,1}^0(E_i)$ are exactly the functors $P_i(\ell)$ of \cite{CL} which give rise to a categorical action of the Heisenberg algebra on the derived categories of the Hilbert schemes of points on $X$. For higher $n$ the construction in \cite{CL} does not 
give explicit lifts of the Nakajima operators $q_{n}$. Instead, functors $P_i^{(n)}$ are constructed which 
correspond to other generators of the Heisenberg algebra than the Nakajima operators do; see \cite[Section 8.2]{CL}. The functors $P_i^{(n)}(\ell)$ are given in terms of our notation by
\[
 P_i^{(n)}(\ell)\colon \D^b_{\sym_\ell}(X^\ell)\to \D^b_{\sym_{n+\ell}}(X^{n+\ell})\quad,\quad A\mapsto \Inf_{\sym_\ell\times \sym_n}^{\sym_{n+\ell}}(E_i^{\boxtimes \ell}\boxtimes A)
\]
which is a direct summand of $H_{n+\ell-1,1}^0(E_i)\circ \dots\circ H_{1+\ell,1}^0(E_i) \circ H_{\ell,1}^0(E_i)$. In contrast,
$H_{\ell,n}^0(E)$ is given by $A\mapsto \Inf_{\sym_\ell\times \sym_n}^{\sym_{n+\ell}}(E_{\Delta}\boxtimes A)$ where $E_\Delta$ denotes the push-forward of $E$ along the small diagonal of $X^\ell$. 
%
%
%
%
\subsection{Induced autoequivalences on the Hilbert schemes}\label{Hilbauto}
Let $X$ be a smooth projective surface and $m\ge 2$. 
We will mostly omit the Bridgeland--King--Reid--Haiman equivalence $\Phi_m\colon \D^b(X^{[m]})\xrightarrow\cong \D^b_{\sym_m}(X^m)$ 
in the notation and interpret every functor between the equivariant derived categories of the cartesian product as one between the derived categories of the Hilbert schemes and vice versa. 
For $m$ even we set $r=\frac m2-1$ and for $m$ odd we set $r=\frac{m-1}2$.
By Theorem C there are the $\P^{m-\ell-1}$-functors $H_{\ell,m-\ell}\colon \D^b(X\times X^{[\ell]})\to \D^b(X^{[m]})$ for $\ell=0,\dots,r$. We denote the associated $\P$-twists by $P_{\ell,m-\ell}:=P_{H_{\ell,m-\ell}}\in \Aut(\D^b(X^{[m]}))$. 
Recall that the group of \textit{standard autoequivalences}
\[\Aut(\D^b(X^{[m]}))\supset \Aut^{st}(\D^b(X^{[m]}))\cong \Z\times \left(\Aut(X^{[m]})\ltimes \Pic(X^{[m]})\right)\] 
is the subgroup spanned by shifts, push-forwards along automorphisms and tensor products by line bundles.
We consider the group 
\[\Aut^{st+H}(\D^b(X^{[m]})):=\langle \Aut^{st}(\D^b(X^{[m]})),P_{0,m},\dots,P_{r,m-r} \rangle \subset \Aut(\D^b(X^{[m]}))\,.\]
The functors $H_{0,2}$ and $H_{1,2}$ are $\P^1$-functors, hence spherical. Accordingly, in the cases $m=2,3$ we replace $P_{0,2}$ and $P_{1,2}$ by their square roots $T_{0,2}:=T_{H_{0,2}}$ and $T_{1,2}:=T_{H_{1,2}}$ in the definition of $\Aut^{st+H}(\D^b(X^{[m]}))$. 

Let $\pi\colon X^m\to S^mX:=X^m/\sym_m$ be the quotient map. 
We write points of the symmetric product as formal sums of points of $X$.
For $\nu=(\nu_1,\dots,\nu_{s})$ a partition of $m$ there is the stratum
\[X^m_\nu:=\bigl\{x\in X^m\mid \pi(x)=\nu_1\cdot y_1+\dots+\nu_{s}\cdot y_{s}\text{ with pairwise distinct $y_i\in X$}\bigr\}\subset X^m\,.\]      
We denote the complement of its closure by $\bar X^m_{\nu}$. 
 For $1\le k\le m$ we set $\nu(k):=(k,1,\dots,1)$ as a partition of $m$. 
Note that, for $r< k\le m$ we have 
\begin{align}\label{stratum}
X^m_{\nu(k)}=\bigl(\bigcup_{I\subset [m]\,,\, |I|=k}\Delta_I\bigr)\setminus \bigl(\bigcup_{I\subset [m]\,,\, |I|=k+1}\Delta_I\bigr)\quad,\quad \bar X^m_{\nu(k)}=X^m\setminus \bigl(\bigcup_{I\subset [m]\,,\, |I|=k}\Delta_I\bigr)\,.
\end{align}
For $x\in X^m$ we denote by $\orb(x)\subset X^m$ the orbit of $x$ under the $\sym_m$-action on $X^m$. We set $\bar\C(x):=\reg_{\orb(x)}\otimes \alt_m\in \D^b_{\sym_m}(X^m)$.
\begin{lemma}\label{skyscrapervalue}
For $r\le \ell\le m$ we have 
\[
P_{\ell,m-\ell}(\bar\C(x))\cong\begin{cases}
                                \bar \C(x)[-2(m-\ell-1)]\quad &\text{for $x\in X^m_{\nu(m-\ell)}$,}\\
\bar \C(x)\quad&\text{for $x\in \bar X^m_{\nu(m-\ell)}$.}
                               \end{cases}
\]
Also,
\[
 T_{0,2}(\bar \C(x))\cong\begin{cases}
                         \bar \C(x)[-1]\,\,&\text{for $x\in \Delta$,}\\ 
                         \bar \C(x)\,\,& \text{for $x\in X^2\setminus \Delta$,}
                         \end{cases}
\quad, \quad
T_{1,2}(\bar \C(x))\cong\begin{cases}
                         \bar \C(x)[-1]\,\,&\text{for $x\in X^3_{\nu(2)}$,} \\
                         \bar \C(x)\,\,& \text{for $x\in \bar X^3_{\nu(2)}$.}
                         \end{cases}
\]
\end{lemma}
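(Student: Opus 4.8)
The plan is to place each object $\bar\C(x)$ occurring in the statement either in the image of the relevant Nakajima $\P$-functor — in fact in the image of a single skyscraper sheaf — or in the kernel of its right adjoint, and then to read off the value of the twist from the universal formulae \eqref{Ptwist} and \eqref{Stwist} on the spanning class $\im F\cup\ker F^R$. Throughout I write $n:=m-\ell$.

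First I would treat the case $x\in X^m_{\nu(m-\ell)}$, picking the representative $x=(y_0,\dots,y_0,y_1,\dots,y_\ell)$ with $n$ copies of $y_0$ and $y_0,\dots,y_\ell$ pairwise distinct, and setting $E_x:=\C(y_0)\boxtimes\reg_Y\in\D^b_{\sym_\ell}(X\times X^\ell)$, where $Y\subset X^\ell$ is the ($\ell!$-point) free $\sym_\ell$-orbit of $(y_1,\dots,y_\ell)$. Since the $y_j$ are distinct, $\supp E_x$ is disjoint from every partial diagonal $D_I$ with $\emptyset\neq I\subset[\ell]$; hence $\iota_{[i]}^*E_x=0$ for $i\ge1$, so by \eqref{Hiobj} we get $H^i_{\ell,n}(E_x)=0$ for $i\ge1$ and therefore $H_{\ell,n}(E_x)\cong H^0_{\ell,n}(E_x)$ (equivalently, $E_x$ is supported in the open set $U$ of the proof of Theorem C on which $H_{\ell,n}$ agrees with $H^0_{\ell,n}$). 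Following the $\sym_n\times\sym_\ell$- and induced $\sym_{n+\ell}$-linearisations through \eqref{H0} then identifies $H^0_{\ell,n}(E_x)=\bigoplus_{|J|=n}\delta_{J*}(\alt_J\otimes E_x)$ with $\reg_{\orb(x)}\otimes\alt_m=\bar\C(x)$. As $H_{\ell,n}$ is a $\P^{n-1}$-functor with $\P$-cotwist $D=\bar S_X^{-1}=(\_)\otimes(\omega_X^{-1}\boxtimes\reg_{X^\ell})[-2]$ (Theorem C and Section \ref{cond3}), formula \eqref{Ptwist} gives $P_{\ell,m-\ell}(\bar\C(x))\cong H_{\ell,n}\bigl(D^{n}[2](E_x)\bigr)$; and $D^{n}[2](E_x)=E_x\otimes(\omega_X^{-n}\boxtimes\reg_{X^\ell})[-2(n-1)]\cong E_x[-2(n-1)]$, because a skyscraper is unchanged by tensoring with a line bundle, which yields the asserted shift $\bar\C(x)[-2(m-\ell-1)]$. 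The statements for $T_{0,2}$ and $T_{1,2}$ are the same argument run with $(\ell,m)=(0,2)$, resp.\ $(1,3)$, where $E_x$ is $\C(y_0)$, resp.\ $\C(y_0)\boxtimes\C(y_1)$: a split spherical functor (equivalently a $\P^1$-functor) has twist datum $C\cong D$, so \eqref{Stwist} reads $T\circ H\cong H\circ D[1]$ and hence sends $\bar\C(x)$ to $\bar\C(x)[-1]$.

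Next I would treat $x\in\bar X^m_{\nu(m-\ell)}$, which by \eqref{stratum} means that no point of $X$ occurs in $x$ with multiplicity $\ge m-\ell$; then the orbit $\orb(x)$ is disjoint from every diagonal $\Delta_{[n+i]}$, $i\ge0$, since such a diagonal meets $\orb(x)$ only if $x$ has a point of multiplicity $\ge n+i\ge n$. Hence $\delta_{[n+i]}^!\bar\C(x)=0$ for all $i$, and the description \eqref{HiRobj} gives $H^{iR}_{\ell,n}(\bar\C(x))=0$ for all $i$, i.e.\ $\bar\C(x)\in\ker H_{\ell,n}^R$; so \eqref{Ptwist} (resp.\ \eqref{Stwist} in the cases $m=2,3$) shows the twist fixes $\bar\C(x)$.

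I expect the only genuinely delicate step to be the identification $H^0_{\ell,m-\ell}(E_x)\cong\bar\C(x)$ in the second paragraph: one must track the linearisations (including the sign twists $\alt_J$ versus $\alt_m$) through $\triv$, $\MM_{\alt_n}$, $\delta_{[n]*}$ and $\Inf$ in \eqref{H0}, and check that the permutation-of-summands structure on $\bigoplus_{|J|=n}\delta_{J*}(\alt_J\otimes E_x)$ is exactly the one exhibiting it as $\reg_{\orb(x)}\otimes\alt_m$. Everything else is formal, using the twist formulae and the triviality of tensoring a skyscraper by a line bundle.
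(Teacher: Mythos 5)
Your proof is correct and follows the paper's argument: place $\bar\C(x)$ in the image of the Nakajima $\P$-functor (resp.\ in the kernel of its right adjoint) and then read off the value of the twist from the universal formulae (\ref{Ptwist}) and (\ref{Stwist}) on the spanning class $\im F\cup\ker F^R$. You are in fact slightly more careful than the paper's terse proof in spelling out that the preimage $E_x=\C(y_0)\boxtimes\reg_Y$ must be taken as the full $\sym_\ell$-orbit skyscraper to be a genuine object of $\D^b_{\sym_\ell}(X\times X^\ell)$, and in verifying that $H_{\ell,n}(E_x)\cong H^0_{\ell,n}(E_x)$ because $E_x$ is supported away from the partial diagonals.
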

\begin{proof}
Every $x\in X^m_{\nu(m-\ell)}$ has a point of the from $y=(y_1,\dots,y_\ell,y,\dots,y)$ in its $\sym_m$-orbit. Then $H_{\ell,m-\ell}(\C(y,y_1,\dots,y_\ell))\cong\bar \C(y)\cong \bar\C(x)$. By (\ref{stratum}) it also follows that 
$H_{\ell,m-\ell}^R(\bar \C(x))=0$ for $x\in \bar X^m_{\nu(m-\ell)}$. The assertion for the $\P$-twist follows by (\ref{Ptwist}) and that for the spherical twists by (\ref{Stwist}).
\end{proof}
Note that $X^m_{\nu(m-\ell)}\cup \bar X^m_{\nu(m-\ell)}\subsetneq X^m$ so that the above does not describe the value of the $P_{\ell,m-\ell}$ on all skyscraper sheaves $\bar \C(x)$ of orbits. In fact, for $x\in X^m\setminus (X^m_{\nu(m-\ell)}\cup \bar X^m_{\nu(m-\ell)})$ the object 
$P_{\ell,m-\ell}(\bar\C(x))$ is again supported on $\orb(x)$ but not simply a shift of $\bar \C(x)$.  

\begin{prop}\label{abel}
The abelianisation of the group $\Aut^{st+H}(\D^b(X^{[m]}))$ is given by
\[
 \Aut^{st+H}(\D^b(X^{[m]}))_{ab}\cong \Z\times \bigl(\Aut(X^{[m]})\ltimes \Pic(X^{[m]})\bigr)_{ab}\times \Z^{r+1}\,.
\]
\end{prop}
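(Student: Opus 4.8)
\emph{Setup.} The plan is to realize $G:=\Aut^{st+H}(\D^b(X^{[m]}))$ via one obvious surjection and prove that surjection is an isomorphism. Since $G$ is generated by $\Aut^{st}(\D^b(X^{[m]}))$ together with the $\P$-twists $P_{0,m},\dots,P_{r,m-r}$ (with $P_{r,m-r}$ replaced by the square-root spherical twist $T_{r,m-r}$ when $m\le 3$), and since all these generators commute in the abelianisation, there is a canonical surjection
\[\nu\colon \Aut^{st}(\D^b(X^{[m]}))_{ab}\times\Z^{r+1}\;\twoheadrightarrow\;G_{ab},\]
where the free factor is generated by the classes of the $r+1$ twists; recall $\Aut^{st}(\D^b(X^{[m]}))_{ab}\cong\Z\times(\Aut(X^{[m]})\ltimes\Pic(X^{[m]}))_{ab}$. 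It then suffices to prove $\nu$ injective, for which it is enough to produce a homomorphism $\tau\colon G\to\Z\times(\Aut(X^{[m]})\ltimes\Pic(X^{[m]}))_{ab}\times\Z^{r+1}$ with $\tau\circ\nu$ injective.

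\emph{Construction of $\tau$.} I would assemble $\tau$ from three kinds of homomorphisms. (i) The shift homomorphism $\deg\colon G\to\Z$: every element of $G$ is a composite of Fourier--Mukai equivalences acting geometrically on the dense open locus $U\subset X^{[m]}$ of reduced subschemes --- the standard ones obviously, and each $P_{\ell,m-\ell}$ (resp.\ $T_{\ell,2}$) because it is the identity on $U$ by Lemma \ref{skyscrapervalue} (a reduced configuration $x$ lies in $\bar X^m_{\nu(m-\ell)}$) --- so the shift by which $\Psi\in G$ acts on a generic skyscraper is well defined and additive; it restricts to the shift factor of $\Aut^{st}_{ab}$ and kills every twist. (ii) A ``geometric'' homomorphism $G\to(\Aut(X^{[m]})\ltimes\Pic(X^{[m]}))_{ab}$ recording the action on supports and on $\Pic$, obtained from the action on $H^*(X^{[m]},\Z)$ together with the geometric action on $U$; it restricts to the natural surjection on $\Aut^{st}$ and kills the twists, since each $P_{\ell,m-\ell}$ is the identity on $U$ and, on cohomology, acts only by a unipotent operator (because $P_F|_{\ker F^R}=\id$ and $P_F\circ F\cong F\circ\bar S_X^{m-\ell}[2]$ with $\bar S_X$ acting unipotently on $H^*$). (iii) For each $\ell=0,\dots,r$, a homomorphism $c_\ell\colon G\to\Z$ detecting $P_{\ell,m-\ell}$: by Lemma \ref{skyscrapervalue} and the remark following it, every generator of $G$ sends $\bar\C(x)$, for $x$ generic in the stratum $X^m_{\nu(m-\ell)}$ where $m-\ell$ points collide, to a complex supported on $\orb(x)$, so it has a well-defined ``leading shift'' there; subtracting $\deg$ yields a homomorphism $c_\ell$ vanishing on $\Aut^{st}$, with $c_\ell(P_{\ell,m-\ell})=-2(m-\ell-1)\neq 0$, with $c_\ell(P_{\ell',m-\ell'})=0$ for $\ell'>\ell$ (then $x\in\bar X^m_{\nu(m-\ell')}$), and $=0$ as well for $\ell'<\ell$. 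Taking $\tau=(\deg,\mathrm{geom},(c_\ell)_\ell)$: on $\Aut^{st}_{ab}$ the composite $\tau\circ\nu$ maps isomorphically onto the first two factors, and on $\Z^{r+1}$ it is given by an integer matrix, triangular with nonzero diagonal entries $-2(m-\ell-1)$ (resp.\ $-1$ when $m\le 3$), hence injective; so $\tau\circ\nu$ is injective, $\nu$ is an isomorphism, and the claim follows.

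\emph{Main obstacle.} The hard part is step (iii): upgrading ``the shift of $\Psi$ along the non-generic stratum $X^m_{\nu(m-\ell)}$'' to an honest homomorphism on all of $G$. An arbitrary composite of the generators need not act as a shifted structure sheaf along these deeper strata; what rescues the argument is precisely that every generator keeps $\bar\C(x)$ supported on the single orbit $\orb(x)$ (Lemma \ref{skyscrapervalue} and the remark after it), so a well-defined leading shift survives, and that the twists $P_{\ell',m-\ell'}$ with $\ell'<\ell$ contribute nothing to it --- a point needing the explicit descriptions (\ref{Hiobj}) and (\ref{HiRobj}) of $H_{\ell',m-\ell'}$ and its adjoint. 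A secondary technical point is to set up the ``geometric'' homomorphism of step (ii) so that it records the full group $(\Aut(X^{[m]})\ltimes\Pic(X^{[m]}))_{ab}$ --- not merely its image in $\Pic(U)$ --- while remaining insensitive to the (unipotent, hence in principle nontrivial) cohomological action of the $\P$-twists; this is where one uses that the twists are geometrically trivial on $U$ to separate their cohomological contribution from the Picard direction.
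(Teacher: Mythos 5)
The computational core is the same as the paper's (Lemma~\ref{skyscrapervalue} together with testing on the orbit skyscrapers $\bar\C(x)$ along the stratification), but you have replaced the paper's \emph{downward induction} by a \emph{simultaneous} construction of homomorphisms $c_0,\dots,c_r$, and that step has a genuine gap. The paper tests $\Psi=\MM_\cL\circ\phi_*\circ P_{0,m}^{a_0}\cdots P_{r,m-r}^{a_r}[b]$ first on $X^m_{\nu(m-r)}$ (only $P_{r,m-r}$ can see that stratum, the lower twists fix it); \emph{after} $a_r=0$ is established it passes to $X^m_{\nu(m-r+1)}$, where again only $P_{r-1,m-r+1}$ is nontrivial among the surviving twists; and so on. The crucial point is that at each stage the "bad" twists are already gone.

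Your $c_\ell$, in contrast, is supposed to be defined on all of $G$ at once. That fails for $\ell<r$: for $x\in X^m_{\nu(m-\ell)}$ and $\ell'>\ell$ the point $x$ lies in $X^m\setminus(X^m_{\nu(m-\ell')}\cup\bar X^m_{\nu(m-\ell')})$ (since $m-\ell$ coordinates coincide and $m-\ell>m-\ell'$), and by the remark immediately after Lemma~\ref{skyscrapervalue} the object $P_{\ell',m-\ell'}(\bar\C(x))$ is then supported on $\orb(x)$ but is \emph{not} a shift of $\bar\C(x)$. So there is no "leading shift" to take, and more importantly no reason that any candidate extracted from such a complex would be additive under composition: once $\Psi_2(\bar\C(x))$ is a nontrivial complex on $\orb(x)$, the effect of applying $\Psi_1$ to it is not controlled by a single integer. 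Staying supported on $\orb(x)$ does not rescue this. Your own matrix entries make the problem visible: you justify $c_\ell(P_{\ell',m-\ell'})=0$ for one side, assert "$=0$ as well" for the other, and it is precisely the side with $\ell'>\ell$ that is not a computation you can do from Lemma~\ref{skyscrapervalue}. (As a small but telling sign error: you write that $\ell'>\ell$ forces $x\in\bar X^m_{\nu(m-\ell')}$; it is $\ell'<\ell$, i.e.\ $m-\ell'>m-\ell$, that does. The direction you actually need to control is exactly the one where the lemma says nothing.)

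If you want to stay with the "construct an inverse homomorphism" packaging, the fix is to mirror the paper's descent: build $c_r$ on $G$ (this one is genuinely a homomorphism, since for $x\in X^m_{\nu(m-r)}$ \emph{every} generator sends $\bar\C(x)$ to a shifted twisted orbit skyscraper), conclude $a_r=0$ for an element of $\ker\nu$, then observe that the remaining element can be represented by a word in $\Aut^{st}$ and $P_{\ell',m-\ell'}$, $\ell'\le r-1$, modulo commutators, and build $c_{r-1}$ only on that subproblem, and so on. Your proposal, as written, asserts a single triangular matrix without the inductive descent that makes each $c_\ell$ meaningful, and that is where it differs from and falls short of the paper's argument.

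Finally, your steps (i) and (ii) are sound and match the paper's tests on reduced $\xi$ and on $\reg_{X^{[m]}}$, and step (iii) for $\ell=r$ is fine; it is only the simultaneous treatment of $\ell<r$ that needs to be replaced by the downward induction.
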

\begin{proof}
Let $\Psi=\MM_\cL\circ \phi_*\circ P_{0,m}^{a_0}\circ\dots\circ P_{r,m-r}^{a_r}[b]$ for $\cL\in \Pic(X^{[m]})$, $\phi\in \Aut(X^{[m]})$, and $a_0,\dots,a_r,b\in \Z$. We have to show that $\Psi=\id$ implies $\cL\cong \reg_X$, $\phi=\id$, and $a_0=\dots=a_r=b=0$. 
Let $\xi=\{x_1,\dots,x_m\}\subset X$ be a reduced subscheme of length $m$. Under the BKRH equivalence $\C([\xi])$ corresponds to $\bar \C(x)$ with $x=(x_1,\dots,x_m)$. Since $x\in \bar X^m_{\nu(k)}$ for all $k\ge 2$ we get by the previous lemma 
$\Psi(\C([\xi]))=\C([\phi(\xi)])[b]$ which implies $b=0$ and $\phi=\id$. Let now $x\in X^m_{m-r}$ and $A=\Phi_m^{-1}(\bar\C(x))$. Again by the previous lemma, $\Psi(A)=\cL\otimes A[-2(m-r-1)a_r]$ which shows $a_r=0$. Testing inductively the values of 
$\Psi(\C(x))$ for $x\in X^m_{\nu(m-\ell)}$ shows that also $a_{r-1}=\dots=a_0=0$ hence $\Psi=\MM_\cL$. Finally $\Psi(\reg_{X^{[m]}})=\cL$ shows $\cL\cong \reg_{X^{[m]}}$. The proof goes through in the same way in the cases $m=2,3$ where $P_{0,2}$ and $P_{1,2}$ have to be replaced by 
$T_{0,2}$ and $T_{1,2}$.   
\end{proof}
\begin{remark}
Even before taking the abelianisation, the $P_{\ell,m-\ell}$ commute with a large class of autoequivalences namely with those induced by automorphisms and line bundles on the surface $X$. For the proof in the case $\ell=0$ see \cite[Lemma 5.4]{Kru3}. 
The proof for arbitrary $\ell$ is similar. 
\end{remark}
\begin{lemma}
Every autoequivalence in $\Aut^{st+H}(\D^b(X^{[m]}))$ is rank-preserving up to sign and sends objects that are supported on a non-trivial subset of $X^{[m]}$ to objects with the same property. 
\end{lemma}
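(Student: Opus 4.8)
The plan is to verify the two properties --- call them (a): there is a global sign $\epsilon_\Psi\in\{\pm1\}$ with $\rank\Psi(E)=\epsilon_\Psi\cdot\rank E$ for all $E$, and (b): $\supp\Psi(E)\subsetneq X^{[m]}$ whenever $\supp E\subsetneq X^{[m]}$ --- for the generators of $\Aut^{st+H}(\D^b(X^{[m]}))$ listed in Section~\ref{Hilbauto} and for their inverses; since the conjunction of (a) and (b) is stable under composition, this suffices. For (a) I would pass to the Grothendieck group: an autoequivalence induces $[\Psi]\in\Aut(\K(X^{[m]}))$, and since $\rank\colon\K(X^{[m]})\to\Z$ is the rank at the generic point, (a) is exactly the identity $\rank\circ[\Psi]=\pm\rank$, which is visibly closed under composition and inverse. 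For (b), using that $X^{[m]}$ is irreducible (so a finite union of proper closed subsets is again proper closed), closure under composition is immediate, while closure under inverse is checked generator by generator.

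The standard generators are dispatched at once: for a shift $[b]$ one has sign $(-1)^b$ and unchanged support; for $\phi_*$ with $\phi\in\Aut(X^{[m]})$ the rank is unchanged and $\supp\phi_*(E)=\phi(\supp E)$; for $\MM_\cL$ with $\cL\in\Pic(X^{[m]})$ both rank and support are unchanged. Their inverses are of the same three types, hence also satisfy (a) and (b).

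The substantive input is a support bound for the images of the Nakajima $\P$-functors. By Section~\ref{simNaka}, under the Bridgeland--King--Reid--Haiman equivalence the Fourier--Mukai kernel of $H_{\ell,m-\ell}\colon\D^b(X\times X^{[\ell]})\to\D^b(X^{[m]})$ is supported on the correspondence $Z^{\ell,m-\ell}$ of (\ref{Zln}); hence every object of $\im H_{\ell,m-\ell}$ is supported on the image $Z_\ell\subseteq X^{[m]}$ of $Z^{\ell,m-\ell}$. Since $m-\ell\geq2$, each $[\xi']\in Z_\ell$ admits $\xi\subset\xi'$ with $\xi,\xi'$ differing only at a point $x$ at which $\xi'$ has length $\geq m-\ell\geq2$, so $\xi'$ is non-reduced; thus $Z_\ell$ lies in the proper closed non-reduced locus of $X^{[m]}$, in particular $Z_\ell\subsetneq X^{[m]}$. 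Granting this, let $D$ be the $\P$-cotwist of $H_{\ell,m-\ell}$. The double cone (\ref{dcone}) provides a triangle $Q(E)\to E\to P_{\ell,m-\ell}(E)\to Q(E)[1]$ with $Q(E)=\cone\bigl(H_{\ell,m-\ell}DH_{\ell,m-\ell}^R(E)\to H_{\ell,m-\ell}H_{\ell,m-\ell}^R(E)\bigr)$ supported on $Z_\ell$, so $\supp P_{\ell,m-\ell}(E)\subseteq\supp E\cup Z_\ell$, which is proper closed once $\supp E\subsetneq X^{[m]}$; this is (b). For (a), in $\K(X^{[m]})$ the endomorphism $[P_{\ell,m-\ell}]$ sends $v$ to $v-[H_{\ell,m-\ell}]([H_{\ell,m-\ell}^R]v)+[H_{\ell,m-\ell}]([D][H_{\ell,m-\ell}^R]v)$, so $[P_{\ell,m-\ell}]v-v$ lies in $\im[H_{\ell,m-\ell}]$; since $H_{\ell,m-\ell}$ takes coherent sheaves to complexes supported on $Z_\ell$, that image consists of rank-zero classes, whence $\rank\circ[P_{\ell,m-\ell}]=\rank$, i.e.\ $\epsilon=+1$. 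The spherical twists $T_{\ell,2}$ that replace $P_{\ell,m-\ell}$ when $m-\ell=2$ are treated identically via $H_{\ell,2}H_{\ell,2}^R(E)\to E\to T_{\ell,2}(E)$ and $[T_{\ell,2}]=\id-[H_{\ell,2}][H_{\ell,2}^R]$, using (\ref{Stwist}) in place of (\ref{Ptwist}).

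It remains to treat the inverses $P_{\ell,m-\ell}^{-1}$ (and $T_{\ell,2}^{-1}$). Property (a) is automatic, since $[P_{\ell,m-\ell}]$ preserves rank and hence so does its inverse. For (b), note that by (\ref{Ptwist}) the composite $P_{\ell,m-\ell}\circ H_{\ell,m-\ell}$ is $H_{\ell,m-\ell}$ post-composed with an autoequivalence, so $P_{\ell,m-\ell}$ maps the triangulated subcategory $\langle\im H_{\ell,m-\ell}\rangle$ --- which is contained in the subcategory of complexes supported on $Z_\ell$ --- into itself, and being an equivalence restricts to an autoequivalence of it; hence $P_{\ell,m-\ell}^{-1}$ preserves this subcategory too. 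Applying $P_{\ell,m-\ell}^{-1}$ to the triangle above yields $\supp P_{\ell,m-\ell}^{-1}(E)\subseteq\supp E\cup\supp P_{\ell,m-\ell}^{-1}(Q(E))\subseteq\supp E\cup Z_\ell$, giving (b); the argument for $T_{\ell,2}^{-1}$ is the same via (\ref{Stwist}). The only point needing genuine care is the support estimate for $\im H_{\ell,m-\ell}$ together with the attendant check that $\im[H_{\ell,m-\ell}]\subset\K(X^{[m]})$ consists of rank-zero classes; both are straightforward consequences of $m-\ell\geq2$ and the description of the kernel in Section~\ref{simNaka}, so I do not expect a real obstacle beyond this bookkeeping.
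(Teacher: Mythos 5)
Your proof is correct, and it has the same overall skeleton as the paper's (establish that $\im H_{\ell,m-\ell}$ lives on a proper closed subset, hence consists of rank-zero classes; feed this into the double-cone formula; dispatch the standard generators separately), but you realize the key support bound by a genuinely different route, and you are more careful about inverses than the paper is. The paper argues entirely on the equivariant side: the Fourier--Mukai kernel $\cP_{\ell,m-\ell}$ is by construction a complex of structure sheaves of the graphs $\Gamma_{I,J,\mu}$ with $|J|\ge m-\ell\ge 2$, so every $H_{\ell,m-\ell}(A)\in\D^b_{\sym_m}(X^m)$ is visibly supported on $\bigcup_{|J|\ge m-\ell}\Delta_J$, the closure of the stratum $X^m_{\nu(m-\ell)}$, a proper closed subset --- no transport across BKRH is needed. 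You instead transport across BKRH and invoke the fact that the kernel of $\Phi_m^{-1}\circ H_{\ell,m-\ell}\circ(\id\boxtimes\Phi_\ell)$ lives on $Z^{\ell,m-\ell}$, whose image in $X^{[m]}$ is contained in the non-reduced locus. That fact is only \emph{asserted} in Section~\ref{simNaka} (``one can deduce\ldots'') and not proven in the paper, so your proof inherits a dependency the paper's proof avoids; the conclusion is the same, but the paper's route is more self-contained and shorter. On the other side of the ledger, your explicit treatment of inverses --- using that $P_{\ell,m-\ell}$ preserves $\langle\im H_{\ell,m-\ell}\rangle$ and hence restricts to an autoequivalence of it, so $P_{\ell,m-\ell}^{-1}$ does too --- is a genuine improvement in rigour: the paper's lemma concerns a \emph{group}, so inverses of the generating $\P$-twists must be handled, and the paper simply does not address this point (it only notes ``the argument for the second assertion is similar'').
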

\begin{proof}
For $\ell=0,\dots,r$ and every object $A\in \D^b_{\sym_\ell}(X\times X^\ell)$ the object $H_{\ell,m-\ell}(A)$ is supported on the closure of $X^m_{\nu(\ell)}$ hence has rank zero. 
By the double cone construction (\ref{dcone}) of the $\P$-twist it follows that $\rank P_{\ell,m-\ell}(B)=\rank B$ for all $B\in \D^b(X^{[m]})$. Also every standard autoequivalence is rank preserving with the exception of odd shifts which multiply the rank by $-1$.
The argument for the second assertion is similar.
\end{proof}
There are plenty of examples, especially in the case that $X$ is a K3 or Enriques surface, of autoequivalences in $\D^b(X^{[m]})$ which are not rank preserving or send skyscraper sheaves to objects whose support is the whole $X^{[m]}$. By the previous lemma we know that these autoequivalences are not contained in $\Aut^{st+H}(\D^b(X^{[m]}))$.
Here is a list of those examples the author is aware of:
\begin{enumerate}
 \item For every non rank-preserving autoequivalence $\Psi\in \Aut(\D^b(X))$ on the surface,
 Ploog's construction \cite[Section 3.1]{Plo} gives a non rank-preserving autoequivalence $\Psi^{[n]}\in \Aut(\D^b(X^{[m]}))$.
 \item For $X$ a K3 surface, the universal ideal sheaf functor $F=\FM_{\I_\Xi}\colon \D^b(X)\to \D^b(X^{[m]})$ is a $\P^{m-1}$-functor and the induced twist $P_F$ sends skyscraper sheaves to objects whose support is $X^{[m]}$; see \cite{Add} and \cite[Lemma 5.7]{Kru3}.
 \item There is a variant $G\colon \D^b(X)\to \D^b(X^{[m]})$ of $F$ called the truncated universal ideal sheaf functor; see \cite{KSos} or Section \ref{trunca}. For $X$ a K3 surface, it is again a $\P^{m-1}$-functor that sends skyscraper sheaves to objects whose support is $X^{[m]}$.
  \item For $X$ an Enriques surface, the universal ideal sheaf functor $F=\FM_{\I_\Xi}\colon \D^b(X)\to \D^b(X^{[m]})$ is neither spherical nor a $\P$-functor but fully faithful. Nevertheless, there is an induced autoequivalence which is 
not rank-preserving; see \cite{KSos}.
  \item For $X$ an Enriques surface also the truncated universal ideal functor is fully faithful and induces another non rank-preserving autoequivalence.
 \item For $X$ a K3 surface, the structural sheaf $\reg_{X^{[m]}}$ is a $\P^m$-object and the induced twist $P_\reg$ sends skyscraper sheaves to objects whose support is $X^{[m]}$. The same holds for twists along general $\P^n$-objects which are supported on the whole $X^{[m]}$. Spherical objects in $\D^b(X)$ induce 
$\P^m$-objects in $\D^b(X^{[m]})$; see \cite{PS}. 
 \item For $X$ an Enriques surface, the structural sheaf $\reg_{X^{[m]}}$ is an exceptional object and induces an autoequivalence which is not rank-preserving. The same holds for any exceptional object of non-zero rank. Exceptional objects in $\D^b(X)$ induce 
exceptional objects in $\D^b(X^{[m]})$; see \cite{KSos}. 
\item For $X=A$ an abelian surface the pull-back $\Sigma^*\colon \D^b(A)\to\D^b_{\sym_m}(A^m)\cong \D^b(A^{[m]})$ along the summation morphism is a $\P^{m-1}$-functor with $\P$-cotwist $[-2]$; see \cite{Mea}. Let $\Delta\subset A^m$ be the small diagonal. Then $\supp(P_{\Sigma^*}(\reg_\Delta))=A^m$. Indeed, the composition $\Sigma\circ \delta\colon A\to A$ with the embedding of the small diagonal is multiplication by $m$.
 Hence, $\Sigma_*\reg_{\Delta}$ is a locally free sheaf of rank $m$.
Thus, also $\Sigma^*\Sigma_*(\reg_\Delta)$ is a locally free sheaf of rank $m$ and the claim follows by
by the double cone construction (\ref{dcone}) of the $\P$-twist.
\end{enumerate}
\subsection{Induced autoequivalences on the Kummer varieties} 
Recall that for $A$ an abelian surface there is the BKRH equivalence $\hat \Phi\colon \D^b(K_{m-1}A)\xrightarrow \cong \D^b_{\sym_m}(N_{m-1}A)$. Thus the $\P$-functors of Theorem C' can be considered as $\P$-functors to the derived category of the generalised Kummer variety.
For the induced twists $\hat P_{\ell,m-\ell}:=P_{\hat H_{\ell,m-\ell}}$ the picture is very similar to the Hilbert scheme case with one exception:
The $\P^{m-1}$-functor $\hat H_{0,m}\colon \D^b(A_m)\to \D^b_{\sym_m}(N_{m-1}A)$ splits into the $\P^{m-1}$-objects $\bar \C(a,\dots,a)=\C(a,\dots,a)\otimes \alt_m\in \D^b_{\sym_m}(N_{m-1}A)$ for $a\in A_m$; see \cite[Section 6]{Kru3}. Thus, it gives 
rise to $m^4$ different autoequivalences $\hat P_a:=P_{\bar \C(a,\dots,a)}$ such that $\prod_{a\in A_m}\hat P_a\cong\hat P_{0,m}$. 
We define $\Aut^{st+H}(\D^b(K_{m-1}A))\subset \Aut(\D^b(K_{m-1}A))$ by
\[\Aut^{st+H}(\D^b(K_{m-1}A)):=\langle \Aut^{st}(\D^b(K_{m-1}A)),\hat P_{a}\,:\, a\in A_m,\hat P_{1,m-1},\dots,\hat P_{r,m-r} \rangle \,.\]
Analogous to Proposition \ref{abel} we get
\begin{align}\label{Kummerabel}
 \Aut^{st+H}(\D^b(K_{m-1}A))_{ab}\cong \Z\times \bigl(\Aut(K_{m-1}A)\ltimes \Pic(K_{m-1}A)\bigr)_{ab}\times\Z^{m^4}\times \Z^{r}\,.
\end{align}
Again, all the autoequivalences in the group $\Aut^{st+H}(\D^b(K_{m-1}A))$ are rank-preserving up to sign and preserve objects with non-trivial support. Thus, neither the $\P$-twist induced by the universal ideal functor (see \cite{Mea}) nor twists induced by $\P^{m-1}$-objects whose support is the whole $K_{m-1}A$, for example $\reg_{K_{m-1}A}$, are contained in $\Aut^{st+H}(\D^b(\K_{m-1}A))$.
\subsection{Relation to the (truncated) universal ideal functors}\label{trunca}
Let $X$ be a smooth quasi-projective surface and $F=F_m=\FM_{\I_\Xi}\colon \D^b(X)\to \D^b(X^{[m]})$ the universal ideal functor.
It follows from results of \cite{Sca1} that $\Phi\circ F=\FM_{\cK^{}}$ where 
$\cK^{}\in \D^b_{\sym_m}(X\times X^m)$ is the complex concentrated in degrees $[0,m]$ given by
\[0\to \reg_{X\times X^m}\to \bigoplus\limits_{i=0}^m\reg_{D_i}\to \bigoplus_{|I|=2}\reg_{D_I}\otimes \alt_I\to \bigoplus_{|I|=3}\reg_{D_I}\otimes \alt_I \dots \to \reg_{D_{[m]}}\otimes \alt_{[m]}\to 0\,;\]
see \cite[Section 6]{Mea}. We define the \textit{truncated universal ideal functor} as \[G:=G_m:=\FM_{\sigma^{\le1}\cK^{}}\colon \D^b(X)\to \D^b_{\sym_m}(X^m)\quad,\quad \sigma^{\le 1}\cK^{}=(0\to \reg_{X\times X^m}\to \oplus_{i=0}^m\reg_{D_i}\to 0)\,.\] 
We have $G^RG\cong F^RF$; see \cite[Section 5]{KSos}. It follows that $G$ is again a $\P^{m-1}$-functor if $X$ is a K3 surface and fully faithful if $X$ is an Enriques surface or more generally a surface with $p_g=0=q$. 
In the following we perform the computations at the level of the functors since we find it a bit easier and more intuitive.
However, in order to show that the induced maps between the functors take the expected form one has to do the calculation at the level of the FM kernels what we omit.  
We have
\begin{align}&\FM_{\cK^0}\cong\Ho^*(\_)\otimes \reg_{X^m}\quad,\quad \FM_{\cK^1}\cong\Inf_{\sym_{m-1}}^{\sym_m}\circ\pr_1\circ\triv\,,\\
\label{ge2} &\FM_{\cK^q}\cong \Inf_{\sym_q\times \sym_{m-q}}^{\sym_m}\circ \delta_{[q]*}\circ p_{[q]}^*\circ \MM_{\alt_q}\circ\triv\quad\text{for $q=2,\dots,m$}\end{align}
where for $I\subset [m]$ we denote by $p_I\colon \Delta_I\cong X\times X^{\bar I}\to X$ the projection to the first factor.
Consider $\pr_X^*\circ \triv\colon \D^b(X)\to \D^b_{\sym_{m-2}}(X\times X^{m-2})$.
\begin{lemma}
$H_{m-2,2}\circ \pr_X^*\circ \triv\cong \FM_{\sigma^{\ge 2}\cK^{}[2]}$. 
\end{lemma}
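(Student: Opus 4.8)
Looking at this statement, I need to prove that $H_{m-2,2} \circ \pr_X^* \circ \triv \cong \FM_{\sigma^{\ge 2}\cK[2]}$.

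The plan is to compute the left-hand side as a Fourier--Mukai transform and identify its kernel with $\sigma^{\ge 2}\cK[2]$. The key observation is that $H_{m-2,2}$ has kernel $\cP_{m-2,2}$, a complex $(0 \to \cP^0 \to \cP^1 \to \dots \to \cP^{m-2} \to 0)$ whose terms $\cP^i$ are direct sums of structure sheaves (twisted by alternating characters) of the subvarieties $\Gamma_{I,J,\mu}$. Composing with $\pr_X^* \circ \triv$, which has kernel $\reg_{\Delta}$ for the diagonal $X \hookrightarrow X \times (X \times X^{m-2})$, should collapse the $X^{m-2}$-factors. Concretely, I would first work out that on the level of objects, $\FM_{\cP^i}(\pr_X^* \triv(E))$ recovers $\FM_{\cK^{i+2}}(E)$ for each $i$, using the description \eqref{ge2}: the composition $\delta_{[n+i]*} \circ (\text{restrict to small diagonal})$ sends $E$ to a sum over subsets $J \subset [m]$ with $|J| = n+i = i+2$ of $\delta_{J*}(p_J^* E \otimes \alt_J)$, followed by $\Inf$, which is exactly $\FM_{\cK^{i+2}}$ after reindexing $q = i+2$.

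The key steps, in order: (1) Verify at the level of FM kernels that $\cP^i \star \reg_\Delta \cong \cK^{i+2}$ as $\sym_m$-equivariant sheaves on $X \times X^m$, using the transversality of $\Delta$ with the $\Gamma_{I,J,\mu}$ and the base-change results from Section~2.1; the shift $[2]$ in the target accounts for the degree shift from $\sigma^{\ge 2}\cK$ to a complex starting in degree $0$ (since $\cP$ is in degrees $[0,m-2]$ and $\sigma^{\ge 2}\cK$ is in degrees $[2,m]$). (2) Check that the differentials match: the differential $d^i \colon \cP^i \to \cP^i$ (sic --- meaning $\cP^i \to \cP^{i+1}$) is built from the signs $\eps_{J,\mu(c)}$ and restriction maps, which under the identification becomes precisely the differential $\cK^{i+2} \to \cK^{i+3}$ of the complex $\cK$, described via the Čech-type signs and restriction of sections in Section~6 of \cite{Mea}. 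Since both sets of differentials are characterized up to isomorphism as the (essentially unique) nonzero equivariant maps between these sums of twisted structure sheaves --- invoking Corollary~\ref{dsufficient} or the analogous uniqueness --- it suffices to check nonvanishing of one component. (3) Conclude that $\FM_{\cP_{m-2,2} \star \reg_\Delta} \cong \FM_{\sigma^{\ge 2}\cK[2]}$, hence the functor identity.

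The main obstacle I expect is step~(2): matching the differentials precisely, including signs. The differential in $\cP_{\ell,n}$ comes from inclusions $\Gamma_{I \cup \{c\}, J \cup \{\mu(c)\}, \dots} \subset \Gamma_{I,J,\mu}$ with sign $\eps_{J,\mu(c)}$, while the differential in $\cK$ (hence in $\sigma^{\ge 2}\cK$) comes from the Scala resolution's Čech differential on the partial diagonals $D_I \subset X \times X^m$ with signs $\eps_{I,b}$. After restricting to the small diagonal in the $X^{m-2}$ factor, the subvariety $\Gamma_{I,J,\mu}$ maps to $D_J \subset X \times X^m$ (with the $X^{\bar J}$-coordinates free), and the combinatorial data $(I,J,\mu)$ degenerates so that $J$ alone indexes the summand --- one must verify that the induced sign on the differential agrees with Scala's. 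I would handle this by the uniqueness argument (any two nonzero equivariant differentials making the complex exact agree up to isomorphism of complexes), reducing the sign bookkeeping to a single nonvanishing check, analogous to how Lemmas in Section~4 handle the induced maps; this is why I note above that the full kernel-level computation of signs can be left implicit, as the text itself does for the truncated ideal functor.
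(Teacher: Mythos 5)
Your proposal is correct and follows essentially the same route as the paper. The paper's proof is a one-paragraph functor-level comparison: under the identifications $\Delta_{[2]}\cong X\times X^{m-2}$ and $\Delta_{[i+2]}\cong X\times X^{m-i-2}$ one has $\pr_X=p_{[2]}$ and $\pr_X\circ\iota_{[i]}=p_{[i+2]}$, so comparing the factorisation (\ref{Hi}) of $H^i_{m-2,2}$ with the description (\ref{ge2}) of $\FM_{\cK^{i+2}}$ gives $H^i_{m-2,2}\circ\pr_X^*\circ\triv\cong\FM_{\cK^{i+2}}$ directly, and that is the entire argument. You arrive at the same term-by-term identification; the main difference is that you propose to push the verification down to the level of Fourier--Mukai kernels and explicitly address matching of the differentials via a uniqueness argument, whereas the paper --- as it declares in the preamble to Section~\ref{trunca} --- deliberately stays at the functor level and omits the kernel-level check needed to confirm that the induced maps between the terms are the expected ones. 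So your proposal is, if anything, a notch more thorough on the very point the text flags as omitted.

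Two small caveats worth noting. First, the kernel of $\pr_X^*\circ\triv$ is the structure sheaf of the graph $\Delta_X\times X^{m-2}\subset X\times(X\times X^{m-2})$ (with trivial $\sym_{m-2}$-linearisation), not a small diagonal; your phrase \emph{the diagonal $X\hookrightarrow X\times(X\times X^{m-2})$} is imprecise, though this does not affect your argument. Second, Corollary~\ref{dsufficient} does not literally apply here: the terms $\cK^q=\bigoplus_{|I|=q}\reg_{D_I}\otimes\alt_I$ are not of the shape $\check\cC_k^\bullet(E)$ for a single simple sheaf $E$, so $\Hom_{\sym_m}(\cK^q,\cK^{q+1})$ need not be one-dimensional. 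What you want instead is the softer statement that two Čech-type differentials on the same terms, both built from restriction maps with nonzero sign coefficients, give isomorphic complexes (by rescaling the terms); this follows from contractibility of the index poset and is standard, but it is not the same lemma. Since the paper itself omits this point, this is not a gap in your proposal relative to the paper --- just a mislabelled reference.
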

\begin{proof}
Under the isomorphism $\Delta_{[2]}\cong X\times X^{m-2}$ we have $\pr_X=p_{[2]}$. Moreover, for $i\ge 1$ under the identification
$\Delta_{[i+2]}\cong X\times X^{m-i-2}$ we have $\pr_X\circ \iota_{[i]}=p_{[i+2]}$. Now it follows from comparing (\ref{Hi}) and  (\ref{ge2}) that $H_{m-2,2}^i\circ \pr_X^*\circ \triv \cong \FM_{\cK^{i+2}}$.     
\end{proof}
\begin{lemma}\label{HLG}
 $H_{m-2,2}^L\circ G\cong \pr_X^*\circ \triv[-1]$. 
\end{lemma}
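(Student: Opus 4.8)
The plan is to truncate the Fourier--Mukai kernel $\cK$ of $\Phi_m\circ F_m$ after its first two terms. Write $\sigma^{\le 1}\cK=(0\to\cK^0\xrightarrow{d^0}\cK^1\to 0)$ in degrees $[0,1]$; then $\sigma^{\le 1}\cK\cong\cone(d^0)[-1]$ in the derived category, and since equivariant Fourier--Mukai transforms are triangulated in the kernel, $G=\FM_{\sigma^{\le 1}\cK}\cong\cone\bigl(\FM_{\cK^0}\xrightarrow{\FM_{d^0}}\FM_{\cK^1}\bigr)[-1]$, i.e.\ there is an exact triangle of functors
\[\FM_{\cK^1}[-1]\longrightarrow G\longrightarrow\FM_{\cK^0}\longrightarrow\FM_{\cK^1}\,.\]
Applying the triangulated functor $H_{m-2,2}^L$ gives the exact triangle
\[H_{m-2,2}^L\FM_{\cK^1}[-1]\longrightarrow H_{m-2,2}^L G\longrightarrow H_{m-2,2}^L\FM_{\cK^0}\longrightarrow H_{m-2,2}^L\FM_{\cK^1}\,.\]
So it suffices to prove $H_{m-2,2}^L\circ\FM_{\cK^0}=0$ and $H_{m-2,2}^L\circ\FM_{\cK^1}\cong\pr_X^*\circ\triv$; the triangle then forces $H_{m-2,2}^L G\cong H_{m-2,2}^L\FM_{\cK^1}[-1]\cong\pr_X^*\circ\triv[-1]$, with the pleasant feature that no connecting morphism has to be identified. (Alternatively one could feed the other truncation triangle $\FM_{\sigma^{\ge 2}\cK}\to\Phi_m F_m\to G\to{}$, together with the previous lemma, into the same argument, but that variant requires pinning down a connecting map on the level of kernels.)

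Both remaining claims reduce to a summand-wise computation of $H_{m-2,2}^{iL}\circ\FM_{\cK^q}$ for $q=0,1$ and $0\le i\le m-2$. I would read off $H_{m-2,2}^{iL}$ from the description (\ref{HiR}) of $H_{m-2,2}^{iR}$ using the relation $H_{m-2,2}^{iR}\cong\bar S_X^{-1}\circ H_{m-2,2}^{iL}$ (the summand-wise version of condition (iii), established for all $\ell$ and $n$ as in Section \ref{cond3}, since $(\omega_X^2\boxtimes\omega_{X^{m-2}}\boxtimes\omega_{X^m}^{-1})_{|\Gamma_{I,J,\mu}}\cong\reg_{I,J,\mu}$). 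Contracting the dualising twists by means of (\ref{partialnormal}) and $\det(T_X\otimes\rho_{[2+i]})\cong\omega_X^{-(1+i)}$ (with trivial $\sym_{2+i}$-linearisation), one finds that $H_{m-2,2}^{iL}$ is, up to a shift and a twist by a line bundle pulled back from the first factor --- both trivial when $i=0$ --- obtained from $(\_)^{\sym_{2+i}}\circ\MM_{\alt_{2+i}}\circ\delta_{[2+i]}^*\circ\Res$ by postcomposing with $\triv$, $\iota_{[i]*}$ and $\Inf$; in particular $H_{m-2,2}^{0L}\cong(\_)^{\sym_2}\circ\MM_{\alt_2}\circ\delta_{[2]}^*\circ\Res$ exactly. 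Now $\FM_{\cK^0}(E)\cong\Ho^*(X,E)\otimes\reg_{X^m}$, so $\delta_{[2+i]}^*\FM_{\cK^0}(E)$ carries the trivial $\sym_{2+i}$-linearisation and is annihilated by $(\_)^{\sym_{2+i}}\circ\MM_{\alt_{2+i}}$; hence $H_{m-2,2}^{iL}\FM_{\cK^0}=0$ for every $i$, and therefore $H_{m-2,2}^L\FM_{\cK^0}=0$. For $\cK^1$ one has $\FM_{\cK^1}(E)\cong\bigoplus_{a=1}^m\pr_a^*E$ with $\sym_m$ permuting the summands; restricting to the partial diagonal $\Delta_{[2+i]}$ decomposes it as $(\C^{2+i}\otimes\pr_X^*E)\oplus\bigl(\text{a summand on which }\sym_{2+i}\text{ acts trivially}\bigr)$, where $\C^{2+i}\cong\C\oplus\rho_{[2+i]}$ is the permutation representation. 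Tensoring with $\alt_{2+i}$ and taking $\sym_{2+i}$-invariants kills the trivial summand always, and kills the first summand as well for $i\ge 1$ (since $(\alt_{2+i}\otimes\rho_{[2+i]})^{\sym_{2+i}}=0$ as soon as $\dim\rho_{[2+i]}\ge 2$), but for $i=0$ it leaves a single copy of $\pr_X^*E$ --- the anti-invariant line in $\C^2$ --- carrying the trivial $\sym_{m-2}$-linearisation. As all twists and shifts are trivial for $i=0$, we conclude $H_{m-2,2}^L\FM_{\cK^1}\cong\pr_X^*\circ\triv$, concentrated in degree $0$.

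The only genuine work is the bookkeeping in the second paragraph: one must track the $\sym_{2+i}$-linearisations on the normal bundles of the partial diagonals and the dualising twists coming from (\ref{RLadjoints}) and (\ref{HiR}), and check that they all cancel when $i=0$. This is a routine application of Corollary \ref{accor} and the Danila-type principle of Section \ref{Dansection}. As with the previous lemma, a fully rigorous treatment of the identifications of the $\FM_{\cK^q}$ and of $H_{m-2,2}^{iL}$ is most conveniently carried out on the level of Fourier--Mukai kernels; but in contrast to that lemma the induced maps never enter here, since the vanishing $H_{m-2,2}^L\FM_{\cK^0}=0$ already pins down $H_{m-2,2}^L G$.
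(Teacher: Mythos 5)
Your proof is correct and follows essentially the same route as the paper: compute $H_{m-2,2}^{iL}\circ\FM_{\cK^q}$ summand-by-summand for $q=0,1$, observe that everything vanishes except the $(i,q)=(0,1)$ term which gives $\pr_X^*\circ\triv$, and conclude via the truncation triangle. The only cosmetic differences from the paper's argument are that you obtain $H^{iL}$ from $H^{iR}$ via the Serre-duality relation of condition (iii) (rather than directly replacing $\iota_{[i]*}$ by $\iota_{[i]!}$ and $\delta_{[2+i]}^!$ by $\delta_{[2+i]}^*$ in \eqref{HiR}), and that you decompose $\C^{2+i}\cong\C\oplus\rho_{[2+i]}$ to take invariants rather than invoking the Danila principle of Section~\ref{Dansection}; both amount to the same computation.
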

\begin{proof}
We set $\ell=m-2$. 
Note that the left adjoints $H_{\ell,2}^{iL}$ are given by (\ref{HiR}) with $\iota_{[i]*}$ replaced by $\iota_{[i]!}$ and $\delta_{[2+i]}^!$ replaced by $\delta_{[2+i]}^*$. We have
 \[(\_)^{\sym_{2+i}} \MM_{\alt_{2+i}} \delta_{[2+i]}^*\Res(\reg_{X^m})\cong (\reg_{X\times X^{\ell-i}}\otimes \alt_{2+i})^{\sym_{2+i}}=0\] which shows that $H_{2,m-2}^{iL}\circ \FM_{\cK^0}=0$ for all $0\le i\le m-2$. Let $q_a\colon X\times X^{\ell-i}\to X$ 
denote the projection to the $a$-th factor of $X^{\ell-i}$. For $E\in \D^b(X)$ we have 
\begin{align*}&(\_)^{\sym_{2+i}} \MM_{\alt_{n+i}} \delta_{[2+i]}^*\Res(\oplus_{i=1}^m \pr_i^*E)\\
\cong &\oplus_{a=1}^{\ell-i}(\alt_{2+i}\otimes q_i^*E)^{\sym_{2+i}} \oplus (E \otimes \alt_{2+i-1}\boxtimes\reg_{X^{\ell-i}} )^{\sym_{2+i-1}}
\cong \begin{cases} 0 \quad&\text{for $i> 0$,}\\
\pr_X^*E\quad&\text{for $i=0$.}       
      \end{cases}
\end{align*}
This shows that $H_{m-2,2}^L\circ G\cong H_{m-2,2}^L\circ \FM_{\cK^1}[-1]\cong H_{m-2,2}^{0L}\circ \FM_{\cK^1}[-1]=\pr_X\circ \triv[-1]$. 
\end{proof}
For $m\ge 4$ the functors $H_{m-2,2}$ are not spherical. Nevertheless, one can consider the twist $T_{m-2,2}:=\cone(H_{m-2,2}H_{m-2,2}^R\to \id)$ and its left adjoint 
$T_{m-2,2}^L=\cone(\id \to H_{m-2,2}H_{m-2,2}^L)$ though they are not autoequivalences for $m\ge 4$.
\begin{prop}\label{truncatedcomparison}
 $T_{m-2,2}^L\circ G\cong F$ for $m\ge2$.
\end{prop}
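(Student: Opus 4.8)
The plan is to use the truncation triangle relating $G$ (the truncated universal ideal functor), $F$ (the universal ideal functor), and $H_{m-2,2}\circ\pr_X^*\circ\triv$, together with the computation of the left adjoint $H_{m-2,2}^L\circ G$ from Lemma \ref{HLG}. Recall that $\Phi\circ F=\FM_{\cK}$ with $\cK$ the complex in degrees $[0,m]$, and $G=\FM_{\sigma^{\le 1}\cK}$. The stupid truncation gives a short exact sequence of complexes $0\to \sigma^{\ge 2}\cK\to \cK\to \sigma^{\le 1}\cK\to 0$, hence an exact triangle of Fourier--Mukai transforms
\[
\FM_{\sigma^{\ge 2}\cK}\to F\to G\xrightarrow{+1}\,.
\]
By the lemma immediately preceding Lemma \ref{HLG}, we have $\FM_{\sigma^{\ge 2}\cK}\cong H_{m-2,2}\circ\pr_X^*\circ\triv[-2]$, so this triangle reads
\[
H_{m-2,2}\circ\pr_X^*\circ\triv[-2]\to F\to G\xrightarrow{+1}\,.
\]

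The next step is to identify the connecting map $G\to H_{m-2,2}\circ\pr_X^*\circ\triv[-1]$ with (a shift of) the unit-type map $G\to H_{m-2,2}\circ H_{m-2,2}^L\circ G$ coming from $H_{m-2,2}^L\circ G\cong \pr_X^*\circ\triv[-1]$ (Lemma \ref{HLG}). Given that identification, the triangle above becomes
\[
F\to G\to H_{m-2,2}\circ H_{m-2,2}^L\circ G\xrightarrow{+1}\,,
\]
and since $T_{m-2,2}^L=\cone(\id\to H_{m-2,2}\circ H_{m-2,2}^L)$ by definition, composing with $G$ on the right gives $T_{m-2,2}^L\circ G=\cone(G\to H_{m-2,2}\circ H_{m-2,2}^L\circ G)\cong F$, which is the claim. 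For $m=2$ one checks the statement directly: there $\ell=0$, $\pr_X^*\circ\triv$ is just $\triv$, and $\sigma^{\ge 2}\cK$ is the two-term piece giving $H_{0,2}$ up to shift, so the same triangle argument applies (and $H_{0,2}$ is genuinely spherical).

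The main obstacle is the middle step: showing that the connecting morphism in the truncation triangle really is the canonical unit map, rather than merely abstractly isomorphic to a map with the same source and target. This requires working at the level of Fourier--Mukai kernels rather than functors — one must check that the boundary map $\sigma^{\le 1}\cK\to \sigma^{\ge 2}\cK[1]$ in $\D^b_{\sym_m}(X\times X^m)$ corresponds, under the identifications of the two preceding lemmas, to the kernel-level unit $\sigma^{\le 1}\cK\to \cP_{m-2,2}\star\cP_{m-2,2}^L\star\sigma^{\le 1}\cK$ for the adjunction $(H_{m-2,2}^L, H_{m-2,2})$. As the excerpt itself notes for the analogous computations, verifying that induced maps take the expected form is the delicate part; here one would compare both morphisms by restricting to the open locus where all the relevant partial diagonals are disjoint (so that only the $\reg_{X\times X^m}\to\oplus_i\reg_{D_i}$ and $\oplus_{|I|=2}\reg_{D_I}\otimes\alt_I$ terms survive), check they agree there up to a nonzero scalar using simplicity of the relevant sheaves, and then extend over the whole product by a codimension-$2$ argument of the type used at the end of Section \ref{surfsplitmon}. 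Once the connecting map is pinned down, the rest is formal triangulated-category manipulation.
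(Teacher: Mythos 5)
Your proposal follows the paper's proof essentially verbatim: the paper combines Lemma \ref{HLG} with the preceding unnumbered lemma to obtain $H_{m-2,2}H_{m-2,2}^L G\cong\FM_{\sigma^{\ge 2}\cK[1]}$ and then concludes $T_{m-2,2}^L\circ G\cong\cone(\FM_{\sigma^{\le 1}\cK}\to\FM_{\sigma^{\ge 2}\cK[1]})\cong\FM_{\cK}\cong F$, which is exactly your truncation-triangle argument. The gap you flag at the end — that one must identify the connecting map of the truncation triangle with the unit of the $(H_{m-2,2}^L,H_{m-2,2})$ adjunction and not merely a map with the same source and target — is genuine, but the paper acknowledges and deliberately omits precisely this kernel-level verification at the start of Section \ref{trunca}, so your reasoning and the paper's coincide, with your suggested codimension-two completion being a reasonable way to fill the common gap.
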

\begin{proof}
The previous two lemmas combined give $H_{m-2,2}\circ H_{m-2,2}^L\circ G\cong \FM_{\sigma^{\ge 2}\cK^{}[1]}$. It follows that
$T_{m-2,2}^L\circ G\cong \cone(\FM_{\sigma^{\le 1}\cK^{}}\to \FM_{\sigma^{\ge 2}\cK^{}[1]})\cong \FM_{\cK^{}}\cong F$.
\end{proof}
In the cases $m=2,3$ the twists $T_{m-2,2}$ and $T_{m-2,2}^L=T_{m-2,2}^{-1}$ are autoequivalences. Thus, the above proposition
gives another proof that $G^RG\cong F^RF$ in these cases. Furthermore, by (\ref{twistrel}) we get  
\begin{cor}
For $X$ a K3 surface, we have the relations $T_{0,2}T_{F_2}=T_{G_2}T_{0,2}$ and $T_{1,2}P_{F_3}=P_{G_3}T_{1,2}$ in $\Aut(\D^b(X^{[2]}))$ and $\Aut(\D^b(X^{[3]}))$, respectively. 
\end{cor}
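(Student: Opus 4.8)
The plan is to read off both relations from Proposition~\ref{truncatedcomparison} combined with the conjugation formulae collected in~(\ref{twistrel}), with no further Fourier--Mukai computations needed. The starting observation is that for $m=2$ and $m=3$ the functors $H_{m-2,2}$ are $\P^1$-functors by Theorem~C (take $n=2$ and $\ell=0,1$), hence split spherical, so that $T_{m-2,2}$ is an autoequivalence with $T_{m-2,2}^L\cong T_{m-2,2}^{-1}$, exactly as recorded in the discussion preceding this corollary. Applying the inverse of $T_{m-2,2}$ to the isomorphism of Proposition~\ref{truncatedcomparison} therefore rewrites $T_{m-2,2}^L\circ G_m\cong F_m$ as $G_m\cong T_{m-2,2}\circ F_m$; concretely $G_2\cong T_{0,2}\circ F_2$ and $G_3\cong T_{1,2}\circ F_3$.

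First I would treat $m=2$. Since $X$ is a K3 surface, $F_2=\FM_{\I_\Xi}$ is split spherical by~\cite{Add}, hence so are $T_{0,2}\circ F_2$ and therefore $G_2$, so all the relevant twists are defined. The third identity of~(\ref{twistrel}), applied with $\Phi=T_{0,2}$ and $S=F_2$, then gives
\[T_{G_2}\cong T_{T_{0,2}\circ F_2}\cong T_{0,2}\circ T_{F_2}\circ T_{0,2}^{-1}\,,\]
and composing on the right with $T_{0,2}$ yields $T_{G_2}\,T_{0,2}\cong T_{0,2}\,T_{F_2}$ in $\Aut(\D^b(X^{[2]}))$. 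Next, for $m=3$ the argument is identical with $\P$-twists replacing spherical twists: $F_3$ and $G_3$ are $\P^2$-functors for $X$ a K3 surface (see~\cite{Add} and~\cite[Section 5]{KSos}), so the fourth identity of~(\ref{twistrel}) with $\Phi=T_{1,2}$ and $F=F_3$ gives $P_{G_3}\cong T_{1,2}\circ P_{F_3}\circ T_{1,2}^{-1}$, whence $P_{G_3}\,T_{1,2}\cong T_{1,2}\,P_{F_3}$ in $\Aut(\D^b(X^{[3]}))$.

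The only point requiring genuine care---and the single place where the restriction $m\le 3$ enters---is the passage from Proposition~\ref{truncatedcomparison} to $G_m\cong T_{m-2,2}\circ F_m$, i.e.\ the claim that $T_{m-2,2}$ is an autoequivalence whose left adjoint is its inverse; for $m=2,3$ this is immediate from $H_{0,2}$ and $H_{1,2}$ being split spherical by Theorem~C together with the fact that the left adjoint of an equivalence is its inverse. (For $m\ge 4$ this step fails, which is why the corollary is stated only for Hilbert squares and cubes.) Everything after that is a purely formal manipulation of~(\ref{twistrel}), so I expect no substantive obstacle beyond making sure that all the twists in sight are actually defined, which the K3 hypothesis guarantees.
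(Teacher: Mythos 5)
Your proof is correct and follows exactly the paper's argument: rewrite Proposition~\ref{truncatedcomparison} as $G_m\cong T_{m-2,2}\circ F_m$ using that $T_{m-2,2}$ is an autoequivalence with $T_{m-2,2}^L=T_{m-2,2}^{-1}$ for $m=2,3$, then apply the conjugation identities of~(\ref{twistrel}). You simply spell out the details the paper leaves implicit after the phrase ``by (\ref{twistrel}) we get.''
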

We get similar relations for $X$ an Enriques surface by \cite[Remark 3.12]{KSos}.
Let now $A$ be an abelian surface and $\hat F_m=\hat F=\FM_{\I_{\hat \Xi}}\colon \D^b(A)\to \D^b(K_{m-1}A)$ the universal ideal functor. 
It is a $\P^{m-2}$-functor for $m\ge 3$ and a $\P^1$-functor for $m=2$; see \cite{Mea} and \cite{KMea}.
Then $\hat \Phi\circ \hat F=\FM_{\hat \cK}$ where $\hat \cK$ is the complex
\[\reg_{A\times N_{m-1}A}\to \bigoplus\limits_{i=0}^m\reg_{\hat D_i}\to \bigoplus_{|I|=2}\reg_{\hat D_I}\otimes \alt_I\to \bigoplus_{|I|=3}\reg_{\hat D_I}\otimes \alt_I \dots \to \reg_{\hat D_{[m]}}\otimes \alt_{[m]}\to 0\,;\]
where $\hat D_I=D_I\cap(A\times N_{m-1}A)$. Following the computations in \cite[Section 6]{Mea} one sees that the truncated version $\hat G_m:=\hat G:=\FM_{\sigma^{\le 1}\cK}$ again satisfies $\hat G\circ \hat G^R\cong \hat F^R\circ\hat F$. 
Hence $\hat G$ is again a $\P^{m-2}$ functor for $m\ge 3$ and a $\P^1$-functor for $m=2$. Now, we can prove analogously to above that $\hat T_{m-2,2}^L\circ \hat F_m=\hat G_m$. The key step is again to compute that 
\begin{align}\label{hatHLG}
 \hat H_{m-2,2}^L\circ \hat G_m\cong \hat \pr_X\circ \triv[-1]\quad\text{where}\quad \hat \pr_X=(M_{m-2,2}A\hookrightarrow A\times A^{m-2}\xrightarrow{\pr_X}A)\,.
\end{align}
\subsection{Braids on hyperk\"ahler fourfolds}\label{braidauto}
We say that two elements $a,b$ of a group satisfy the \textit{braid relation} if $aba=bab$. 
Two twists $T_E,T_F$ along spherical objects satisfy the braid relation if $\Hom^*(E,F)=\C[n]$ for some $n\in \Z$; see \cite{ST}.
There is the following straight forward generalisation which gives a criterion for twists along spherical functors to satisfy the braid relation; compare \cite[Theorem 1.2]{ALdg}. 
\begin{prop}\label{braidcrit}
 Let $G=\FM_\cG, H=\FM_\cH\colon \D^b(M)\to\D^b(N)$ be two spherical functors such that $G^R H\cong \id$ and $\Hom_{\D^b(M\times N)}(\cG,\cH)=\C$. Let $\cF=\cone(\cG\xrightarrow\psi\cH)$ for $0\neq\psi\in \Hom(\cG,\cH)$ and set $F=\FM_\cF$. 
Then $F$ is also a spherical functor and every pair of $T_F,T_G,T_H$ spans $\langle T_F,T_G,T_H\rangle$ and satisfies the braid relation.
\end{prop}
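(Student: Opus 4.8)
The plan is to realise $F$, up to a shift, both as $T_G\circ H$ and as $T_H^{-1}\circ G$, i.e.\ as the image of the spherical functor $H$ under the autoequivalence $T_G$ and as the image of $G$ under $T_H^{-1}$; the statement then collapses to an elementary computation in the braid group on three strands. This is the functorial shadow of the classical fact that an $A_2$-configuration $\cG,\cH,\cF=\cone(\cG\to\cH)$ of spherical objects yields three spherical twists which pairwise braid and any two of which generate the group they span; compare \cite{ST} and \cite[Theorem~1.2]{ALdg}.

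First I would record the adjoint bookkeeping: from $G^R\circ H\cong\id$ one gets $H^L\circ G\cong\id$ by passing to left adjoints and using $(G^R)^L\cong G$ (legitimate since all functors in sight are Fourier--Mukai transforms between smooth projective varieties, or their equivariant versions). Then, writing $T_G=\FM_{\cone(\cG\star\cG^R\to\reg_\Delta)}$ and using that the convolution product $\star$ is triangulated, one finds $T_G\circ H\cong\FM_{\cone(\cG\star\cG^R\star\cH\to\cH)}$, the connecting morphism being induced by the counit $GG^R\to\id$. The isomorphism $\cG^R\star\cH\cong\reg_\Delta$ identifies the source of this cone with $\cG$, turning the connecting morphism into some $\alpha\in\Hom(\cG,\cH)$; unwinding the identifications shows that $\alpha$ is the image of the isomorphism $G^R\circ H\cong\id$ under the natural bijection $\Hom(G^R H,\id)\cong\Hom(G,H)$ coming from the adjunction $(G,G^R)$, so $\alpha\neq0$. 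Since $\Hom(\cG,\cH)\cong\C$, the map $\alpha$ is a nonzero scalar multiple of $\psi$, hence $\cone(\alpha)\cong\cone(\psi)=\cF$ and $T_G\circ H\cong F$. An entirely parallel argument, starting from $T_H^{-1}=\FM_{\cone(\reg_\Delta\to\cH\star\cH^L)[-1]}$ and using $\cH^L\star\cG\cong\reg_\Delta$, gives $T_H^{-1}\circ G\cong F[-1]$.

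Granting the two identifications, the remainder is formal. From $T_G\circ H\cong F$ it follows that $F$ is spherical, being the composite of the autoequivalence $T_G$ with the spherical functor $H$, and that $T_F\cong T_G\circ T_H\circ T_G^{-1}$ by the conjugation formula (\ref{twistrel}); from $T_H^{-1}\circ G\cong F[-1]$ together with the shift-invariance of spherical twists we likewise get $T_F\cong T_H^{-1}\circ T_G\circ T_H$. Equating the two expressions for $T_F$ yields $T_GT_HT_G^{-1}=T_H^{-1}T_GT_H$, which rearranges to the braid relation $T_GT_HT_G=T_HT_GT_H$. With this braid relation and with $T_F=T_GT_HT_G^{-1}=T_H^{-1}T_GT_H$, a one-line manipulation gives $T_FT_GT_F=T_G^2T_H=T_GT_FT_G$ and $T_FT_HT_F=T_GT_H^2=T_HT_FT_H$, so all three pairs braid; and since $T_F=T_GT_HT_G^{-1}$, $T_H=T_G^{-1}T_FT_G$ and $T_G=T_HT_FT_H^{-1}$, every two of $T_F,T_G,T_H$ generate $\langle T_F,T_G,T_H\rangle$.

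The real work is concentrated in the second step: carrying out the two kernel-level identifications $T_G\circ H\cong F$ and $T_H^{-1}\circ G\cong F[-1]$ rigorously. What must be checked there is that the connecting morphisms of the relevant cones are nonzero — forcing those cones to agree with $\cF$ up to isomorphism — and that the shift in the second identification is tracked correctly. The non-vanishing is exactly where the hypothesis $\Hom(\cG,\cH)=\C$ is used, through the bijection above; alternatively, a vanishing connecting morphism would split $T_G\circ H$ as $H\oplus G[1]$, which is incompatible with $T_G\circ H$ being a spherical functor.
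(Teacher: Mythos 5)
Your proof follows the paper's own route: realise $F$ as $T_G\circ H$ and as $T_H^{-1}\circ G[1]$ via the triangles $G\to H\to T_GH$ and $T_H^{-1}G\to G\to H$ (using $G^RH\cong\id$ and its left-adjoint consequence $H^LG\cong\id$), argue the connecting morphism is nonzero and use $\Hom(\cG,\cH)=\C$ to match the cone with $\cF$, then combine the two conjugation formulas $T_F=T_GT_HT_G^{-1}=T_H^{-1}T_GT_H$ to extract the braid relations. The one genuine variation is your primary non-vanishing argument via the adjunction bijection $\Hom(\id,G^RH)\cong\Hom(G,H)$, which is a clean alternative to the paper's contradiction argument (a splitting $T_GH\cong H\oplus G[1]$ is incompatible with sphericality); you also record the paper's version as a fallback, and you spell out the final group-theoretic bookkeeping that the paper leaves implicit.
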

\begin{proof}
Composing the triangle $G G^R\to \id \to T_G$ with $H$ and using that $G^R H\cong \id$ we get the triangle $G\to H\to T_G H$. The map $G\to H$ of this triangle is non-zero. Indeed, otherwise we had $T_G H=H\oplus G[1]$ contradicting the 
fact that all three functors are spherical. Because of $\Hom(\cG,\cH)=\C$ it follows that $F\cong T_G H$.
This shows that $F$ is spherical and by (\ref{twistrel}) there is the relation
\begin{align}\label{eq1}
 T_F=T_G\circ T_H\circ T_G^{-1}
\end{align}
in $\Aut(\D^b(N))$. Taking left adjoints of $G^R H\cong \id$ we get $H^L G\cong \id$. This gives the triangle $T_H^{-1}G\to G\to H$ which shows that $T_H^{-1}G=F[-1]$ and 
\begin{align}\label{eq2}
 T_F=T_H^{-1}\circ T_G\circ T_H\,.
\end{align}
The assertion follows from the equations (\ref{eq1}) and (\ref{eq2}).
\end{proof}
\begin{prop}
\begin{enumerate}
\item Let $X$ be a K3 surface.
Then every pair of $T_{F_2},T_{G_2},T_{0,2}$ spans the subgroup $\langle T_{F_2},T_{G_2},T_{0,2}\rangle \subset \Aut(\D^b(X^{[2]}))$ and satisfies the braid relation.
\item Let $A$ be an abelian surface.
Then every pair of $T_{\hat F_3},T_{\hat G_3},\hat T_{1,2}$ spans the subgroup $\langle T_{\hat F_3},T_{\hat G_3},\hat T_{1,2}\rangle \subset \Aut(\D^b(K_2A))$ and satisfies the braid relation.
\end{enumerate}
\end{prop}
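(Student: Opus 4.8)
The plan is to verify in each case that the hypotheses of Proposition \ref{braidcrit} are satisfied with $M$ and $N$ the appropriate derived categories and $G,H$ a suitable pair of spherical functors whose cone recovers the third functor. For part (i) I would take $M=\D^b(X)$, $N=\D^b(X^{[2]})$, and consider $G=\FM_{\sigma^{\le 1}\cK}=G_2$ (the truncated universal ideal functor) together with $H=H_{0,2}\circ\pr_X^*$ in an appropriate guise; more precisely, recall from Proposition \ref{truncatedcomparison} and Lemmas \ref{HLG} and the surrounding discussion that $H_{0,2}$ (here $m=2$, $\ell=0$, so $H_{0,2}\colon\D^b(X)\to\D^b(X^{[2]})$) is a $\P^1$-functor, hence split spherical, and that $F_2$ is obtained from $G_2$ via the cone construction $F_2\cong T_{0,2}^L\circ G_2$. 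So the three functors $F_2,G_2$ together with $H_{0,2}$ (whose spherical twist is $T_{0,2}$) are exactly a triple $\cF,\cG,\cH$ of FM kernels related by a cone.

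First I would identify which of $F_2,G_2,H_{0,2}$ play the roles of $G,H,F$ in Proposition \ref{braidcrit}: by the proof of Proposition \ref{truncatedcomparison}, $\FM_{\cK}=F_2$ is the cone of $\FM_{\sigma^{\le1}\cK}=G_2\to\FM_{\sigma^{\ge2}\cK[1]}$, and the latter is computed there to be $H_{0,2}\circ H_{0,2}^L\circ G_2[1]$; composing with the triangle $G_2\to \cdots$ shows that $F_2\cong T_{0,2}^L\circ G_2$, i.e. the cone of a map $H_{0,2}$-shape object $\to G_2$. So I would set $G:=H_{0,2}$ (rescaled/shifted appropriately so that the adjunction $G^R H\cong\id$ holds), $H:=G_2$, and then $\cF=\cone(\cG\to\cH)$ gives $F=F_2$. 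The two conditions to check are then (a) $G^RH=H_{0,2}^R\circ G_2\cong\id$, and (b) $\Hom_{\D^b(X\times X^{[2]})}(\cG,\cH)=\C$, i.e. the space of maps between the FM kernels is one-dimensional. Condition (a) should follow from the explicit computations of Section \ref{trunca} (compare Lemma \ref{HLG} and the identity $G_2^RG_2\cong F_2^RF_2$ from \cite{KSos}) together with the orthogonality built into the complex $\cK$. Condition (b) is the content that needs a short direct argument: the kernel of $H_{0,2}$ is a structure sheaf (twisted by $\alt$) of the small diagonal $D\subset X\times X^2$, and $\sigma^{\le1}\cK$ is the two-term complex $\reg_{X\times X^2}\to\oplus\reg_{D_i}$; one computes $\Hom$ into it using the support of $\reg_D$ and Lemma \ref{Homvanish}, together with the fact that $X$ is a K3 surface (so $\Ho^0(\reg_X)=\C$ and the relevant higher Exts that could contribute vanish after taking $\sym_2$-invariants).

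For part (ii) I would run the identical argument with $A$ an abelian surface, $m=3$, $M=\D^b(A)$, $N=\D^b(K_2A)$, using the functors $\hat F_3,\hat G_3$ from Section \ref{trunca} and $\hat H_{1,2}$ (which by Theorem C' is a $\P^1$-functor since $n-1=1$ when $n=2$, hence split spherical with twist $\hat T_{1,2}$). The relation $\hat T_{1,2}^L\circ\hat G_3\cong\hat F_3$ is the Kummer analogue stated just after \eqref{hatHLG}, and I would again invoke Proposition \ref{braidcrit} with $G=\hat H_{1,2}$, $H=\hat G_3$, $\cF=\cone(\cG\to\cH)$ giving $F=\hat F_3$. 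Conditions (a) and (b) are checked as before, now using the restriction lemmas (Lemma \ref{cross}) and the explicit form of $\hat\cK$.

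The main obstacle I anticipate is establishing condition (b), the one-dimensionality $\Hom(\cG,\cH)=\C$ of the Hom-space between the FM kernels — this is exactly where the ambient geometry (K3 resp. abelian surface) enters and where one must be careful about equivariance: one computes $\Hom_{\sym_m}$ using the inflation/restriction adjunctions and the vanishing principle of Lemma \ref{Homvanish}, and must rule out extra contributions coming from the higher terms of the truncated complex $\sigma^{\le1}\cK$ or from the $\sym_m$-linearisation data. A secondary subtlety is matching shifts and twists so that the normalisation $G^RH\cong\id$ (not merely $\id$ up to shift) holds on the nose, since Proposition \ref{braidcrit} is stated with that precise hypothesis; this is bookkeeping rather than a genuine difficulty, but it must be done consistently in both the Hilbert-scheme and the Kummer case.
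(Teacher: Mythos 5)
Your approach is essentially correct but takes a genuinely dual route to the paper's. The paper applies Proposition \ref{braidcrit} with $G=G_2$ and $H=H_{0,2}[-1]$, using $H^LG\cong\id$ from Lemma \ref{HLG} and computing $\Hom(\cG,\cH)=\Hom_{\sym_2}(\sigma^{\le 1}\cK,\reg_D\otimes\alt_2[-1])=\C$, which lands in degree zero and reduces to a short $\Hom^0$ calculation on the terms of $\sigma^{\le1}\cK$. You instead swap the roles: $G=H_{0,2}[s]$ (with $s=-3$ so that $G^RH\cong\id$ on the nose, via $H_{0,2}^R=\bar S_X^{-1}H_{0,2}^L=[-2]\circ H_{0,2}^L$ and Lemma \ref{HLG}), and $H=G_2$. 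Your swap has a concrete advantage: the stupid-truncation triangle $\sigma^{\ge2}\cK[-1]\to\sigma^{\le1}\cK\to\cK$ identifies $F=\FM_{\cone(\cG\to\cH)}$ with $F_2$ directly, with no need to invoke Proposition \ref{truncatedcomparison} and worry about a shift in the definition of $T^L$. The price is that $\Hom(\cG,\cH)$ is now $\Ext^3_{\sym_2}(\reg_D\otimes\alt_2,\sigma^{\le1}\cK)$ rather than a degree-zero space; you still get $\C$, but via Serre duality the key calculation becomes $\Ext^2_{\sym_2}(\reg_D\otimes\alt_2,\oplus_i\reg_{D_i})\cong\Ext^2(\reg_D,\reg_{D_1})\cong\Ext^4(\reg_{D_1},\reg_D)^*\cong H^2(X,\reg_X)^*\cong\C$, so it is $H^2$ rather than $H^0$ that carries the one-dimensionality (your parenthetical about $\Ho^0(\reg_X)$ being the relevant fact is slightly off; the point is $p_g=1$). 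You correctly flag the shift bookkeeping and the need to control contributions from the higher terms and the linearisation — those are exactly the loose ends, and they do close up. The Kummer case is handled identically, as you say, with $\hat H_{1,2}$, $\hat G_3$, $\hat F_3$ and (\ref{hatHLG}) in place of Lemma \ref{HLG}.

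One small correction: your parenthetical attributing condition (a) in part to the identity $G_2^RG_2\cong F_2^RF_2$ is misplaced — that identity is what makes $G_2$ a spherical functor (needed so that Proposition \ref{braidcrit} applies at all), but condition (a) itself comes only from Lemma \ref{HLG} plus the $\P^1$-adjunction $H_{0,2}^R\cong\bar S_X^{-1}H_{0,2}^L$.
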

\begin{proof}
 We apply the above proposition to $G=G_2$ and $H=H_{0,2}[-1]$. By Lemma \ref{HLG} we have $H^LG=\id$ and hence also $G^RH=\id$. We have 
 \[\cG=\sigma^{\le 1}\cK=(0\to \reg_{X\times X^2}\to\reg_{D_1}\oplus\reg_{D_2}\to 0)\quad,\quad \cH=\reg_{D_{[2]}}\otimes \alt_2[-1]\,.\]
 Now, $\Hom(\reg_{X\times X^2},\reg_{D_{[2]}}\otimes \alt_2)^{\sym_2}=0$ and 
 \[\Hom(\reg_{D_1}\oplus\reg_{D_2},\reg_{D_{[2]}}\otimes \alt_2)^{\sym_2}\cong \Hom(\reg_{D_1},\reg_{D_{[2]}})=\C\]
 by (\ref{Danlem}). Hence $\Hom_{\D^b_{\sym_2}(X\times X^2)}(\cG,\cH)=\C$. Thus, the assumptions of Proposition \ref{braidcrit} are fulfilled (one either has to apply the BKRH equivalence or make the straight-forward generalisation of Proposition \ref{braidcrit} to equivariant spherical FM transforms).

In the second case we set $G=\hat G_3$ and $H=\hat H_{1,2}[-1]$. Note that $M_{1,2}A\cong A$ so that (\ref{hatHLG}) gives $H^LG=\id$.
The verification that $\Hom(\cG,\cH)=\C$ is similar to the first case.
\end{proof}
\subsection{Semi-orthogonal decompositions in the curve cases}
Let $C$ be a smooth curve.
As stated in Corollary B, it follows from Proposition A that there is the semi-orthogonal decomposition
\begin{align}\label{sod}\D^b_{\sym_m}(C^m)=\langle \cA_{0,m},\cA_{1,m-1},\dots,\cA_{r,m-r},\cB\rangle\quad,\quad \cA_{\ell,m-\ell}=H_{\ell,m-\ell}(\D^b_{\sym_\ell}(C\times C^\ell))\,.\end{align}
Since the symmetric product $S^mC$ is smooth, the pull-back along $\pi\colon C^m\to S^mC$ maps to the bounded derived category, i.e.\ $\pi^* \triv\colon \D^b(S^mC)\to \D^b_{\sym_m}(C^m)$. 
Note that this holds exclusively for curves. Since $(\pi_*\reg_{C^m})^{\sym_m}=0$, it follows by projection formula that $(\_)^{\sym_m} \pi_* \pi^* \triv\cong \id$ which means that $\pi^* \triv$ is fully faithful. For $I\subset [m]$ with $|I|\ge 2$ we have 
$(\_)^{\sym_I}\MM_{\alt_I}\delta_I^*\Res \pi^* \triv=0$ .
Hence, $H_{\ell,m-\ell}^L\pi^*\triv=0$ for all $\ell\ge 2$ which shows that $\pi^*\triv(\D^b(S^mC))\subset \cB$.

Also note that the semi-orthogonal decomposition (\ref{sod}) can be further refined by considering for $\ell\le r$ the semi-orthogonal decompositions of $\D^b_{\sym_\ell}(C\times C^\ell)$  induced by those of $\D^b_{\sym_\ell}(C^\ell)$ which are themselves of the form (\ref{sod}).

\subsection{Induced autoequivalences in the curve cases}\label{curveauto}
\begin{lemma}\label{altcomp}
 Let $X$ be a smooth variety of odd dimension. Then \[\bar S_X^{-(n-1)}H_{\ell,n}^L\MM_{\alt_{n+\ell}}\cong M_{\alt_\ell}H_{\ell,n}^R\quad\text{for $n\ge 2$}\,.\] 
\end{lemma}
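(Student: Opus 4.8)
The plan is to prove the isomorphism on the level of Fourier--Mukai kernels, by computing the kernel of each of the two functors $\bar S_X^{-(n-1)}\circ H_{\ell,n}^L\circ\MM_{\alt_{n+\ell}}$ and $M_{\alt_\ell}\circ H_{\ell,n}^R$ and showing that the resulting complexes of equivariant sheaves agree. By the adjoint--kernel formulas (\ref{RLadjoints}) one has $\cP^R=\cP^\vee\otimes(\omega_{X\times X^\ell}\boxtimes\reg_{X^{n+\ell}})[(\ell+1)d]$ and $\cP^L=\cP^\vee\otimes(\reg_{X\times X^\ell}\boxtimes\omega_{X^{n+\ell}})[(n+\ell)d]$, with $d=\dim X$ and the canonical bundles equipped with their natural $\sym$-linearisations. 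Precomposing a Fourier--Mukai functor with $\MM_V$ for a character $V$ tensors its kernel by the pullback of $V$ from the source, while postcomposing with a line-bundle twist tensors by the pullback from the target; applying this to $\MM_{\alt_{n+\ell}}$, to $\bar S_X^{-(n-1)}=(\_)\otimes(\omega_X^{-(n-1)}\boxtimes\reg_{X^\ell})[-(n-1)d]$ and to $M_{\alt_\ell}$ (and noting $(n+\ell)d-(n-1)d=(\ell+1)d$, which is why one takes $\bar S_X=(\_)\otimes(\omega_X\boxtimes\reg_{X^\ell})[d]$, agreeing with the earlier convention when $d=2$), the claim reduces to
\[\cP^\vee\otimes\bigl(\omega_X^{-(n-1)}\boxtimes\reg_{X^\ell}\boxtimes(\omega_{X^{n+\ell}}\otimes\alt_{n+\ell})\bigr)\ \cong\ \cP^\vee\otimes\bigl((\omega_{X\times X^\ell}\otimes\alt_\ell)\boxtimes\reg_{X^{n+\ell}}\bigr)\,.\]

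The step where the hypothesis that $d$ is odd enters is the identification of $\omega_{X^k}\otimes\alt_k$: because $\sym_k$ permutes the $k$ rank-$d$ summands of $\Omega_{X^k}\cong\bigoplus_a\pr_a^*\Omega_X$, the induced linearisation on $\det\Omega_{X^k}$ differs from the ``tensor'' linearisation on $\bigotimes_a\pr_a^*\omega_X$ by the character $\alt_k^{\otimes d}$, which for odd $d$ equals $\alt_k$ (this is the same phenomenon that makes $\omega_{E^n}$ a non-trivial $\sym_n$-bundle for an elliptic curve $E$). Hence $\omega_{X^{n+\ell}}\otimes\alt_{n+\ell}$ and $\omega_{X\times X^\ell}\otimes\alt_\ell$ become, $\sym$-equivariantly, the canonical bundles $\bigotimes_b\pr_b^*\omega_X$ and $\reg_X\boxtimes\bigotimes_a\pr_a^*\omega_X$ with \emph{trivial} linearisation, so the sign twists coming from $\MM_{\alt_{n+\ell}}$ and $M_{\alt_\ell}$ cancel exactly. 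Dualising, the remaining claim becomes $\cP\otimes\mathcal B\cong\cP$, where $\mathcal B=\omega_X^n\boxtimes\bigotimes_a\pr_a^*\omega_X\boxtimes\bigotimes_b\pr_b^*\omega_X^{-1}$ carries the trivial linearisation; this is the variant of the invariance statement used in Section \ref{cond3}, now valid in arbitrary dimension precisely because of the trivial linearisation.

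Finally I would check $\cP\otimes\mathcal B\cong\cP$ termwise. Each term of $\cP$ is a direct sum of the sheaves $\cP(I,J,\mu)=\reg_{\Gamma_{I,J,\mu}}\otimes\alt_J$, so it suffices to trivialise $\mathcal B$ on every $\Gamma_{I,J,\mu}\cong X\times X^{\bar I}$ compatibly with the restriction maps that constitute the differential of $\cP$. On $\Gamma_{I,J,\mu}$ one cancels, for each $c\in\bar I$, the copy of $\omega_X$ indexed by $x_c$ against the copy of $\omega_X^{-1}$ indexed by $y_{\mu(c)}$, and the remaining powers of $\omega_X$ indexed by $x$ cancel because $n+|I|-|J|=n+i-(n+i)=0$; since the trivial linearisation permutes equal line-bundle factors and hence acts trivially, and these cancellations are visibly compatible with the inclusions $\Gamma_{I',J',\mu'}\subset\Gamma_{I,J,\mu}$ occurring in the differential, one obtains a canonical equivariant isomorphism $\cP\otimes\mathcal B\cong\cP$ of complexes, which finishes the argument. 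The main thing one has to be careful about throughout is exactly this bookkeeping of the sign characters $\alt_k^{\otimes d}$ on the canonical bundles; everything else is formal manipulation of Fourier--Mukai kernels.
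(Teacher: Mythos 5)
Your argument is correct and proceeds exactly as the paper's: reduce to the FM kernels via (\ref{RLadjoints}), observe that in odd dimension $\omega_{X^k}\cong\omega_X^{\boxtimes k}\otimes\alt_k$ (the sign $\alt_k^{\otimes d}$ coming from the block-permutation of the $d\cdot k$ wedge factors), and then run the same line-bundle cancellation on each $\Gamma_{I,J,\mu}$ as in Section~\ref{cond3}. One small terminological slip: where you say the canonical bundles become $\bigotimes_b\pr_b^*\omega_X$ ``with trivial linearisation'', you mean the natural permutation (tensor) linearisation, which is what you defined a sentence earlier and which only acts trivially once restricted to the partial diagonals $\Gamma_{I,J,\mu}$ where the relevant factors coincide; the rest of the bookkeeping is sound.
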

\begin{proof}
There is the following crucial difference between the odd and the even dimensional case. If $X$ is even dimensional, the $\sym_m$-equivariant canonical bundle of $X^m$ equals $\omega_{X^m}\cong \omega_X^{\boxtimes m}$ where the linearisation is the one 
acting by permuting the factors, while in the odd dimensional case the linearisation is given by $\omega_{X^m}\cong \omega_X^{\boxtimes m}\otimes\alt_m$. Indeed, on the fibre $\omega_{X^m}(x)$ the stabiliser of $x\in X^m$ acts by permuting blocks of 
length $d=\dim X$ in the wedge product.

With this in mind the proof is the same as the proof that $\bar S_X^{-(n-1)}H_{\ell,n}^L\cong H_{\ell,n}^R$ for $X$ even dimensional; see Section \ref{cond3}.
\end{proof}
Let now $X=C$ be a smooth curve. Then $H_{\ell,n}\colon \D^b_{\sym_\ell}(C\times C^\ell)\to \D^b_{\sym_{n+\ell}}(X^{n+\ell})$ is fully faithful for $n>\min\{\ell,1\}$; see Proposition A. 
Let $m=n+\ell$ and $\mathfrak A_{m}\subset \sym_{m}$ be the alternating group. The functor $\Res\colon \D^b_{\sym_m}(X^m)\to\D^b_{\mathfrak A_m}(X^m)$ is spherical with cotwist $M_\alt$ and twist $\tau^*[1]$ where $\tau$ is any element of $\sym_m\setminus \mathfrak A_m$.
This follows by the Lemmas \ref{infspherical} and \ref{Inftwist} together with the fact that a functor $F$ is spherical if and only if $F^R$ is spherical with the roles of the twist and cotwists exchanged; see \cite[Theorem 1.1]{ALdg}.
It follows by  \cite[Theorem 4.13]{HLS} 
that the composition $\Res_{\sym_{n+\ell}}^{\Alt_{n+\ell}}\circ H_{\ell,n}$ is again a spherical functor. 
For $\tau\in\sym_{m}\setminus\Alt_{m}$ we have $\tau^*\circ \Res\circ H_{\ell,n}\cong \Res\circ H_{\ell,n}$. Thus, the spherical twist $\tilde T_{\ell,n}:=T_{\Res\circ H_{\ell,n}}\in \Aut(\D^b_{\Alt_{m}}(X^{m}))$ is 
equivariant, i.e.\ $\tau^*\circ T_{\ell,n}\cong T_{\ell,n}\circ \tau^*$, by (\ref{twistrel}). Hence, there is an induced autoequivalence $T_{\ell,n}\in \Aut(\D^b_{\sym_m}(X^{m}))$; see \cite[Theorem 6]{Plo}. 
By the same arguments, for $E$ an elliptic curve  the fully faithful functors $\hat H_{\ell,n}\colon\D^b_{\sym_\ell}(M_{\ell,n}E)\to \D^b_{\sym_{n+\ell}}(N_{n+\ell-1}E)$ induce autoequivalences of $\D^b_{\Alt_{n+\ell}}(N_{n+\ell-1}E)$  
and $\D^b_{\sym_{n+\ell}}(N_{n+\ell-1}E)$. 
  
One can also construct $T_{\ell,n}\in \Aut(\D^b_{\sym_{m}}(X^{m}))$ directly as the cone \[T_{\ell,n}=\cone(H_{\ell,n}H_{\ell,n}^R\oplus \MM_\alt H_{\ell,n}H_{\ell,n}^R \MM_\alt\xrightarrow{c}\id)\] where $c$ is the direct sum of the counit maps. 
It follows by Lemma \ref{altcomp} that 
\[
 T_{\ell,n}H_{\ell,n}\cong M_{\alt_{n+\ell}}H_{\ell,n}M_{\alt_\ell}\bar S_C^{-(n-1)}[1]\quad ,\quad T_{\ell,n}M_{\alt_{n+\ell}} H_{\ell,n}\cong H_{\ell,n}M_{\alt_\ell}\bar S_C^{-(n-1)}[1]\,,
\]
and $T_{\ell,n}(B)=B$ for $B\in \ker H_{\ell,n}^R\cap \ker (H_{\ell,n}^R\MM_{\alt})$. Thus 
\begin{align}\label{curvetwistvalues}
 T_{\ell,n}(\bar C(x))=\bar\C(x)[-(n-2)]\,\,\text{ for $x\in X^{n+\ell}_{\nu(n)}$}\quad, \quad 
T_{\ell,n}(\bar C(x))=\bar\C(x)\,\,\text{ for $x\in \bar X^{n+\ell}_{\nu(n)}$}
\end{align}
similarly to Lemma \ref{skyscrapervalue}.  
\subsection{Some conjectures}\label{conjectures}
As we see from the list at the end of Section \ref{Hilbauto}, there are surfaces $X$ for which the subgroup 
\[\Aut^{st+H}(\D^b(X^{[m]}))=\langle \Aut^{st}(\D^b(X^{[m]})),P_{0,m},\dots,P_{r,m-r} \rangle\subset\Aut(\D^b(X^{[m]}))\]
generated by standard autoequivalences and twists along the Nakajima $\P$-functors is far smaller than the full group $\Aut(\D^b(X^{[m]}))$. However, in the case that the canonical bundle of the surface is ample or anti-ample there are no non-standard 
autoequivalences coming from the surface (see \cite{BO}) and we expect the following to hold.
\begin{conj}
 Let $X$ be a smooth projective surface with $\omega_X$ ample or anti-ample. Then for $m=2,3$ we have $\Aut^{st+H}(\D^b(X^{[m]}))=\Aut(\D^b(X^{[m]}))$, i.e.\
 \[\Aut(\D^b(X^{[2]}))=\langle \Aut^{st}(\D^b(X^{[2]})),T_{0,2}\rangle \quad,\quad \Aut(\D^b(X^{[3]}))=\langle \Aut^{st}(\D^b(X^{[3]})),P_{0,3},T_{1,2}\rangle\,.\]
\end{conj}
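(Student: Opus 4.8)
\medskip
\noindent\textbf{Towards a proof.}
The plan is to pass through the Bridgeland--King--Reid--Haiman equivalence $\Phi_m\colon\D^b(X^{[m]})\xrightarrow{\cong}\D^b_{\sym_m}(X^m)$ and to reconstruct an arbitrary $\Psi\in\Aut(\D^b_{\sym_m}(X^m))$ by a stratified argument, the strata being the orbit types of the $\sym_m$-action on $X^m$, that is the partial and small diagonals. Since $X$ is projective with $\omega_X$ ample (resp.\ anti-ample), $X^m$ is projective with $\omega_{X^m}\cong\omega_X^{\boxtimes m}$ ample (resp.\ anti-ample), and for $\dim X=2$ the induced $\sym_m$-linearisation on $\omega_{X^m}$ is the permutation one, with no sign. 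Hence the equivariant Serre functor $\MM_{\omega_{X^m}}[2m]$ is far from the identity, and I would first run the Bondal--Orlov argument (\cite{BO}): any $\Psi$ must preserve the class of point objects, which in the equivariant setting are, up to shift, the simple objects $\C(x)\otimes V$ supported on a single orbit ($V$ an irreducible representation of $\Stab_{\sym_m}(x)$) and their sums over an orbit. In particular $\Psi$ is rank-preserving up to sign and preserves the property of having proper support; this is the converse, under the present hypothesis, of the last two lemmas of Section~\ref{Hilbauto}.

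The second step is to peel off the strata from the top down. On the free locus $U\subset X^m$ the quotient $U/\sym_m$ is smooth quasi-projective, its points are honest point objects, and $\Psi$ permutes the skyscrapers $\bar\C(x)$, $x\in U$, up to shift and line bundle twist. After composing $\Psi$ with a shift, an equivariant line bundle twist, and the autoequivalence induced by a suitable automorphism of $X^{[m]}$ --- all standard --- I would arrange $\Psi(\bar\C(x))\cong\bar\C(x)$ for every free orbit; a reconstruction argument of Bondal--Orlov type on the open part then forces the Fourier--Mukai kernel of $\Psi$ to agree, away from the union of the partial diagonals, with the kernel $\bigoplus_{g\in\sym_m}\reg_{\Gamma_g}$ of the identity functor.

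The third step is to normalise this remaining ambiguity stratum by stratum, with Lemma~\ref{skyscrapervalue} supplying the bookkeeping. For $m=2$ there is a single deeper stratum, the diagonal $\Delta\cong X$, on which $T_{0,2}$ acts by $\bar\C(x)\mapsto\bar\C(x)[-1]$; composing with a suitable power of $T_{0,2}$ makes $\Psi$ fix the $\bar\C(x)$, $x\in\Delta$, too, so that $\Psi$ now fixes, up to isomorphism, a spanning class of point objects of every orbit type. A rigidity argument in the spirit of Bondal--Orlov would then identify such a $\Psi$ with a composition of a line bundle twist and a shift, hence with a standard autoequivalence, and tracing the normalisations backwards gives $\Aut(\D^b(X^{[2]}))=\langle\Aut^{st}(\D^b(X^{[2]})),T_{0,2}\rangle$. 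For $m=3$ I would peel in two steps: first along the partial-diagonal stratum $X^3_{\nu(2)}$, where $T_{1,2}$ shifts $\bar\C(x)$ by $[-1]$, and then along the small diagonal, where $P_{0,3}$ shifts $\bar\C(x)$ by $[-2]$; the same rigidity argument then settles $m=3$.

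The hard part will be the third step, and specifically showing that the portion of $\Psi$ concentrated near a diagonal $\Delta_I$ is \emph{exactly} a power of the twist attached to $\Delta_I$ (up to a shift and a character). The local model near $\Delta_I$ is the one controlling the $\P$-functor $H_{\ell,n}$ --- for $m=2$ it is $\D^b_{\sym_2}(\C^2\times\C^2)$ with $\sym_2$ acting by $-1$ on the second factor, equivalently the derived category of a crepant resolution whose autoequivalence group is generated by standard autoequivalences and one spherical twist --- so what is needed is a local classification of autoequivalences supported on $\Delta_I$, together with a proof that the resulting local twist matches the global $T_{0,2}$, $T_{1,2}$ or $P_{0,3}$ along the closure of the stratum. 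A further difficulty is that $X^{[m]}$ is neither Fano nor of general type, so no global Bondal--Orlov theorem is available and the argument is forced to proceed stratum by stratum; the failure of autoequivalences of the complement of the boundary divisor of $X^{[m]}$ to extend across that divisor is precisely what the twists $T_{0,2}$, $T_{1,2}$, $P_{0,3}$ measure. This is also why the statement is only conjectured for $m=2,3$, where the stratification is short enough for the local-to-global matching to be carried through by hand.
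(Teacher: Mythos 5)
The statement you are trying to prove is labelled as a \emph{conjecture} in the paper: the author explicitly offers no proof, only a heuristic (that $\omega_X$ ample or anti-ample rules out extra autoequivalences coming from $X$ by \cite{BO}) and the observation, in the discussion after the conjecture, that the expected ``extra'' autoequivalences for $m\ge 4$ are still missing. There is therefore no paper proof to compare your attempt against.

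Your sketch is a sensible research program, but it is not a proof, and you yourself flag the step that fails. Two points deserve emphasis. First, your opening move --- ``run the Bondal--Orlov argument'' to conclude that any $\Psi$ preserves point objects --- is not available: as you later concede, $\omega_{X^{[m]}}$ is the pull-back of an ample bundle along the crepant Hilbert--Chow morphism and therefore only nef, not ample, so the hypotheses of \cite{BO} are not satisfied by $X^{[m]}$ (nor by the equivariant stack $[X^m/\sym_m]$). Without this, the very first reduction --- that $\Psi$ permutes the objects $\bar\C(x)$ up to shift and that $\Psi$ is rank-preserving up to sign --- is unjustified. Establishing it would amount to proving a Bondal--Orlov-type reconstruction theorem for this class of (holomorphic-symplectic-like but here non-Calabi--Yau) varieties, which is itself open. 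Second, the heart of the argument, your ``third step,'' asserts that an autoequivalence of $\D^b_{\sym_m}(X^m)$ whose kernel is trivial off the big diagonals and is ``concentrated near $\Delta_I$'' must be a power of the corresponding twist $T_{0,2}$, $T_{1,2}$, or $P_{0,3}$ up to standard autoequivalences. This local-to-global matching is exactly where the difficulty lies; the paper supplies no tool for it and I do not see one implicit in Lemma~\ref{skyscrapervalue}, which only records the effect of the twists on the spanning class of orbit skyscrapers and does not rule out exotic kernels having the same effect on that class. In short, the strategy is reasonable, but the proposal does not close either of the gaps it identifies, and the statement remains a conjecture.
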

For $m\ge 4$, we expect there to be further autoequivalences $p_{r+1,m-r-1},\dots, p_{m-3,3},t_{m-2,2}$ with the same behaviour as the one of the twists along the Nakajima $\P$-functors that is described in Lemma \ref{skyscrapervalue}. 
That means that we should have
\begin{align*}
 p_{\ell,m-\ell}(\bar \C(x))&\cong\begin{cases}
                              \bar \C(x)[-2(m-\ell-1)]\,\,&\text{for $x\in X^m_{\nu(\m-\ell)}$,}\\
                             \bar \C(x)\,\,&\text{for $x\in \bar X^m_{\nu(\m-\ell)}$,}\\
                             \end{cases}
\\ 
 t_{m-2,2}(\bar \C(x))&\cong\begin{cases}
                              \bar \C(x)[-1]\,\,&\text{for $x\in X^m_{\nu(2)}$,}\\
                             \bar \C(x)\,\,&\text{for $x\in \bar X^m_{\nu(2)}$.}\\
                             \end{cases}
\end{align*}
The main evidence is that there is in fact always an autoequivalence one can consider as $t_{m-2,2}$, namely the composition $\Phi^{-1}\MM_\alt \Phi \MM_{\reg(D/2)}$ where $D$ denotes the boundary divisor of the Hilbert scheme. 
Indeed, for $x\in X^m_{\nu(2)}$ we have $\Phi^{-1}(\bar \C(x))\cong  \reg_{\mu^{-1}(\pi(x))}(-1)$ and $\Phi^{-1}(\bar \C(x)\otimes \alt)\cong  \reg_{\mu^{-1}(\pi(x))}(-2)[1]$ where $\pi\colon X^m\to S^mX$ is the quotient morphism and 
$\mu\colon X^{[m]}\to S^mX$ is the Hilbert--Chow morphism. This is shown in the case $m=2$ in \cite[Proposition 4.4 \& Remark 4.6]{Kru3} and the proof for general $m$ is the same. Furthermore, $\reg(D/2)_{|\mu^{-1}(\pi(x))}\cong \reg_{\mu^{-1}(\pi(x))}(-1)$. 
Thus, $\Phi^{-1}\MM_\alt \Phi \MM_{\reg(D/2)}(\Phi^{-1}(\bar \C(x))\cong \Phi^{-1}(\bar \C(x))[-1]$.  

As explained in Section \ref{curveauto} there are also autoequivalences of $\D^b_{\sym_m}(C^m)$ that act as a shift on certain strata $C^m_{\nu(k)}$ and as the identity on $\bar C^m_{\nu(k)}$. Thus, one might also guess that this kind of 
autoequivalences exist in $\D^b_{\sym_m}(X^m)$ for $X$ a smooth variety of arbitrary dimension, even though for $\dim X\ge 3$ the functors $H_{\ell,n}$ are far from being fully faithful or $\P$-functors; compare Remark \ref{higherdiminva}.

\bibliographystyle{alpha}
\bibliography{references}

\end{document}